\documentclass[10pt,a4paper]{article}
\usepackage[T1]{fontenc}
\usepackage[utf8]{inputenc}
\pdfoutput=1
\usepackage{lmodern}
\usepackage{textcomp}
\usepackage{microtype}
\usepackage[DIV=10,BCOR=5mm,headinclude=false,footinclude=false]{typearea}
\usepackage[font=small,labelfont=bf]{caption}
\usepackage{titlesec}
\usepackage{tocloft}
\usepackage{amsmath,amssymb}
\usepackage{enumerate} 
\usepackage{amsfonts}
\usepackage{graphicx}
\usepackage{amsthm}
\usepackage{yfonts}
\usepackage{tikz-cd}
\usepackage{amssymb}
\usepackage{mathrsfs}
\usepackage[parfill]{parskip}
\usepackage{hyperref}
\usepackage[nameinlink]{cleveref}
\usepackage{MnSymbol }
\usepackage{verbatim}
\usepackage{array}

\newtheorem{theorem}{Theorem}

\newtheorem{corollary}{Corollary}[section]
\newtheorem{lemma}{Lemma}
\newtheorem{prop}{Proposition}[section]

\theoremstyle{remark}

 \setcounter{tocdepth}{1}
\newcommand{\C}{\mathbb{C}}
\newcommand{\Ff}{\mathbb{F}}

\newcommand{\Gl}{\mathbf{GL}}

\newcommand{\ra}{\rightarrow}
\newcommand{\sra}{\twoheadrightarrow}
\newcommand{\hra}{\hookrightarrow}
\newcommand{\Rep}{\mathrm{Rep}}
\newcommand{\id}{\mathrm{Ind}}

\newcommand{\abs}{\nu}
\newcommand{\NN}{\mathbb{N}}
\renewcommand*{\det}{\qopname\relax o{det}}
\newcommand{\ZZ}{\mathbb{Z}}
\newcommand{\mvw}{^{\mathrm{MVW}}}
\newcommand{\irr}{\mathfrak{Irr}}
\newcommand{\ain}[3]{#1\in\{#2,\ldots,#3\}}
\newcommand{\ho}{\mathrm{Hom}}
\newcommand{\soc}{\mathrm{soc}}
\newcommand{\cosoc}{\mathrm{cosoc}}
\newcommand{\Sp}{\mathbf{Sp}}
\renewcommand{\sp}{\mathrm{Sp}}
\renewcommand{\mp}{\mathrm{Mp}}
\newcommand{\De}{\Delta}
\newcommand{\Z}{\mathrm{Z}}
\newcommand{\op}{\overline{P}}
\newcommand{\cc}{^{\mathfrak{c}}}

\renewcommand{\L}{\mathrm{L}}
\newcommand\restr[2]{{\left.\kern-\nulldelimiterspace #1\vphantom{\big|}\right|_{#2}}}
\newcommand{\dr}{\mathcal{D}_{\rho,r}}
\newcommand{\dl}{\mathcal{D}_{\rho,l}}
\newcommand{\dd}{\mathcal{D}_\rho}
\newcommand{\Dr}{\mathcal{D}_{\rho,r}^\mathrm{max}}
\newcommand{\Dl}{\mathcal{D}_{\rho,l}^\mathrm{max}}
\newcommand{\Dd}{\mathcal{D}_{\rho}^\mathrm{max}}
\newcommand{\drm}{d_{\rho,r,\mathrm{max}}}
\newcommand{\dm}{d_{\rho,\mathrm{max}}}
\newcommand{\dlm}{d_{\rho,l,\mathrm{max}}}

\renewcommand{\abs}[2]{\nu_{#1}^{#2}}
\newcommand{\LL}{\mathcal{L}}
\newcommand{\M}{\mathbf{Mat}}
\newcommand{\rk}{\mathrm{rk}}
\newcommand{\bX}{\mathbf{X}}
\newcommand{\bG}{\mathbf{G}}
\newcommand{\bB}{\mathbf{B}}
\newcommand{\bP}{\mathbf{P}}
\newcommand{\bL}{\mathbf{L}}
\newcommand{\bN}{\mathbf{N}}
\newcommand{\bY}{\mathbf{Y}}
\newcommand{\bD}{\mathbf{D}}
\newcommand{\oo}{\mathbb{O}}
\newcommand{\bon}{\mathbf{1}}
\newcommand{\Sg}{\mathcal{S}\mathbf{Gr}}
\newcommand{\sg}{\mathcal{S}\mathrm{Gr}}
\newcommand{\bH}{\mathbf{H}}
\renewcommand{\AA}{\mathbf{A}}
\newcommand{\bS}{\mathbf{S}}
\newcommand{\bT}{\mathbf{T}}
\newcommand{\bs}{\backslash}
\newcommand{\con}{\mathrm{Cone}}
\newcommand{\curC}{\mathcal{C}}
\newcommand{\curF}{\mathcal{F}}
\newcommand{\bU}{\mathbf{U}}
\newcommand{\Glt}{\widetilde{\mathrm{GL}}}
\newcommand{\gl}{\mathrm{GL}}

\newcommand{\tp}{\widetilde{P}}
\newcommand{\sgt}{\widetilde{\mathcal{S}\mathrm{Gr}}}
\newcommand{\DD}{\mathbb{D}}

\newcommand{\cuu}{\mathfrak{C}^{sd}}
\newcommand{\cus}{\mathfrak{C}}
\newcommand{\bQ}{\mathbf{Q}}
\newcommand{\QQ}{\mathbb{Q}}
\newcommand{\im}{\mathrm{Im}}
\newcommand{\bM}{\mathbf{M}}
\newcommand{\nr}{\nu_\rho}
\newcommand{\Se}{\mathcal{S}\mathrm{eg}}
\newcommand{\Ms}{\mathcal{M}\mathrm{ult}}
\newcommand{\Irr}{\irr}
\newcommand{\fm}{\mathfrak{m}}

\newcommand{\Dde}{\mathcal{D}_{l,\De}^{max}}
\newcommand{\irs}{\irr^\square}
\date{}

\begin{document}
\title{The spectrum of the symplectic Grassmannian and $\M_{n,m}$.}
\author{Johannes Droschl}
\maketitle
\begin{abstract}
   Let $\bG$ be a reductive group and $\bX$ a spherical $\bG$-variety over a local non-archimedean field $\Ff$. We denote by $S(\bX(\Ff))$ the Schwartz-functions on $\bX(\Ff)$. In this paper we offer a new approach on how to obtain bounds on
   \[\dim_\C\ho_{\bG(\Ff)}(S(\bX(\Ff)),\pi)\]
   for an irreducible smooth representation $\pi$ of $\bG(\Ff)$. Our strategy builds on the theory of $\rho$-derivatives and the Local Structure Theorem for spherical varieties. Currently, we focus on the case of the symplectic Grassmannian and the space of matrices. In particular, we obtain a new proof of Howe duality in type II as well as an explicit description of the local Miyawaki-liftings in the Hilbert-Siegel case. Furthermore, we manage to extend previous results of the author regarding the conservation relation in the theta correspondence to metaplectic covers of symplectic groups.
   Finally, we use our new proof of Howe duality in type II to relate the order of the poles of Godement-Jacquet $L$-functions to the geometry of the space of matrices and the order of poles of certain intertwining operators.
\end{abstract}
\section{Introduction}
Let $\bG$ be a quasi-split, connected, reductive group over a non-archimedean local field $\Ff$ with Borel subgroup $\bB$ and $\bX$ a spherical $\bG$-variety, \emph{i.e.} a normal variety over $\Ff$ which admits an open and dense $\bB$-orbit. In recent years the formulation of a relative Langlands program, \emph{cf.} \cite{SakVen}, \cite{Sak23}, \cite{Sak08}, and more generally a relative Langlands duality proposed in \cite{BenSakVen}, extended the classical local Langlands correspondence between irreducible, smooth complex representations of $\bG(\Ff)$ and the local $L$-parameters of $\mathrm{Gal}(\overline{\Ff}/\Ff)$. 
These extensions require a precise understanding of the \emph{spectrum} $\irr_\bX(\bG(\Ff))$ of $\bX$, \emph{i.e.} the set of irreducible representations $\pi$ of $\bG(\Ff)$ admitting a non-zero morphism $S(\bX(\Ff))\ra\pi$, where $S(\bX(\Ff))$ denotes the locally constant, compactly supported functions on $\bX(\Ff)$. The most interesting cases are those where $\bX$ is \emph{multiplicity free}, \emph{i.e.} when the above morphism is, up to a scalar, unique for all irreducible representations. 
The theory of spherical varieties offers itself very nicely for such considerations, in part because of the combinatorial classification due to \cite{LunVus83} and the work of \cite{Kno90}, \cite{Kno96}.

In this paper we will explain how one can compute the spectrum and prove multiplicity-freeness in two specific cases, namely where $\bG=\Gl_n\times \Gl_m$ and $\bX=\M_{n,m}$, the space of $n\times m$-matrices, as well as $\bG=\Sp_n\times \Sp_m$ and $\bX=\Sg_{n,m}\coloneq\bP_{n+m}\bs\Sp_{n+m}$, where $\bP_{n+m}$ is the Siegel-parabolic of the symplectic group $\Sp_{n+m}$. Note that $\Sg_{n,m}$ is the space of $n+m$-dimensional isotropic subspaces of $\AA^{2n+2m}$ equipped with the standard symplectic structure and from now on we assume that $n\le m$.
Our approach rests on two pillars: Firstly, the existence of crystal-like structures in the form of $\rho$-derivatives, \emph{cf.} \cite{Mder}, \cite{Jader}, \cite{AtoMin}, and secondly the Local Structure Theorem for spherical varieties in combination with the Jacquet-functor and Frobenius reciprocity. Due to the general nature of our arguments we hope that in the future we can extend our results to a larger class of spherical varieties. The idea of approaching these problems from such an angle was already present in the work of the author in \cite{DroKudRa}, \cite{DroHow}, although in a somewhat hidden manner. Finally, we also address the case of metaplectic covers of the symplectic groups in the context of \cite{DroKudRa}, and thus obtain a new proof of the conservation relation in the type I theta correspondence in the corresponding cases.

Still, the results of this paper have two new applications. Namely, we obtain a new proof of Howe-duality in type II, originally due to \cite{Minguez}, and which shows, together with \cite{DroKudRa}, \cite{DroHow}, that all versions of Howe-duality follow from the general arguments laid out below. Secondly, we give an explicit description of the local Miyawaki-lifts in the Hilbert-Siegel case as introduced in \cite{Ato19}. Let us now explain some of the above ideas in more detail.

Throughout the exposition, we will always assume that all orbits are defined over $\Ff$. We will write for any variety $\bY$ over $\Ff$, $Y\coloneq\bY(\Ff)$ and order the, necessarily finitely many, $\bG$-orbits of $\bX$ via the closure-order, \emph{i.e.} we write for two orbits $\bY,\bY'$ $\bY'\le\bY$ if $Y'\subseteq \overline{Y}$. We call $\bX$ \emph{ordered} if the closure-order is a total order and note that each $\bG$-orbit $\bY$ is again a spherical variety. If $\pi$ is an irreducible smooth representation of $G$, we say $\bX$ has the lifting property with respect to $\pi$ if the following holds:
If $f\colon S(X)\ra\pi$ is a non-zero morphism and $\pi$ is in spectrum of some $\bG$-orbit $\bY$, then $f$ vanishes on all $\phi\in S(X)$ whose support is contained in \[\bigcup_{\bY'>\bY}Y'.\] The following is then an easy consequence of the definitions.
\begin{lemma}
    If $\bX$ is ordered, has the lifting property for all irreducible smooth representations and each orbit $\bY$ is multiplicity-free, then $\bX$ is multiplicity-free.
\end{lemma}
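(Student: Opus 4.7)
The plan is to filter $S(X)$ by the closure order on orbits and show that any nonzero $f\colon S(X)\to\pi$ is detected on a single orbit, which is determined by $\pi$ alone.

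Write the orbits as $\bY_1<\ldots<\bY_k$ and, for each $j$, set $U_j\coloneq\bigcup_{i\ge j}Y_i$. Because $\bX$ is ordered, the complement $\bigcup_{i<j}Y_i=\overline{Y_{j-1}}$ is closed, so $U_j$ is open in $X$ and $Y_j$ is closed in $U_j$ with open complement $U_{j+1}$. Extension by zero on the $\ell$-space $X$ therefore gives a filtration
\[S(U_k)\subset S(U_{k-1})\subset\ldots\subset S(U_1)=S(X),\qquad S(U_j)/S(U_{j+1})\simeq S(Y_j).\]

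Fix an irreducible smooth $\pi$ and a nonzero $f\colon S(X)\to\pi$. Since $f\neq 0$, at least one orbit has $\pi$ in its spectrum; let $j^\ast$ be the smallest index with $\pi\in\irr_{\bY_{j^\ast}}(G)$. The lifting property applied to $\bY_{j^\ast}$ yields $f|_{S(U_{j^\ast+1})}=0$. I claim moreover that $f|_{S(U_{j^\ast})}\ne 0$: otherwise $f$ would descend to a morphism $S(X)/S(U_{j^\ast})\to\pi$, and this quotient carries a finite filtration whose successive quotients are $S(Y_1),\ldots,S(Y_{j^\ast-1})$. By minimality of $j^\ast$ one has $\ho_G(S(Y_i),\pi)=0$ for $i<j^\ast$, so repeated use of left-exactness of $\ho_G(-,\pi)$ forces $\ho_G(S(X)/S(U_{j^\ast}),\pi)=0$, contradicting $f\ne 0$. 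Hence $f|_{S(U_{j^\ast})}$ is nonzero and, being zero on $S(U_{j^\ast+1})$, descends to a nonzero $\overline f\colon S(Y_{j^\ast})\to\pi$.

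The index $j^\ast$ depends only on $\pi$, so a second nonzero $g\colon S(X)\to\pi$ gives by the same construction a nonzero $\overline g\colon S(Y_{j^\ast})\to\pi$. Multiplicity-freeness of $\bY_{j^\ast}$ yields $\lambda\in\C^\times$ with $\overline g=\lambda\overline f$, and then $g-\lambda f$ vanishes on $S(U_{j^\ast})$, so by the previous paragraph it is identically zero. This gives $\dim_\C\ho_G(S(X),\pi)\le 1$. The argument is essentially formal and I do not anticipate any obstacle inside it; the real difficulty of the paper lies not in this lemma but in verifying its hypotheses in the two main examples, in particular the lifting property, which is presumably where the $\rho$-derivative machinery and the Local Structure Theorem for spherical varieties enter.
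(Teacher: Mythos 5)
Your proof is correct and is exactly the standard d\'evissage argument the paper has in mind: the paper states this lemma as ``an easy consequence of the definitions'' and gives no proof, and your filtration of $S(X)$ by the open sets $U_j$, combined with the lifting property to isolate the minimal distinguishing orbit and multiplicity-freeness there, is the intended reasoning.
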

Note that in our specific cases the orbits are given by $\M_{n,m}^r$, the matrices of rank $r$, $\ain{r}{0}{n}$, respectively by $\Sg_{n,m}^r$, the isotropic subspaces $U$ such that the dimension of $U$ intersected with the subspaces spanned by the first $n$-coordinates equals $r$, $\ain{r}{0}{n}$. 
For the following results, we will have to introduce some notation.
Denote by $\lvert-\lvert$ the absolute value of $\Ff$ and by $\nu_n\coloneq \lvert\det_n\lvert\colon\gl_n\ra\C^\times$. We let $\irr(G)$ be the set of irreducible smooth complex representations of $G$. Finally, we denote for any smooth finite length representation $\pi$ of $G$ by $\soc(\pi)$ its socle, \emph{i.e.} its maximal semi-simple subrepresentation.
We use the standard notation for parabolic induction for general linear groups and symplectic groups, \emph{i.e.} we let $\bP_{n_1,n_2}$ be the parabolic subgroup of $\Gl_{n_1+n_2}$ containing the upper triangular matrices with Levi component $\Gl_{n_1}\times \Gl_{n_2}$ and for $\pi_1\otimes \pi_2\in \irr(\gl_{n_1}\times \gl_{n_2})$, we write $\pi_1\times\pi_2$ for the normalized parabolic induction of $\pi_1\otimes\pi_2$ to $\gl_{n_1+n_2}$. Similarly, we let $\bP_{n_1}$ be the standard parabolic subgroup of $\Sp_{n_1+n_2}$ with Levi component $\Gl_{n_1}\times \Sp_{n_2}$ and for $\pi_1\otimes \pi_2\in \irr(\gl_{n_1}\times \sp_{n_2})$, we write $\pi_1\rtimes\pi_2$ for the normalized parabolic induction of $\pi_1\otimes\pi_2$ to $\sp_{n_1+n_2}$. For any parabolic subgroup $\bP$ we denote by $\overline{\bP}$ the opposite parabolic subgroup and denote by $\delta_P$ its modular character.
\begin{lemma}[\cite{Jader},\cite{Mder}]
    Let $\rho$ be a cuspidal representation of $\gl_m$ and $\pi,\pi'\in\irr(\gl_n),\,\sigma\in\irr(\gl_{n+m})$. Then $\pi\times\rho$ and $\rho\times\pi$ have an irreducible socle. If $\sigma\hra\pi\times\rho$ and $\sigma\hra\pi'\times\rho$, then $\pi\cong \pi'$ and $\pi$ is called the \emph{right-$\rho$-derivative} of $\sigma$.
    If $\sigma\hra\rho\times\pi$ and $\sigma\hra\rho\times\pi'$, then $\pi\cong \pi'$ and $\pi$ is called the \emph{left-$\rho$-derivative} of $\sigma$.
\end{lemma}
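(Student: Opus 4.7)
The proof strategy I would use rests on the Bernstein--Zelevinsky geometric lemma together with Bernstein's second adjointness. I focus on the right-derivative assertion; the left-derivative assertion follows by the same argument with the roles of the two parabolics reversed (alternatively by applying the MVW-involution). By second adjointness, for every $\sigma\in\irr(\gl_{n+m})$ there is a natural isomorphism
\[
\ho_{\gl_{n+m}}(\sigma,\,\pi\times\rho)\;\cong\;\ho_{\gl_n\times\gl_m}\bigl(r_{\overline{P}_{n,m}}(\sigma),\,\pi\otimes\rho\bigr),
\]
so understanding subrepresentations of $\pi\times\rho$ reduces to understanding the socle of $r_{\overline{P}_{n,m}}(\sigma)$ and its possible maps to the irreducible $\pi\otimes\rho$.

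The second step is to compute $r_{\overline{P}_{n,m}}(\pi\times\rho) = r_{\overline{P}_{n,m}}\bigl(\id_{P_{n,m}}(\pi\otimes\rho)\bigr)$ using the geometric lemma. The resulting subquotients are indexed by $k\in\{0,\dots,\min(n,m)\}$, and the $k$-th subquotient is an induction (within $\gl_n\times\gl_m$) of a representation that involves a Jacquet module of $\rho$ along the parabolic of $\gl_m$ with Levi $\gl_k\times\gl_{m-k}$. Cuspidality of $\rho$ forces this to vanish unless $k\in\{0,m\}$, and the case $k=m$ can occur only when $m\le\min(n,m)$. The $k=0$ subquotient is $\pi\otimes\rho$ itself, and one checks from the direction of the Bernstein--Zelevinsky filtration that it sits as a submodule of $r_{\overline{P}_{n,m}}(\pi\times\rho)$. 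When the $k=m$ term is present, it is induced from a proper parabolic of $\gl_n$ on the first factor, and so its irreducible subquotients all have cuspidal support on the $\gl_n$ side strictly different from that of $\pi$; in particular none is isomorphic to $\pi\otimes\rho$. Consequently $\pi\otimes\rho$ occurs exactly once as a composition factor of $r_{\overline{P}_{n,m}}(\pi\times\rho)$ and constitutes the entire socle.

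From this, irreducibility of the socle of $\pi\times\rho$ is formal. If $\sigma\hra\pi\times\rho$ is irreducible, exactness of the Jacquet functor gives $r_{\overline{P}_{n,m}}(\sigma)\hra r_{\overline{P}_{n,m}}(\pi\times\rho)$, and this is non-zero by second adjointness, so $\soc\,r_{\overline{P}_{n,m}}(\sigma)\cong\pi\otimes\rho$. Two distinct irreducible subrepresentations $\sigma_1,\sigma_2$ would embed $\sigma_1\oplus\sigma_2$ into $\pi\times\rho$, producing an embedding of $(\pi\otimes\rho)^{\oplus 2}$ into the socle of $r_{\overline{P}_{n,m}}(\pi\times\rho)=\pi\otimes\rho$, a contradiction; the same argument rules out higher multiplicities. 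For the right derivative, if $\sigma\hra\pi\times\rho$ and $\sigma\hra\pi'\times\rho$, then $\pi\otimes\rho\cong\soc\,r_{\overline{P}_{n,m}}(\sigma)\cong\pi'\otimes\rho$, forcing $\pi\cong\pi'$.

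The main technical obstacle lies in the second step: identifying the direction of the geometric-lemma filtration so that $\pi\otimes\rho$ appears as a genuine submodule, and not merely as a composition factor, of $r_{\overline{P}_{n,m}}(\pi\times\rho)$, and making precise the cuspidal-support argument that excludes the possible $k=m$ piece from contributing to the socle. Everything else reduces to routine bookkeeping with Bernstein's adjunctions and the exactness of the Jacquet functor.
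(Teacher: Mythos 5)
Your overall architecture (pass to a Jacquet module of $\pi\times\rho$ by adjunction, kill most geometric-lemma pieces using cuspidality of $\rho$, conclude by a multiplicity count) is the right family of ideas, but the central claim of your second step is false, and the proof collapses there. You assert that the $k=m$ piece of the filtration, being induced from a proper parabolic of $\gl_n$ in the first factor, has irreducible subquotients whose cuspidal support on the $\gl_n$-side is "strictly different from that of $\pi$". Parabolic induction does not change cuspidal support: that piece is glued from representations of the form $(\tau_1\times\rho)\otimes\tau_2$ with $\tau_1\otimes\tau_2$ a subquotient of $r_{n-m,m}(\pi)$, and whenever $\tau_2\cong\rho$ the factor $\pi\otimes\rho$ can reappear. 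Concretely, take $m=1$, $\rho=\bon_1$, $n=2$, $\pi=\bon_1\times\bon_1$ (irreducible, since the two segments are unlinked): then $\pi\otimes\rho$ occurs with multiplicity $3$, not $1$, in $r_{2,1}(\pi\times\rho)$ (and with multiplicity $2$ already for $n=m=1$, $\pi=\rho=\bon_1$). So the multiplicity-one statement on which both your socle argument and your uniqueness argument rest fails precisely when $\pi$ is not right-$\rho$-reduced, i.e., in the cases where the lemma has real content. (A smaller point: with the paper's conventions the relevant adjunction is plain Frobenius reciprocity, $\ho(\sigma,\id_{P_{n,m}}(\pi\otimes\rho))\cong\ho(r_{P_{n,m}}(\sigma),\pi\otimes\rho)$, not second adjointness for $\overline{P}_{n,m}$; and even granting multiplicity one, "occurs once as a composition factor and is a submodule" does not by itself identify the socle. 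But these are repairable; the cuspidal-support exclusion is the genuine gap.)

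The standard repair, which is exactly the mechanism encoded in the paper's subsequent lemmas on maximal derivatives, is to iterate before counting: set $d=\drm(\pi)$ and embed $\pi\times\rho\hra\Dr(\pi)\times\rho^{\times (d+1)}$, where $\Dr(\pi)$ is right-$\rho$-reduced. For that representation, the geometric lemma together with cuspidality of $\rho$ and $\rho$-reducedness of $\Dr(\pi)$ really does give that $\Dr(\pi)\otimes\rho^{\times(d+1)}$ occurs with multiplicity one in the relevant Jacquet module: the extra copies in your computation all arise from stripping a $\rho$ off the left-hand factor, which is now impossible. Your counting argument then goes through at this level: two distinct irreducible subrepresentations of $\pi\times\rho$ would each force $\Dr(\pi)\otimes\rho^{\times(d+1)}$ to occur in their Jacquet modules, exceeding the multiplicity bound; and the uniqueness of $\pi$ with $\sigma\hra\pi\times\rho$ is obtained from the same leading-term analysis rather than from the (false) identification of $\soc(r_{\overline{P}_{n,m}}(\sigma))$ with $\pi\otimes\rho$.
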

\begin{lemma}{\cite{AtoMin}}
    Let $\rho$ be a cuspidal representation of $\gl_m$ such that $\rho$ is not self dual and $\pi,\pi'\in\irr(\sp_n),\,\sigma\in\irr(\sp_{n+m})$. Then $\rho\rtimes\pi$ has an irreducible socle. If $\sigma\hra\rho\rtimes\pi$ and $\sigma\hra\rho\rtimes\pi'$, then $\pi\cong \pi'$ and $\pi$ is called the \emph{$\rho$-derivative} of $\sigma$.
\end{lemma}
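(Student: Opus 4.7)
My strategy is to apply Bernstein's second adjunction and then analyze $J_{\overline{\bP}}(\rho\rtimes\pi)$ via the geometric lemma. Let $\bP=\bP_m\subset\Sp_{n+m}$ be the standard parabolic with Levi $\bM=\Gl_m\times\Sp_n$. Second adjunction gives
\[
\ho_{\sp_{n+m}}(\sigma,\rho\rtimes\pi)\cong\ho_{M}(J_{\overline{\bP}}(\sigma),\rho\otimes\pi)
\]
for any irreducible $\sigma\in\irr(\sp_{n+m})$, where $M=\gl_m\times\sp_n$. The statement thus reduces to showing that the projection of $J_{\overline{\bP}}(\rho\rtimes\pi)$ onto the Bernstein component where the $\gl_m$-factor has cuspidal support $\{\rho\}$ has $\rho\otimes\pi$ as a multiplicity-one cosocle constituent.

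\textbf{Key analysis.} To compute the relevant part of $J_{\overline{\bP}}(\rho\rtimes\pi)$ I would invoke Tadić's formula (the geometric lemma for $\Sp_{n+m}$). Cuspidality of $\rho$ forces only the trivial Jacquet pieces of $\rho$ to contribute, and in the Grothendieck group the subquotients split into three types: $\rho\otimes\pi$ (identity double coset), $\rho^\vee\otimes\pi$ (long Weyl element, via the MVW-twist), and mixed terms $\tau_1\otimes(\rho\rtimes\tau_2)$ coming from Jacquet pieces $\tau_1\otimes\tau_2$ of $\pi$ at the standard parabolic $\bP_m\subset\Sp_n$. The non-self-duality hypothesis intervenes here precisely to separate the $\rho$ and $\rho^\vee$ Bernstein components of $M$: since $\rho\not\cong\rho^\vee$, the long-Weyl-element contribution falls outside the $\rho$-Bernstein projection. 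A non-zero second-adjunction surjection $J_{\overline{\bP}}(\rho\rtimes\pi)\sra\rho\otimes\pi$, coming from the identity map on $\rho\rtimes\pi$, then pinpoints $\rho\otimes\pi$ as the cosocle of this Bernstein piece.

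\textbf{Conclusion and main obstacle.} For $\sigma\hra\rho\rtimes\pi$, the non-zero $M$-morphism $J_{\overline{\bP}}(\sigma)\to\rho\otimes\pi$ furnished by second adjunction surjects onto the irreducible $\rho\otimes\pi$, so $\pi$ is recovered intrinsically from $\sigma$ as the $\sp_n$-factor of this cosocle, giving $\pi\cong\pi'$. The irreducibility of the socle follows similarly: two non-isomorphic submodules $\tau_1,\tau_2\subset\rho\rtimes\pi$ would give an embedding $J_{\overline{\bP}}(\tau_1)\oplus J_{\overline{\bP}}(\tau_2)\hra J_{\overline{\bP}}(\rho\rtimes\pi)$ whose composition with the second-adjunction surjection is non-zero on each summand, contradicting the multiplicity one of $\rho\otimes\pi$ in the cosocle. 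I expect the main obstacle to be controlling the mixed Tadić contributions $\rho\otimes(\rho\rtimes\tau_2)$: when $\pi$'s own Jacquet module contains a $\rho$-cuspidal piece these also lie in the $\rho$-Bernstein component, and ensuring they do not contribute to the cosocle requires examining the filtration structure provided by the geometric lemma itself rather than merely its semisimplification.
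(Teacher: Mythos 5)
This lemma is quoted in the paper from \cite{AtoMin} without proof, so the only question is whether your argument stands on its own. It does not: the load-bearing claim — that $\rho\otimes\pi$ occurs with multiplicity one in (the cosocle of) the $\rho$-Bernstein piece of $r_{\overline{P}}(\rho\rtimes\pi)$ — is exactly the hard point, and you have flagged it in your last sentence without resolving it. Concretely, when $\pi$ is not $\rho$-reduced, say $\pi\hra\rho\rtimes\tau_2$, the mixed geometric-lemma term contributes $\rho\otimes(\rho\rtimes\tau_2)$, which lies in the same Bernstein component \emph{and contains $\rho\otimes\pi$ itself as a subquotient}; so the multiplicity of $\rho\otimes\pi$ in the full Jacquet module is genuinely $\ge 2$, and no amount of "examining the filtration rather than its semisimplification" rescues the count at that level. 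Worse, the cosocle version of the claim is circular: by second adjunction the multiplicity of $\rho\otimes\pi$ in $\cosoc(r_{\overline{P}}(\rho\rtimes\pi))$ equals $\dim_\C\ho_{\sp_{n+m}}(\rho\rtimes\pi,\rho\rtimes\pi)$, which is essentially what you are trying to bound. The same issue undermines the uniqueness step: for a general $\sigma$ the cosocle of $J_{\overline{P}}(\sigma)$ may a priori have several irreducible quotients $\rho\otimes(\cdot)$ with different $\sp_n$-factors, so "$\pi$ is recovered as the $\sp_n$-factor of this cosocle" is not yet meaningful.

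The missing idea, which is how \cite{AtoMin} (and the GL-analogue in \cite{Jader}, \cite{Mder}) actually proceed, is to pass to the \emph{maximal} $\rho$-exponent: write $\pi\hra\rho^{\times k}\rtimes\pi_0$ with $k$ maximal and $\pi_0$ $\rho$-reduced, so that any irreducible subrepresentation of $\rho\rtimes\pi$ embeds into $\rho^{\times(k+1)}\rtimes\pi_0$. For this representation the cuspidality of $\rho$, the hypothesis $\rho\not\cong\rho^\vee$, and the $\rho$-reducedness of $\pi_0$ together kill all mixed and MVW-type terms in the $(k+1)$-fold Jacquet module, so the $\rho^{\otimes(k+1)}\otimes\pi_0$-isotypic part can be counted exactly; comparing this count with the contribution forced by any irreducible subrepresentation yields both the irreducibility of the socle and the recovery of $\pi$ from $\sigma$. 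This leading-exponent device is also what the present paper uses repeatedly (see \Cref{L:derrek} and \Cref{L:derreksp}), and your proof does not work without it.
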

Using the theory of $\rho$-derivatives one can then show the following.
\begin{lemma}
    Let $\ain{r}{0}{n}$. The spaces $\M_{n,m}^r$ and $\Sg_{n,m}^r$ are multiplicity-free.
    Moreover, \[\irr_{\M_{n,m}^r}(\gl_n\times \gl_m)=\{\pi\otimes\pi':\pi\otimes \pi'\hra \abs{n-r}{\frac{r}{2}}\times \tau\otimes\nu^{\frac{m-n}{2}}\tau^\lor \times\abs{m-r}{-\frac{r}{2}}: \tau\in \irr(\gl_r)\}\]
    and
        \[\irr_{\Sg_{n,m}^r}(\sp_n\times \sp_m)=\{\pi\otimes\pi':\pi\otimes\pi'\hra \abs{r}{-\frac{r+n-m}{2}}\rtimes \tau\otimes \abs{m-n+r}{-\frac{r}{2}}\rtimes \tau^\lor,\,  \tau\in \irr(\sp_{n-r})\}.\]
\end{lemma}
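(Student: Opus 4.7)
\emph{Proof plan.} The plan is to exploit the fact that each orbit $\bY$ is homogeneous, so $S(Y)$ is compactly induced from the trivial character of the stabilizer $H$ of a chosen basepoint, and then to reduce $\ho_G(S(Y),\pi\otimes\pi')$ to a condition on Jacquet modules via Frobenius reciprocity and the second adjunction. In the matrix case, taking the basepoint $X_0=\left(\begin{smallmatrix} I_r & 0\\ 0 & 0\end{smallmatrix}\right)$, a direct computation shows that $H$ is contained in a parabolic $P$ of $\gl_n\times\gl_m$ whose Levi is a rearrangement of $L=(\gl_{n-r}\times\gl_r)\times(\gl_r\times\gl_{m-r})$, that $H$ contains the unipotent radical $U$ of $P$, and that $HU/U$ is the codimension-$r^2$ subgroup of $L$ identifying the two copies of $\gl_r$. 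Hence $L/HU\cong\gl_r$ as a $\gl_r\times\gl_r$-set (the quotient by the diagonal), and inducing in stages one obtains $S(M_{n,m}^r)$ as a normalized parabolic induction from $P$ of a suitable modular twist of $S(\gl_r)$, inflated via the two middle $\gl_r$-factors of $L$.

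Applying second adjunction, $\ho_G(S(M_{n,m}^r),\pi\otimes\pi')$ becomes a space of $L$-maps from the twisted $S(\gl_r)$ into $r_{\overline{P}}(\pi\otimes\pi')$. The key computational input is that $S(\gl_r)$ is compactly induced from the trivial character along the diagonal $\gl_r\hra\gl_r\times\gl_r$, which by Frobenius yields
\[\dim_\C\ho_{\gl_r\times\gl_r}(S(\gl_r),\tau\otimes\tau')=\begin{cases}1 & \tau'\cong\tau^\lor,\\ 0 & \text{otherwise.}\end{cases}\]
Combined with second adjunction again, this shows that non-zero morphisms $S(M_{n,m}^r)\ra\pi\otimes\pi'$ are indexed (up to scalar) by those $\tau\in\irr(\gl_r)$ for which $\pi\otimes\pi'\hra\abs{n-r}{r/2}\times\tau\otimes\nu^{(m-n)/2}\tau^\lor\times\abs{m-r}{-r/2}$. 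The exponents $\pm r/2$ on the outer factors come directly from the modular characters of the two parabolics, while the twist $\nu^{(m-n)/2}$ on $\tau^\lor$ records the asymmetry between the two $\gl_r$-factors introduced by the Schwartz normalization (and can in any case be absorbed into a reparameterization of $\tau$).

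The Siegel--Grassmannian case proceeds along entirely analogous lines: fixing a basepoint Lagrangian whose intersection with the first $n$ coordinate subspace has the prescribed dimension $r$, the stabilizer is contained in a parabolic whose Levi is $(\gl_r\times\sp_{n-r})\times(\gl_{m-n+r}\times\sp_{n-r})$ with the two $\sp_{n-r}$-factors diagonally identified, and the analogous decomposition of $S(\sp_{n-r})$ under $\sp_{n-r}\times\sp_{n-r}$ produces the spectrum indexed by $\tau\in\irr(\sp_{n-r})$ with the claimed twists, again determined by the relevant modular characters. Multiplicity-freeness in both cases follows from two observations: for each fixed $\tau$ the hom space has dimension at most one by the Schur-type identity above, and the isomorphism class of $\tau$ is uniquely reconstructed from $\pi\otimes\pi'$ by iteratively taking $\rho$-derivatives against the cuspidal constituents of the auxiliary characters $\abs{n-r}{r/2}$ and $\abs{m-r}{-r/2}$ (left on $\pi$, right on $\pi'$ in the $\gl$-case; $\rho$-derivatives of both in the $\sp$-case), invoking the socle-uniqueness in the cited lemmas. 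The main technical obstacle throughout is the careful bookkeeping of modular characters and of the ``inverse-transpose'' twist arising from the diagonal identification of the two copies of the reductive subgroup.
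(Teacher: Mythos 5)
Your starting point coincides with the paper's: $S(\M_{n,m}^r)$ is compactly induced from the stabilizer of $\eta_r$ and, by induction in stages, equals $\omega_r=\id_{P_{r,n-r}\times P_{m-r,r}}^{\gl_n\times \gl_m}(\abs{n-r}{\frac{r}{2}}\otimes \abs{m-r}{-\frac{r}{2}}\otimes (\bon_r\otimes\abs{r}{\frac{m-n}{2}})S(\Gl_r))$, and similarly for $\Sg_{n,m}^r$. The gap is in the passage from the Schur-type identity $\dim_\C\ho_{\gl_r\times\gl_r}(S(\gl_r),\tau\otimes\tau')\le 1$ to the conclusion. That identity controls maps into an \emph{irreducible} target; after the second adjunction your target is $r_{\overline{P}}(\pi\otimes\pi')$, a finite-length but generally non-semisimple $L$-module in which the relevant constituents $\chi_1\otimes\tau\otimes\chi_2\otimes\tau^\lor$ may a priori occur with multiplicity greater than one, occur for several non-isomorphic $\tau$, or occur as subquotients that are neither subrepresentations nor quotients. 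Moreover, the adjunction produces maps \emph{out of} $\sigma$ into $r_{\overline{P}}(\pi\otimes\pi')$, whereas the statement asserts embeddings $\pi\otimes\pi'\hra \id_{\overline{P}}(\ldots)$, i.e.\ quotients of the opposite Jacquet module; matching the two requires the ``unique irreducible sub/quotient, multiplicity one'' properties of derivatives. This is exactly why the paper proves \Cref{L:locmult} by induction on $n+m$: it peels off one cuspidal character $\rho=\nu^{\frac{2r-n+1}{2}}$ at a time, uses the Geometric Lemma to isolate the single constituent of $r_{P_{d,n-d}\times\gl_m}(\omega_r)$ that can contribute, and reduces $\omega_r$ to $\omega_{r-1}$ in lower rank, with \Cref{L:derrek} supplying the uniqueness and multiplicity-one inputs.

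For $\Sg_{n,m}^r$ the corresponding step is substantially harder than ``entirely analogous'': the inducing data involves $\abs{m-n+r}{-\frac{r}{2}}\rtimes\tau^\lor$, whose socle can have length two (the character $\abs{r}{s}$ is a segment representation, not cuspidal for $r>1$, and the segment may be self-dual), so $\pi'$ is not determined by $\pi$ and multiplicity-freeness cannot be reduced to a one-dimensional Schur identity for a single $\tau$. The paper needs the entire $\De$-derivative section for this -- in particular \Cref{L:2dim} and \Cref{L:2dimstronger} bounding $\dim_\C\ho(\Z(\De)\rtimes\tau,\Z(\De)\rtimes\tau)$ by $2$, and \Cref{L:deruniqueseg} recovering $\tau$ from an irreducible subrepresentation of $\Z(\De)\rtimes\tau$ -- before \Cref{L:locmultsp} can be run. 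Your appeal to ``socle-uniqueness in the cited lemmas'' only covers cuspidal $\rho$ (and non-self-dual $\rho$ in the symplectic case) and does not apply to the factors $\abs{n-r}{\frac{r}{2}}$, $\abs{m-r}{-\frac{r}{2}}$ as they stand; either the segment-derivative theory or the paper's one-cuspidal-at-a-time induction is needed to close this.
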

By \cite[Lemma 2.8]{MinLa18}, $\pi'$ is uniquely determined by $\pi$ in the case of $\M_{n,m}^r$, however in the case of $\Sg_{n,m}^r$, there might exist two non-isomorphic choices for $\pi'$, see \Cref{S:Deder}.
Let us state the main technical lemma of that section.
\begin{lemma}
    Let $m,n\in\NN$, $\pi,\pi'\in \irr(\sp_n)$ and $s\in \C$. 
    \begin{enumerate}
        \item The socle of $\nu_m^s\rtimes\pi$ is either of length $1$ or $2$.
        \item If the socle is of length $2$, its two summands are non-isomorphic. 
        \item The socle is irreducible if $s>0$, or if $s+\frac{m-1}{2}<0$, or if $s\le 0$ and $2s\notin \ZZ$.
        \item Let $\sigma$ be an irreducible subrepresentation of $\nu_m^s\rtimes\pi$ and assume that $\sigma\hra \nu_m^s\rtimes\pi'$. Then $\pi\cong \pi'$.
        \item The cosocle of the image of the normalized intertwining operator 
    \[I:\nu_m^{-s}\rtimes\pi\ra \nu_m^s\rtimes\pi\] is isomorphic to the socle of $\nu_m^s\rtimes\pi$.
    \end{enumerate}
\end{lemma}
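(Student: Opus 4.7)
The plan is to combine the geometric lemma of Bernstein--Zelevinsky (in the Tadi\'c formulation for classical groups) with Bernstein's second adjointness and the Knapp--Stein theory of normalized intertwining operators. The central observation is that, by second adjunction, for every irreducible $\sigma$ of $\sp_{m+n}$,
\[\ho_{\sp_{m+n}}(\sigma,\abs{m}{s}\rtimes\pi)\cong\ho_{\gl_m\times\sp_n}(\sigma_{\overline{\bP_m}},\abs{m}{s}\otimes\pi).\]
Hence the length of $\soc(\abs{m}{s}\rtimes\pi)$ and the multiplicities of its irreducible constituents are bounded by the number of quotients of the Jacquet module $(\abs{m}{s}\rtimes\pi)_{\overline{\bP_m}}$ isomorphic to $\abs{m}{s}\otimes\pi$.

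As the first step, I would compute $(\abs{m}{s}\rtimes\pi)_{\overline{\bP_m}}$ using the geometric lemma. For the Siegel parabolic $\bP_m$ of $\Sp_{m+n}$, the relevant Weyl double cosets $W_{M_m}\backslash W/W_{M_m}$ produce finitely many graded pieces: the identity coset contributes exactly one copy of $\abs{m}{s}\otimes\pi$, while the non-trivial coset contributes a subquotient constructed from the Jacquet module of $\pi$ along an appropriate maximal parabolic of $\sp_n$, with $\gl_m$-factor of the form $\abs{m}{-s}$. Tracking multiplicities, $\abs{m}{s}\otimes\pi$ appears at most twice in the semisimplification, which, together with the exactness of the Jacquet functor, immediately yields (1). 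For (4), exactness implies that if $\sigma\hra\abs{m}{s}\rtimes\pi$ and $\sigma\hra\abs{m}{s}\rtimes\pi'$, then $\sigma_{\overline{\bP_m}}$ has both $\abs{m}{s}\otimes\pi$ and $\abs{m}{s}\otimes\pi'$ as irreducible quotients; comparing with the ambient $(\abs{m}{s}\rtimes\pi)_{\overline{\bP_m}}$ and noting that the $\sp_n$-factor of such a quotient is determined by matching cuspidal support with $\pi$, one concludes $\pi\cong\pi'$.

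For (3), I would verify in each of the three regimes that the non-trivial Weyl coset cannot contribute $\abs{m}{s}\otimes\pi$: if $s>0$, the $\gl_m$-central character of this contribution is $\abs{m}{-s}\ne\abs{m}{s}$; if $s+\tfrac{m-1}{2}<0$, the cuspidal support of the contribution is bounded away from the Langlands-maximal regime containing $\abs{m}{s}$; and if $s\le 0$ with $2s\notin\ZZ$, the required shift to recover $\abs{m}{s}$ from the Jacquet module of $\pi$ is non-integral. For (2), I would argue by contradiction: if the two socle summands were isomorphic, the Frobenius contribution would saturate the multiplicity-two bound, and a Loewy-structure analysis of the two coset contributions forces them to be distinguished by their cuspidal-support position, yielding the required contradiction. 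For (5), the normalized operator $I$ factors through the cosocle of the non-trivial coset contribution in $(\abs{m}{-s}\rtimes\pi)_{\overline{\bP_m}}$; hence $\im(I)$ is the subrepresentation whose cosocle matches, via second adjunction, the socle of $\abs{m}{s}\rtimes\pi$.

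The main obstacle is the fine analysis required for (2) and (5): the geometric lemma provides a filtration rather than a direct sum decomposition, and establishing non-isomorphism of the two socle summands, respectively identifying the cosocle of $\im(I)$ on the nose, requires going beyond the semisimplification and tracking the Loewy structure of the Jacquet module carefully, especially in the critical range $2s\in\ZZ$ with $s\le 0$ and $s+\tfrac{m-1}{2}\ge 0$ where both coset contributions are genuinely present.
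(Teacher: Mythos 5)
Your reduction of the socle length to a multiplicity count in the semisimplified Jacquet module does not work: the central claim that $\abs{m}{s}\otimes\pi$ occurs at most twice in the semisimplification of $r_{\overline{P_m}}(\abs{m}{s}\rtimes\pi)$ is false, and it fails precisely in the critical range. The Geometric Lemma for the Siegel parabolic produces not two but $\lvert\mathfrak{B}(m,m)\rvert$ graded pieces, indexed by triples $(k_1,k_2,k_3)$; besides the two extreme pieces $\abs{m}{s}\otimes\pi$ and $\abs{m}{-s}\otimes\pi$ that you account for, the intermediate pieces are built from the Jacquet modules of $\pi$ itself, and these can contribute further copies of $\abs{m}{s}\otimes\pi$, with multiplicity. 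Concretely, take $m=1$, $s=0$, $n=1$ and let $\pi=\abs{1}{0}\rtimes\bon$ be the irreducible unramified principal series of $\sp_1$ (here $\bon$ is the trivial representation of $\sp_0$); then $r_{P_1}(\pi)=2\cdot(\abs{1}{0}\otimes\bon)$ in the Grothendieck group, the piece of $r_{P_1}(\abs{1}{0}\rtimes\pi)$ indexed by $(0,1,0)$ contributes $2\cdot(\abs{1}{0}\otimes\pi)$, and together with the pieces $(1,0,0)$ and $(0,0,1)$ the class $\abs{1}{0}\otimes\pi$ occurs with multiplicity $4$. So parts (1), (2) and (4) do not follow from your count. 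Moreover, for (4) the assertion that the $\sp_n$-factor is ``determined by matching cuspidal support with $\pi$'' is not a proof, since cuspidal support does not determine an irreducible representation; and for (5) the claim that $I$ ``factors through the cosocle of the non-trivial coset contribution'' is not substantiated.

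The paper avoids exactly this trap by controlling the submodule structure of the Jacquet module rather than its semisimplification. Writing $\abs{m}{s}=\Z(\De)$ with $\De=[s-\frac{m-1}{2},s+\frac{m-1}{2}]_{\bon_1}$ and embedding $\pi\hra\Z([s_1,b])\times\cdots\times\Z([s_k,b])\rtimes\pi'$ via maximal derivatives, it uses the Lapid--M{\'i}nguez intertwining-operator argument (\Cref{L:lapminint}, \Cref{L:ext}) to show that the maximal $(\Z(\De),\gl_m)$-equivariant subrepresentation of the relevant Jacquet module is irreducible, which gives the sharp bound $\dim_\C\ho(\Z(\De)\rtimes\pi,\Z(\De)\rtimes\pi)\le 2$ (\Cref{L:2dim}). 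The lower bound $\sum_\tau c_\tau^2$ needed to convert this into statements (1) and (2) about the socle comes from the MVW involution (\Cref{L:2dimstronger}), a self-duality input entirely absent from your proposal. Statement (3) is proved by reducing to non-self-dual cuspidal supports and inducting on the length of the segment via $\rho$-derivatives (\Cref{L:casenotneeded}), and (5) is an induction on $n$ using the compatibility of intertwining operators with derivatives (\Cref{T:segint}). To repair your approach you would need, at a minimum, the MVW involution and a mechanism (such as the intertwining-operator lemma) that sees the actual module structure of the Jacquet module and not just its composition factors.
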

To prove that $\Sg_{n,m}$ and $\M_{n,m}$ have the lifting property, we use the following theorem, known as the Local Structure Theorem.
For $\bY$ a $\bG$-orbit of $\bX$ we denote by
\[\bX^\circ_{\bY}\coloneq \{x\in \bX: \bY\subseteq \overline{\bB x}\}.\]
Let $\bP_\bY$ be the stabilizer of $\bX_\bY$, which is a parabolic subgroup with Levi-subgroup $\bL_\bY$ and unipotent part $\bN_\bY$.
\begin{theorem}[{Local Structure Theorem, \cite{LunVus83}}]
    Then there exists an $\bL$-spherical, affine variety $\bS_\bY\subseteq \bX^\circ_{\bY}$ over $\Ff$ such that 
    \[\bN_\bY\times \bS_\bY\ra \bX^\circ_{\bY}\] is a $\bP_\bY$-equivariant isomorphism and $\bX^\circ_{\bY}\cap \bY$ consists of the unique open and dense $\bB$-orbit in $\bY$.  
\end{theorem}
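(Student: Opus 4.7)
The plan is to realize $\bX^\circ_\bY$ as an affine principal open subset of $\bX$ cut out by a single $\bB$-semi-invariant function $f$, and then construct $\bS_\bY$ as a transversal slice to the $\bN_\bY$-orbits through a base point in the open $\bB$-orbit of $\bY$.

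First I would use the orbit--divisor dictionary for spherical varieties to produce a $\bB$-semi-invariant regular function $f$ on $\bX$ whose zero set is exactly the union of the prime $\bB$-stable divisors of $\bX$ not containing $\bY$. Its existence rests on the finiteness of the colors and $\bG$-stable boundary divisors of $\bX$, combined with the finite generation of the monoid of $\bB$-eigenfunctions on a spherical variety. Unwinding the closure order, $x$ lies in $\bX^\circ_\bY$ precisely when no such divisor passes through $x$, so $\bX^\circ_\bY$ coincides with the principal open $\{f \neq 0\}$ and is in particular affine. The $\bB$-weight $\chi$ of $f$ then has stabilizer parabolic equal to $\bP_\bY$ by construction.

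Next, I would pick a point $x_0$ in the unique open $\bB$-orbit of $\bY \cap \bX^\circ_\bY$ and take $\bS_\bY$ to be the $\bL_\bY$-stable closed subvariety of $\bX^\circ_\bY$ through $x_0$ cut out as the fixed locus of a central cocharacter of $\bL_\bY$ chosen to contract $\bN_\bY$ to the identity. A tangent-space computation at $x_0$ matches the $\bL_\bY$-weights of $\mathrm{Lie}(\bN_\bY)$ with the complementary summand of $T_{x_0}\bX$, yielding a splitting $T_{x_0}\bX \cong \mathrm{Lie}(\bN_\bY) \oplus T_{x_0}\bS_\bY$. Hence the $\bP_\bY$-equivariant multiplication map $\bN_\bY \times \bS_\bY \to \bX^\circ_\bY$ is étale and bijective at $(\bon, x_0)$, and a spreading-out argument using the $\bL_\bY$-action and the affineness of $\bX^\circ_\bY$ upgrades this to a global isomorphism. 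The $\bL_\bY$-sphericality of $\bS_\bY$ and the uniqueness of the open $\bB$-orbit in $\bY \cap \bX^\circ_\bY$ then follow by restricting the $\bB$-action.

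The hard part will be the freeness of the $\bN_\bY$-action on all of $\bX^\circ_\bY$, rather than only at the base point: one has to propagate the transversality from $x_0$ to the entire principal open, which is where sphericality enters decisively through the interplay between the $\bB$-weights appearing in $\Ff[\bX^\circ_\bY]$ and the root data of $\bN_\bY$. Finally, since $\bY$ is defined over $\Ff$, all choices ($f$, $\chi$, the base point $x_0$) can be made Galois-equivariantly, and descent from $\overline{\Ff}$ to $\Ff$ is automatic.
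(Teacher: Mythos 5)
You should first note that the paper does not prove this statement at all: it is quoted verbatim from the literature (Luna--Vust, in the refined form due to Brion--Luna--Vust and Knop) and used as a black box, so there is no in-paper argument to compare against. Judged on its own, your sketch follows the standard route (cut out $\bX^\circ_\bY$ by a $\bB$-semi-invariant, build an $\bL_\bY$-stable slice, show the multiplication map is an isomorphism), but two of its concrete steps fail as written. First, for a projective spherical variety --- and $\Sg_{n,m}=\bP_{n+m}\backslash\Sp_{n+m}$, the main example of this paper, is projective --- there are no nonconstant global regular functions, so $\bX^\circ_\bY$ is not a principal open subset $\{f\neq 0\}$ for a regular $\bB$-eigenfunction $f$ on $\bX$; one must instead take $f$ to be a $\bB$-semi-invariant section of a $\bG$-linearized ample line bundle (equivalently, pass to the affine cone), and only then is $\bX^\circ_\bY$ affine. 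Second, the slice is not the fixed locus of a central cocharacter $\lambda$ of $\bL_\bY$ contracting $\bN_\bY$: already for $\Sp_1=\mathbf{SL}_2$ acting on $\AA^2$ with $\bY$ the open orbit, one has $\bX^\circ_\bY=\{y\neq 0\}$, the correct slice is $\{x=0,\ y\neq 0\}$, while the fixed locus of $\lambda(t)=\mathrm{diag}(t,t^{-1})$ is only the origin. The slice is merely $\lambda$-stable (a weight subspace, or in the projective picture $\mathbb{P}(W)\cap \bX^\circ_\bY$ for a suitable $\bL_\bY$-submodule $W$ of the ambient representation), not pointwise fixed.

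Beyond these, the step you yourself flag as "the hard part" --- upgrading the isomorphism from an \'etale bijection near the base point to a global isomorphism $\bN_\bY\times\bS_\bY\ra\bX^\circ_\bY$ --- is where essentially all of the content of the theorem sits, and the sketch offers no mechanism for it. The standard arguments use either the explicit $\mathfrak{g}$-module decomposition of the space of sections (Brion--Luna--Vust) or the non-negativity of the $\lambda$-grading on $\Ff[\bX^\circ_\bY]$ together with a graded Nakayama-type argument (Knop); "spreading out using the $\bL_\bY$-action and affineness" does not by itself rule out that the map fails to be injective or surjective away from the base orbit. Since the theorem is being imported rather than reproved, the honest course here is to cite it; if one insists on a proof, the missing ingredients above must be supplied.
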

In particular, we can formulate the following inductive approach to show the lifting property.
Assume one can find for any $\pi\in\irr(G)$ a suitable orbit $\bY$ and $\sigma\in\irr(L_Y)$ such that 
\[\pi\hra \id_{P_Y}^{G}(\sigma)\]
and for any non-zero morphism
\[S(\bX)\ra\pi\] the composition 
\[S(\bX^\circ_{\bY})\hra S(\bX)\ra\pi\hra\id_{P}^{G}(\sigma)\]
is non-zero. Then we can apply Frobenius reciprocity and obtain an injection
\[\ho_{G}(S(\bX),\pi)\subseteq \ho_{L_Y}(S(\bS_\bY),\delta_{P_Y}^{-\frac{1}{2}}\sigma)\]
and hope that by inductive argument we deduce the lifting property for $\pi$ from the lifting property of $\sigma$. 

In our specific case, the above takes for a suitable choice of a Borel-subgroup the following form.
\begin{lemma}
    Let $\ain{r}{0}{n}$. If $\bY=\M_{n,m}^r$, then $\bP_\bY= \bP_{r,n-r}\times \overline{\bP_{r,m-r}}$ and
    \[\bS_{\M_{n,m}^r}=\M_{n-r,m-r}\times \Gl_r.\]
    If $\bY=\Sg_{n,m}^r$, then $\bP_\bY= \bP_{n-r}\times \overline{\bP_{m-r}}$ and \[\bS_{\Sg_{n,m}^r}=\M_{n-r,m-r}\times \Sp_r.\]
\end{lemma}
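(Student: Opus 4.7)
The plan is to apply the Local Structure Theorem directly by exhibiting, for each orbit $\bY$, a representative $x_0$ of the open $\bB$-orbit in $\bY$, computing the parabolic $\bP_\bY$ as the subgroup preserving the locus $\bX^\circ_\bY$, and writing down an explicit affine slice $\bS_\bY$ through $x_0$. The uniqueness part of the Local Structure Theorem then reduces the verification to checking in coordinates that the product map $\bN_\bY\times\bS_\bY\to\bX^\circ_\bY$ is an isomorphism and that the induced $\bL_\bY$-action on $\bS_\bY$ is the expected one.

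For $\bY=\M_{n,m}^r$, I would fix the Borel $\bB\subseteq\Gl_n\times\Gl_m$ to be upper triangular in the first factor and lower triangular in the second, and take as $x_0$ the $n\times m$ matrix with $1$'s in the positions $(n-r+i,m-r+i)$ for $i=1,\ldots,r$ and zeros elsewhere. Elementary row-and-column reduction shows that $x_0$ lies in the open $\bB$-orbit of $\bY$, and that $\bX^\circ_\bY$ is the open subvariety of $\M_{n,m}$ on which the bottom-right $r\times r$ minor is nonzero. Checking which $(g_1,g_2)$ preserve this locus under the action $(g_1,g_2)\cdot X=g_1Xg_2^{-1}$ yields $\bP_\bY=\bP_{r,n-r}\times\overline{\bP_{r,m-r}}$, and the candidate slice
\[
\bS_\bY=\left\{\begin{pmatrix}A&0\\ 0&Z\end{pmatrix}:A\in\M_{n-r,m-r},\ Z\in\Gl_r\right\}
\]
is manifestly stable under the Levi with the indicated product action. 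A direct block matrix calculation then produces an explicit inverse: given $X\in\bX^\circ_\bY$, the invertibility of its bottom-right block uniquely determines a pair $(n_1,n_2)\in\bN_\bY$ killing the top-right and bottom-left off-diagonal blocks, leaving the slice datum as the remaining entries.

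For $\bY=\Sg_{n,m}^r$, I would realize $\Sg_{n,m}$ as the variety of Lagrangians $U$ inside $\AA^{2n}\oplus\AA^{2m}$ with its standard symplectic form, and let $\Sp_n\times\Sp_m$ act through the two factors. As representative I would choose
\[
x_0=\mathrm{span}(e_1,\ldots,e_r,f_{r+1},\ldots,f_n,e'_1,\ldots,e'_m),
\]
so that $x_0\cap W=\mathrm{span}(e_1,\ldots,e_r)$ has the correct dimension $r$. Describing $\bX^\circ_\bY$ in terms of the relative positions of $U$ with respect to the isotropic flags $\mathrm{span}(e_{r+1},\ldots,e_n)\subseteq\AA^{2n}$ and $\mathrm{span}(e'_1,\ldots,e'_{m-r})\subseteq\AA^{2m}$ yields $\bP_\bY=\bP_{n-r}\times\overline{\bP_{m-r}}$, whose Levi is $(\Gl_{n-r}\times\Sp_r)\times(\Gl_{m-r}\times\Sp_r)$. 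The remaining slice data then splits into a matrix in $\M_{n-r,m-r}$ encoding the linear coupling between the two $\Gl$-pieces of the Levi, together with a Lagrangian in the residual $2r$-dimensional symplectic subspace (fixed pointwise by the two $\Sp_r$-factors of the Levi but permuted by their diagonal), which as an $\Sp_r\times\Sp_r$-homogeneous space is identified with $\Sp_r$. This gives $\bS_\bY=\M_{n-r,m-r}\times\Sp_r$.

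The main technical obstacle will be the symplectic case, specifically the identification of the $\Sp_r$ factor: one must carefully verify that after normalizing the $(n-r)$- and $(m-r)$-dimensional isotropic pieces of $U$ via $\bN_\bY$ and extracting the transverse linear datum, what remains is precisely a Lagrangian inside a residual $2r$-dimensional symplectic space, and that the induced $\bL_\bY$-action on this datum factors through the diagonal $\Sp_r$. The general-linear case, by contrast, reduces to elementary block-matrix bookkeeping once the representative $x_0$ is chosen.
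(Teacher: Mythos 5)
Your overall strategy -- pick an explicit representative of the open $\bB$-orbit in $\bY$, compute $\bP_\bY$ as a stabilizer, and exhibit the slice $\bS_\bY$ by hand -- is a legitimate route, and it is genuinely different from the paper's. The paper never works in coordinates at this stage: it first proves a general structural statement (\Cref{L:spespher}) via the Bruhat decomposition, which pins down $\bP_\bY$ as $\overline{\bP_1\times\bP_2}$ directly from the abstract form of the orbit stabilizers and identifies the orbit poset of $\bS_\bY$; it then invokes the Luna--Vust colored-fan classification (\Cref{L:groupglsp}) to conclude that an ordered affine spherical variety with that orbit poset \emph{must} be $\M_{n-r,m-r}$ (resp.\ its symplectic analogue), so $\bS_\bY$ is identified by classification rather than computation. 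Your approach, if carried out correctly, would buy a more concrete and self-contained verification at the cost of heavier bookkeeping.

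However, the execution has a genuine error in the symplectic case. Your representative $x_0=\mathrm{span}(e_1,\ldots,e_r,f_{r+1},\ldots,f_n,e'_1,\ldots,e'_m)$ does not lie in $\Sg_{n,m}^r$: its intersection with $\AA^{2n}_+$ is $\mathrm{span}(e_1,\ldots,e_r,f_{r+1},\ldots,f_n)$, which is $n$-dimensional, so $x_0$ is a direct sum of a Lagrangian of $\AA^{2n}_+$ and a Lagrangian of $\AA^{2m}_-$ and lies in the closed orbit $\Sg_{n,m}^n$. A correct representative must couple the two symplectic factors, as the paper's $\delta_r$ does; starting from your $x_0$ the slice computation would never produce the $\M_{n-r,m-r}$ factor, and the "residual symplectic subspace" carrying the $\Sp_r$-datum is $4r$-dimensional (a $2r$-dimensional piece from each factor, with the generic Lagrangians being graphs of symplectic isomorphisms), not $2r$-dimensional as you write. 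There are also smaller bookkeeping inconsistencies in the matrix case: with your Borel and your $x_0$ (identity in the bottom-right block), the stabilizer of your locus is $\bP_{n-r,r}\times\overline{\bP_{m-r,r}}$ with the $\Gl_r$-blocks in the bottom-right, which does not match the claimed $\bP_{r,n-r}\times\overline{\bP_{r,m-r}}$; and $\bX^\circ_\bY$ is the complement of \emph{all} $\bB$-stable divisors not containing $\bY$, i.e.\ the locus where the trailing principal minors of every size $\ain{k}{1}{r}$ are nonzero, not just the $r\times r$ one. The latter two points are repairable by fixing conventions, but the symplectic representative must be replaced before the argument can go through.
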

This allows us to implement the above-suggested strategy, thereby enabeling us to show the following.
For $\pi\in\irr(\gl_n)$, let $\ain{r}{0}{n}$ be minimal such that there exists $\tau\in\irr(\gl_r)$ with $\pi\hra\abs{n-r}{\frac{r}{2}}\times\tau$. Then \[ \mathfrak{T}_n^m(\pi)\coloneq\mathrm{soc}(\tau^\lor\nu^{\frac{m-n}{2}}\times \abs{m-r}{-\frac{r}{2}}).\]

Let $\pi\in \irr(\sp_n)$ and let $\ain{r}{0}{n}$ be maximal such that there exists $\tau\in\irr(\sp_{n-r})$ with $\pi\hra\abs{r}{-\frac{m-n+r}{2}}\rtimes\tau$. Then \[\mathfrak{T}_n^m(\pi)\coloneq\mathrm{soc}(\abs{m-n+r}{-\frac{r}{2}}\rtimes\tau).\]
\begin{theorem}
    The space $\M_{n,m}$ is multiplicity-free, has the lifting property for all irreducible representations and \[\irr_{\M_{n,m}}(\gl_n\times \gl_m)=\{\pi\otimes \mathfrak{T}_n^m(\pi):\, \pi\in \irr(\gl_n)\}.\]
    The space $\Sg_{n,m}$ is multiplicity-free, has the lifting property for all irreducible representations  and \[\irr_{\Sg_{m,n}}(\sp_n\times \sp_m)=\{\pi\otimes\pi':\pi\otimes\pi'\subseteq \pi\otimes \mathfrak{T}_n^m(\pi),\,  \pi\in \irr(\sp_{n})\}.\]
\end{theorem}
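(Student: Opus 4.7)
I would argue by induction on $n$, with trivial base case $n=0$. The key step is to establish the lifting property for every $\pi\in\irr(G)$: once this is done, multiplicity-freeness follows from the opening lemma of the introduction combined with the orbit-wise multiplicity-freeness already established, and the explicit spectrum description emerges by tracking the auxiliary representation $\tau$ through the induction against the recipe defining $\mathfrak{T}_n^m(\pi)$.

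\textbf{Choice of orbit and Frobenius reduction.} Given $\pi\in\irr(G)$, I would choose the orbit $\bY$ prescribed by the definition of $\mathfrak{T}_n^m(\pi)$: for $\bX=\M_{n,m}$, take the minimal $\ain{r}{0}{n}$ with $\pi\hra\abs{n-r}{r/2}\times\tau$ for some $\tau\in\irr(\gl_r)$ and set $\bY=\M_{n,m}^r$; for $\bX=\Sg_{n,m}$, take the maximal $\ain{r}{0}{n}$ with $\pi\hra\abs{r}{-(m-n+r)/2}\rtimes\tau$ for some $\tau\in\irr(\sp_{n-r})$ and set $\bY=\Sg_{n,m}^r$. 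Uniqueness of $\tau$ at the extremal $r$ follows from the $\rho$-derivative lemmas. The orbit-wise spectrum description then upgrades the embedding to $\pi\hra\id_{P_Y}^G(\sigma)$ for a suitable $\sigma\in\irr(L_Y)$ built out of $\tau$, $\tau^\lor$ and the abelian characters appearing in the lemma computing $\irr_\bY(G)$. Using the Local Structure Theorem lemma to identify $\bX^\circ_\bY\cong \bN_\bY\times\bS_\bY$ with $\bS_\bY=\M_{n-r,m-r}\times\Gl_r$ (respectively $\M_{n-r,m-r}\times\Sp_r$), Frobenius reciprocity then yields an injection $\ho_G(S(\bX),\pi)\hra\ho_{L_Y}(S(\bS_\bY),\delta_{P_Y}^{-1/2}\sigma)$, to which the inductive hypothesis applies directly, since $\bS_\bY$ is a product of strictly smaller instances of the same type of space.

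\textbf{Main obstacle and conclusion.} The crucial technical step is to verify that for every non-zero $f\colon S(\bX)\to\pi$, the composition $S(\bX^\circ_\bY)\hra S(\bX)\to\pi\hra\id_{P_Y}^G(\sigma)$ is non-zero; this is what legitimises the Frobenius injection above, and simultaneously this is the lifting property itself. Vanishing of the composition would force $f$ to factor through the Schwartz space on the closed complement, supported on orbits $\bY'<\bY$, and would thus place $\pi$ in the spectrum of some such $\bY'$. The orbit-wise spectrum formula at $\bY'$ would then provide a second embedding of $\pi$ into an induction with a different rank parameter, and the intertwining operator lemma, especially parts $(4)$ and $(5)$, together with part $(2)$ in the symplectic case where the socle may have length two, combined with $\rho$-derivative uniqueness, would contradict the minimality, respectively maximality, of $r$. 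I expect this non-vanishing step to be the main obstacle, since it requires a careful analysis of the geometric Jacquet restriction of $S(\bX)$ along $\bN_\bY$ and identification of the correct piece hosting the map to $\sigma$. Once non-vanishing is in hand, the inductive hypothesis applied to $\bS_\bY$ delivers both the lifting property for $\pi$ on $\bX$ and the identification $\pi'\subseteq\mathfrak{T}_n^m(\pi)$, completing the induction and yielding multiplicity-freeness and the claimed spectra.
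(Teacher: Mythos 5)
Your overall strategy coincides with the paper's: induct via the Local Structure Theorem, reduce along the orbit $\bY$ singled out by the definition of $\mathfrak{T}_n^m(\pi)$, and combine orbit-wise multiplicity-freeness with a genericity/non-vanishing statement (this is exactly the mechanism of \Cref{T:lstcor1} and \Cref{C:liftingproperty}, with the genericity supplied by \Cref{L:locgen}). However, there are two genuine gaps.

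First, the base case is not trivial. Your induction reduces $\M_{n,m}$ to the slice $\M_{n-r,m-r}\times\Gl_r$, but when $\pi=\bon_n$ the relevant orbit is the closed orbit $\M_{n,m}^0$, where $\bP_\bY=\bG$ and the slice is $\M_{n,m}$ itself: there is no parabolic reduction, and the genericity lemma does not apply (in the notation of \Cref{L:locgen} one has $k=1$ and $\sum_i(n-\alpha_i)=n$). The lifting property for $\bon_n\otimes\bon_m$ therefore needs a separate, direct argument; the paper supplies it in \Cref{L:basecase} by showing that the $\Gl_n\times\Gl_n$-invariant integral over $\Gl_n\subseteq\M_{n,n}$ cannot be extended to a functional on $S(\M_{n,n})$ (the scaling argument with $X_k=\{\lvert\det\rvert\le q^{-k}\}$). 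Declaring the base case ``trivial'' leaves the induction without a foundation exactly at the orbit where the general mechanism breaks down. (A smaller inaccuracy of the same kind: for $\Sg_{n,m}$ the slice is $\M_{n-r,m-r}\times\Sp_r$, a matrix space rather than a smaller Grassmannian, so the argument is not an induction within $\Sg$ but a reduction to the already-proved $\M$ theorem.)

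Second, your argument only yields the inclusion $\irr_{\Sg_{n,m}}(\sp_n\times\sp_m)\subseteq\{\pi\otimes\pi':\pi'\subseteq\mathfrak{T}_n^m(\pi)\}$; the stated theorem asserts equality. Since $\soc(\Z(\De)\rtimes\tau)$ can have length two (\Cref{C:secrep}), one must show that \emph{both} irreducible summands of $\mathfrak{T}_n^m(\pi)$ actually occur in the spectrum, and the Frobenius/slice reduction gives no handle on this. The paper needs a separate construction (\Cref{P:explicit}): a rational family of doubling-method functionals $f_s\colon S(\Sg_{n,m},\LL_{\mu,s})\to\tau_s'$, a comparison of the order of the pole of $f_s$ at $s=0$ with that of the normalized intertwining operator on the open orbit, and the identification $\cosoc(\im(I_{\Z(\De),\pi}))\cong\soc(\Z(\De)\rtimes\pi)$ of \Cref{T:segint}. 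None of this is recoverable from the inductive lifting argument alone, so the surjectivity half of the spectrum description is missing from your proposal.
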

We also formulate analogous statements for general linear groups over division algebras and metaplectic covers of both $\gl_n$ and $\sp_n$. In particular the case of $\mp_n$ gives an explicit description of local Miyawaki-liftings in the Hilbert-Siegel case, see \Cref{S:app}. Additionally, the case of the metaplectic cover of $\gl_n$ allows us to extend the arguments of \cite{DroKudRa} and \cite{DroHow} to metaplectic covers, see \cite[Remark 2.9.2]{DroKudRa}.

Finally, let us connect our results in the case $\M_{n,n}$ to the behavior of the Godement-Jacquet $L$-function $L(\pi,s)$ of a smooth, irreducible representation $\pi$ of $\gl_n$ and certain intertwining operators.
To state our theorem, we introduce the following classical definition. For $\pi_1,\pi_2$ smooth, irreducible representation of $\gl_{n_1}$ and $\gl_{n_2}$, $s\in \C$, we denote the intertwining operator by
\[I_{\pi_1,\pi_2}(s)\colon \pi_2\lvert-\lvert^s\times\pi_1\ra \pi_1\times \pi_2\lvert-\lvert^s,\] and the order of its pole at $s=0$ by $\Lambda(\pi_1,\pi_2)$.
\begin{theorem}
Let $\pi\in \irr(\gl_n)$. Then $L(\pi,s)$ has a pole at $s=-\frac{n-1}{2}$ if and only if
the, up to a scalar, unique map \[T_\pi\colon S(\M_{n,n})\ra \pi\otimes\pi^\lor\] vanishes on $S(\Gl_n)$. To be more precise,
let $r$ be maximal such that $T_\pi$ vanishes on \[S(\bigcup_{n\ge r'\ge r+1}\M_{n,n}^{r'}). \]Then there exists an, up to isomorphism, unique irreducible representation $\tau$ of $\gl_{r}$ such that $\pi$ is a subrepresentation of $\nu_{n-r}^{\frac{r}{2}}\times\tau$. Moreover, \[\mathrm{ord}_{s=-\frac{n-1}{2}}L(\pi,s)=\Lambda(\nu_{n-r}^{\frac{r}{2}},\tau)+1.\]
\end{theorem}
In particular, this allows us to recover the description of the $L$-factor in terms of its Langlands-parameter, as was already achieved in \cite{JL2}. As an interesting side note, our proof does not use the functional equation.
\subsection*{Acknowledgements}
I am grateful to Alberto M{\'i}nguez for his suggestion and encouragement to extend the results of my thesis to the cases discussed above. Our conversations have consistently been a great source of inspiration for my research.
This work has been supported by the projects PAT4628923 and PAT4832423 of the Austrian Science Fund (FWF).
\section{Preliminaries}
\counterwithin{theorem}{section}
\counterwithin{lemma}{section}
Let $\Ff$ be a non-archimedean field with absolute value $\lvert-\lvert$ and residue cardinality $q$ and let $\bG$ be a connected reductive group over $\Ff$. We fix a smooth additive character $\psi\colon\Ff\ra\C^\times$. If $\bX$ is a variety over $\Ff$, we will write usually $X=\bX(\Ff)$ and equip $X$ with the topology inherited by $\Ff$ as follows. If $\bX$ is affine, we note \[X=\ho(\Ff[\bX],\Ff),\]
where we equip the latter space with the compact-open topology and $\Ff[\bX]$ has the discrete topology. For non-affine $\bX$, we just choose an affine cover and equip $X$ with the corresponding topology.
A locally constant function on $X$ refers to a function $\phi\colon X\ra\C$ which is locally constant with respect to the aforementioned topology.
We denote by $\Rep(G)$ the category of smooth admissible representations of $G$ with complex coefficients. We write for injective respectively surjective maps in this category $\hra$ respectively $\sra$, and denote the image of a morphism $f$ by $\im(f)$.
If $\bH$ is a closed subgroup of $\bG$, we denote 
\[\#-\id_H^G\colon \Rep(H)\ra \Rep(G)\]
the functor of compactly supported induction.
If $\bP$ is a parabolic subgroup of $\bG$, with Levi-decomposition $\bL\bN$, where $\bN$ is the unipotent part, we denote by $\delta_P=\delta_N$ the modular character of $P$ and by \[\id_P^G\coloneq \#-\id_P^G(\delta_P\otimes-)\colon \Rep(L)\ra \Rep(G),\]
where $P$ acts on an $L$-representation via the canonical surjection $P\sra L$.
We also denote the Jacquet functor, \emph{i.e.} the $N$-coinvariants, by $\#-r_P\colon \Rep(G)\ra \Rep(L)$, and its normalized version by $r_N=r_P\coloneq\delta_P^{-\frac{1}{2}}\otimes\#-r_P$.
The functors $\id_P^G$ and $r_P$ are exact, preserve representations of finite length and $\id_P^G$ has as a left adjoint $r_P$ and as a right adjoint $\overline{r_P}\coloneq r_{\overline{P}}$. These facts are referred to as \emph{Frobenius-} respectively \emph{Bernstein reciprocity.} 

Let $\bP=\bL_{\bP}\bN_{\bP}$ and $\bQ=\bL_{\bQ}\bN_{\bQ}$ be two parabolic subgroups of $\bG$. We recall the description of the functor $r_{Q}\circ \id_P^G\colon \Rep(L_P)\ra \Rep(L_Q)$.  Define for $\mathfrak{w}\in \bP\bs \bG/\bQ$ the groups
 \[\bL_{\bP}'\coloneq \bL_{\bP}\cap w^{-1}\bL_{\bQ}w,\, \bL_{\bQ}'\coloneq w\bL_{\bP}'w^{-1},\,\bN_{\bQ}'\coloneq \bL_{\bP}\cap w^{-1}\bN_{\bQ}w,\, \bN_{\bP}'\coloneq \bL_{\bQ}\cap w \bN_{\bP}w^{-1},\]
where $w$ is a representative of $\mathfrak{w}$. Let \[\delta_1\coloneq \delta_{N_P}^{\frac{1}{2}}\cdot\delta_{N_P\cap w^{-1}Qw}^{-{\frac{1}{2}}},\, \delta_2\coloneq\delta_{N_Q}^{\frac{1}{2}}\cdot\delta_{N_Q\cap wPw^{-1}}^{-{\frac{1}{2}}}\] be characters of $L_P'$ respectively $L_Q'$. Finally, let $\mathrm{Ad}(w)\colon \Rep(L_P')\ra \Rep(L_Q')$ be the pullback by conjugation by $w$ and set $\delta\coloneq \delta_1w^{-1}(\delta_2)$.
Order the $\bQ$-orbits of $\bP\bs \bG$ as  $\mathfrak{w}_1,\ldots, \mathfrak{w}_l$ such that $\overline{{\mathfrak{w}_i}}=\bigcup_{i\le j}{\mathfrak{w}_j}.$

 Define for $\mathfrak{w}\in \bP\bs \bG/\bQ$ the functor \[F(\mathfrak{w})\coloneq \id_{M_P'N_P'}^{M_Q}\circ \mathrm{Ad}(w)\circ \delta\circ r_{N_Q'}\colon \Rep(M)\ra\Rep(N).\]
Then the following holds.
\begin{lemma}[\cite{Ber} Lemma 2.11]
The functor \[F\coloneq r_Q\circ \id_P^G\colon \Rep(M)\ra\Rep(N)\] has a filtration $0=F_0\subseteq F_1\subseteq F_2\subseteq\ldots\subseteq F_l=F$ with subquotients $F_{i-1}\bs F_i\cong F(\mathfrak{w}_i)$. 
\end{lemma}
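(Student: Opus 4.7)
The plan is the standard geometric (Mackey-style) filtration argument. First I would enumerate the double cosets by representatives $w_i$ of $\mathfrak{w}_i$ and note that the ordering hypothesis makes
\[Y_i\coloneq \bigsqcup_{j\ge i}Pw_j Q\]
a chain of \emph{open} subsets of $G$, with $Y_1=G$ and $Y_{l+1}=\emptyset$. Let $F_i\subseteq \id_P^G(V)$ denote the subspace of sections whose support in $P\bs G$ is contained in the image of $Y_i$. Each $F_i$ is right-$Q$-stable, so we obtain a filtration of $\id_P^G(V)$ by sub-$Q$-representations, and by exactness of $r_Q$ it descends to the desired filtration of $r_Q\circ \id_P^G(V)$.

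The core of the argument is then to identify the subquotient $F_i/F_{i+1}$. Sections supported on the single orbit $Pw_i Q$, which is open in $Y_i$, can be described as the space of compactly supported sections over $P\bs Pw_i Q$. Via the change-of-variables $q\mapsto w_i q$ and a standard Mackey computation, this piece is $Q$-equivariantly isomorphic to a compactly induced representation of the form $\#{-}\id_{w_i^{-1}Pw_i\cap Q}^{Q}$ of the representation obtained by pulling $V$ back along $\mathrm{Ad}(w_i)$, twisted by a ratio of modular characters accounting for the normalization $\delta_P^{1/2}$ built into $\id_P^G$. Since $P$ is parabolic, $w_i^{-1}Pw_i\cap Q$ has Levi decomposition $L_Q'N_Q'$ in the notation of the statement, and applying $r_Q=\delta_{N_Q}^{-1/2}\otimes(-)_{N_Q}$, together with the fact that $N_Q$-coinvariants of $\#{-}\id_{L_Q'N_Q'}^{Q}$ compute $\id_{L_Q'N_P'}^{L_Q}\circ r_{N_Q'}$ (by pushing the coinvariants through the unipotent part), produces precisely the functor $F(\mathfrak{w}_i)$, up to a character twist.

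The main technical obstacle is the modular-character bookkeeping: one must carefully redistribute powers of $\delta_{N_P}^{1/2}$ and $\delta_{N_Q}^{1/2}$ between the non-normalized and normalized versions of induction and of the Jacquet functor, and verify that the twist produced by the Mackey computation on $L_P'$ agrees with
\[\delta_{N_P}^{1/2}\cdot\delta_{N_P\cap w^{-1}Qw}^{-1/2}\cdot w^{-1}\!\bigl(\delta_{N_Q}^{1/2}\cdot\delta_{N_Q\cap wPw^{-1}}^{-1/2}\bigr)=\delta_1\,w^{-1}(\delta_2),\]
as defined in the statement. This is a routine but unenlightening calculation that reduces to comparing Jacobians of conjugation on the relevant unipotent subgroups. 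Once it is checked for one orbit, the filtration assembles formally from the chain of opens $Y_i$.
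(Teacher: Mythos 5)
This lemma is not proved in the paper at all: it is the classical Bernstein--Zelevinsky Geometric Lemma, quoted with a citation to \cite{Ber}, so there is no in-paper argument to compare yours against. Your sketch is the standard proof and is correct in substance: filter $\id_P^G(V)$ by support over a $Q$-stable chain of locally closed subsets of $P\backslash G$, identify each graded piece with the compactly supported sections on a single double coset, rewrite that piece by Mackey theory as a compact induction from $w^{-1}Pw\cap Q$, and push the $N_Q$-coinvariants through to obtain $\id_{L_Q'N_P'}^{L_Q}\circ\mathrm{Ad}(w)\circ\delta\circ r_{N_Q'}$, with $\delta=\delta_1\,w^{-1}(\delta_2)$ absorbing all normalizations. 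One piece of bookkeeping should be repaired, however. With the ordering fixed in the statement, $\overline{\mathfrak{w}_i}=\bigcup_{j\ge i}\mathfrak{w}_j$, so your sets $Y_i=\bigsqcup_{j\ge i}Pw_jQ$ are \emph{closed} in $G$, not open, and sections supported on a closed subset do not form a subrepresentation of a compactly induced module (extension by zero works for open subsets; restriction to closed subsets gives quotients). The increasing filtration must therefore take $F_i$ to be the sections supported on the open sets $\bigcup_{j\le i}\mathfrak{w}_j$, whose complements are the closed sets $\overline{\mathfrak{w}_{i+1}}$; then $F_i/F_{i-1}$ is the space of sections on $\mathfrak{w}_i$, which is closed in $\bigcup_{j\le i}\mathfrak{w}_j$. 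Relatedly, if $Pw_iQ$ were open in $Y_i$ as you assert, its sections would appear as a subobject rather than as the quotient your computation requires. These are convention slips rather than genuine gaps; once the open/closed roles are straightened out, the remainder of your outline (the Mackey identification and the modular-character comparison) is exactly the standard argument.
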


We let $\irr(G)$ be the set of isomorphism classes of irreducible representations in $\Rep(G)$ and $K(G)$ be the freely generated group by $\irr(G)$. If $\pi\in \Rep(G)$ is of finite length we denote by $\soc(\pi)$ the maximal, semi-simple subrepresentation of $\pi$, and by $\cos(\pi)$ the maximal, semi-simple quotient of $\pi$. The representation $\pi$ is called cosocle- respectively socle-irreducible, if $\cos(\pi)$ respectively $\soc(\pi)$ is irreducible.

A cone $\curC$ in a $\QQ$-vector space $V$ is a subset which is stable under addition and multiplication under $\QQ_{\ge 0}$. A cone $\curC$ is called strictly convex if $\curC\cap -\curC$ contains no line and the dual cone is defined as $\curC^\lor\coloneq \{v^\lor\in V^\lor:v^\lor(\curC)\ge 0\}$. The faces of $\curC$ are given by the vanishing sets of $v^\lor\in\curC^\lor$ in $\curC$ and the interior $\curC^\circ$ of $\curC$ is given by the complement of all faces. Finally, the dimension of $\curC$ is the dimension of its linear span.

A partition $\alpha=(\alpha_1,\ldots,\alpha_k)$ of $\lvert\alpha\lvert\coloneq m\in \NN$ refers to a tuple of natural numbers which sum to $m$ and if $\alpha_1\ge \ldots\ge \alpha_k$, we called it ordered.
\subsection{The general linear group}\label{S:repgl}
Let $\DD$ be a central division algebra over $\Ff$ of degree $d$ and consider the group $\Gl_n'$ of invertible $n\times n$ matrices over $\DD$. We fix a minimal parabolic subgroup $\bB_n'$ consisting of upper triangular matrices. We will sometimes write $\irr_n\coloneq \irr(\gl_n')$ and $\irr=\bigcup_{n\in\NN}\irr_n$. Similarly we define $\Rep_n$ and $\Rep$. For $\pi\in \Rep_n$ we write $\deg(\pi)=n.$

In the following discussion we will omit the marking "$'$" if $\DD=\Ff$.
To a partition $(\alpha_1,\ldots,\alpha_k)$ of $n$ we associate the parabolic subgroup $\bP_\alpha'$ of $\Gl_n'$ containing $\bB_n'$, whose Levi-component consists of the block-diagonal matrices of the form $\Gl_{\alpha_1}'\times\ldots\times \Gl_{\alpha_k}'$. Writing $\overline{\alpha}\coloneq (\alpha_k,\ldots,\alpha_1)$, we recall that $\overline{\bP_\alpha'}$ is conjugated to $\bP_{\overline{\alpha}}$.

Parabolic induction and the Jacquet functor with respect to $\bP_\alpha'$ are then denoted as usual by
\[\pi_1\times\ldots\times\pi_k\coloneq \id_{P_\alpha'}^{\gl_n'}(\pi_1\otimes\ldots\otimes \pi_k)\text{ and } r_\alpha\coloneq r_{P_\alpha'}.\]
The product \[\times\colon \Rep_n\times \Rep_m'\ra \Rep_{n+m}\] is not commutative, however since parabolic induction is exact it induces a product on \[K(\gl_n')\times K(\gl_m')\ra K(\gl_{n+m}'),\] which turns out to be commutative.
In particular if $\pi\times\pi'$ is irreducible, $\pi\times\pi'\cong \pi'\times \pi$. For $k\in \NN$, we write \[\pi^{\times k}\coloneq \overbrace{\pi\times\ldots\times\pi}^{ k}.\]
Finally, we recall the cuspidal support of $\pi$, \emph{i.e.} the well-defined, formal sum $\rho_1+\ldots+\rho_k$, where $\rho_1\otimes\ldots\otimes \rho_k$ is a cuspidal representation such that $\pi\hra\rho_1\times\ldots\times\rho_k$.
For $\pi\in \Rep_n$ we denote $\pi\cc$ the representation obtained by twisting $g\mapsto {}^tg^{-1}$.
\begin{lemma}
    If $\pi$ is irreducible, $\pi\cc\cong\pi^\lor$. Moreover, for $\pi_1,\pi_2\in \Rep(\gl_{n_1}\times\gl_{n_2})$, \[(\pi_1\times\pi_2)\cc=\id_{\overline{P_{n_1,n_2}}}^{\gl_{n_1+n_2}}(\pi_1\cc\otimes\pi_2\cc).\]
\end{lemma}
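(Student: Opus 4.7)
The plan is to invoke Gelfand--Kazhdan for the first assertion and perform a direct change-of-variables computation for the second.

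For the first part, the involution $\theta\colon g\mapsto {}^tg^{-1}$ is the standard outer automorphism of $\gl_n$, and the Gelfand--Kazhdan theorem asserts that for any irreducible smooth representation $\pi$ of $\gl_n$ the pullback $\pi\circ\theta$ is isomorphic to the contragredient $\pi^\lor$. Since $\pi\cc$ is by definition $\pi\circ\theta$, this gives $\pi\cc\cong\pi^\lor$.

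For the second part, I would realize $\pi_1\times\pi_2=\id_{P_{n_1,n_2}}^{\gl_{n_1+n_2}}(\pi_1\otimes\pi_2)$ in the standard function model, i.e.\ as functions $f\colon\gl_{n_1+n_2}\to V_{\pi_1}\otimes V_{\pi_2}$ with $f(pg)=\delta_{P_{n_1,n_2}}^{1/2}(p)(\pi_1\otimes\pi_2)(p)f(g)$, and check how this model transforms under $\theta$. Pulling back the $G$-action by $\theta$ and simultaneously substituting $\phi=f\circ\theta$ converts the equivariance condition into $\phi(\bar q h)=\delta_{P_{n_1,n_2}}^{1/2}(\theta(\bar q))(\pi_1\otimes\pi_2)(\theta(\bar q))\phi(h)$ for $\bar q\in\theta(P_{n_1,n_2})=\overline{P_{n_1,n_2}}$. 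Two observations then finish the identification with $\id_{\overline{P_{n_1,n_2}}}^{\gl_{n_1+n_2}}(\pi_1\cc\otimes\pi_2\cc)$: first, on the Levi $\gl_{n_1}\times\gl_{n_2}$ the map $\theta$ acts factor-wise as $(g_1,g_2)\mapsto({}^tg_1^{-1},{}^tg_2^{-1})$, so $(\pi_1\otimes\pi_2)\circ\theta=\pi_1\cc\otimes\pi_2\cc$; second, $\delta_{P_{n_1,n_2}}\circ\theta=\delta_{\overline{P_{n_1,n_2}}}$ on $\overline{P_{n_1,n_2}}$.

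The final modular-character identity is the only point meriting a line of verification, but it is not an obstacle: the modular character is intrinsically determined by the adjoint action on the unipotent radical, and $\theta$ carries the unipotent radical of $\overline{P_{n_1,n_2}}$ isomorphically onto that of $P_{n_1,n_2}$; alternatively one computes directly that $\delta_{P_{n_1,n_2}}(g_1,g_2)=\nu_{n_1}(g_1)^{n_2}\nu_{n_2}(g_2)^{-n_1}$ and observes that composition with $\theta$ inverts both exponents. Apart from this bookkeeping, the argument is entirely formal, and no genuine difficulty arises.
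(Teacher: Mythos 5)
Your argument is correct and complete: the first assertion is exactly the Gelfand--Kazhdan theorem, and your change-of-variables computation for the induced representation (using that $\theta$ carries $P_{n_1,n_2}$ onto $\overline{P_{n_1,n_2}}$, acts factor-wise on the Levi, and satisfies $\delta_{P_{n_1,n_2}}\circ\theta=\delta_{\overline{P_{n_1,n_2}}}$) is the standard verification. The paper states this lemma without proof, treating it as a known fact, so there is nothing to compare against; your write-up supplies precisely the routine argument that is being omitted.
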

We recall the following properties of induced representations, see \cite[Theorem 1.9, Theorem 4.1, Proposition 4.6 and Theorem 6.1]{Zel}.
Let $\xi\colon\gl_1\ra \C^\times$ be a smooth character. We denote by $\xi_n\colon\gl_n'\ra\C^\times$ the character $\xi(\det_n')$, where $\det_n'$ denotes the reduced norm of $\Gl_n'$. If $n=1$, we often write $\xi=\xi_1$. For $\pi\in \Rep_n$ we write $\chi\pi\coloneq\chi_n\otimes\pi\in\Rep_n$ and we denote the trivial representation in $\Rep_{\gl_n'}$ by $\bon_n$.
For a partition $\alpha=(\alpha_1,\ldots,\alpha_k)$, 
\[\delta_{P_\alpha}=\abs{\alpha_1}{q(\alpha)_1}\otimes\ldots\otimes \abs{\alpha_k}{q(\alpha)_k}, q(\alpha)_i\coloneq n-\sum_{j=1}^{i-1}2\alpha_j-\alpha_{i}\]
\subsubsection{Multisegments}
Let $a,b\in \ZZ$, $a\le b$, $\rho\in \cus$. To this datum we associate the segment $[a,b]_\rho\coloneq ([\rho \nr^a],\ldots,[\rho\nr^b])$. Note that $[a,b]_\rho=[c,d]_{\rho'}$ if and only if $\rho\nr^a\cong \rho'\nu_{\rho'}^c$ and $d-c=b-a$.
A segment $\De_1=[a,b]_\rho$ precedes a segment $\De_2=[c,d]_{\rho'}$, denoted by
$\De_1\prec\De_2$, if $\rho'\cong \rho\nr^x,\,x\in \ZZ$ and
\[a+1\le c+x\le b+1\le d+x.\]
We call $\De_1$ and $\De_2$ unlinked if neither $\De_1\prec\De_2$ nor $\De_2\prec\De_1$.
We set
\[l([a,b]_\rho)\coloneq b-a+1,\, \deg([a,b]_\rho))\coloneq l([a,b]_\rho)\deg(\rho).\]
The set of multisegments
is defined as $\Ms\coloneq \NN(\Se)$, the set of formal finite sums of segments. We extend the functions $l$ and $\deg$ linearly to $\Ms$.
For a multisegment $\De_1+\ldots+\De_k$, we say $(\De_1,\ldots,\De_k)$ is an arranged form if there exists no $i<j$ such that $\De_i\prec\De_j$.
\begin{theorem}[\cite{Zel}]\label{T:zel}
    There exists a bijection
    \[\Z\colon \Ms\ra\Irr\] satisfying the following properties.
    \begin{enumerate}
        \item $\Z([a,b]_\rho)=\soc(\rho\nr^a\times\ldots\times\rho\nr^b)$.
        \item For $m,n\in\NN,\,m+n=\deg([a,b]_\rho)$,
        \[r_{m,n}(\Z([a,b]_\rho))=0\] unless $\deg(\rho)\lvert m$, in which case, \[r_{m,n}(\Z([a,b]_\rho))=\Z([a,a+\frac{m}{\deg(\rho)}-1]_\rho)\otimes \Z([a+\frac{m}{\deg(\rho)},b]_\rho).\]
        \item If $(\De_1,\ldots,\De_k)$ is an arranged form of $\fm\in\Ms$, then
        \[\Z(\fm)=\soc(\Z(\De_1)\times\ldots\times \Z(\De_k))=\cosoc(\Z(\De_k)\times\ldots\times\Z(\De_1))\]
        and appears with multiplicity $1$ in $\Z(\De_1)\times\ldots\times \Z(\De_k)$.
        \item For $\fm\in\Ms$, $\deg(\Z(\fm))=\deg(\fm)$.
        \item If $\fm=\De_1+\ldots+\De_k$ is a multisegment such that the $\De_i$ are pairwise unlinked, then 
        \[\Z(\fm)=\Z(\De_1)\times\ldots\times \Z(\De_k).\]
        \item If $\fm'$ is a second multisegment, $\Z(\fm+\fm')$ appears in $\Z(\fm)\times \Z(\fm')$ with multiplicity $1$.
    \end{enumerate}
\end{theorem}
\subsubsection{Aubert-Zelevinsky involution}
We recall the involution 
\[(-)^*\colon \Ms\ra \Ms\] of \cite{Zel}, \cite{MoeglinetJ1986}, see also \cite[Theorem 1.7]{Aub}.
As usual, we write $\L(\fm)\coloneq \Z(\fm^*)$.
\begin{theorem}
    There exists an involution $(-)^*\colon \Ms\ra \Ms$ with the following properties.
    \begin{enumerate}
        \item For $\fm\in \Ms$, $\deg(\fm)=\deg(\fm^*)$.
        \item For $\rho\in \cus$, $[0,0]_\rho^*=[0,0]_\rho$.
        \item If $\fm_1,\fm_2,\fm_3\in \Ms$ with either $\fm_2$ or $\fm_3$ consisting of a segment of length $1$ and $\Z(\fm_1)\hra \Z(\fm_2)\times \Z(\fm_3)$, then
        \[\L(\fm_1)\hra \L(\fm_3)\times \L(\fm_2).\]
        \item In particular, the cuspidal support of $\Z(\fm)$ and $\L(\fm)$ is the same.
        \item For $[a,b]_\rho\in \Se$,
        \[[a,b]_\rho^*=[a,a]_\rho+\ldots+[b,b]_\rho.\]
        
    \end{enumerate}
\end{theorem}
\subsection{$\De$-derivatives for general linear groups}\label{S:deltader}
Following \cite[§7]{LapMin25}, we define for $\pi\in \Irr$ and $\De=[0,b]_\rho$ a segment, the two representations
$\Dde(\pi),\De(\pi)\in \Irr$ as the irreducible representation such that 
\begin{enumerate}
    \item $\pi\hra \De(\pi)\times \Dde(\pi)$.
    \item $\De(\pi)\cong \Z([a_1,b]_\rho)\times\ldots\times\Z([a_k,b]_\rho)$ with $\ain{a_i}{0}{b}$ for all $\ain{i}{1}{k}$.
    \item There exists no $\pi'\in \Irr_{m}$ for suitable $m$ and $\ain{a}{0}{b}$ such that \[r_{(b-a+1)\deg(\rho),m}(\Dde(\pi))\] contains $ \Z([a,b]_\rho)\otimes \pi'$ as an irreducible subquotient.
\end{enumerate}
We also allow here for the possibility that $\De(\pi)$ is the $0$-representation, in which case we call $\pi$ $\De$-reduced.
In the above situation, we write $l_\De(\pi)=k$.
\begin{lemma}[{\cite[Lemma 7.2]{LapMin25}}]\label{L:derL}
    The above properties define $\Dde(\pi),\De(\pi)\in \Irr$ up to isomorphism uniquely. Moreover, the map
    \[\Dde\times \De(-)\colon\Irr\ra\Irr\times \Irr\cup\{0\}\] is injective.
\end{lemma}
\begin{lemma}[{\cite[Proposition 4.1]{LapMin16}}]
    Let $b\in \NN,\,\rho\in \cus,\, \fm\in \Ms$. Then there exists $ a \in \NN,a\le b$ such that $\Z(\fm)$ is not $[a,b]_\rho$-reduced if and only if there exists a segment in $\fm^*$ of the form $[c,b]_\rho$. 
    If there exists such $a$, let it be minimal. Then the number of segments in $\fm^*$ of the form $[c,b]_\rho$ equals $l_{[a,b]_\rho}(\Z(\fm))$.
\end{lemma}

Let us finally state an explicit version for the Geometric Lemma in the case of the general linear group.
Let $\ain{r,t}{1}{n}$ and let \[\mathfrak{A}(t,r)=\{(k_1,k_2)\in\NN^2: k_1+k_2=t, \, 0\le r-k_1\le n-t\}.\]
For $(k_1,k_2)\in\mathfrak{A}(t,r)$ one can find $w_{k_1,k_2}\in \Gl'_n$ such that conjugation by $w_{k_1,k_2}$ acts on the Levi-subgroup of $\bP_{k_1,k_2,r-k_1,n-t-r+k_1}$ by acting trivially on $\Gl'_{k_1}$ and $\Gl'_{n-t-r+k_1}$ and swapping $\Gl'_{k_2}$ and $\Gl'_{r-k_1}$.
\begin{lemma}[Geometric Lemma for $\Gl'_n$]
    Let $\ain{r,t}{1}{n}$, $\pi_1\otimes\pi_2\in\Rep(\gl'_t\times\gl'_{n-t})$. Then $r_{r,n-r}(\pi_1\times\pi_2)$ has a filtration whose subquotients are indexed by $(k_1,k_2)\in\mathfrak{A}(t,r)$ and which are of the following form:
    \[\Pi_{k_1,k_2}=\id_{P_{k_1,r-k_1,k_2,n-t-r+k_1}}^{\gl'_r\times\gl'_{n-r}}(\mathrm{Ad}(w_{k_1,k_2})(r_{k_1,k_2}(\pi_1)\otimes r_{r-k_1,n-t-r+k_1}(\pi_2))).\]
\end{lemma}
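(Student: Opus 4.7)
The plan is to apply the general Geometric Lemma (stated as the Bernstein lemma above) to the particular case $\bG=\Gl'_n$, $\bP=\bP_{t,n-t}$, and $\bQ=\bP_{r,n-r}$. The entire content of the statement is then a parametrization of the double cosets $\bP \bs \bG / \bQ$ together with an explicit identification of the intersections $\bL_\bP', \bL_\bQ', \bN_\bP', \bN_\bQ'$ appearing in Bernstein's filtration, and a verification that the twist $\delta$ in Bernstein's formula is trivial in this setup.

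First, I would parametrize the double cosets $\bP_{t,n-t}\bs \Gl'_n / \bP_{r,n-r}$. Since $\bP_{r,n-r}\bs \Gl'_n$ is the Grassmannian of $r$-dimensional subspaces of $\DD^n$, the $\bP_{t,n-t}$-orbits on it are indexed by the dimension $k_1$ of the intersection of such a subspace with the standard $t$-dimensional coordinate subspace. Setting $k_2=t-k_1$ and observing that the remaining $r-k_1$ dimensions must embed into the complementary $(n-t)$-dimensional subspace, one recovers exactly the conditions $k_1+k_2=t$ and $0\le r-k_1\le n-t$, i.e.\ the set $\mathfrak{A}(t,r)$. For each such pair I would fix the representative $w_{k_1,k_2}$ described in the excerpt—the permutation matrix that, acting by conjugation on the standard Levi of $\bP_{k_1,k_2,r-k_1,n-t-r+k_1}$, fixes the outer two blocks and swaps the middle two.

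Next, for each such $w_{k_1,k_2}$ I would compute the four subgroups entering Bernstein's formula. A direct check shows that $\bL_{\bP}'=\bL_{\bP_{t,n-t}}\cap w_{k_1,k_2}^{-1}\bL_{\bP_{r,n-r}}w_{k_1,k_2}$ is the Levi of $\bP_{k_1,k_2,r-k_1,n-t-r+k_1}$; moreover, $\bN_{\bQ}'$ (the $N$-part inside $\bL_\bP$) is exactly the unipotent radical of $\bP_{k_1,k_2}\times \bP_{r-k_1,n-t-r+k_1}$ inside $\bL_{\bP_{t,n-t}}$, so that $r_{\bN_{\bQ}'}$ applied to $\pi_1\otimes\pi_2$ gives $r_{k_1,k_2}(\pi_1)\otimes r_{r-k_1,n-t-r+k_1}(\pi_2)$. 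Conjugation by $w_{k_1,k_2}$ then transports this into $\bL_\bQ'$, after which one parabolically induces up to $\bL_\bQ=\Gl'_r\times\Gl'_{n-r}$ along the image parabolic, which is precisely $P_{k_1,r-k_1,k_2,n-t-r+k_1}$. This is how the displayed formula for $\Pi_{k_1,k_2}$ arises.

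The main (and genuinely only) subtle step is to verify that the auxiliary character $\delta=\delta_1\cdot w^{-1}(\delta_2)$ is trivial, so that no extra twist is needed. This is a standard modular character bookkeeping: using the explicit formula for $\delta_{P_\alpha}$ recalled in Section~\ref{S:repgl} and the fact that all parabolics involved are standard parabolics of products of general linear groups, one sees that the square-root contributions $\delta_{N_P}^{1/2}\delta_{N_P\cap w^{-1}Qw}^{-1/2}$ and $\delta_{N_Q}^{1/2}\delta_{N_Q\cap wPw^{-1}}^{-1/2}$ match up in pairs on each of the four block factors $\Gl'_{k_1},\Gl'_{k_2},\Gl'_{r-k_1},\Gl'_{n-t-r+k_1}$. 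This is exactly the compatibility which makes normalized Jacquet and normalized parabolic induction interact cleanly on $\Gl_n'$, and it is the only step that requires any real care; the ordering of the subquotients coming from the closure order on $\bP\bs \bG/\bQ$ can be made explicit by ordering $\mathfrak{A}(t,r)$ by $k_1$, but the statement as formulated is insensitive to this ordering.
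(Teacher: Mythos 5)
Your proposal is correct and is exactly the route the paper intends: the lemma is stated as a direct specialization of the recalled Bernstein filtration of $r_Q\circ\id_P^G$ to $\bP=\bP_{t,n-t}$, $\bQ=\bP_{r,n-r}$, with the double cosets parametrized by $\mathfrak{A}(t,r)$ (equivalently, $2\times 2$ matrices of nonnegative integers with row sums $t,n-t$ and column sums $r,n-r$) and the character $\delta$ checked to be trivial. The paper gives no proof, and your write-up correctly supplies the standard details it leaves implicit.
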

\subsection{The symplectic group}\label{S:symplecticgroup}
Whenever we deal with symplectic groups or their metapletic covers, we implicitly assume that $\mathrm{char}\,\Ff\neq 2$.
We equip $\AA^{2n}$ with the standard symplectic form \[\begin{pmatrix}
    0&1_n\\-1_n&0
\end{pmatrix}\] and consider its symmetry group, the symplectic group $\Sp_{n}$. For $\ain{i}{1}{n}$ we let $\AA^i_\pm$ be the subspace of $\AA^{2n}$ given by the first respectively last $i$-coordinates. For $\alpha=(\alpha_1,\ldots,\alpha_k)$ a partition of $m\le n$, we denote the parabolic subgroup given by the stabilizer of the flag
\[\{0\subseteq\AA_+^{\alpha_1}\subseteq \ldots\subseteq \AA_+^{m}\subseteq \AA^{2n}\}\]by $\bP_\alpha$. The Levi-component of $\bP_\alpha$ is isomorphic to \[\Gl_{\alpha_1}\times\ldots\times\Gl_{\alpha_k}\times \Sp_{n-m}.\]
Recall that $\overline{\bP_\alpha}$ is conjugated to $\bP_\alpha$ by an element which acts on $\Gl_{\alpha_i}$, $\ain{i}{1}{k}$ by $g\mapsto {}^tg^{-1}$. Moreover, we have that\[\delta_{P_\alpha}=\nu_{\alpha_1} ^{p(\alpha)_1}\otimes\ldots\otimes \nu_{\alpha_t} ^{p(\alpha)_t},\, p(\alpha)_i\coloneq n- (2\sum_{j=1}^{i-1}\alpha_j)-\alpha_i+1.\]

We then write for \[\pi_1\otimes\ldots\otimes \pi_k\otimes \tau\in \Rep_{\alpha_1}\times \ldots \times \Rep_{\alpha_k}\times \Rep(\sp_{m-n})\] the parabolically induced representation as
\[\pi_1\times\ldots\times \pi_k\rtimes\tau\coloneq \id_{P_\alpha}^{\sp_n}(\pi_1\otimes\ldots\otimes \pi_k\otimes \tau).\]
 Recall the M{\oe}glin-Vign\'eras-Waldspurger involution, \emph{cf.} \cite[p.91]{MVW}, \[\mathrm{MVW}\colon \Rep(\sp_n)\rightarrow \Rep( \sp_n)\] which is covariant, exact and satisfies
\begin{enumerate}
    \item $\mathrm{MVW}\circ\mathrm{MVW}=\mathrm{id}$,
    \item $\pi\mvw\cong \pi^\lor$ if $\pi$ is irreducible,
    \item For $\ain{m}{1}{n}$, $\alpha=(\alpha_1,\ldots,\alpha_k)$ a partition of $m$, $\pi_i\in \irr_{\alpha_i}$, $\tau\in\irr(\sp_{n-m})$,\[(\pi_1\times\ldots\times \pi_k\rtimes \tau)\mvw\cong\pi_1\times\ldots\times \pi_k\rtimes\tau\mvw.\]
\end{enumerate}
Let $\ain{t,r}{1}{n}$ and
set \[\mathfrak{B}(t,r)\coloneq\{(k_1,k_2,k_3)\in \NN^3:\, k_1+k_2+k_3=t,\, k_1+k_3\le r,\, r+k_2\le n\}.\]
 To $(k_1,k_2,k_3)\in \mathfrak{B}(t,r)$ we can associate a certain element $w_{k_1,k_2,k_3}\in \Gl_{2n}$ with the following properties. Conjugating the Levi-subgroup of $\bP_{k_1,k_2,k_3,r-k_1-k_2}$ by $w_{k_1,k_2,k_3}$ exchanges the factors of $\Gl_{k_2}$ and $\Gl_{r-k_1-k_2}$, but leaves them otherwise unchanged, and acts on $\Gl_{k_1}$ and $\Sp_{n-r-k_3}$ trivially. Finally, $w_{k_1,k_2,k_3}$ preserves $\Gl_{k_3}$. If $\pi_3$ is an irreducible representation of $\Gl_{k_3}$, twisting the action by conjugation by $w_{k_1,k_2,k_3}$ yields the representation $\pi_3^\lor$.
 
\begin{lemma}[{Geometric Lemma for $\sp_n$, \cite{Ber}, \cite[Lemma 5.1]{TADIC19951}}]\label{L:GL}
Let $\ain{t,r}{1}{n}$ and $\pi\otimes\tau\in\Rep(\gl_t\times\sp_{n-t})$.
Then $r_{P_r}(\pi\rtimes\tau)$ admits a filtration whose subquotients are indexed by $(k_1,k_2,k_3)\in \mathfrak{B}(t,r)$ and are of the following form: \[\Pi_{k_1,k_2,k_3}=\id_{P_{k_1,r-k_1,k_2}\times P_{k_2}}^{\gl_r\times \sp_{n-r}}(\mathrm{Ad}(w_{k_1,k_2,k_3})(r_{k_1,k_2,k_3}(\pi)\otimes r_{P_{r-k_1-k_3}}(\tau))).\]
\end{lemma}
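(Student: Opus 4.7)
The plan is to apply the general Bernstein--Zelevinsky Geometric Lemma (already recalled in the excerpt) to the pair of parabolic subgroups $\bP_t$ and $\bP_r$ of $\Sp_n$. This reduces the statement to three essentially combinatorial tasks: classifying the double cosets $\bP_t\backslash \Sp_n/\bP_r$, computing the subgroups $\bL_{\bP_t}'$, $\bL_{\bP_r}'$, $\bN_{\bP_t}'$, $\bN_{\bP_r}'$ for each representative, and verifying that the modular twist $\delta=\delta_1 w^{-1}(\delta_2)$ disappears into the normalized notation.

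First, I would describe the double coset space. Since $\bP_t$ is the stabilizer of the isotropic subspace $\AA_+^t$ and $\bP_r$ the stabilizer of $\AA_+^r$, the set $\bP_t\backslash \Sp_n/\bP_r$ parametrizes the $\Sp_n$-orbits on pairs of isotropic subspaces of dimensions $t$ and $r$. Such an orbit is determined by two invariants: the dimension $k_1$ of the intersection of the two subspaces, and the dimension $k_1+k_2$ of the part of the $t$-subspace contained in the orthogonal of the $r$-subspace. Setting $k_3\coloneq t-k_1-k_2$ and translating the symplectic constraints (isotropy and ambient dimension) gives precisely the set $\mathfrak{B}(t,r)$. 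For each $(k_1,k_2,k_3)\in\mathfrak{B}(t,r)$ I would fix as representative the element $w_{k_1,k_2,k_3}$ already introduced before the statement, built from an appropriate block permutation matrix.

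Next, for $w=w_{k_1,k_2,k_3}$ I would carry out the block-matrix computations intersecting $\bL_{\bP_t}=\Gl_t\times\Sp_{n-t}$ with $w^{-1}\bL_{\bP_r}w$, and symmetrically on the other side. A direct check shows that the resulting parabolic $\bL_{\bP_t}'\cdot (\bL_{\bP_t}\cap w^{-1}\bN_{\bP_r}w)$ is the product of $\bP_{k_1,k_2,k_3}$ inside $\Gl_t$ with $\bP_{r-k_1-k_3}$ inside $\Sp_{n-t}$, while on the target side $\bL_{\bP_r}'\cdot \bN_{\bP_r}'$ becomes $\bP_{k_1,r-k_1,k_2}$ in $\Gl_r$ together with $\bP_{k_2}$ in $\Sp_{n-r}$. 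The conjugation $\mathrm{Ad}(w)$ then permutes the $\Gl_{k_2}$ and $\Gl_{r-k_1-k_2}$ blocks, fixes $\Gl_{k_1}$ and the residual $\Sp$-factor, and on the $\Gl_{k_3}$ block acts as $g\mapsto{}^tg^{-1}$ (because this block of the $t$-subspace is mapped into the ``negative'' coordinates), hence sends an irreducible $\pi_3$ to $\pi_3^\lor$ as claimed.

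Finally, putting these pieces together in the formula for $F(\mathfrak{w})$, the normalized modular character $\delta=\delta_1 w^{-1}(\delta_2)$ is absorbed into the normalized induction and Jacquet functors, leaving exactly the functor $\Pi_{k_1,k_2,k_3}$ in the statement. The filtration on $r_{P_r}\circ \id_{P_t}^{\sp_n}$ is then the one provided by the general lemma for any ordering of $\mathfrak{B}(t,r)$ refining the closure order on double cosets. I expect the main obstacle to be the clean bookkeeping of the block-matrix intersections and in particular verifying that the twist by $w_{k_1,k_2,k_3}$ indeed produces the $\Gl_{k_3}$-duality with the correct half-modular normalization; once this is done, everything else is a direct transcription of \cite{Ber} Lemma 2.11 to this setting.
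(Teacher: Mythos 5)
Your proposal is correct and follows exactly the route the paper intends: the statement is given without proof, by citation to the general Bernstein--Zelevinsky lemma (recalled in the paper as \cite[Lemma 2.11]{Ber}) and to \cite[Lemma 5.1]{TADIC19951}, and your argument is precisely the specialization carried out there --- parametrizing $\bP_t\backslash\Sp_n/\bP_r$ by the Witt invariants $\dim(U\cap V)$ and $\dim(U\cap V^\perp)$ of pairs of isotropic subspaces, matching these to $\mathfrak{B}(t,r)$, and computing the intersected Levis and the twist by $w_{k_1,k_2,k_3}$ (including the dualization on the $\Gl_{k_3}$-block). The only caveat is that you transcribe the paper's indexing of the target parabolic verbatim, which appears to contain a typo (the partition $(k_1,r-k_1,k_2)$ does not sum to $r$; the correct Levi of $\gl_r$ is $\Gl_{k_1}\times\Gl_{r-k_1-k_3}\times\Gl_{k_3}$), but this does not affect the validity of your method.
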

\subsection{Metaplectic covers}
As is common in the study of phenomena related to the theta correspondence, one is often forced to consider certain double covers of general and symplectic groups which depend on the choice of an additive character $\psi\colon\Ff\ra\C^\times$
\subsubsection{The group $\Glt_n$}\label{S:dergl}\label{S:2gl}
We set $\Glt_n$ to the twofold cover of $\gl_n$ which is 
\[\gl_n\times \mu_2\]
with multiplication \[(g,\epsilon)\cdot (g',\epsilon')=(gg',\epsilon\epsilon'\cdot(\det(g),\det(g'))_\Ff),\]
where $(-,-)_\Ff$ is the Hilbert symbol of $\Ff$.
For $\alpha$ a partition of $n$, we let $\tp_\alpha$ be the preimage of the parabolic subgroup $P_\alpha$ of $\gl_n$. We write the Levi-decomposition of $\tp_\alpha$ as $\widetilde{L}_\alpha\widetilde{N}_\alpha$, where $\widetilde{L}_\alpha=\Glt_{\alpha_1}\times^{\mu_2}\ldots\times^{\mu_2}\Glt_{\alpha_k}$ and $\widetilde{N}_\alpha=N_\alpha$, since the covering maps splits over unipotent subgroups. 

There exists a bijection \[(-)^\psi\colon \Rep_n\ra\Rep(\Glt_n)^{gen},\] which depends on our chosen additive character $\psi$ of $\Ff$ and is constructed as follows. Here we use the notation $\Rep(\Glt_n)^{gen}$ for the set of \emph{genuine} representations of $\Glt_n$, \emph{i.e.}, the ones where $\mu_2$ does not act trivially. It is clear that the representations on whom $\mu_2$ acts trivially are obtained by pullback from $\gl_n$. 
First lift $\det_n\colon \gl_n\rightarrow \gl_1$ to \[\widetilde{\det_n}\colon \Glt_n\ra\Glt_1\] by $(g,\epsilon)\mapsto (\det(g),\epsilon)$.
Let $\gamma_\Ff(\psi)\in \mu_8$ be the Weil index of $\psi$.
For $a\in \Ff$, let $\psi_a\coloneq \psi(a\,\cdot)$ and set
\[\gamma_\Ff(a,\psi)\coloneq \frac{\gamma_\Ff(\psi_a)}{\gamma_\Ff(\psi)}.\]
This gives a character \[\chi_{-1}^\frac{1}{2}\colon\Glt_1\ra \C^\times,\, (a,\epsilon)\mapsto \epsilon\gamma_\Ff(a,\psi)^{-1}.\]
Composing with $\widetilde{\det}$ gives a character $\chi_\psi$ of $\Glt_n$.
Finally, we define the bijection $\Rep_n\ra\Rep(\Glt_n)^{gen}$ by sending \[\pi\mapsto \pi^\psi\coloneq \pi\otimes\chi_\psi.\]
The results of \Cref{S:repgl} now translate straightforwardly to the case $\Glt_n$. For a more detailed discuss see for example \cite[§7]{KLZ} .
\subsubsection{The group $\mp_n$}
We will also encounter the metaplectic group $\mp_n$, which sits in the short exact sequence
\[0\ra \mu_2\ra\mp_n\ra\sp_n\ra0.\]
For a partition $\alpha$ of $m\le n$, we let $\tp_\alpha$ be the preimage of the parabolic subgroup $P_\alpha$ of $\sp_n$. We write the Levi-decomposition of $\tp_\alpha$ as $\widetilde{L}_\alpha\widetilde{N}_\alpha$, where $\widetilde{L}_\alpha=\Glt_{\alpha_1}\times^{\mu_2}\ldots\times^{\mu_2}\Glt_{\alpha_k}\times^{\mu_2}\mp_{n-m}$ and $\widetilde{N}_\alpha=N_\alpha$, since again, the covering maps splits over unipotent subgroups. We thus are able to extend the notions of \Cref{S:symplecticgroup} straightforwardly to the metaplectic case.
\section{Representation theoretical results}
In this section we introduce some representation-theoretical notions and results for above groups, which will be important in the later parts of the paper.
\subsection{Intertwining operators}
In this section we recall intertwining operators and several of their properties. Let $\bG$ be a reductive $\Ff$-group and let $\bP=\bL\bN\subseteq \bG$ be a parabolic subgroup with opposite parabolic subgroup $\overline{\bP}$. Following \cite[§IV]{Wal03}, we denote for $\pi\in\Rep(L)$ the normalized intertwining operator \[I_{P,\pi}=I_\pi\colon\id_{\op}^G(\pi)\ra \id_P^G(\pi)\]
and the order of its pole by $\Lambda(\pi)\in \ZZ_{\ge 0}$. We also write $\Lambda(\pi_1,\ldots,\pi_k)=\Lambda(\pi_1\otimes\ldots\otimes\pi_k)$.
If $\bG=\Gl_n'$, $\bP=\bP_\alpha'$ and $\pi=\pi_1\otimes\ldots\pi_k\in \Rep(L_\alpha')$ for a partition $\alpha=(\alpha_1,\ldots,\alpha_k)$ of $n$, we recall that $\overline{\bP_\alpha'}$ is conjugated to $\bP_{\overline{\alpha}}'$ by a suitable element $w$ and denote the so obtained morphism by 
\[I_{\pi_1,\ldots,\pi_k}=I_{\pi_1\otimes\ldots\otimes\pi_k}\circ\mathrm{Ad}(w)\colon \pi_k\times\ldots\times \pi_1\ra\pi_1\times\ldots\times \pi_k.\]
If $\bG=\Sp_n$ with $\bP=\bP_\alpha$ and $\pi=\pi_1\otimes\ldots\pi_k\otimes \in\Rep(L_\alpha)$ for a partition $\alpha=(\alpha_1,\ldots,\alpha_k)$ of $m\le n$
we note that $\overline{\bP_\alpha}$ is conjugated to $\bP_{\alpha}$ by a suitable element $v$ and denote the so obtained morphism by 
\[I_{\pi_1,\ldots,\pi_k,\tau}=I_{\pi_1\otimes\ldots\otimes\pi_k\otimes\tau}\circ\mathrm{Ad}(v)\colon \pi_1\cc\times\ldots\pi_k\cc\rtimes\tau\ra\pi_1\times\ldots\times\pi_k\rtimes\tau.\]
We can extend these notations straightforwardly to the respective metaplectic covers. 
Let us remark the following. Assume that $\bP$ is conjugated to itself by an element $w$ and $\sigma$ a representation of $\bL$ such that $\mathrm{Ad}(w)(\sigma)=\sigma$. Then \[I_{P,\sigma}\circ\mathrm{Ad}(w)= \mathrm{Ad}(w)\circ I_{\overline{P},\sigma},\]
see \cite[p. 286]{Wal03}.
\begin{lemma}\label{L:interop}
    Let $G\in \{\gl_n,\gl_n',\Glt_n,\sp_n,\mp_n\}$ and $\pi$ as above. Then the following holds.
    \begin{enumerate}
        \item Let $0\ra\pi'\ra\pi\ra\pi''\ra0$ be a short-exact sequence. Then $\restr{I_\pi}{\pi'}=I_{\pi'}$ if $\Lambda(\pi)=\Lambda((\pi')$ and $0$ otherwise. Thus, $I_\pi$ induces a natural morphism $\id_{\op}^G(\pi'')\ra \id_P^G(\pi'')$. This morphism is equal to $I_{\pi''}$ if $\Lambda(\pi)=\Lambda((\pi'')$ and $0$ otherwise.
        \item The composition $I_{P,\pi}\circ I_{\overline{P},\pi}$ is always a scalar. In particular, if it is non-zero, it is an isomorphism.
        \item Assume $G\in\{\sp_n,\mp_n\}$ and $\pi=\pi_1\otimes\pi_2\otimes\tau$. Then $\Lambda(\pi_1,\pi_2\rtimes\tau)+\Lambda(\pi_2,\tau)=\Lambda(\pi_2,\pi_2)+\Lambda(\pi_1\times\pi_2,\tau)\ge \Lambda(\pi_1,\pi_2,\tau)$.
        Moreover, the diagram
    \[\begin{tikzcd}
        &\pi_1\rtimes(\pi_2\cc\rtimes\tau)\arrow[dr,"1_{\pi_1}\rtimes I_{\pi_2,\tau}"]&\\
        \pi_1\cc\times\pi_2\cc\rtimes\tau\arrow[rr, "I_{\pi_1,\pi_2,\tau}"]\arrow[ur,"I_{\pi_1,\pi_2\cc\rtimes\tau}"]\arrow[dr,"(I_{\pi_1\cc,\pi_2\cc})\rtimes 1_\tau"']&&\pi_1\times\pi_2\rtimes\tau\\
        &\pi_2\cc\times\pi_1\cc\rtimes\tau\arrow[ur,"I_{\pi_1\times\pi_2,\tau}"']&
    \end{tikzcd}\]
    commutes, where the middle arrow is $0$ if above inequality is strict and $I_{\pi_1,\pi_2,\tau}$ if above inequality is an inequality.
    \end{enumerate}
\end{lemma}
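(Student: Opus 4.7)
The plan is to reduce everything to properties of the \emph{unnormalized} intertwining operator $J_{P,\pi}(s)$, defined on the twisted family $\pi_s = \pi\otimes\nu^s$ by an absolutely convergent integral for $\mathrm{Re}(s)$ sufficiently positive and then extended meromorphically in $s$. By construction $J_{P,\pi}(s)$ is functorial in $\pi$: any $L$-morphism $f\colon\pi\to\pi''$ induces a commutative square relating $J_{P,\pi}(s)$ and $J_{P,\pi''}(s)$ at every value of $s$. I will identify $I_\pi$ with the leading Laurent coefficient at $s=0$ of the version of $J_{P,\pi}(s)$ divided by its standard normalizing factor, so that $r_\pi$ records the remaining pole order. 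Every assertion in the lemma will then be read off from the behaviour of leading Laurent coefficients.

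For part (1), exactness of parabolic induction yields the induced short exact sequence, and naturality of $J_{P,\pi}(s)$ provides a commutative diagram at every $s$. Extracting leading Laurent coefficients gives both assertions: if $r_\pi = r_{\pi'}$, the leading term of $J_\pi$ restricted to $\pi'$ coincides with that of $J_{\pi'}$, whereas if $r_\pi > r_{\pi'}$ the restriction carries a vanishing factor $s^{r_\pi - r_{\pi'}}$. The argument for the quotient is identical. Part (2) then follows by reducing, via (1) and a composition-series argument, to the case of irreducible $\pi$. In that case the Maass--Selberg relation identifies $J_{P,\pi}\circ J_{\overline{P},\pi}$ with Harish-Chandra's $\mu$-function times the identity; dividing by the square of the normalizing factor yields a meromorphic scalar. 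The ``non-zero implies isomorphism'' clause is then a formal consequence.

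Part (3) is the technical heart. The cocycle property of $J$, obtained by writing the relevant Weyl element as a product of simple reflections and composing the corresponding integral operators stepwise, shows that both triangular paths in the diagram, viewed as compositions of unnormalized intertwining operators, equal the horizontal unnormalized operator $J_{\pi_1,\pi_2,\tau}$. Translating back via the product formula for normalizing factors forces the equality $r_{\pi_1,\pi_2\rtimes\tau}+r_{\pi_2,\tau}=r_{\pi_2,\pi_2}+r_{\pi_1\times\pi_2,\tau}$, while the inequality $\ge r_{\pi_1,\pi_2,\tau}$ records that poles may cancel in the composition. Extracting leading Laurent coefficients then yields commutativity of the two outer triangles; the middle arrow is $I_{\pi_1,\pi_2,\tau}$ when equality holds and $0$ otherwise, because in the strict case the leading term is dominated by a strictly higher-order zero. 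The main obstacle I anticipate is bookkeeping normalization factors in the metaplectic case, where Weil-index twists enter the picture; however, since the cocycle identities for $J$ itself carry over verbatim to the covers, this reduces to explicit comparison of normalizing factors rather than any genuinely new difficulty.
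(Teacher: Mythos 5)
The paper itself gives no proof of this lemma: it is stated as a package of standard facts about normalized intertwining operators, with \cite[\S IV]{Wal03} as the implicit reference. So your sketch is being measured against the standard theory rather than against an argument in the text. For parts (1) and (3) your outline is exactly that standard route: rationality of the unnormalized family $J_{P,\pi}(s)$ in $q^{-s}$, naturality in the inducing datum, extraction of leading Laurent coefficients, and multiplicativity of $J$ along a reduced decomposition together with the product formula for the normalizing factors. One bookkeeping caveat for (3): the deformation there is genuinely a two-parameter family (independent twists on $\pi_1$ and on $\pi_2$), so ``order of the pole'' and ``leading coefficient'' must be taken in Waldspurger's sense (e.g.\ along a generic line through the point of evaluation); this is where the asserted identity of the $r$'s actually lives, and your one-variable phrasing glosses over it without, I think, damaging the argument.

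The genuine soft spot is part (2). Reducing to irreducible $\pi$ ``via (1) and a composition-series argument'' does not work as described: part (1) tells you that $I_{P,\pi}\circ I_{\overline{P},\pi}$ preserves the subrepresentation $\mathrm{Ind}_{\overline{P}}^G(\pi')$ and acts there, and on the induced quotient, by the corresponding composites (or by $0$); but these two scalars need not coincide, and nothing in (1) kills the off-diagonal block, so you cannot conclude that the composite is a scalar on all of $\mathrm{Ind}_{\overline{P}}^G(\pi)$ by assembling composition factors. The scalar statement has to be established at the level of the rational family, namely $J_{P|\overline{P}}(\pi_s)\circ J_{\overline{P}|P}(\pi_s)=j(\pi_s)\cdot\mathrm{id}$, proved for irreducible $\pi$ by Schur's lemma at generic $s$ (where the induced representation is irreducible) and then everywhere by rationality; only after that does the leading-coefficient formalism give that the specialization is a scalar, possibly $0$. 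You should either restrict (2) to irreducible $\pi$ (which is how the lemma is used in the paper) or explain why the constituents of the $\pi$ you feed into it share a common $j$-function.
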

In particular, if $\pi_1\in\irr_n$ is self dual and $\pi_2\in\irr(\sp_n)$, we obtain a non-zero map
\[I_{\pi_1,\pi_2}\colon \pi_1\rtimes\pi_2\ra\pi_1\rtimes\pi_2\]
such that $I_{\pi_1,\pi_2}\circ I_{\pi_1,\pi_2}$ is a, possible vanishing, scalar.
\begin{theorem}[{\cite[§2]{MinLa18}}]\label{T:si}
A representation $\pi\in \Irr_n$ is called $\square$-irreducible if one of the following equivalent conditions holds.
\begin{enumerate}
    \item $\pi\times\pi\in \Irr$.
    \item $I_{\pi,\pi}$ is a scalar.
    \item For all $\sigma\in \Irr$, $\pi\times\sigma$ is SI.
    \item For all $\sigma\in \Irr$, $\sigma\times \pi$ is SI.
    \item For all $\sigma\in \Irr$, \[\dim_\C\ho_{G_{\deg(\pi\times\sigma)}}(\pi\times\sigma,\sigma\times\pi)=\dim_\C\ho_{G_{\deg(\pi\times\sigma)}}(\sigma\times\pi,\pi\times\sigma)=1.\]
\end{enumerate}
\end{theorem}
We denote the subset of $\Irr$ consisting of $\square$-irreducible representations by $\irs$. 
\begin{lemma}[{\cite[Theorem 4.1.D]{LapMin25}}]
    Let $\pi\in\irs$. The two maps
    \[\soc(\pi\times-)=\cosoc(-\times\pi),\soc(-\times\pi)=\cosoc(\pi\times-)\colon\Irr\ra\Irr\]
    are injective.
\end{lemma}
\begin{lemma}[{\cite[Lemma 2.10]{MinLa18}}]\label{L:prodsquare}
    Let $\pi,\pi'\in\irs$ such that $\pi\times\pi'\in\Irr$. Then $\pi\times\pi'\in\irs$.
\end{lemma}
For example all irreducible representations of the form of \Cref{T:zel} are $\square$-irreducible. The following was proven in the proof of \cite[Theorem 4.1.D]{LapMin25}.
\begin{lemma}\label{L:lapminint}
    Let $\rho$ be $\square$-irreducible representation of $\gl_m'$ and $\pi\in \irr_n'$, $n\ge m$. Let $\tau$ be the maximal $(\rho,\gl_m')$-equivariant subrepresentation of $\overline{r}_{n-m,m}(\pi)$, \emph{i.e.} the maximal subrepresentation of the form $\tau\otimes \rho$. Then $I_{\rho,\tau}$ factors through the map $\tau\times\rho\sra \pi$ obtained by Bernstein-reciprocity.
\end{lemma}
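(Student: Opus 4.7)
The plan is to analyze the intertwining operator $I_{\rho,\tau}\colon\rho\times\tau\to\tau\times\rho$ together with the Bernstein-reciprocity surjection $p\colon\tau\times\rho\sra\pi$ by passing to the Jacquet functor $\overline{r}_{n-m,m}$, and then to show that the image of $I_{\rho,\tau}$ inside $\tau\times\rho$ maps isomorphically onto $\pi$ under $p$, which yields the desired factorization.

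First, I would apply the Geometric Lemma for $\gl_n'$ to describe $\overline{r}_{n-m,m}(\tau\times\rho)$ and $\overline{r}_{n-m,m}(\rho\times\tau)$ as successive extensions. In the former, the adjunction unit produces a distinguished embedded copy of $\tau\otimes\rho$ arising from the closed double coset, whose image in $\overline{r}_{n-m,m}(\pi)$ under $\overline{r}_{n-m,m}(p)$ is precisely the maximal $(\rho,\gl_m')$-equivariant subrepresentation given by hypothesis. In the latter, an analogous computation shows that a subquotient isomorphic to $\tau\otimes\rho$ appears, this time as the contribution of the ``long'' double coset, after the appropriate modular $\delta_1,\delta_2$ twists and conjugation by the block-swap Weyl element.

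Next, I would verify that $\overline{r}_{n-m,m}(I_{\rho,\tau})$ sends the long-cell subquotient of $\overline{r}_{n-m,m}(\rho\times\tau)$ isomorphically (up to a non-zero scalar) onto the closed-cell subrepresentation of $\overline{r}_{n-m,m}(\tau\times\rho)$. The crux here is the $\square$-irreducibility of $\rho$: by \cite[Corollary 2.5]{MinLa18}, $I_{\rho,\rho}$ is a non-zero scalar, and via the multiplicativity of normalized intertwining operators (the $\gl_n'$-analogue of \Cref{L:interop}(3)), the cell-wise behavior of $I_{\rho,\tau}$ reduces essentially to this rank-one computation for $I_{\rho,\rho}$.

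Finally, writing the short exact sequence
\[0\to K\to\tau\times\rho\xrightarrow{p}\pi\to 0,\]
the maximality of $\tau\otimes\rho\hra\overline{r}_{n-m,m}(\pi)$ forbids $K$ from containing any subrepresentation of the form $\tau'\otimes\rho$. Combined with the preceding step, this forces the image of $I_{\rho,\tau}$ to meet $K$ trivially, so it embeds into $\pi$ via $p$; comparing cuspidal supports then shows this embedding realises all of $\pi$, yielding the claimed factorization of $I_{\rho,\tau}$ through $p$. The main obstacle is the second step: tracking which subquotients of the two Geometric Lemma filtrations are paired by $\overline{r}_{n-m,m}(I_{\rho,\tau})$, and verifying that no cancellation occurs on the long-cell piece, requires careful book-keeping of the $\delta_1,\delta_2$ twists; once this is pinned down, the remainder follows cleanly from the maximality hypothesis and Bernstein reciprocity.
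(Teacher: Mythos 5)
The paper does not actually prove this lemma: it is imported from the proof of \cite[Theorem 4.1.D]{LapMin25}, and the same argument is reproduced in the symplectic setting in \Cref{L:deruniqueseg}. That argument is a \emph{doubling} argument: one passes to $\rho\times\rho\times\tau$, uses the multiplicativity of normalized intertwining operators (\Cref{L:interop}) together with the fact that $I_{\rho,\rho}$ is a non-zero scalar to identify the doubled operator with $\id_{P}(1_{\rho}\otimes I_{\rho,\tau})$, and then descends using \cite[Lemma 2.1]{MinLa18}. Your proposal avoids the doubling entirely and tries to read off the factorization from the two Geometric Lemma filtrations of $\overline{r}_{n-m,m}(\rho\times\tau)$ and $\overline{r}_{n-m,m}(\tau\times\rho)$. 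That is a genuinely different route, but as written it has gaps at exactly the points you defer to ``book-keeping.''

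Concretely: (1) The role of $\square$-irreducibility is misplaced. Multiplicativity decomposes $I_{\rho,\tau}$ according to the factors of $\tau$, not according to copies of $\rho$; since $\tau$ is an arbitrary finite-length representation, there is no reduction of the cell-wise behaviour of $I_{\rho,\tau}$ to $I_{\rho,\rho}$. (2) The assertion that $\overline{r}_{n-m,m}(I_{\rho,\tau})$ carries the long-cell subquotient isomorphically onto the closed-cell copy of $\tau\otimes\rho$ amounts to $r_{\rho,\tau}=0$: if the operator has a pole, its leading Laurent coefficient vanishes on functions supported on the open cell (this is precisely the mechanism in the proof of \Cref{L:derivativesintgl}, where the absence of a pole is \emph{deduced} from a reducedness hypothesis that you do not have here). (3) Most seriously, maximality of $\tau\otimes\rho$ inside $\overline{r}_{n-m,m}(\pi)$ does not forbid $\overline{r}_{n-m,m}(\ker p)$ from containing a subrepresentation of the form $\tau'\otimes\rho$; it would do so only if the closed cell were the \emph{entire} maximal $(\cdot\otimes\rho)$-equivariant subrepresentation of $\overline{r}_{n-m,m}(\tau\times\rho)$. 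Other cells contribute $\rho$-isotypic pieces whenever $\tau$ is not right-$\rho$-reduced, and for non-cuspidal $\square$-irreducible $\rho$ (the case actually needed in this paper, e.g.\ $\rho=\Z(\De)$) in many more ways; ruling out that these pieces occur as subrepresentations is essentially equivalent to the lemma itself, so the argument is circular at its key step. I would either follow the doubling argument of \cite{LapMin25}, or restrict to cuspidal $\rho$ with $\tau$ right-$\rho$-reduced, where the two-cell filtration makes your strategy viable.
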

An analogous version of the lemma holds true if one considers the $(\rho,\gl_m)$-equivariant quotient of $r_{m,n-m}(\pi)$. Finally, let us note that the proof of the result can be extended by the same argument to the case $\Glt_n$. 
\subsection{$\rho$-derivatives}
In this section we recall derivatives of classical and general groups and following the treatment of \cite[Appendix C.1]{atobe2024localintertwiningrelationscotempered}.
\subsubsection{The case of $\Gl_n$}
In here we explicate some of the things mentioned in \Cref{S:deltader} in the case where $\De$ is a segment of length $1.$

Let $\rho\in\cus_m$. We call a representation $\pi\in \Rep_n$ right- (respectively left-$\rho$-reduced) if $n\ge m$ and there exists no $\tau\in\irr_{n-m}$ such that $\rho\otimes\tau$ appears as a subquotient of $r_{m,n-m}(\pi)$ (respectively $\tau\otimes\rho$ appears as a subquotient of $r_{n-m,m}(\pi)$).
\begin{lemma}
    Let $\pi\in \irr_n$ and $\rho\in\cus_m$. Then there exists $\tau\in \irr_{n-m}$ with $\pi\hra\tau\times\rho$ if and only if $\pi$ is not right-$\rho$-reduced. If $\pi$ is not right-$\rho$-reduced, $\tau$ is unique and denoted by $\dr(\pi)$. 
    Moreover, in this case $\pi$ is the unique subrepresentation of $\dr(\pi)\times\rho$, the unique quotient of $\rho\times \dr(\pi)$ and appears in the respective composition series with multiplicity one.
    If $\pi$ is right-$\rho$-reduced, we set $\dr(\pi)\coloneq 0$. 
\end{lemma}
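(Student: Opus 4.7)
My plan is to combine Frobenius reciprocity, the Bernstein decomposition of Jacquet modules at cuspidal representations, and the previously stated Jantzen--M\'{\i}nguez lemma on irreducible socles.

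For the equivalence, the forward direction ($\pi\hra\tau\times\rho$ implies $\pi$ is not right-$\rho$-reduced) is a direct application of Frobenius reciprocity: the embedding yields $\tau\otimes\rho$ as a quotient of $r_{n-m,m}(\pi)$, which by the conjugation relating $\bP_{n-m,m}$ and $\overline{\bP_{m,n-m}}$ translates into the relevant appearance as a subquotient of $r_{m,n-m}(\pi)$, up to swapping of tensor factors. Conversely, suppose $\pi$ is not right-$\rho$-reduced. Cuspidality of $\rho$ ensures via the Bernstein center that the $\rho$-isotypic component of the appropriate Jacquet module on the $\gl_m$-factor is a direct summand of the form $\rho\otimes M$, with $M$ a non-zero finite-length $\gl_{n-m}$-representation. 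Passing to an irreducible quotient of $M$ and invoking Frobenius reciprocity produces the desired embedding.

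Uniqueness of $\tau$ follows immediately from the Jantzen--M\'{\i}nguez lemma applied with $\sigma=\pi$, and the identification of $\pi$ as the unique irreducible subrepresentation of $\dr(\pi)\times\rho$ follows from the same lemma, which guarantees that this induced representation has irreducible socle, necessarily equal to $\pi$. For the dual claim, applying the lemma to the cuspidal $\rho^\lor$ shows that $\dr(\pi)^\lor\times\rho^\lor\cong(\rho\times\dr(\pi))^\lor$ has irreducible socle, whence $\rho\times\dr(\pi)$ has irreducible cosocle; the cosocle is identified with $\pi$ by second adjointness together with the Bernstein decomposition of $\overline{r}_{m,n-m}(\pi)$.

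Finally, multiplicity one of $\pi$ in the composition series of $\dr(\pi)\times\rho$ and of $\rho\times\dr(\pi)$ reduces to $\dim\ho(\pi,\dr(\pi)\times\rho)=\dim\ho(\rho\times\dr(\pi),\pi)=1$, each of which follows from the $\rho$-isotypic decomposition of the relevant Jacquet modules combined with the uniqueness of $\dr(\pi)$. The main technical difficulty is the converse of the equivalence, where one must lift a mere subquotient of the Jacquet module to an actual quotient: this step relies crucially on the cuspidality of $\rho$, which ensures via the Bernstein center that the $\rho$-isotypic component is a direct summand acted upon by $\gl_m$ in a semisimple, genuinely $\rho$-isotypic manner.
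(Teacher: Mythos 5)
The paper does not actually prove this lemma --- it is recalled from the derivative theory of Jantzen and M\'inguez --- so there is no argument of the paper to compare yours against; judged on its own, your sketch has two genuine gaps. The first is the passage between the two Jacquet functors. Frobenius reciprocity turns $\pi\hra\tau\times\rho$ into ``$\tau\otimes\rho$ is a quotient of $r_{n-m,m}(\pi)$,'' and the conjugation relating $\bP_{n-m,m}$ to $\overline{\bP_{m,n-m}}$ then places $\rho\otimes\tau$ inside $\overline{r}_{m,n-m}(\pi)=r_{\overline{P_{m,n-m}}}(\pi)$, \emph{not} inside $r_{m,n-m}(\pi)$ as your argument needs. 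These two functors genuinely differ on irreducible representations: for $\pi=\Z([0,1]_\rho)$ with $\rho\in\cus_1$ one has $r_{1,1}(\pi)=\rho\otimes\rho\nu_\rho$ while $\overline{r}_{1,1}(\pi)=\rho\nu_\rho\otimes\rho$; so $\pi\hra\tau\times\rho'$ with $\tau=\rho$ and $\rho'=\rho\nu_\rho$, yet no subquotient of $r_{1,1}(\pi)$ has first factor $\rho'$. (This also shows that the definition of right-/left-$\rho$-reduced as printed in the paper must have its two parenthetical clauses interchanged; the condition matching $\pi\hra\tau\times\rho$ is the nonexistence of a subquotient $\tau\otimes\rho$ of $r_{n-m,m}(\pi)$.) With the corrected definition your forward direction is immediate and needs no conjugation, and your converse --- take the cuspidal-support direct summand of $r_{n-m,m}(\pi)$ on which every $\gl_m$-subquotient is isomorphic to $\rho$, pass to an irreducible quotient, apply Frobenius --- is the standard argument; note only that this summand need not literally be of the form $M\otimes\rho$, since $\rho$ admits self-extensions, but its cosocle is honestly $\rho$-isotypic, which suffices.

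The second gap is the last step: the multiplicity of $\pi$ in the composition series of $\dr(\pi)\times\rho$ is \emph{not} controlled by $\dim_\C\ho(\pi,\dr(\pi)\times\rho)$ together with $\dim_\C\ho(\rho\times\dr(\pi),\pi)$, because a constituent can recur in the interior of a composition series without contributing to either Hom-space. Multiplicity one is the delicate part of the lemma; the standard proof bounds $[\pi:\dr(\pi)\times\rho]$ by a count, via the Geometric Lemma, of the $(-)\otimes\rho$-isotypic constituents of $r_{n-m,m}(\dr(\pi)\times\rho)$ (or, more efficiently, passes to the highest derivative $\Dr(\pi)\times\rho^{\times\drm(\pi)}$ and induction), and none of this is present in your sketch. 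Relatedly, your identification of $\cosoc(\rho\times\dr(\pi))$ with $\pi$ is incomplete: second adjointness only produces some irreducible $\tau'$ with $\tau'\otimes\rho\hra r_{n-m,m}(\pi)$ and $\rho\times\tau'\sra\pi$, and one must still argue that $\tau'\cong\dr(\pi)$, e.g.\ by dualizing and comparing left- and right-derivatives.
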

We set $\drm(\pi)$ to be the maximal $k\in\NN$ such that \[\dr^k(\pi)\coloneq \overbrace{\dr\circ\ldots\circ\dr}^{k}(\pi)\neq 0\] and write $\Dr(\pi)\coloneq \dr^{\drm(\pi)}(\pi)$.
An analogous lemma holds for $\dl$ and allows us to define $\dl(\pi)$, $\dlm(\pi)$ and $\Dl(\pi)$.
\begin{lemma}
    Let $\pi\in \irr_n$ and $\rho\in\cus_m$. Then $\pi$ is the unique subrepresentation of $\Dr(\pi)\times\rho^{\times \drm(\pi)}$, the unique quotient of $\rho^{\times \drm(\pi)}\times \Dr(\pi)$ and appears in the respective composition series with multiplicity one. Moreover, for all other irreducible subquotients $\sigma$ of $\Dr(\pi)\times\rho^{\times \drm(\pi)}$, $\drm(\sigma)<\drm(\pi)$.

    Finally, $\Dr(\pi)^\lor=\mathcal{D}_{\rho^\lor,l}(\pi^\lor)$.
\end{lemma}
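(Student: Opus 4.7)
My plan is to argue the four claims jointly by induction on $k := \drm(\pi)$; the base case $k = 0$ is immediate since $\Dr(\pi) = \pi$. Set $\tau' := \dr(\pi)$ and $\tau := \Dr(\pi)$, so that $\drm(\tau') = k-1$ and $\Dr(\tau') = \tau$, and the inductive hypothesis applies to $\tau'$.

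I would first dispose of the duality statement (4) by directly dualising the surjection $\rho \times \dr(\pi) \twoheadrightarrow \pi$ supplied by the previous lemma to obtain $\pi^\lor \hookrightarrow \rho^\lor \times \dr(\pi)^\lor$; the defining uniqueness of the left $\rho^\lor$-derivative then forces $\mathcal{D}_{\rho^\lor, l}(\pi^\lor) = \dr(\pi)^\lor$. Iterating this identity shows that the maximal number of left $\rho^\lor$-derivatives of $\pi^\lor$ equals $k$, and consequently $\Dr(\pi)^\lor = \mathcal{D}_{\rho^\lor, l}(\pi^\lor)$ in the sense stated. Statement (2) is then the dual of (1) via (4). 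A preliminary observation, provable by a short separate induction on $\drm$ using only the previous lemma (the socle of $\dr(\sigma) \times \rho$ is irreducible), is that any irreducible representation is determined up to isomorphism by its pair $(\Dr, \drm)$-invariants.

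The existence half of (1) is a chain of embeddings: $\pi \hookrightarrow \tau' \times \rho$ from the previous lemma composes with $\tau' \hookrightarrow \tau \times \rho^{\times (k-1)}$ from the inductive hypothesis to yield $\pi \hookrightarrow \tau \times \rho^{\times k}$. For uniqueness, let $\pi'' \hookrightarrow \tau \times \rho^{\times k}$ be irreducible and set $A := \tau \times \rho^{\times (k-1)}$. The induction hypothesis identifies the socle of $A$ as $\tau'$ (multiplicity one), with all remaining composition factors $\sigma_j$ satisfying $\drm(\sigma_j) \le k - 2$. Choosing the minimal $i$ in the composition filtration $A_i \subseteq A$ with $\pi'' \hookrightarrow A_i \times \rho$, the induced injection $\pi'' \hookrightarrow (A_i / A_{i-1}) \times \rho$ forces $\dr(\pi'') = A_i / A_{i-1}$ by the previous lemma. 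The case $i = 1$ gives $\dr(\pi'') = \dr(\pi)$, whence $(\Dr(\pi''), \drm(\pi'')) = (\tau, k)$ and the preliminary observation yields $\pi'' \cong \pi$. The case $i > 1$, where $\dr(\pi'') = \sigma_j$ for some $j > 1$, must be ruled out, and this is the principal technical obstacle: I would verify that the pullback of the short exact sequence $0 \to A_{i-1} \times \rho \to A_i \times \rho \to (A_i/A_{i-1}) \times \rho \to 0$ along $\pi'' \hookrightarrow (A_i/A_{i-1}) \times \rho$ does not split, by a Jacquet-module computation using the geometric lemma for $r_{n-km, m, \ldots, m}$ and the right-$\rho$-reducedness of $\tau$, in the style of \cite{AtoMin}. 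The multiplicity-one statement in the composition series drops out of the same analysis.

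Claim (3) is then comparatively easy: an irreducible subquotient $\sigma \not\cong \pi$ of $\tau \times \rho^{\times k}$ has the same cuspidal support as $\pi$, and writing $\sigma \hookrightarrow \Dr(\sigma) \times \rho^{\times \drm(\sigma)}$ and comparing the cuspidal supports of the right-$\rho$-reduced irreducibles $\Dr(\sigma)$ and $\tau$ yields $\drm(\sigma) \le k$. Equality forces $\Dr(\sigma) = \tau$ by a multisegment analysis of right-$\rho$-reduced irreducibles with a fixed cuspidal support, whence the preliminary observation gives $\sigma \cong \pi$, contradicting $\sigma \not\cong \pi$. Therefore $\drm(\sigma) < k$.
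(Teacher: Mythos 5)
The paper does not actually prove this lemma---it is recalled from the literature on $\rho$-derivatives---so your argument has to stand entirely on its own. Its skeleton is fine: the induction on $\drm(\pi)$, deducing $\Dr(\pi)^\lor=\mathcal{D}_{\rho^\lor,l}(\pi^\lor)$ by dualizing the surjection $\rho\times\dr(\pi)\sra\pi$, obtaining the quotient statement from the subrepresentation statement by duality, the existence half of (1) by composing embeddings, and the observation that $\pi$ is recovered from $(\Dr(\pi),\drm(\pi))$ by iterated socles are all correct. The problems are concentrated in the two places where you defer the work. For the case $i>1$ you propose to show that the pullback of $0\to A_{i-1}\times\rho\to A_i\times\rho\to(A_i/A_{i-1})\times\rho\to0$ along $\pi''$ does not split ``by a Jacquet-module computation.'' Jacquet functors are exact, so multiplicities in Jacquet modules cannot by themselves detect splitting of an extension; as stated this is not a proof strategy. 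The mechanism that actually works is different: Frobenius reciprocity applied to $\pi''\hra\tau\times\rho^{\times k}$ gives $r_{n-km,km}(\pi'')\sra\tau\otimes\rho^{\times k}$, and the standard characterization of $\drm$ (an irreducible $\sigma$ has $\drm(\sigma)\ge k$ iff $r_{n-km,km}(\sigma)$ has a subquotient of the form $\ast\otimes\rho^{\times k}$) then forces $\drm(\pi'')\ge k$, incompatible with $\dr(\pi'')=\sigma_j$ and $\drm(\sigma_j)\le k-2$. Better still, the single computation that $\tau\otimes\rho^{\times k}$ occurs in $r_{n-km,km}(\tau\times\rho^{\times k})$ with multiplicity exactly one---by the geometric lemma, any other term would produce a subquotient $\ast\otimes\rho$ of a Jacquet module of $\tau$, contradicting right-$\rho$-reducedness, and $\rho^{\times k}$ is irreducible---yields simultaneously the irreducibility of the socle, the multiplicity-one statement, and claim (3), making your filtration of $A$ and the extension-splitting detour unnecessary. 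Note also that ruling out subrepresentations from higher filtration steps says nothing about the multiplicity of $\pi$ as a subquotient, so multiplicity one does not ``drop out of the same analysis'' as you organize it.

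The second gap is that your argument for (3) is wrong as stated. A right-$\rho$-reduced irreducible can contain $\rho$ in its cuspidal support: with the paper's conventions $\Z([0,1]_\rho)$ has Jacquet module $\rho\otimes\rho\nu_\rho$, hence is right-$\rho$-reduced while $\rho$ lies in its support. So comparing the cuspidal supports of $\Dr(\sigma)$ and $\tau$ does not bound $\drm(\sigma)$ by $k$. Moreover, right-$\rho$-reduced irreducibles with a fixed cuspidal support are in general far from unique (many multisegments with the same support have no segment ending in $\rho$), so ``equality forces $\Dr(\sigma)=\tau$'' is not a valid step. Both assertions must be replaced by the multiplicity count in $r_{n-km,km}(\tau\times\rho^{\times k})$ described above: any irreducible subquotient $\sigma$ with $\drm(\sigma)\ge k$ contributes a copy of some $\ast\otimes\rho^{\times k}$ to that Jacquet module, and the unique such copy is already accounted for by $\pi$.
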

We will state the following lemmas only for right-derivatives. It is clear that analogous statements hold for left-derivatives. 
\begin{lemma}\label{L:derivativesintgl}
    Let $\pi\in \Rep_n$ of finite length, $\rho\in\cus_m$ and $k\in \NN$. If $\pi$ is right-$\rho$-reduced, $\Lambda(\rho^{\times k},\pi)=0$ and if $\pi'$ is a subrepresentation or quotient of $\pi$, $I_{\rho^{\times k},\pi}$ induces the map $I_{\rho^{\times k},\pi'}\colon\rho^{\times k}\times\pi'\ra\pi'\times\rho^{\times k}$. If $\pi$ is irreducible, the map $I_{\rho^{\times k},\pi}$ factors through $\soc(\pi\times\rho^{\times k})$.
    In particular 
    \[\mathrm{cosoc}(\mathrm{Im}(I_{\rho^{\times k},\pi}))=\soc(\cosoc(\pi)\times \rho^{\times k}).\]
\end{lemma}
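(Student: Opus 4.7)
The statement decomposes into three successive claims---(a) the holomorphy $r_{\rho^{\times k},\pi}=0$, (b) the naturality of the intertwining operator on subrepresentations and quotients, and (c) the socle factorization in the irreducible case---and the ``in particular'' identity will then follow formally. The essential input used throughout is exactness of the Jacquet functor $r_{m,n-m}$, which ensures right-$\rho$-reducedness descends to every irreducible subquotient of $\pi$.

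For (a), I would reduce to the case of irreducible $\pi$ via \Cref{L:interop}(1), bounding $r_{\rho^{\times k},\pi}$ above by the maximum pole order across the irreducible composition factors of $\pi$. Since $\rho^{\times k}$ is itself irreducible---by \Cref{L:irredgl}, as the segments $[0,0]_\rho$ are pairwise unlinked---the operator $I_{\rho^{\times k},\pi}$ factors into individual $I_{\rho,\tau}$-type operators, reducing further to $k=1$. For $I_{\rho,\pi}$, the Shahidi--Waldspurger normalization relates its order of pole to the appearance of a $\rho\otimes\tau$-composition factor in $r_{m,n-m}(\pi)$, which right-$\rho$-reducedness precludes.

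Claim (b) is then immediate: for any subrepresentation or quotient $\pi'$ of $\pi$, the representation $\pi'$ is again right-$\rho$-reduced (by exactness of Jacquet), so by (a) we have $r_{\rho^{\times k},\pi'}=0=r_{\rho^{\times k},\pi}$, and \Cref{L:interop}(1) identifies the restriction with $I_{\rho^{\times k},\pi'}$. For claim (c), I invoke the Jantzen--M\'inguez derivative theory: when $\pi$ is irreducible and right-$\rho$-reduced, both $\rho^{\times k}\times\pi$ and $\pi\times\rho^{\times k}$ have the same irreducible representation $\sigma$ as cosocle respectively socle (appearing with multiplicity one), and the non-zero intertwining operator $I_{\rho^{\times k},\pi}$ realizes the factorization $\rho^{\times k}\times\pi\twoheadrightarrow\sigma\hookrightarrow\pi\times\rho^{\times k}$, so its image equals $\sigma\subseteq\soc(\pi\times\rho^{\times k})$.

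Finally, the ``in particular'' identity follows from a diagram chase: apply (b) to the quotient $\pi\twoheadrightarrow\cosoc(\pi)$ to obtain a commutative square relating $I_{\rho^{\times k},\pi}$ with $I_{\rho^{\times k},\cosoc(\pi)}$. Since $\cosoc(\pi)$ is semisimple, applying (c) summand-wise yields $\im(I_{\rho^{\times k},\cosoc(\pi)})=\soc(\cosoc(\pi)\times\rho^{\times k})$, which is then identified with the cosocle of $\im(I_{\rho^{\times k},\pi})$ via the induced surjection from the commutative square. I expect the main technical obstacle to be (a)---establishing holomorphy cleanly---which requires careful bookkeeping of the normalization factors against the structure of the Jacquet module of $\pi$; the remaining steps are either categorical or direct consequences of the derivative theory already recalled in the text.
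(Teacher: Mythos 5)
Your decomposition (holomorphy, naturality via \Cref{L:interop}(1), socle factorization, then the ``in particular'' by a diagram chase) matches the paper's proof in outline, and parts (b) and the final diagram chase are exactly what the paper does. The two places where you genuinely diverge are (a) and (c). For (a), the paper does not reduce to $k=1$ and to irreducible constituents; it works directly with $I_{\rho^{\times k},\pi}$ and observes, via Waldspurger's Theorem IV.1.1, that for sections supported on the big cell $P_{km,n}wP_{n,km}$ the intertwining integral is literally the Jacquet integral $\int_{N}f(n)\,dn$, whose convergence is guaranteed by right-$\rho$-reducedness; your route through multiplicativity of intertwining operators and a pole-order criterion for $I_{\rho,\pi}$ is morally the same fact but note that the paper only states the multiplicativity (\Cref{L:interop}(3)) for symplectic groups, so you would need to supply its $\gl$-analogue. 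For (c), the paper pins down the operator by showing $\dim\ho(\rho^{\times k}\times\pi,\pi\times\rho^{\times k})\le 1$ via Frobenius reciprocity and the Geometric Lemma, so that $I_{\rho^{\times k},\pi}$ must coincide with the canonical map $\rho^{\times k}\times\pi\twoheadrightarrow\sigma\hookrightarrow\pi\times\rho^{\times k}$ coming from derivative theory; you instead argue on the image using irreducibility of $\cosoc(\rho^{\times k}\times\pi)$ and $\soc(\pi\times\rho^{\times k})$ together with multiplicity one of $\sigma$. Your mechanism works and is arguably more self-contained, but it needs two small supplements: that the normalized operator is non-zero (true by construction of the normalization, but worth saying), and that the claim holds for \emph{any} irreducible $\pi$, not only right-$\rho$-reduced ones --- your restriction is unnecessary and, while harmless for the summand-wise application to $\cosoc(\pi)$ here, the lemma as stated asserts more. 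Finally, in the ``in particular'' step your commutative square only yields a surjection $\cosoc(\im(I_{\rho^{\times k},\pi}))\twoheadrightarrow\soc(\cosoc(\pi)\times\rho^{\times k})$; the reverse containment (every irreducible quotient of $\im(I_{\rho^{\times k},\pi})$ already occurs as a quotient of $\rho^{\times k}\times\cosoc(\pi)$) still needs a word, e.g.\ via the derivative bookkeeping for irreducible quotients of $\rho^{\times k}\times\pi$ --- though in fairness the paper is equally terse at this point.
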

\begin{proof}
   Note since $\pi$ is right-$\rho$-reduced, one can deduce by \cite[Theorem IV.1.1.]{Wal03}, that for $f\in \rho^{\times k}\times\pi$ whose support is contained in $P_{km,n}wP_{n,km}$, where $w$ is the anti-diagonal, 
   \[I_{\rho^{\times k},\pi}(f)(1)=\int_{N_{n,km}}f(n)\mathrm{d}n\] for a suitable Haar-measure. This implies by \cite[Theorem IV.1.1.]{Wal03} that $\Lambda(\rho^{\times k},\pi)=0$. The second claim is now an easy consequence of the first by \Cref{L:interop}.
    For the final claim it is enough to note that there exists at most one, up to a scalar, non-zero morphism
    \[\rho^{\times k}\times\pi\ra\pi\times \rho^{\times k},\]
    as any easy consequence of Frobenius reciprocity and the Geometric Lemma. Then the claim follows by the above mentioned properties of derivatives.
\end{proof}
\begin{lemma}\label{L:derrek}
    Let $\pi\in \Rep_n,\pi'\in \Rep_r$ and $\rho\in \cus$ such that $\pi\times \rho^{\times k}$ is irreducible for all $k\in \NN$ and $\pi'$ is left-$\rho$-reduced. Then for any irreducible subrepresentation $\sigma$ of $\pi\times \rho^{\times k}\times\pi'$ the following are equivalent
       \begin{enumerate}
        \item  $\sigma\hra\pi\times\soc(\rho^k\times \pi')).$
        \item $\sigma\hra\rho^{\times \dm(\pi)+k}\times \soc(\Dd(\pi)\times \pi').$
        \item $\sigma\hra\rho^{\times \dm(\pi)+k}\times \Dd(\pi)\times \pi'.$ 
    \end{enumerate}
    In particular, if $\Dd(\pi)\times \pi'$ has an irreducible socle, the so does $\pi\times\soc(\rho^k\times \pi'))$.
\end{lemma}
\begin{proof}
The implication $(ii)\Rightarrow (iii)$ is trivially true.

Assume next that \[\sigma\hra\pi\times\soc(\rho^k\times \pi')).\]Therefore
 \[\sigma\hra \pi\times\rho^k\times\pi'\cong \rho^k\times\pi\times\pi'\hra \rho^{\times \dlm(\pi)+k}\times \Dl(\pi)\times \pi'.\]
Hence also $(i)\Rightarrow (iii)$ follows.
Let $\sigma'$ be an irreducible representation such that
\[\sigma\hra \rho^{\times \dlm(\pi)+k}\times \sigma'\]
and hence
\[\sigma'\times\rho^{\times \dlm(\pi)+k}\sra \pi\hra \rho^{\times \dlm(\pi)+k}\times \Dl(\pi)\times \pi'.\]
Bernstein reciprocity and the Geometric Lemma thus imply that $\sigma'\hra \Dl(\pi)\times \pi'$ and hence $(i)\Rightarrow (ii)$ follows.
Finally, assume \[\sigma\hra\rho^{\times \dlm(\pi)+k}\times \Dl(\pi)\times \pi'\]
and in particular $\dlm(\sigma)=k+\dlm(\pi)$.
Let $\sigma_1$ be such that $\sigma_1$ is an irreducible subquotient of 
\[\rho^{\times \dlm(\pi)}\times \Dl(\pi)\] and 
\[\sigma\hra \
\rho^{\times k}\times\sigma_1\times \Dl(\pi).\]
Since $\dlm(\sigma)=k+\dlm(\pi)$, it follows that $\dlm(\sigma_1)=\dlm(\pi)$ and hence $\sigma_1=\pi$. Thus
\[\sigma\hra\pi\times\rho^{\times k}\times \pi'.\] The same argument gives now that 
\[\sigma\hra\rho^{\times \dlm(\pi)+k}\times \soc(\Dl(\pi)\times \pi'),\] proving the direction $(iii)\Rightarrow(i)$.
\end{proof}

Let us end this section by noting that all results here have analogous statements when one replaces the group $\gl_n$ by $\Glt_n$ or $\gl_n'$.
\subsubsection{The case of $\sp_n$}\label{S:dersp}
The theory of derivatives has been extended in \cite{AtoMin} to the case of classical groups and we will now recall its most important aspects.

Let $\rho\in\cus_m\setminus\cuu_m$. We call a representation $\pi\in \Rep(\sp_n)$ $\rho$-reduced if $n\ge m$ and there exists no $\tau\in\irr_{n-m}$ such that $\rho\otimes\tau$ appears as a subquotient of $r_{P_m}(\pi)$.
\begin{lemma}
    Let $\pi\in \irr(\sp_n)$ and $\rho\in\cus_m\setminus\cuu_m$. Then there exists $\tau\in \irr(\sp_{n-m})$ with $\pi\hra\rho\rtimes\tau$ if and only if $\pi$ is not $\rho$-reduced. If $\pi$ is not $\rho$-reduced, $\tau$ is unique and denoted by $\dd(\pi)$. 
    Moreover, in this case $\pi$ is the unique subrepresentation of $\rho\rtimes\dd(\pi)$, the unique quotient of $\rho^\lor\rtimes \dd(\pi)$, and appears in the respective composition series with multiplicity one.
    If $\pi$ is $\rho$-reduced, we set $\dd(\pi)\coloneq 0$. 
\end{lemma}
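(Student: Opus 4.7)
The plan is to reduce the equivalence and uniqueness parts of the statement to Frobenius reciprocity combined with the earlier cited lemma of \cite{AtoMin} (asserting that $\rho\rtimes\pi$ has irreducible socle when $\rho\not\cong\rho^\lor$). The multiplicity-one assertion requires an additional computation via the Geometric Lemma.

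By Frobenius reciprocity, $\pi\hra\rho\rtimes\tau$ is equivalent to $\rho\otimes\tau\hra r_{P_m}(\pi)$. Since $\rho$ is cuspidal, the direct summand of $r_{P_m}(\pi)$ whose $\gl_m$-cuspidal support equals $\{\rho\}$ is a sum of subrepresentations of the form $\rho\otimes\tau'$; hence such a subquotient exists iff such a subrepresentation does, which gives the equivalence with non-$\rho$-reducedness. Uniqueness of $\tau$ and the identification of $\pi$ as the unique subrepresentation of $\rho\rtimes\dd(\pi)$ are immediate from the socle-irreducibility statement of \cite{AtoMin}; we set $\dd(\pi)\coloneq\tau$.

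The ``unique quotient'' statement follows by combining contragredient and the MVW involution. Dualizing $\pi\hra\rho\rtimes\dd(\pi)$, and using the standard identity $(\rho\rtimes\sigma)^\lor\cong\rho^\lor\rtimes\sigma^\lor$ for normalized parabolic induction, produces the surjection $\rho^\lor\rtimes\dd(\pi)^\lor\sra\pi^\lor$. Applying MVW (covariant, exact, trivial on GL-factors, and sending irreducibles to their contragredients) yields the desired $\rho^\lor\rtimes\dd(\pi)\sra\pi$. Irreducibility of the cosocle of $\rho^\lor\rtimes\dd(\pi)$ follows from the irreducibility of the socle of $\rho\rtimes\dd(\pi)^\lor$, guaranteed by \cite{AtoMin} applied to $\dd(\pi)^\lor$.

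The main obstacle is the multiplicity-one assertion, which I would prove by computing $r_{P_m}(\rho\rtimes\dd(\pi))$ via the Geometric Lemma (\Cref{L:GL}) and showing that $\rho\otimes\dd(\pi)$ appears in its semisimplification exactly once; the multiplicity of $\pi$ in $\rho\rtimes\dd(\pi)$ then follows by Frobenius reciprocity. Cuspidality of $\rho$ restricts the contributing triples $(k_1,k_2,k_3)\in\mathfrak{B}(m,m)$ to those with a single non-zero entry equal to $m$. The $k_3=m$ term has $\gl_m$-factor isomorphic to $\rho^\lor$ after the $\mathrm{Ad}(w_{0,0,m})$-twist, and is excluded from the $\rho$-isotypic part by the hypothesis $\rho\not\cong\rho^\lor$; a case-by-case analysis of the remaining terms, using the very definition of $\dd(\pi)$ and the uniqueness of the $\rho$-derivative, rules out any further contribution of $\rho\otimes\dd(\pi)$, leaving only the $(m,0,0)$-term. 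Multiplicity one in $\rho^\lor\rtimes\dd(\pi)$ then follows by the same duality argument as above.
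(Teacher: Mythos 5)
The paper itself offers no proof of this lemma: it is recalled verbatim from \cite{AtoMin} at the start of the subsection on derivatives for $\sp_n$, so there is no in-paper argument to compare against. That said, the first three quarters of your sketch are sound. The cuspidal-support decomposition of $r_{P_m}(\pi)$ does reduce ``subquotient of the form $\rho\otimes\tau$'' to ``irreducible quotient of that form'' (note that with the paper's conventions Frobenius reciprocity for $\pi\hra\rho\rtimes\tau$ produces a \emph{quotient} $r_{P_m}(\pi)\sra\rho\otimes\tau$, not a subobject, but this is immaterial once you invoke the summand argument), the uniqueness of $\tau$ and of the irreducible subrepresentation is exactly the socle-irreducibility statement quoted from \cite{AtoMin}, and the passage to $\rho^\lor\rtimes\dd(\pi)\sra\pi$ via contragredient plus MVW is the standard one.

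The multiplicity-one step, however, has a genuine gap. Your claim that after excluding the $k_3=m$ term ``only the $(m,0,0)$-term'' contributes $\rho\otimes\dd(\pi)$ to $r_{P_m}(\rho\rtimes\dd(\pi))$ fails whenever $2m\le n$ and $\dd(\pi)$ is itself not $\rho$-reduced: the $(0,m,0)$-term is built from $\sigma'\otimes(\rho\rtimes\sigma'')$ with $\sigma'\otimes\sigma''$ running over the subquotients of $r_{P_m}(\dd(\pi))$, and taking $\sigma'\otimes\sigma''=\rho\otimes\dd^2(\pi)$ yields $\rho\otimes(\rho\rtimes\dd^2(\pi))$, which contains $\rho\otimes\dd(\pi)$ as a subquotient. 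Concretely, for a generic non-self-dual unramified character $\chi$ of $\gl_1$ and $\pi=\chi\times\chi\rtimes\bon_{\sp_0}\in\irr(\sp_2)$ one has $\dd(\pi)=\chi\rtimes\bon_{\sp_0}$, and $\chi\otimes\dd(\pi)$ occurs with multiplicity $2$ in $r_{P_1}(\chi\rtimes\dd(\pi))$ even though $\pi$ occurs only once. So the Jacquet-module count you propose is simply not equal to one, and the inequality it feeds (multiplicity of $\pi$ bounded by the multiplicity of $\rho\otimes\dd(\pi)$ in the Jacquet module) then gives no information. The standard repair, and the one underlying \cite{AtoMin}, is to prove multiplicity one first for the highest derivative $\pi\hra\rho^{\times k}\rtimes\Dd(\pi)$ with $k=\dm(\pi)$: there $\Dd(\pi)$ is $\rho$-reduced, so the offending middle terms of the Geometric Lemma vanish, while $\rho\ncong\rho^\lor$ kills the terms with $k_3>0$, and the leading-term count in the iterated Jacquet module goes through. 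The single-step statement then follows because $\rho\rtimes\dd(\pi)\hra\rho^{\times k}\rtimes\Dd(\pi)$ by exactness of induction.
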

We set $\dm(\pi)$ to be the maximal $k\in\NN$ such that \[\dd^k(\pi)\coloneq \overbrace{\dd\circ\ldots\circ\dd}^{k}(\pi)\neq 0\] and write $\Dd(\pi)\coloneq \dd^{\dm(\pi)}(\pi)$.
\begin{lemma}
    Let $\pi\in \irr(\sp_n)$ and $\rho\in\cus_m\setminus\cuu_m$. Then $\pi$ is the unique subrepresentation of $\rho^{\times \dm(\pi)}\times \Dd(\pi)$, the unique quotient of $(\rho^{\times \dm(\pi)})^\lor\times \Dd(\pi)$ and appears in the respective composition series with multiplicity one.

    Finally, $\Dd(\pi)^\lor=\Dd(\pi^\lor)$.
\end{lemma}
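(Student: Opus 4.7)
The plan is to induct on $k = \dm(\pi)$, with the base case $k = 0$ being immediate since then $\Dd(\pi) = \pi$ and the ambient representation equals $\pi$. For the inductive step, set $\pi' = \dd(\pi)$, so that $\dm(\pi') = k-1$ and $\Dd(\pi') = \Dd(\pi)$; the inductive hypothesis therefore applies to $\pi'$. The existence of the embedding $\pi \hookrightarrow \rho^{\times k} \rtimes \Dd(\pi)$ is immediate by chaining $\pi \hookrightarrow \rho \rtimes \pi'$ from the previous lemma with the inductive embedding $\pi' \hookrightarrow \rho^{\times k-1} \rtimes \Dd(\pi)$.

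For uniqueness of this subrepresentation, I would take an arbitrary irreducible subrepresentation $\sigma \hookrightarrow \rho^{\times k} \rtimes \Dd(\pi)$ and show $\sigma \cong \pi$ by a derivative analysis. Rewriting $\sigma \hookrightarrow \rho \rtimes (\rho^{\times k-1} \rtimes \Dd(\pi))$, Bernstein reciprocity produces a non-zero map $\overline{r}_{P_m}(\sigma) \to \rho \otimes (\rho^{\times k-1} \rtimes \Dd(\pi))$. Using the Geometric Lemma (\Cref{L:GL}) together with $\rho$-reducedness of $\Dd(\pi)$ and cuspidality of $\rho$, one isolates the unique $(\rho,\gl_m)$-isotypic contribution and extracts an embedding $\dd(\sigma) \hookrightarrow \rho^{\times k-1} \rtimes \Dd(\pi)$. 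By the inductive hypothesis, $\dd(\sigma) \cong \pi'$, and then $\sigma \hookrightarrow \rho \rtimes \pi'$ combined with the previous lemma forces $\sigma \cong \pi$. The multiplicity-one statement follows by the analogous derivative computation: the multiplicity of $\pi$ in the composition series of $\rho^{\times k} \rtimes \Dd(\pi)$ equals that of $\pi'$ in $\rho^{\times k-1} \rtimes \Dd(\pi)$, which is one by induction.

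For the identity $\Dd(\pi)^\lor = \Dd(\pi^\lor)$, I apply the MVW involution to the embedding $\pi \hookrightarrow \rho^{\times k} \rtimes \Dd(\pi)$: using that MVW is covariant, exact, commutes with $\rtimes$ and satisfies $\tau\mvw \cong \tau^\lor$ on irreducibles, this yields $\pi^\lor \hookrightarrow \rho^{\times k} \rtimes \Dd(\pi)^\lor$. Since MVW preserves $\dm$, we have $\dm(\pi^\lor) = k$, and the uniqueness of the derivative already established forces $\Dd(\pi^\lor) = \Dd(\pi)^\lor$. The quotient statement then follows by dualizing: applying the subrepresentation statement to $\pi^\lor$ yields $\pi^\lor \hookrightarrow \rho^{\times k} \rtimes \Dd(\pi)^\lor$ uniquely with multiplicity one, and taking contragredients gives $\pi$ as the unique quotient of $(\rho^\lor)^{\times k} \rtimes \Dd(\pi)$ with multiplicity one, using $(\rho \rtimes T)^\lor \cong \rho^\lor \rtimes T^\lor$ for the general-linear Levi factor.

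The main obstacle I expect is the derivative-extraction step, namely producing an embedding $\dd(\sigma) \hookrightarrow \rho^{\times k-1} \rtimes \Dd(\pi)$ from $\sigma \hookrightarrow \rho \rtimes (\rho^{\times k-1} \rtimes \Dd(\pi))$. This is where $\rho$-reducedness of $\Dd(\pi)$ is essential, ensuring that the Geometric Lemma filtration of $\overline{r}_{P_m}$ of the ambient representation contains only one non-trivial $(\rho,\gl_m)$-isotypic subquotient, namely the expected $\rho \otimes (\rho^{\times k-1} \rtimes \Dd(\pi))$; once this combinatorial analysis is in place, both uniqueness and multiplicity-one reduce mechanically to the inductive hypothesis, and the dual claims follow formally via MVW and contragredient.
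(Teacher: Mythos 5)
First, note that the paper itself gives no proof of this lemma: it is recalled verbatim from \cite{AtoMin} (the surrounding text says the section merely ``recalls'' the theory of derivatives for classical groups), so there is no in-paper argument to compare yours against. Your outline does follow the standard strategy for such statements, and the existence of the embedding, the MVW computation of $\Dd(\pi^\lor)$, and the passage to the quotient statement by contragredients are all fine (one small convention slip: for a \emph{sub}representation $\sigma\hra\id_{P_m}^{G}(\rho\otimes T)$ the paper's adjunctions give a nonzero map $r_{P_m}(\sigma)\to\rho\otimes T$ via the left adjoint, i.e.\ Frobenius reciprocity, not $\overline{r}_{P_m}$ via Bernstein reciprocity).

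The genuine gap is exactly at the step you flag as the main obstacle, and it does not close as written. From $\sigma\hra\rho\rtimes T$ with $T=\rho^{\times(k-1)}\rtimes\Dd(\pi)$, Frobenius reciprocity and \Cref{L:GL} give you $\sigma\hra\rho\rtimes\tau_1$ for some irreducible $\tau_1$ occurring in a filtration of $T$ (so $\dd(\sigma)=\tau_1$ is a \emph{subquotient} of $T$), not an embedding $\dd(\sigma)\hra T$. Your inductive hypothesis only identifies the socle of $T$ and the multiplicity of $\pi'$ in it; it says nothing about which other irreducibles occur in $T$, so you cannot conclude $\tau_1\cong\pi'$. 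The same issue breaks the multiplicity count: the identity ``$[\pi:\rho^{\times k}\rtimes\Dd(\pi)]=[\pi':\rho^{\times(k-1)}\rtimes\Dd(\pi)]$'' requires ruling out $\pi\in\mathrm{JH}(\rho\rtimes\tau_1)$ for constituents $\tau_1\neq\pi'$ of $T$. The standard repair is to strengthen the induction to control \emph{all} irreducible subquotients: every $\tau_1\in\mathrm{JH}(\rho^{\times j}\rtimes\Dd(\pi))$ satisfies $\dm(\tau_1)\le j$, with equality only for the socle --- precisely the extra clause the paper does include in the $\Gl_n$ version of this lemma (``for all other irreducible subquotients $\sigma$ \dots, $\drm(\sigma)<\drm(\pi)$'') but which your symplectic induction omits. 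Equivalently, one can avoid peeling altogether by the usual global count: using \Cref{L:GL}, $\rho$-reducedness of $\Dd(\pi)$ and $\rho\ncong\rho^\lor$, the $\rho^{\otimes k}\otimes\Dd(\pi)$-isotypic multiplicity in the iterated Jacquet module of $\rho^{\times k}\rtimes\Dd(\pi)$ is exactly $k!$, while every irreducible subrepresentation contributes at least $k!$ to it (apply $r_{(m,\dots,m)}$ to the surjection $r_{P_{km}}(\sigma)\sra\rho^{\times k}\otimes\Dd(\pi)$); this yields irreducibility of the socle, its identification, and multiplicity one in a single stroke. With either repair the rest of your argument goes through.
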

The following can be proven analogously to \Cref{L:derivativesintgl}.
\begin{lemma}\label{L:derivativesint}
    Let $\pi\in \Rep(\sp_n)$ of finite length, let $\rho\in\cus_m\setminus\cuu_m$ and $k\in \NN$. If $\pi$ is $\rho$-reduced, $\Lambda(\rho^{\times k},\pi)=0$ and if $\pi'$ is a subrepresentation or quotient of $\pi$, $I_{\rho^{\times k},\pi}$ restricts to $I_{\rho^{\times k},\pi'}$. If $\pi$ is irreducible, the map $I_{\rho^{\times k},\pi}$ factors through $\soc(\pi\times\rho^{\times k})$.
        In particular 
    \[\mathrm{cosoc}(\mathrm{Im}(I_{\rho^{\times k},\pi}))=\soc(\cosoc(\pi)\times \rho^{\times k}).\]
\end{lemma}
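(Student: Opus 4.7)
The plan is to follow the template of the proof of Lemma \ref{L:derivativesintgl} closely, substituting the symplectic Geometric Lemma (Lemma \ref{L:GL}) for its general-linear counterpart throughout and exploiting the hypothesis $\rho\not\cong\rho^\lor$ to eliminate the extra terms that arise for classical groups.

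First I would establish the vanishing $r_{\rho^{\times k},\pi}=0$. By Waldspurger's integral formula \cite[Theorem IV.1.1]{Wal03}, the leading term of $I_{\rho^{\times k},\pi}$ applied to a section supported in the big Bruhat cell reduces to computing a specific coinvariant of $\rho^{\times k}\rtimes\pi$. Expanding $r_{P_{km}}(\rho^{\times k}\rtimes\pi)$ via Lemma \ref{L:GL}, each subquotient $\Pi_{k_1,k_2,k_3}$ with $(k_1,k_2,k_3)\in\mathfrak{B}(km,km)$ distinct from $(km,0,0)$ forces either $\rho$ or, via the $\mathrm{Ad}(w_{k_1,k_2,k_3})$-twist acting on the $\Gl_{k_3}$-factor as $\pi_3\mapsto\pi_3^\lor$, $\rho^\lor$ to appear as a factor of some Jacquet module of $\pi$. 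Since $\pi$ is $\rho$-reduced and $\rho\not\cong\rho^\lor$, both cases are excluded and only the trivial Weyl element $(km,0,0)$ survives; this is exactly the vanishing condition of loc.\ cit.

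Next, for $\pi'$ a subrepresentation or quotient of $\pi$, Lemma \ref{L:interop}(i) gives that $I_{\rho^{\times k},\pi}$ either restricts to $I_{\rho^{\times k},\pi'}$ or vanishes on it according to the equality of pole orders, and the first step applied along a composition series of $\pi$ shows these pole orders all vanish, so the nontrivial alternative holds. For $\pi$ irreducible, combining Frobenius and Bernstein reciprocity with Lemma \ref{L:GL} yields
\[\dim\ho_{\Sp_{n+km}}\bigl(\rho^{\times k}\rtimes\pi,\,(\rho^\lor)^{\times k}\rtimes\pi\bigr)\le 1,\]
again using $\rho$-reducedness of $\pi$ and the non-self-duality of $\rho$ to admit only one matching of cuspidal supports. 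Since $I_{\rho^{\times k},\pi}$ is non-zero on $\soc(\rho^{\times k}\rtimes\pi)$ by the previous step, any simple subquotient of $\im(I_{\rho^{\times k},\pi})$ embeds into $\soc((\rho^\lor)^{\times k}\rtimes\pi)$, giving the desired factorization. The ``in particular'' clause then follows by applying the factorization to $\cosoc(\pi)$.

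The main technical obstacle is the combinatorics of the symplectic Geometric Lemma, which is indexed by triples $(k_1,k_2,k_3)$ rather than the pairs of the general-linear case. The extra index $k_3$ tracks the wrap-around through the symplectic block via the involution $\pi_3\mapsto\pi_3^\lor$ on the $\Gl_{k_3}$-factor, and this is precisely where one has to invoke $\rho\not\cong\rho^\lor$ to rule out contributions absent in the $\gl_n$ setting; a careful case-by-case analysis of each $\Pi_{k_1,k_2,k_3}$ is needed to confirm that the cuspidal-support argument kills every non-trivial triple.
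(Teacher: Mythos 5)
Your overall strategy is the one the paper intends: the paper's proof of this lemma is literally ``argue as in \Cref{L:derivativesintgl}, replacing the general-linear Geometric Lemma by \Cref{L:GL}'', and you correctly identify that the only new feature is the dualized $\Gl_{k_3}$-block and that the hypothesis $\rho\notin\cuu_m$ must be used to control it. The use of \cite[Theorem IV.1.1]{Wal03} for the vanishing of the pole order and of \Cref{L:interop}(i) for the restriction statement also matches the paper.

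There is, however, a concrete gap in the cuspidal-support bookkeeping, precisely at the step you yourself flag as the main technical obstacle. You assert that each nontrivial term $\Pi_{k_1,k_2,k_3}$ ``forces either $\rho$ or $\rho^\lor$ to appear as a factor of some Jacquet module of $\pi$'' and that ``since $\pi$ is $\rho$-reduced and $\rho\not\cong\rho^\lor$, both cases are excluded.'' The second case is \emph{not} excluded: $\rho$-reducedness of $\pi$ only forbids subquotients of the form $\rho\otimes\tau$ in $r_{P_m}(\pi)$, and a $\rho$-reduced representation need not be $\rho^\lor$-reduced (take $\pi=\soc(\rho^\lor\rtimes\sigma)$ with $\sigma$ cuspidal and $\rho^\lor\rtimes\sigma$ reducible: then $r_{P_m}(\pi)=\rho^\lor\otimes\sigma$). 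Now run the Frobenius-reciprocity computation for $\ho(\rho^{\times k}\rtimes\pi,(\rho^\lor)^{\times k}\rtimes\pi)$: a term with $k_2>0$ contributes exactly when a $\gl_{k_2}$-block $\gamma$ of $r_{P_{k_2}}(\pi)$ with cuspidal support in $\ZZ\rho^\lor$ is reflected into the $\gl_{km}$-factor and $\pi$ is a quotient of $\rho^{\times k_2/m}\rtimes\epsilon$ --- both of which are conditions on the $\rho^\lor$-derivative of $\pi$, not on its $\rho$-derivative. So the bound $\dim\ho\le 1$ does not follow from the hypotheses as you have invoked them. To close the gap you must either (a) carefully track, for each of the two adjunctions ($r_P$ applied to the source versus $\overline{r}_P$ applied to the target, the latter introducing an extra $\mathfrak{c}$-twist that interchanges the roles of $\rho$ and $\rho^\lor$), which reducedness condition is actually needed, and verify it is the one supplied by the hypothesis; or (b) give a separate argument eliminating the $k_2>0$ terms, for instance by analyzing the symplectic factor $\rho^{\times k_2/m}\rtimes\epsilon$ and the second adjunction. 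The same ambiguity infects the ``in particular'' clause: one must decide whether $\soc(\cosoc(\pi)\times\rho^{\times k})$ refers to the socle of the source $\rho^{\times k}\rtimes\cosoc(\pi)$ or of the target $(\rho^\lor)^{\times k}\rtimes\cosoc(\pi)$; these differ in general (already for $\nu\rtimes\bon$ in $\Sp_1$, where the image of the intertwining operator is the Langlands quotient, i.e.\ the socle of the target, not of the source), and your final sentence does not distinguish them.
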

The following can be proved analogously to \Cref{L:derrek}.
\begin{lemma}\label{L:derreksp}
    Let $\pi\in \Rep_n,\pi'\in \Rep(\sp_r)$ and $\rho\in \cus_m\setminus\cuu_m$ such that $\pi\times \rho^{\times k}$ is irreducible for all $k\in \NN$, $\pi$ is right-$\rho^\lor$-reduced, and $\pi'$ is left-$\rho$-reduced.
    Then for any irreducible subrepresentation $\sigma$ of $\pi\times\rho^{\times k}\rtimes\pi'$ the following are equivalent:
    \begin{enumerate}
        \item  $\sigma\hra\pi\rtimes\soc(\rho^k\rtimes \pi')).$
        \item $\sigma\hra\rho^{\times \dm(\pi)+k}\rtimes \soc(\Dd(\pi)\rtimes \pi').$
        \item $\sigma\hra\rho^{\times \dm(\pi)+k}\times \Dd(\pi)\rtimes \pi'.$ 
    \end{enumerate}
        In particular, if $\Dd(\pi)\rtimes \pi'$ has an irreducible socle, the so does $\pi\times\soc(\rho^k\rtimes \pi'))$.
\end{lemma}
Let us end this section by noting that all results here have analogous statements when one replaces the group $\sp_n$ by $\mp_n$.
\subsection{$\De$-derivatives for symplectic groups}\label{S:Deder}
The goal of this section is to prove the following theorem.
\begin{theorem}
    Let $\De=[a,b]_\rho,\, \rho\in\cuu_d$ be a segment of degree $m$ and $\pi\in \irr(\sp_n)$. Then $\soc(\Z(\De)\rtimes\pi)$ is multiplicity-free. If $\tau\hra \Z(\De)\rtimes\pi$ is an irreducible subrepresentation and $\pi'$ is a second irreducible representation such that $\tau\hra \Z(\De)\rtimes\pi'$, we have $\pi\cong \pi'$. 
\end{theorem}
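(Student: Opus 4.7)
The plan is to prove the theorem by induction on the length $b - a + 1$ of the segment $\De = [a,b]_\rho$.

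For the base case $b = a$, one has $\Z(\De) = \rho\nu_\rho^a$. If $a \neq 0$, then $\rho\nu_\rho^a$ is non-self-dual cuspidal, and the Atobe-Minguez lemma recalled in the introduction gives at once that $\rho\nu_\rho^a \rtimes \pi$ has irreducible socle and that $\pi$ is determined by any irreducible subrepresentation. The remaining case $a = 0$, where $\rho$ itself is self-dual cuspidal, is the main obstacle. Here the Atobe-Minguez derivative theory is unavailable, and one must prove an analogue of the main technical lemma (stated in the introduction for the character $\nu_m^s$) with $\rho$ in place of $\nu_m^s$. This should go along the same lines: the normalized intertwining operator $I_{\rho,\pi}\colon \rho \rtimes \pi \to \rho \rtimes \pi$ (well-defined since $\rho\cc \cong \rho$) is used to show that $\soc(\rho \rtimes \pi)$ has length at most two with non-isomorphic summands, and a Jacquet module computation via the geometric lemma (\Cref{L:GL}) recovers $\pi$ from the $\rho$-isotypic component of $r_{P_d}(\tau)$.

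For the inductive step $b > a$, at least one of $a, b$ is nonzero. I would treat the case $b \neq 0$; the case $a \neq 0$ (in particular $b = 0$) is handled analogously via the left-end segment embedding together with the Atobe-Minguez derivative at $\rho\nu_\rho^a$. By \Cref{T:propgl}(iii), $\Z(\De) \hra \Z(\De^-) \times \rho\nu_\rho^b$ with $\De^- = [a, b-1]_\rho$, which induces
\[ \Z(\De) \rtimes \pi \hra \Z(\De^-) \rtimes \sigma, \qquad \sigma \coloneq \rho\nu_\rho^b \rtimes \pi. \]
Since $\rho\nu_\rho^b$ is non-self-dual cuspidal, $\sigma$ is irreducible by the Atobe-Minguez lemma, and $\pi$ is recovered as the $\rho\nu_\rho^b$-derivative of $\sigma$.

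The inductive hypothesis applied to $\De^-$ and $\sigma$ yields that $\soc(\Z(\De^-) \rtimes \sigma)$ is multiplicity-free and that $\sigma$ is determined by any irreducible subrepresentation. Since $\soc(\Z(\De) \rtimes \pi)$ sits as a semisimple subrepresentation of $\soc(\Z(\De^-) \rtimes \sigma)$, it inherits multiplicity-freeness. For the uniqueness of $\pi$: if $\tau \hra \Z(\De) \rtimes \pi$ and also $\tau \hra \Z(\De) \rtimes \pi'$, both embeddings factor through $\Z(\De^-) \rtimes \sigma$ and $\Z(\De^-) \rtimes \sigma'$ respectively, with $\sigma' \coloneq \rho\nu_\rho^b \rtimes \pi'$; induction forces $\sigma \cong \sigma'$, whence $\pi \cong \pi'$ by the $\rho\nu_\rho^b$-derivative theory. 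The hard part will be the self-dual base case $a = 0$, requiring the aforementioned extension of the main technical lemma.
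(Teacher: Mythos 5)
Your overall architecture (induct on the length of $\De$, reduce to a cuspidal step via \Cref{T:propgl}(iii), isolate the self-dual case as the hard part) matches the spirit of the paper, but your inductive step contains a genuine gap. You set $\sigma\coloneq\rho\nu_\rho^b\rtimes\pi$ and assert that $\sigma$ is irreducible ``by the Atobe--M{\'i}nguez lemma'' because $\rho\nu_\rho^b$ is a non-self-dual cuspidal. That lemma only gives that $\rho\nu_\rho^b\rtimes\pi$ has an \emph{irreducible socle}; the full induced representation is reducible in general (already $\nu\rtimes\bon_{\sp_0}$ for $\mathrm{SL}_2$ has length two, and reducibility points occur for arbitrary $\pi$). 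Without irreducibility of $\sigma$ you cannot invoke the inductive hypothesis on $\Z(\De^-)\rtimes\sigma$, and replacing $\sigma$ by its socle does not obviously work either: an irreducible $\tau\hra\Z(\De^-)\times(\rho\nu_\rho^b\rtimes\pi)$ need not land in the subobject $\Z(\De^-)\rtimes\soc(\rho\nu_\rho^b\rtimes\pi)$. The paper avoids this by stripping the cuspidal off the \emph{left} end of the segment, so that it sits on the outside of the whole induced representation, $\Z(\De)\rtimes\pi\hra\rho\nu_\rho^a\times\Z({}^-\De)\rtimes\pi$, and then uses the commutation statement of \Cref{L:derreksp} to get $\soc(\Z(\De)\rtimes\pi)=\soc((\rho\nu_\rho^a)^{\times(1+d)}\rtimes\soc(\Z({}^-\De)\rtimes\mathcal{D}_{\rho\nu_\rho^a}^{\mathrm{max}}(\pi)))$; there the Atobe--M{\'i}nguez socle irreducibility is applied in exactly the form in which it is true. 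Your right-end reduction would need an analogous (and nontrivial) commutation lemma to be salvaged.

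The second issue is that the part you defer --- the self-dual case --- is where essentially all of the work lies, and it is needed for self-dual segments of arbitrary length, not only for the cuspidal $\rho$ itself: the case distinction in the paper is governed by whether $a+b=0$ (segment self-dual), so self-dual segments $[-b,b]_\rho$ of every length occur as terminal cases of the reduction. The paper's treatment is not a routine transcription of the technical lemma for $\nu_m^s$: it writes $\pi\hra\Z([s_1,b]_\rho)\times\cdots\times\Z([s_k,b]_\rho)\rtimes\pi'$, proves the Jacquet-module rigidity statement \Cref{L:ext}, bounds $\dim\ho(\Z(\De)\rtimes\pi,\Z(\De)\rtimes\pi)\le 2$ via Bernstein reciprocity and the Geometric Lemma (\Cref{L:2dim}), upgrades this with the MVW involution (\Cref{L:2dimstronger}), and establishes the uniqueness of $\pi$ by the Lapid--M{\'i}nguez intertwining-operator argument (\Cref{L:deruniqueseg}); the remaining non-self-dual configurations are handled in \Cref{L:casenotneeded} and assembled in \Cref{C:secrep}. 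Your sketch names the right tools (intertwining operators, Jacquet modules) but does not supply this argument, so as written the proposal both rests on a false irreducibility claim in the inductive step and leaves the decisive case unproved.
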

This theorem will allow us to define the map
\[\mathcal{D}_\De\colon \irr(\sp_{n+m})\ra \irr(\sp_n),\]
sending $\tau$ to the irreducible representation $\pi$ with $\tau\hra \Z(\De)\rtimes\pi$, if it exists, and $0$ otherwise.
We start with the following corollary of \Cref{L:lapminint}, see also \cite[Lemma 2.9.3]{DroKudRa}.
Throughout the section we will only prove the results in the case of $\sp_n$, however it is just a matter of adjusting the notation to carry out the same proofs for its metaplectic cover.
\begin{lemma}\label{L:ext}
    Let $a\le b\in \NN$ and $\ain{a=s_1\le\ldots\le s_k}{a}{b}$.
    Let $\rho\in\cus$ and set $\De_i\coloneq [s_i,b]_\rho$ for $\ain{i}{1}{k}$.
    Denote by $m=\deg(\De_1)$ and let $n\coloneq \deg(\De_2)+\ldots+\deg(\De_k)$. Let $\Pi\in \Rep_{n}$ such that $\Z(\De_1)\otimes \Pi\hra\overline{r}_{(m,n)}(\Z(\De_1)\times\ldots\times\Z(\De_k))$. Then $\Pi\cong \Z(\De_2)\times\ldots\times \Z(\De_k)$.
\end{lemma}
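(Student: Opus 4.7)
Since every $\De_i$ ends at $b$, no $\De_i$ can precede any $\De_j$ (such a precedence would require $b + 1 \le b$). Hence the segments are pairwise unlinked, and \Cref{L:irredgl} implies both $\Pi_{\mathrm{full}} := \Z(\De_1) \times \cdots \times \Z(\De_k)$ and $\Sigma := \Z(\De_2) \times \cdots \times \Z(\De_k)$ are irreducible.

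By Frobenius reciprocity applied to the opposite parabolic $\overline{P_{(m, n)}}$ (noting that induction from this parabolic reverses the order of Levi factors), the hypothesis $\Z(\De_1) \otimes \Pi \hra \bar{r}_{(m, n)}(\Pi_{\mathrm{full}})$ corresponds to a non-zero morphism $\Pi \times \Z(\De_1) \to \Pi_{\mathrm{full}}$, which is surjective by irreducibility of the target. Applying Frobenius reciprocity once more (this time to $P_{(n, m)}$), we obtain an embedding $\Pi \otimes \Z(\De_1) \hra r_{(n, m)}(\Pi_{\mathrm{full}})$ in $\Rep(\gl_n \times \gl_m)$.

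The main step is then a Jordan--H\"older multiplicity count. By iterated application of the Leibniz formula (Geometric Lemma), the subquotients of $r_{(n, m)}(\Pi_{\mathrm{full}})$ are indexed by choices of a ``left-right split'' of each segment $\De_i$ into $[s_i, s_i + l_i - 1]_\rho$ and $[s_i + l_i, b]_\rho$, and take the form
\[\Bigl(\prod_i \Z([s_i, s_i + l_i - 1]_\rho)\Bigr) \otimes \Bigl(\prod_i \Z([s_i + l_i, b]_\rho)\Bigr),\]
the first factor in $\gl_n$ and the second in $\gl_m$. For the second factor to admit $\Z([a, b]_\rho) = \Z(\De_1)$ as a Jordan--H\"older factor, a cuspidal support count is decisive: each non-empty right piece $\Z([s_i + l_i, b]_\rho)$ ends at $\rho \nu_\rho^b$, so at most one such piece can be non-empty, and it must equal $\De_1$. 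This forces $l_i = 0$ for some $i$ with $s_i = a$ and $l_j = b - s_j + 1$ otherwise, yielding first factor isomorphic to $\Sigma$; hence the Jordan--H\"older multiplicity of $\Sigma \otimes \Z(\De_1)$ in $r_{(n, m)}(\Pi_{\mathrm{full}})$ is exactly one.

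Every Jordan--H\"older factor of $\Pi \otimes \Z(\De_1)$ must appear as a Jordan--H\"older factor of $r_{(n, m)}(\Pi_{\mathrm{full}})$ whose second tensor factor is $\Z(\De_1)$, hence all such factors are $\Sigma \otimes \Z(\De_1)$, and by multiplicity one the length of $\Pi$ is at most one. Non-vanishing of $\Pi$ then forces $\Pi \cong \Sigma$. The main obstacle is the combinatorial verification that the shared right endpoint $b$ forces essentially the trivial ``left-right split'' in the Geometric Lemma, yielding the multiplicity-one conclusion.
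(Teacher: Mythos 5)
Your reduction to a Jordan--H\"older count in $r_{(n,m)}(\Pi_{\mathrm{full}})$ is sound as far as it goes, and the combinatorial core (every non-empty right piece $[s_i+l_i,b]_\rho$ contains $\rho\nu_\rho^b$, so at most one can be non-empty, forcing the second tensor factor to be exactly $\Z(\De_1)$ and the first to be exactly $\Sigma$) correctly shows that every irreducible constituent of $\Pi$ is isomorphic to $\Sigma$ --- this is also the first step of the paper's proof. The gap is in your claim that the multiplicity of $\Sigma\otimes\Z(\De_1)$ is \emph{exactly one}. The hypotheses only require $a=s_1\le s_2\le\ldots\le s_k$, so several $s_i$ may equal $a$, i.e.\ $\De_1$ may be repeated among the $\De_i$; this case genuinely occurs in the applications (in \Cref{L:2dim} one applies the lemma to $\Z(\De)\times\Z([s_1,b]_\rho)\times\cdots$ with possibly $s_1=a$). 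Each index $i$ with $s_i=a$ gives a \emph{distinct} cell of the Geometric Lemma filtration (the one with $l_i=0$ and all other segments sent entirely to the left block), and each such cell contributes one copy of $\Sigma\otimes\Z(\De_1)$, so the multiplicity is $\#\{i:s_i=a\}$. Concretely, for $k=2$ and $\De_1=\De_2=\De$ the multiplicity of $\Z(\De)\otimes\Z(\De)$ in $r_{(m,m)}(\Z(\De)\times\Z(\De))$ is $2$, and your argument only bounds the length of $\Pi$ by $2$: it cannot rule out that $\Pi$ is a non-split self-extension of $\Sigma$ sitting inside the Jacquet module. (Already for $\bon_1\times\bon_1$ on $\gl_2$ the Jacquet module is a non-split self-extension of $\bon\otimes\bon$, so the multiplicity really does exceed the length of the isotypic subrepresentation; one must argue with the module structure, not just constituents.)

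The paper closes exactly this gap with the intertwining operator: by \Cref{L:lapminint} the operator $I_{\Z(\De_1),\Pi}$ factors through the surjection onto $\Pi_{\mathrm{full}}$ with image equal to $\Pi_{\mathrm{full}}$, and since all constituents of $\Pi$ are isomorphic (so all orders of poles agree), \Cref{L:interop}(1) gives that the image of $I_{\Z(\De_1),\Pi}$ has length at least that of $\Pi$; irreducibility of $\Pi_{\mathrm{full}}$ then forces $\Pi$ to have length one. You would need this (or some comparable structural input beyond the Grothendieck group) to finish. A secondary, fixable point: the embedding $\Z(\De_1)\otimes\Pi\hra\overline{r}_{(m,n)}(\Pi_{\mathrm{full}})$ corresponds under Bernstein reciprocity to a map $\Z(\De_1)\times\Pi\ra\Pi_{\mathrm{full}}$ (not $\Pi\times\Z(\De_1)\ra\Pi_{\mathrm{full}}$), though this does not affect the multiplicity count, which only needs that $\Pi\otimes\Z(\De_1)$ is a subquotient of $r_{(n,m)}(\Pi_{\mathrm{full}})$.
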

\begin{proof}
    By \Cref{L:lapminint} we have that \[\Z(\De_1)\times\ldots\times \Z(\De_k)=\im(I_{\Z(\De_1),\Pi}).\] 
As a consequence of the Geometric Lemma, we have that all irreducible subquotients of $\Pi$ are isomorphic to $\Z(\De_2)\times\ldots\times \Z(\De_k)$.
But \Cref{L:interop} (1) shows then that for any short exact sequence $0\ra\Pi_1\ra\Pi\ra\Pi_2\ra 0$, we have the following commutative diagram
    \[\begin{tikzcd}
        0\arrow[r]&\Z(\De_1)\times\Pi_1\arrow[r]\arrow[d,"I_{\Z(\De_1),\Pi_1}"]&\Z(\De_1)\times\Pi\arrow[r]\arrow[d,"I_{\Z(\De_1),\Pi}"]&\Z(\De_1)\times\Pi_2\arrow[r]\arrow[d,"I_{\Z(\De_1),\Pi_2}"]&0\\
        0\arrow[r]&\Pi_1\times\Z(\De_1)\arrow[r]&\Pi\times \Z(\De_1)\arrow[r]&\Pi_2\times\Z(\De_1)\arrow[r]&0.
    \end{tikzcd}\]
    
    This shows that in this case the image of $I_{\Z(\De_1),\Pi}$ has to have at least the same length as $\Pi$, which implies that $\Pi$ is irreducible.
\end{proof}
We call a segment $\De=[a,b]_\rho$ self-dual if $\rho \nu_\rho^a\cong \rho^\lor \nu_\rho^{-b}$, or equivalently, if $\Z(\De)^\lor\cong \Z(\De)$.
\begin{lemma}\label{L:2dim}
     Let $\pi\in \irr(\sp_n)$ and $\De$ a self dual segment of degree $m$. Then
    \[\dim_\C\ho_{\sp_{m+n}}(\Z(\De)\rtimes\pi,\Z(\De)\rtimes \pi)\le 2.\]
\end{lemma}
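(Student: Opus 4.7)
The plan is to apply Bernstein reciprocity, which gives
\[
\ho_{\sp_{m+n}}(\Z(\De)\rtimes\pi,\Z(\De)\rtimes\pi)\cong \ho_{\gl_m\times\sp_n}\bigl(\Z(\De)\otimes\pi,\overline{r}_{P_m}(\Z(\De)\rtimes\pi)\bigr).
\]
Since $\Z(\De)\otimes\pi$ is irreducible, it suffices to show that $\Z(\De)\otimes\pi$ appears as a composition factor of $\overline{r}_{P_m}(\Z(\De)\rtimes\pi)$ with multiplicity at most $2$. I would analyze this Jacquet module via the Geometric Lemma (\Cref{L:GL}, in its form for $\overline{P}_m$ obtained by conjugation), which provides a filtration by subquotients $\Pi_{k_1,k_2,k_3}$ indexed by $(k_1,k_2,k_3)\in\mathfrak{B}(m,m)$. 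The $\gl_m$-factor of $\Pi_{k_1,k_2,k_3}$ takes the form $\tau_1\times\tau_2'\times\tau_3^\vee$ and the $\sp_n$-factor the form $\tau_2\rtimes\sigma'$, where $\tau_1\otimes\tau_2\otimes\tau_3$ is a subquotient of $r_{k_1,k_2,k_3}(\Z(\De))$ and $\tau_2'\otimes\sigma'$ is a subquotient of $r_{P_{m-k_1-k_3}}(\pi)$.

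The two boundary cells $(m,0,0)$ and $(0,0,m)$ each contribute one copy of $\Z(\De)\otimes\pi$: the first tautologically, and the second via $\Z(\De)^\vee\otimes\pi\cong \Z(\De)\otimes\pi$, using the self-duality of $\De$. These two copies correspond under Bernstein reciprocity precisely to the identity endomorphism of $\Z(\De)\rtimes\pi$ and to the normalized intertwining operator $I_{\Z(\De),\pi}\colon \Z(\De)\rtimes\pi\to \Z(\De)\cc\rtimes\pi\cong\Z(\De)\rtimes\pi$. It thus remains to rule out a contribution of $\Z(\De)\otimes\pi$ from every other cell. For cells with $k_2=0$ and $0<k_1<m$, writing $k_1=dj$ with $\rho\in\cus_d$, \Cref{T:propgl}(vii) gives $\tau_1\cong \Z([a,a+j-1]_\rho)$ and $\tau_3\cong \Z([a+j,b]_\rho)$, and the self-duality hypothesis (equivalent to $\rho^\vee\cong\rho\nu_\rho^{a+b}$) yields $\tau_3^\vee\cong \Z([a,b-j]_\rho)$. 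Hence the $\gl_m$-factor equals $\Z([a,a+j-1]_\rho)\times \Z([a,b-j]_\rho)$, whose cuspidal support as a multiset contains the character $\rho\nu_\rho^a$ with multiplicity $2$, and therefore cannot have $\Z(\De)$ as a composition factor.

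The main obstacle is the analysis of the cells with $k_2>0$: here $\tau_2'$ is drawn from a Jacquet module of $\pi$, so the cuspidal support of the $\gl_m$-factor depends on $\pi$, and one cannot exclude $\Z(\De)$ by a support comparison on $\Z(\De)$ alone. To handle these cells I would combine a segment-level cuspidal-support argument (using \Cref{T:propgl}(v)-(viii) to enumerate how $\Z(\De)$ could arise as a composition factor of $\tau_1\times\tau_2'\times\tau_3^\vee$) with the constraint that the $\sp_n$-factor $\tau_2\rtimes\sigma'$ must contain $\pi$ as a composition factor. Exploiting the $\rho$-derivative formalism of \Cref{S:dersp} together with \Cref{L:derivativesint}, one expects to show that any hypothetical additional contribution would force $\pi$ to have an incompatible $\rho$-derivative structure. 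Once this step is in place, the bound $\dim\ho_{\sp_{m+n}}(\Z(\De)\rtimes\pi,\Z(\De)\rtimes\pi)\le 2$ follows at once.
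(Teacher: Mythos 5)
Your overall framework (Bernstein reciprocity plus the Geometric Lemma, with the two boundary cells $(m,0,0)$ and $(0,0,m)$ accounting for the bound $2$) is in the right spirit, but the reduction you make at the outset is not valid: it is \emph{not} true in general that $\Z(\De)\otimes\pi$ occurs in $\overline{r}_{P_m}(\Z(\De)\rtimes\pi)$ with composition-factor multiplicity at most $2$, so the cells with $k_2>0$ cannot be ``ruled out'' by any cuspidal-support or derivative argument --- they genuinely contribute. Concretely, take $\De=[0,0]_\rho$ with $\rho\in\cuu_d$ and suppose $\pi=\rho\rtimes\sigma$ is irreducible. The cell $(0,d,0)$ produces the constituents $\tau_2'\otimes(\rho\rtimes\sigma')$ as $\tau_2'\otimes\sigma'$ runs over the constituents of $r_{P_d}(\pi)$; since $r_{P_d}(\rho\rtimes\sigma)$ contains $\rho\otimes\sigma$ with multiplicity at least $2$ (from its own cells $(d,0,0)$ and $(0,0,d)$ and the self-duality of $\rho$), this cell alone contributes at least two further copies of $\rho\otimes\pi$, for a total composition-factor multiplicity of at least $4$. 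The Hom-space is of course still at most $2$, but that is because only the socle of the Jacquet module matters, and your plan gives no handle on which of these constituents survive into the socle. This is the essential difficulty, and the sentence ``one expects to show that any hypothetical additional contribution would force $\pi$ to have an incompatible $\rho$-derivative structure'' is precisely where the argument cannot be completed as stated.

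The paper circumvents this by not applying reciprocity to $\Z(\De)\rtimes\pi$ directly. Instead it first embeds $\pi\hra\Z([s_1,b]_\rho)\times\cdots\times\Z([s_k,b]_\rho)\rtimes\pi'$ with the $s_i$ chosen so that no further embedding $\pi\hra\Z([s',b]_\rho)\rtimes\pi''$ with $s'\in\{a,\ldots,b\}$ exists, and then bounds
\[
\dim_\C\ho\bigl(\Z(\De)\rtimes\pi,\ \Z(\De)\rtimes\pi\bigr)\cdot\dim_\C\ho\bigl(\pi,\ \Z([s_1,b]_\rho)\times\cdots\rtimes\pi'\bigr)
\]
by the Hom-space into the larger induced representation. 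There Bernstein reciprocity and the Geometric Lemma apply to a $\gl$-product of $\Z$'s whose $\Z(\De)$-equivariant Jacquet-module piece is irreducible by \Cref{L:ext}, and the minimality of the $s_i$ kills exactly the $k_2>0$ cells that derail your computation; the surviving two cells give the factor $2$. If you want to salvage your route, you would have to replace the composition-factor count by a direct analysis of the socle of $\overline{r}_{P_m}(\Z(\De)\rtimes\pi)$, which is essentially what the paper's auxiliary embedding is engineered to avoid.
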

\begin{proof}
    Write $\De=[a,b]_\rho$ and write $\pi\hra \Z([s_1,b]_\rho)\times \ldots \times \Z([s_k,b]_\rho)\rtimes \pi'$ with $a\le s_1\le s_2\le\ldots\le b$ and there exists no $\ain{s'}{a}{b}$ and suitable $\pi''$ with $\pi\hra\Z([s',b]_\rho)\rtimes\pi''$. We write $r=\deg(\Z([s_1,b]_\rho))+ \ldots + \deg(\Z([s_k,b]_\rho))$.
    We have on the one hand 
    \[\dim_\C\ho_{\sp_n}(\pi,\Z([s_1,b]_\rho)\times \ldots \times \Z([s_k,b]_\rho)\rtimes \pi')\cdot \dim_\C\ho_{\sp_{m+n}}(\Z(\De)\rtimes\pi,\Z(\De)\rtimes \pi)\le\]\[\le \dim_\C\ho_{\sp_{m+n}}(\Z(\De)\rtimes\pi,\Z(\De)\times \Z([s_1,b]_\rho)\times \ldots \times \Z([s_k,b]_\rho)\rtimes \pi').\]
    On the other hand, Bernstein reciprocity, the Geometric Lemma and the assumption on $\pi''$ imply that 
    \[ \dim_\C\ho_{\sp_{m+n}}(\Z(\De)\rtimes\pi,\Z(\De)\times \Z([s_1,b]_\rho)\times \ldots \times \Z([s_k,b]_\rho)\rtimes \pi')\le\]
    \[2\dim_\C\ho_{\gl_m\times \sp_{n}}(\Z(\De)\otimes\pi,\]\[ \overline{r}_{m,r}(\Z(\De)\times \Z([s_1,b]_\rho)\times \ldots \times \Z([s_k,b]_\rho))\rtimes \pi').\]
    By \Cref{L:ext} the above is equal to 
      \[2\dim_\C\ho_{\gl_m\times \sp_{n}}(\pi,\Z([s_1,b]_\rho)\times \ldots \times  \Z([s_k,b]_\rho)\rtimes \pi').\]
We thus obtain from the first inequality that \[\dim_\C\ho_{\sp_{m+n}}(\Z(\De)\rtimes\pi,\Z(\De)\rtimes \pi)\le 2.\] 
\end{proof}
\begin{lemma}\label{L:2dimstronger}
    Let $\pi\in \irr(\sp_n)$ and $\De$ a self dual segment of degree $m$. Then
    \[\dim_\C\ho_{\sp_{m+n}}(\Z(\De)\rtimes\pi,\Z(\De)\rtimes \pi)=2\] unless $\Z(\De)\rtimes \pi$ is irreducible. If $\soc(\Z(\De)\rtimes \pi)$ is not irreducible, $\Z(\De)\rtimes\pi$ is semi-simple of length $2$ and the two summands are non-isomorphic.
\end{lemma}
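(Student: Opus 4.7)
The plan is to use the upper bound from \Cref{L:2dim} together with a careful analysis of the self-intertwining operator. Since \Cref{L:2dim} already gives $\dim_\C\ho_{\sp_{m+n}}(\Z(\De)\rtimes\pi, \Z(\De)\rtimes\pi)\le 2$, and Schur's lemma gives $\dim = 1$ in the irreducible case, the task reduces to producing two linearly independent endomorphisms whenever $\Z(\De) \rtimes \pi$ is reducible, and to upgrading this to a direct sum decomposition when the socle has length at least two.

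The central tool will be the self-intertwining operator $I = I_{\Z(\De), \pi}$ introduced in the paragraph after \Cref{L:interop}, which is non-zero and satisfies $I \circ I = c \cdot \mathrm{id}$ for some $c \in \C$. First I would carry out a case analysis on $c$. If $c = 0$, then $I$ is nilpotent and non-zero, hence not a scalar multiple of $\mathrm{id}$, so $\{\mathrm{id}, I\}$ is linearly independent. If $c \neq 0$, then $I$ is invertible and, after rescaling, an involution; if its $\pm 1$-eigenspaces are both non-zero they yield a direct sum decomposition of $\Z(\De) \rtimes \pi$ and witness linear independence of $\{\mathrm{id}, I\}$. The remaining sub-case -- $c \neq 0$ with $I$ itself a scalar -- I would handle by invoking the segment-level analog of \Cref{L:derivativesint}, namely $\cos(\im(I_{\Z(\De),\pi})) \cong \soc(\Z(\De)\rtimes\pi)$: since a scalar $I$ is surjective, this would give $\cos(\Z(\De)\rtimes\pi) \cong \soc(\Z(\De)\rtimes\pi)$, and the composition $\Z(\De)\rtimes\pi \sra S \hra \Z(\De)\rtimes\pi$ through any irreducible $S$ common to socle and cosocle is an endomorphism with proper image, hence not a scalar multiple of $\mathrm{id}$.

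For the semi-simplicity statement, assume $\soc(\Z(\De)\rtimes\pi)$ contains two distinct irreducible summands $S_1, S_2$. I would first rule out $c = 0$: there $\im(I)$ would be a proper subrepresentation with cosocle $S_1 \oplus S_2$ by the segment analog of \Cref{L:derivativesint}, producing via $\Z(\De)\rtimes\pi \to \im(I) \sra S_i \hra \Z(\De)\rtimes\pi$ two endomorphisms that together with $\mathrm{id}$ exhibit three linearly independent elements of the endomorphism algebra, contradicting the upper bound of $2$. Hence $c \neq 0$ and both $S_1, S_2$ also appear in $\cos(\Z(\De)\rtimes\pi)$, so the compositions $\phi_i\colon \Z(\De)\rtimes\pi \sra S_i \hra \Z(\De)\rtimes\pi$ are well-defined and linearly independent. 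The bound $\dim\mathrm{End}\le 2$ then forces a relation $\mathrm{id} = a\phi_1 + b\phi_2$, so $\Z(\De)\rtimes\pi = \im(\mathrm{id}) \subseteq S_1 \oplus S_2$, yielding $\Z(\De)\rtimes\pi = S_1 \oplus S_2$ semi-simple of length $2$. The summands must be non-isomorphic: otherwise $\Z(\De)\rtimes\pi \cong S \oplus S$ would give $\dim_\C \mathrm{End} \ge 4$, again contradicting \Cref{L:2dim}.

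The main technical hurdle will be rigorously establishing the segment analog of \Cref{L:derivativesint}, i.e., that $\cos(\im(I_{\Z(\De),\pi})) \cong \soc(\Z(\De)\rtimes\pi)$. The argument should parallel that of \Cref{L:derivativesint}, but now requires combining the Geometric Lemma \Cref{L:GL} with the vanishing of Jacquet functors $r_{k_1,k_2,k_3}(\Z(\De))$ at non-admissible indices supplied by \Cref{T:propgl}(7)--(8), in order to confirm that only the $(m,0,0)$ and $(0,0,m)$ subquotients of the Jacquet filtration contribute to $\ho(\Z(\De) \otimes \pi, \overline{r}_{P_m}(\Z(\De)\rtimes\pi))$.
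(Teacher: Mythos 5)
Your overall architecture (Schur's lemma in the irreducible case, the upper bound from \Cref{L:2dim}, and the clever step $\mathrm{id}=a\phi_1+b\phi_2\Rightarrow\Z(\De)\rtimes\pi=S_1\oplus S_2$) is fine, but the proof has a genuine gap at the point you yourself flag as the "main technical hurdle": every non-trivial case of your argument leans on the identity $\cosoc(\im(I_{\Z(\De),\pi}))\cong\soc(\Z(\De)\rtimes\pi)$ for a \emph{self-dual} segment $\De$. In the paper this is exactly \Cref{T:segint}, which is proved \emph{after} the present lemma and whose self-dual case is deduced \emph{from} \Cref{L:2dimstronger}; so as written your route is circular. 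Nor can the identity be obtained by "paralleling" \Cref{L:derivativesint}: that proof rests on two features that are specific to the non-self-dual situation, namely that $\pi$ is $\rho$-reduced (so the intertwining operator has no pole and is computed by a convergent integral over the open cell) and, crucially, that the relevant Hom-space $\ho(\rho^{\times k}\rtimes\pi,\pi^{?}\rtimes\rho^{\times k})$ is at most one-dimensional, which forces the unique map to factor through the socle. For self-dual $\De$ neither holds --- the Hom-space can be two-dimensional (that is the very content of the lemma you are proving), so knowing which Geometric-Lemma strata contribute only reproduces the bound $\le 2$ of \Cref{L:2dim} and does not pin down the cosocle of the image of $I_{\Z(\De),\pi}$.

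The gap is repairable, and the repair makes the intertwining operator unnecessary: this is what the paper does. Since $\Z(\De)$ is self-dual, the MVW involution gives $(\Z(\De)\rtimes\pi)^{\mathrm{MVW}}\cong\Z(\De)\rtimes\pi^\lor\cong(\Z(\De)\rtimes\pi)^\lor$, whence each irreducible $\tau$ occurs in the cosocle of $\Z(\De)\rtimes\pi$ with the same multiplicity $c_\tau$ as in the socle. The resulting compositions $\Z(\De)\rtimes\pi\sra\tau\hra\Z(\De)\rtimes\pi$ give $\dim_\C\mathrm{End}\ge\sum_\tau c_\tau^2$, and $\ge\sum_\tau c_\tau^2+1$ when the representation is not semi-simple (the identity is surjective while none of these maps is). Comparing with the bound $2$ of \Cref{L:2dim} immediately yields all assertions of the lemma, including the semi-simplicity and multiplicity-one statements that you obtain by your $\mathrm{id}=a\phi_1+b\phi_2$ argument. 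If you insist on your case analysis in $c=I\circ I$, you should at least replace every appeal to $\cosoc(\im(I))\cong\soc(\Z(\De)\rtimes\pi)$ by this MVW comparison of socle and cosocle; as it stands, the sub-case "$c\neq 0$ with $I$ scalar" and both steps of your semi-simplicity argument are unsupported.
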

\begin{proof}
Let $c_{\tau}$ be the multiplicity of an irreducible representation $\tau$ in the socle of $\Z(\De)\rtimes\pi$.
The MVW-involution gives the lower bound
\[\dim_\C\ho_{\sp_{m+n}}(\Z(\De)\rtimes\pi,\Z(\De)\rtimes \pi)\ge \sum_\tau c_\tau^2.\]
Moreover, if $\soc(\Z(\De)\rtimes\pi)\neq \Z(\De)\rtimes\pi$, the identity $\Z(\De)\rtimes\pi\ra \Z(\De)\rtimes \pi$ is not accounted for in the above inequality, hence if  $\Z(\De)\rtimes\pi$ is not semi-simple
\[\dim_\C\ho_{\sp_{m+n}}(\Z(\De)\rtimes\pi,\Z(\De)\rtimes \pi)\ge \sum_\tau c_\tau^2+1.\]
The claim then follows from \Cref{L:2dim}.
\end{proof}
\begin{lemma}\label{L:deruniqueseg}
     Let $\pi\in \irr(\sp_n)$ and $\De$ a self dual segment of degree $m$. Let $\tau$ be an irreducible subrepresentation of $\Z(\De)\rtimes \pi$. If $\sigma\in \irr(\sp_n)$ with $\tau\hra \Z(\De)\rtimes \sigma$ then $\pi\cong \sigma$. 
\end{lemma}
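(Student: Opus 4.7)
The plan is to adapt the argument in the proof of \Cref{L:2dim}.

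First, applying the MVW-involution to $\tau\hra\Z(\De)\rtimes\sigma$ and taking contragredients---using the self-duality of $\De$ (so $\Z(\De)^\lor\cong\Z(\De)$) together with the identification of induction from the opposite parabolic recorded in \Cref{S:symplecticgroup} (namely $\id_{\overline{P_m}}^{\sp_{n+m}}(\Z(\De)\otimes\sigma)\cong\Z(\De)\rtimes\sigma$)---yields a surjection $\Z(\De)\rtimes\sigma\sra\tau$. Composing with the given embedding $\tau\hra\Z(\De)\rtimes\pi$ produces a nonzero morphism $f\colon\Z(\De)\rtimes\sigma\to\Z(\De)\rtimes\pi$.

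Second, fix a maximal embedding $\pi\hra\Pi\rtimes\pi'$ exactly as in the proof of \Cref{L:2dim}, where $\Pi=\Z([s_1,b]_\rho)\times\cdots\times\Z([s_k,b]_\rho)$ with $a\le s_1\le\cdots\le s_k\le b$ and $\pi'$ admits no further embedding of the form $\Z([s',b]_\rho)\rtimes\pi''$. Postcomposing $f$ with $1_{\Z(\De)}\rtimes(\pi\hra\Pi\rtimes\pi')$ produces a nonzero morphism $\Z(\De)\rtimes\sigma\to\Z(\De)\times\Pi\rtimes\pi'$; by Bernstein reciprocity this is equivalent to a nonzero map $\Z(\De)\otimes\sigma\to\overline{r}_{P_m}(\Z(\De)\times\Pi\rtimes\pi')$. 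Computing this Jacquet module via the Geometric Lemma \Cref{L:GL}, using the maximality of $\pi'$ to kill contributions involving nontrivial $\sp$-Jacquet of $\pi'$, and invoking \Cref{L:ext} to identify the $\Z(\De)$-isotypic subquotients of the $\gl$-Jacquet $\overline{r}_{m,\deg\Pi}(\Z(\De)\times\Pi)$---in complete parallel to the inequality chain in the proof of \Cref{L:2dim}---one extracts an embedding $\sigma\hra\Pi\rtimes\pi'$.

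Finally, one must conclude $\pi\cong\sigma$ from the fact that both are irreducible subrepresentations of $\Pi\rtimes\pi'$. A segment $[s_i,b]_\rho$ is self-dual if and only if $s_i=a$, since by \Cref{T:propgl}(4)--(5) self-duality of $[s,b]_\rho$ is equivalent to $\rho^\lor\cong\rho\nu_\rho^{s+b}$ while the self-duality of $\De$ reads $\rho^\lor\cong\rho\nu_\rho^{a+b}$. For each $s_i>a$, the induction $\Z([s_i,b]_\rho)\rtimes(-)$ has an irreducible socle by the standard theory of $\rho$-derivatives, so these non-self-dual factors can be peeled off one at a time. The main technical obstacle arises when $\Pi$ contains further self-dual copies of $\De$ itself: here non-self-dual uniqueness no longer applies, and one must combine the socle and endomorphism analysis supplied by \Cref{L:2dimstronger} with an induction on the number of self-dual segments in $\Pi$ to finish.
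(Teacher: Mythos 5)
Your first two steps match the paper: the MVW/contragredient manipulation to turn $\tau\hra\Z(\De)\rtimes\sigma$ into a surjection, and the Jacquet-module analysis (via the maximal embedding $\pi\hra\Pi\rtimes\pi'$, the Geometric Lemma and \Cref{L:ext}) yielding $\sigma\hra\Pi\rtimes\pi'$. The gap is in your final step, and it is not a technicality you can defer: the statement ``$\pi$ and $\sigma$ are both irreducible subrepresentations of $\Pi\rtimes\pi'$, hence $\pi\cong\sigma$'' is false in exactly the case you flag. By \Cref{L:2dimstronger}, when $\soc(\Z(\De)\rtimes\pi')$ is reducible it has length $2$ with \emph{non-isomorphic} summands, so an induction on the self-dual factors of $\Pi$ cannot rescue the claim --- that lemma is evidence against your desired conclusion, not for it. The underlying problem is that your reduction discards the hypothesis that $\Z(\De)\rtimes\pi$ and $\Z(\De)\rtimes\sigma$ share the \emph{common} constituent $\tau$; once that is forgotten, $\pi\cong\sigma$ is simply not recoverable from co-occurrence in $\soc(\Pi\rtimes\pi')$.

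The paper's proof keeps that information through a quantitative count. Writing $a=\dim\ho(\pi,\Pi\rtimes\pi')$, $b=\dim\ho(\sigma,\Pi\rtimes\pi')$, $c=\dim\ho(\Z(\De)\rtimes\pi,\Z(\De)\rtimes\pi)$ and $d=\dim\ho(\Z(\De)\rtimes\pi,\Z(\De)\rtimes\sigma)$, the Frobenius/Bernstein argument of \Cref{L:2dim} combined with \Cref{L:ext} gives $ac+bd\le 2a$; your $\tau$ guarantees $d\ge 1$, and if $\sigma\ncong\pi$ this forces $c=1$, so $\Z(\De)\rtimes\pi$ is irreducible by \Cref{L:2dimstronger}. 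The irreducible case then requires a genuinely different second argument, which your proposal does not contain at all: one takes the maximal $(\Z(\De),\gl_m)$-equivariant quotient $\Pi$ of $r_{P_m}(\Z(\De)\rtimes\pi)$, shows via the commutative square with $I_{\Z(\De),\Z(\De)\rtimes\pi}$ that the intertwining operator $I_{\Z(\De),\Pi}$ factors through the resulting embedding $\Z(\De)\rtimes\pi\hra\Z(\De)\rtimes\Pi$, and concludes as in the proof of \cite[Theorem 4.1.D]{LapMin25} using \cite[Lemma 2.1]{MinLa18}. A further caveat: your ``peeling off'' of the non-self-dual segments $\Z([s_i,b]_\rho)$, $s_i>a$, by ``the standard theory of $\rho$-derivatives'' invokes segment-derivative uniqueness (\Cref{L:casenotneeded}, \Cref{C:secrep}) that in the paper's logical order is established only after, and partly by reduction to, the present lemma; only the cuspidal derivatives of \Cref{S:dersp} are available at this point.
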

\begin{proof}
    Write $\De=[a,b]_\rho$ and write $\pi\hra \Z([s_1,b]_\rho)\times \ldots \times \Z([s_k,b]_\rho)\rtimes \pi'$ with $a\le s_1\le s_2\le\ldots\le s_k$ and there exists no $\ain{s'}{a}{b}$ and suitable $\pi''$ with $\pi\hra\Z([s',b]_\rho)\rtimes\pi''$. As in \Cref{L:2dim} it follows that $\sigma\hra \Z([s_1,b]_\rho)\times \ldots \times \Z([s_k,b]_\rho)\rtimes \pi'$. Assuming $\sigma\ncong \pi$, it follows, again by an argument as in \Cref{L:2dim}, that  \[\dim_\C\ho_{\sp_n}(\pi,\Z([s_1,b]_\rho)\times \ldots \times \Z([s_k,b]_\rho)\rtimes \pi')\cdot \dim_\C\ho_{\sp_{m+n}}(\Z(\De)\rtimes\pi,\Z(\De)\rtimes \pi)+\]\[+\dim_\C\ho_{\sp_n}(\sigma,\Z([s_1,b]_\rho)\times \ldots \times \Z([s_k,b]_\rho)\rtimes \pi')\cdot \dim_\C\ho_{\sp_{m+n}}(\Z(\De)\rtimes\pi,\Z(\De)\rtimes \sigma)\le\]\[\le \dim_\C\ho_{\sp_{m+n}}(\Z(\De)\rtimes\pi,\Z(\De)\times \Z([s_1,b]_\rho)\times \ldots \times \Z([s_k,b]_\rho)\rtimes \pi').\]
    Since this is bounded above by  \[2\dim_\C\ho_{\gl_m\times \sp_{n}}(\pi,\Z([s_1,b]_\rho)\times \ldots \times  \Z([s_k,b]_\rho)\rtimes \pi')\] it follows that $\dim_\C\ho_{\sp_{m+n}}(\Z(\De)\rtimes\pi,\Z(\De)\rtimes \pi)=1$.
    and hence $\Z(\De)\rtimes\pi$ is irreducible. 
    We now let $\Pi$ be the maximal $(\Z(\De),\gl_m)$-invariant quotient of $r_{P_m}(\Z(\De)\rtimes\pi)$ and hence by Frobenius reciprocity there exists an embedding
    \[\iota\colon\Z(\De)\rtimes\pi\hra \Z(\De)\rtimes\Pi.\] If we establish that the intertwining operator $I_{\Z(\De),\Pi}$ factors through this embedding, we obtain the claim completely analogously as in the proof of \cite[Theorem 4.1.D.]{LapMin25}. To do so, we consider the following commutative diagram, \emph{cf}. the proof of \cite[Theorem 4.1.D.]{LapMin25}.
    \[\begin{tikzcd} \Z(\De)\times\Z(\De)\rtimes\pi\arrow[rr,"\lambda I_{\Z(\De),\Z(\De)\rtimes\pi}"]\arrow[d,"1_{\Z(\De)}\rtimes\iota"]&&\Z(\De)\times\Z(\De)\rtimes\pi\arrow[d,"1_{\Z(\De)}\rtimes\iota"]\\\Z(\De)\times\Z(\De)\rtimes\Pi\arrow[rr,"I_{\Z(\De),\Z(\De)\rtimes\Pi}"]&&\Z(\De)\times\Z(\De)\rtimes\Pi
    \end{tikzcd}\]
    for suitable $\lambda\in \C$.
    Since $I_{\Z(\De),\Z(\De)}$ is a scalar, we obtain by standard properties of the intertwining operator, \emph{cf.} \cite[§IV]{Wal03}, that the bottom arrow is of the form
    \[\id_{P_{m,m}}^{\sp_{n+2m}}(1_{\Z(\De)}\otimes I_{\Z(\De),\Pi}),\]
    where $1_{\Z(\De)}$ is the map on the second copy of $\gl_m$ in the Levi subgroup, \emph{i.e.}
    \[\gl_m\times \overbrace{\gl_m}^{1_{\Z(\De)}}\times \sp_n.\]
    This implies that \[\Z(\De)\times \Z(\De)\rtimes \pi\hra \id_{P_{m,m}}^{\sp_{n+2m}}({\Z(\De)}\otimes I_{\Z(\De),\Pi}^{-1}(\Z(\De)\rtimes\pi)),\] where again $\Z(\De)$ is the representation on the second copy of $\gl_m$.
    Following the strategy of the proof of \cite[Theorem 4.1.D.]{LapMin25}, we now obtain by \cite[Lemma 2.1]{MinLa18}, see also the proof of \cite[Corollary 2.2]{MinLa18}, that $\Z(\De)\rtimes\Pi=I_{\Z(\De),\Pi}^{-1}(\Z(\De)\rtimes\pi)$, and hence the claim follows.
\end{proof}
\begin{lemma}\label{L:casenotneeded}
    Let $\rho\in \cuu$, $\pi\in \irr(\sp_n)$ and $\De=[a,b]_\rho$ a segment such that one of the following holds.
        \begin{enumerate}
        \item $a+b>0$,
        \item $b< 0$,
        \item $a+b\le 0$ and $a+b\notin\ZZ$.
    \end{enumerate}
    Then \[\dim_\C\ho_{\sp_{m+n}}(\Z(\De)^\lor\rtimes\pi,\Z(\De)\rtimes\pi)=1\] and every non-zero morphism factors through a, necessarily unique, irreducible subrepresentation $\tau\hra\Z(\De)\rtimes\pi$. Moreover, if $\pi'\in\irr(\sp_n)$ is a second representation with $\tau\hra\Z(\De)\rtimes\sigma$, then $\pi\cong \pi'$.
\end{lemma}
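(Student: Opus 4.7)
The plan is to adapt the dimension-counting strategy of \Cref{L:2dim}, exploiting the fact that under each of hypotheses (1)--(3) the segment $\De$ is not self-dual and so $\Z(\De)^\lor \ncong \Z(\De)$; this collapses the factor of two that appears in the self-dual case. Concretely, I would write $\pi \hra \Sigma \coloneq \Z([s_1,b]_\rho) \times \ldots \times \Z([s_k,b]_\rho) \rtimes \pi'$ with $a \le s_1 \le \ldots \le s_k \le b$ and $\pi'$ admitting no further embedding $\pi' \hra \Z([s',b]_\rho) \rtimes \pi''$ for $s' \in [a,b]$, and invoke exactness of parabolic induction to obtain
\[\dim_\C \ho_{\sp_{m+n}}(\Z(\De)^\lor \rtimes \pi, \Z(\De) \rtimes \pi) \cdot \dim_\C \ho_{\sp_n}(\pi, \Sigma) \le \dim_\C \ho_{\sp_{m+n}}(\Z(\De)^\lor \rtimes \pi, \Z(\De) \rtimes \Sigma).\]

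To estimate the right-hand side, I would apply Frobenius reciprocity and the Geometric Lemma (\Cref{L:GL}) to decompose $r_{P_m}(\Z(\De) \rtimes \Sigma)$ and pick out the $(\Z(\De)^\lor, \gl_m)$-equivariant pieces. In contrast with \Cref{L:2dim}, where both the undualized orbit $(m, 0, 0)$ and the fully dualized orbit $(0, 0, m)$ contribute (with $\Z(\De)^\lor \cong \Z(\De)$ in the self-dual case), here only $(0, 0, m)$ can produce $\Z(\De)^\lor$ as the $\gl_m$-factor. For any intermediate orbit $(k_1, k_2, k_3)$ with $0 < k_3 < m$, the swap $\mathrm{Ad}(w_{k_1, k_2, k_3})$ produces an induced $\gl_m$-factor combining non-dualized pieces $\Z(\De_1) \times \Z(\De_2)$, with cuspidal support in $\{\rho \nu_\rho^a, \ldots, \rho \nu_\rho^b\}$, and a dualized piece $\Z(\De_3)^\lor$, with cuspidal support in $\{\rho \nu_\rho^{-b}, \ldots, \rho \nu_\rho^{-a}\}$. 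Hypotheses (2) and (3) force these two supports to be disjoint, so only the fully dualized orbit matches; in case (1), the supports can overlap when $a \le 0$, but a Frobenius computation using \Cref{T:propgl} together with a Zelevinsky-linkage argument rules out $\Z(\De)^\lor$ as a subrepresentation of the mixed $\gl_m$-factor. This yields $\dim_\C \ho_{\sp_{m+n}}(\Z(\De)^\lor \rtimes \pi, \Z(\De) \rtimes \Sigma) \le \dim_\C \ho_{\sp_n}(\pi, \Sigma)$, hence $\dim_\C \ho_{\sp_{m+n}}(\Z(\De)^\lor \rtimes \pi, \Z(\De) \rtimes \pi) \le 1$. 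The lower bound is supplied by the normalized intertwining operator $I \coloneq I_{\Z(\De)^\lor, \pi}$, which under (1)--(3) lies away from the exceptional locus and is non-zero; so the Hom space has dimension exactly one. The image $\tau \coloneq \im I$ is irreducible, since were $\tau$ of length at least two one could exhibit linearly independent morphisms $\Z(\De)^\lor \rtimes \pi \to \Z(\De) \rtimes \pi$ by composing with distinct irreducible subquotients, contradicting $\dim_\C \ho = 1$.

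For the uniqueness of $\pi$, suppose $\tau \hra \Z(\De) \rtimes \pi'$ for some $\pi' \in \irr(\sp_n)$. Applying the MVW-involution followed by contragredient to $\tau \hra \Z(\De) \rtimes \pi$ produces a surjection $\Z(\De)^\lor \rtimes \pi \sra \tau$, and composing with $\tau \hra \Z(\De) \rtimes \pi'$ yields a non-zero morphism $\Z(\De)^\lor \rtimes \pi \to \Z(\De) \rtimes \pi'$. Re-running the Geometric Lemma analysis with $\pi'$ in place of $\pi$ on the target, the single contributing orbit forces the $\sp_n$-factor of the surviving piece to be $\pi'$, whence $\pi \cong \pi'$. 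The hard part will be the combinatorial step in case (1) with $a \le 0$ and $a + b \in \ZZ$: here the cuspidal supports of mixed orbits can coincide with that of $\Z(\De)^\lor$, and one needs a careful Zelevinsky-linkage argument, reducing via \Cref{T:propgl} the Hom question to a matching of Zelevinsky tensor factors, to establish the requisite vanishing.
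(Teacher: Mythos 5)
Your overall strategy---a uniform dimension count via the Geometric Lemma, with the factor of two from \Cref{L:2dim} collapsing because $\Z(\De)^\lor\ncong\Z(\De)$ under (1)--(3)---is a reasonable alternative to what the paper does, and it goes through cleanly whenever the cuspidal supports of $\Z(\De)$ and $\Z(\De)^\lor$ are disjoint, i.e.\ in cases (2), (3) and in case (1) with $a>0$. The problem is case (1) with $a\le 0$: there the supports overlap on $\{\rho\nu_\rho^{-b},\ldots,\rho\nu_\rho^{\min(b,-a)}\}$, and the assertion that the mixed orbits $(k_1,k_2,k_3)$ with $0<k_3<m$ contribute nothing is exactly the hard content of the lemma. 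You name this step (``a careful Zelevinsky-linkage argument'') but do not carry it out, so as written the proof has a genuine gap precisely in the hardest case. The paper avoids this case almost entirely: it runs the dimension count only for the single base case $a=0<b$ (where the overlap is just $\rho\nu_\rho^{0}$ and \Cref{T:propgl} controls which initial segments of $\Z(\De)$ can appear in the Jacquet module), and handles every other instance of (1)--(3) by peeling off $\rho'=\rho\nu_\rho^{a}$, which is then \emph{not} self-dual, and inducting on the length of the segment via \Cref{L:derreksp} together with the identity $\soc(\Z(\De)\rtimes\pi)=\soc((\rho\nu_\rho^a)^{1+d}\rtimes\soc(\Z({}^-\De)\rtimes\mathcal{D}_{\rho\nu_\rho^a}^{\mathrm{max}}(\pi)))$. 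If you want to keep your uniform approach you must actually supply the linkage computation for $a\le 0<b$, $a+b>0$; otherwise the cleanest fix is to adopt the derivative reduction, which funnels everything into the one overlap configuration you can control.

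Two smaller points. First, your argument that the image of $I_{\Z(\De)^\lor,\pi}$ is irreducible (``compose with distinct irreducible subquotients'') is not valid: an irreducible subquotient of the image is in general neither a quotient of $\Z(\De)^\lor\rtimes\pi$ nor a subrepresentation of $\Z(\De)\rtimes\pi$, so it does not produce new elements of the Hom space. The correct mechanism, which the paper uses, is the MVW-involution: it identifies $\cosoc(\Z(\De)^\lor\rtimes\pi)$ with $\soc(\Z(\De)\rtimes\pi)$, so each irreducible summand of the socle yields an independent morphism $\Z(\De)^\lor\rtimes\pi\sra\sigma\hra\Z(\De)\rtimes\pi$; the dimension bound then forces the socle to be irreducible and every non-zero morphism to factor through it. Second, your Geometric Lemma analysis tracks only the $\gl$-factors coming from $\Z(\De)\times\Z([s_1,b]_\rho)\times\cdots$; as in \Cref{L:2dim} you also need the maximality assumption on the chosen embedding of $\pi$ (``no further $\Z([s',b]_\rho)\rtimes\pi''$'') to rule out contributions from the Jacquet modules of $\pi'$ itself. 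Make that dependence explicit.
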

\begin{proof}
We first deal with the case where $a=0$ and $b>0$.
    We write $\pi\hra \Z([s_1,b]_\rho)\times \ldots \times \Z([s_k,b]_\rho)\rtimes \pi'$ with $0\le s_1\le s_2\le\ldots\le b$ and there exists no $\ain{s'}{0}{b}$ and suitable $\pi''$ with $\pi\hra\Z([s',b]_\rho)\rtimes\pi''$. We write $r=\deg(\Z([s_1,b]_\rho))+ \ldots + \deg(\Z([s_k,b]_\rho))$.

    Since $b\ge 1$ and $a=0$, the Geometric Lemma and \Cref{T:zel} give as in the proof of \Cref{L:2dim} that   \[\dim_\C\ho_{\sp_n}(\pi,\Z([s_1,b]_\rho)\times \ldots \times \Z([s_k,b]_\rho)\rtimes \pi')\cdot \dim_\C\ho_{\sp_{m+n}}(\Z(\De)^\lor\rtimes\pi,\Z(\De)\rtimes \pi)\le\]\[\le  \dim_\C\ho_{\sp_{m+n}}(\Z(\De)^\lor\rtimes\pi,\Z(\De)\times \Z([s_1,b]_\rho)\times \ldots \times \Z([s_k,b]_\rho)\rtimes \pi')\le\]
    \[\dim_\C\ho_{\gl_m\times \sp_{n}}(\Z(\De)^\lor\otimes\pi,\]\[ \overline{r}_{(m,r)}(\Z(\De)\times \Z([s_1,b]_\rho)\times \ldots \times \Z([s_k,b]_\rho))\rtimes \pi').\]
    By \Cref{L:ext} the above is equal to 
      \[\dim_\C\ho_{\gl_m\times \sp_{n}}(\pi,\Z([s_1,b]_\rho)\times \ldots \times  \Z([s_k,b]_\rho)\rtimes \pi').\]
      We thus obtain that $\dim_\C\ho_{\sp_{m+n}}(\Z(\De)^\lor\rtimes\pi,\Z(\De)\rtimes\pi)=1$. By the MVW-involution it thus follows that $\Z(\De)\rtimes\pi$ has an irreducible socle $\sigma$ and we have a non-zero morphism
      \[\Z(\De)^\lor\rtimes\pi\sra\sigma\hra \Z(\De)\rtimes\pi.\] The proof that $\sigma\hra\Z(\De)\rtimes\pi'$ implies $\pi'\cong\pi$ follows analogously to the proof of \Cref{L:deruniqueseg}.
    Indeed, one first easily see that $\sigma\hra\Z([s_1,b]_\rho)\times \ldots \times \Z([s_k,b]_\rho)\rtimes \pi'$ and as above, one can show that
    \[\dim_\C\ho_{\sp_n}(\Z([s_1,b]_\rho)^\lor\times \ldots \times \Z([s_k,b]_\rho)^\lor\rtimes \pi',\Z([s_1,b]_\rho)\times \ldots \times \Z([s_k,b]_\rho)\rtimes \pi')=1,\] which implies by the MVW-involution that $\pi\cong\sigma$.

      In all the other cases we let $\rho'\coloneq\rho\nu_\rho^a$ and note that $\rho'$ is not self dual and $\Z(\De)^\lor$ is left-$\rho'$-reduced. The claim then follow easily inductively on the length of $\De$ from \Cref{L:derreksp}. Indeed, we saw in the proof of \Cref{{L:derreksp}} that 
    \[\soc(\Z(\De)\rtimes \pi)=\soc((\rho\nu_\rho^a)^{1+d_{\rho\nu_\rho^a,\mathrm{max}}(\pi)}\rtimes(\soc(\Z({}^-\De)\rtimes \mathcal{D}_{\rho\nu_\rho^a}^{\mathrm{max}}(\pi))).\] 
    The claim then follows the basic properties as laid out in \Cref{S:dergl} and \Cref{S:dersp} and the induction hypothesis.
\end{proof}
\begin{theorem}\label{T:segint}
    Let $\rho\in \cuu$, $\pi\in \irr(\sp_n)$ and $\De=[a,b]_\rho$ a segment. Then 
    \[\cos(\im(I_{\Z(\De),\pi}))\cong\soc(\Z(\De)\rtimes\pi).\]
\end{theorem}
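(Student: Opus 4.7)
The plan is to proceed by induction on the degree $|\De|$ of the segment $\De = [a,b]_\rho$, reducing the general statement to the case of a single cuspidal twist via the embedding $\Z(\De) \hookrightarrow \rho\nu_\rho^a \times \Z({}^-\De)$ furnished by \Cref{T:propgl} (3).

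For the base case $|\De|=1$, so that $\De = \{c\}_\rho$ and $\Z(\De) = \rho\nu_\rho^c$: when $c \neq 0$, the twist $\rho\nu_\rho^c$ is cuspidal and, since $\rho \in \cuu$, no longer self-dual, so the identity follows directly from \Cref{L:derivativesint} with $k=1$. The delicate sub-case is $c=0$, where $\Z(\De) = \rho$ is self-dual cuspidal and $I_{\rho,\pi}$ is an endomorphism of $\rho \rtimes \pi$. By \Cref{L:2dimstronger}, $\rho \rtimes \pi$ has length at most $2$ and endomorphism algebra of dimension at most $2$, giving three sub-cases: (i) $\rho \rtimes \pi$ irreducible, where both sides of the claim equal $\rho \rtimes \pi$; (ii) $\rho \rtimes \pi$ of length $2$ with irreducible socle (and thus $\mathrm{End}(\rho\rtimes\pi) \cong \C[N]/(N^2)$), in which case $I_{\rho,\pi}^2$ being a scalar by \Cref{L:interop} (2) combined with the reducibility of $\rho \rtimes \pi$ forces $I_{\rho,\pi}$ to be a non-zero multiple of $N$, whence $\im I_{\rho,\pi} = \soc(\rho \rtimes \pi)$; (iii) $\rho \rtimes \pi$ semi-simple of length $2$ with non-isomorphic summands, in which case $I_{\rho, \pi}$ is diagonal and non-vanishing on each factor, giving $\im I_{\rho,\pi} = \rho \rtimes \pi = \soc(\rho\rtimes\pi)$.

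For the inductive step, fix $|\De| \geq 2$ and set $\De' = {}^-\De = [a+1,b]_\rho$. The embedding $\Z(\De) \rtimes \pi \hookrightarrow \rho\nu_\rho^a \times \Z(\De') \rtimes \pi$, together with the commutative diagram of \Cref{L:interop} (3) applied to $\pi_1 = \rho\nu_\rho^a$, $\pi_2 = \Z(\De')$ and $\tau = \pi$, presents the composed intertwiner $I_{\rho\nu_\rho^a, \Z(\De'), \pi}$ as a composition involving $I_{\rho\nu_\rho^a, -}$ and $I_{\Z(\De'), -}$ in either order. Pulling back to the subrepresentation $\Z(\De) \rtimes \pi$ and invoking the uniqueness of the intertwiner, via the one-dimensional hom-space of \Cref{L:casenotneeded} where its hypotheses apply (passing to $\De^\lor$ if necessary) and via \Cref{L:2dim} in the self-dual case, identifies this restriction with $I_{\Z(\De), \pi}$ up to a non-zero scalar. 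Combining the induction hypothesis $\cos(\im I_{\Z(\De'),\pi}) \cong \soc(\Z(\De') \rtimes \pi)$ with the base-case analysis of $I_{\rho\nu_\rho^a, -}$ and then applying \Cref{L:derreksp} to the pair $(\rho\nu_\rho^a, \soc(\Z(\De') \rtimes \pi))$ identifies $\cos(\im I_{\Z(\De),\pi})$ with $\soc(\rho\nu_\rho^a \rtimes \soc(\Z(\De') \rtimes \pi)) \cong \soc(\Z(\De) \rtimes \pi)$, which is the desired identity.

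The hardest part is the self-dual base case $c = 0$: verifying that $I_{\rho, \pi}$ is genuinely nilpotent, rather than a non-trivial scalar, when $\rho \rtimes \pi$ is non-split of length $2$. This rests on the structure of $\mathrm{End}(\rho \rtimes \pi) \cong \C[N]/(N^2)$ supplied by \Cref{L:2dimstronger}: although any element whose square is a non-zero scalar would have to be a non-zero scalar itself, such an endomorphism is incompatible with the assumed reducibility. A secondary obstacle in the inductive step is tracking the orders of poles \emph{à la} \Cref{L:interop} (1) to guarantee that the restriction of the composite intertwiner does not degenerate to zero; one must verify that the inequality featured in \Cref{L:interop} (3) is an equality along the embedding $\Z(\De) \rtimes \pi \hookrightarrow \rho\nu_\rho^a \times \Z(\De') \rtimes \pi$, so that both factorizations of the diagram contribute compatibly.
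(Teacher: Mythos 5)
Your overall strategy (peel the cuspidal end $\rho\nu_\rho^a$ off the segment, use the commutative diagram of \Cref{L:interop}(3), and treat the self-dual situation via \Cref{L:2dimstronger}) is the same one the paper uses, but your inductive step has a genuine gap: you apply \Cref{L:derreksp} to the pair $(\rho\nu_\rho^a,\soc(\Z({}^-\De)\rtimes\pi))$ to conclude $\soc(\Z(\De)\rtimes\pi)\cong\soc(\rho\nu_\rho^a\rtimes\soc(\Z({}^-\De)\rtimes\pi))$, but that lemma requires the symplectic-side representation to be left-$\rho\nu_\rho^a$-reduced, and $\pi$ (hence $\Z({}^-\De)\rtimes\pi$) need not be. Concretely, an irreducible subrepresentation of $\rho\nu_\rho^a\times\Z({}^-\De)\rtimes\pi$ embeds into $\rho\nu_\rho^a\rtimes\sigma_1$ only for $\sigma_1$ a sub\emph{quotient} of $\Z({}^-\De)\rtimes\pi$, and without the reducedness hypothesis you cannot promote this to a subrepresentation; the copies of $\rho\nu_\rho^a$ hiding in $\pi$ interfere. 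This is exactly why the paper's proof runs a two-stage commutation: it first applies \Cref{L:interop}(3) with $\pi_2=(\rho\nu_\rho^a)^{\times d}$, $d=d_{\rho\nu_\rho^a,\mathrm{max}}(\pi)$, to replace $\pi$ by $\mathcal{D}_{\rho\nu_\rho^a}^{\mathrm{max}}(\pi)$, and only then peels the cuspidal end off the segment together with those $d$ copies, landing on the correct identity $\soc(\Z(\De)\rtimes\pi)=\soc((\rho\nu_\rho^a)^{\times(d+1)}\rtimes\soc(\Z({}^-\De)\rtimes\mathcal{D}_{\rho\nu_\rho^a}^{\mathrm{max}}(\pi)))$. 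Relatedly, the two points you defer --- that the restriction of the composite intertwiner agrees with $I_{\Z(\De),\pi}$ up to a non-zero scalar, and that the pole-order inequality in \Cref{L:interop}(3) is an equality so the middle arrow does not vanish --- are precisely where \Cref{L:derivativesint} ($r_{\rho^{\times k},\pi}=0$ for $\rho$-reduced $\pi$) must be invoked; asserting them is not enough.

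Two smaller remarks. First, in your base case (ii) the claim that a non-zero-scalar square ``is incompatible with the assumed reducibility'' is not an argument: a non-zero scalar is a perfectly good endomorphism of a reducible representation, so you have not ruled out $I_{\rho,\pi}$ being an isomorphism. Fortunately you do not need nilpotency: whether $I$ is an isomorphism or has image equal to the unique proper non-zero subrepresentation, the cosocle of the image is $\soc=\cosoc$ in this case, which is how the paper argues. Second, note that the paper handles \emph{all} self-dual segments $\Z(\De)\cong\Z(\De)^\lor$ (not just cuspidal ones) by the direct \Cref{L:2dimstronger} analysis and uses the peeling reduction only when $a+b<0$; if you insist on peeling down to a cuspidal base case you should check that your induction never tries to peel off a self-dual cuspidal $\rho\nu_\rho^a$ with $a=0$ (it does not, since $a\ge 0$ forces $a+b>0$ and \Cref{L:casenotneeded} applies, but this case split should be made explicit).
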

\begin{proof}
    We argue by induction on $n$ and note first that if one of the followig conditions is satisfied, the claim is settled by \Cref{L:casenotneeded}. 
            \begin{enumerate}
        \item $a+b>0$,
        \item $b< 0$,
        \item $a+b\le 0$ and $a+b\notin\ZZ$.
    \end{enumerate}
    
    We thus assume otherwise, \emph{i.e.} that $a+b\le 0$, $b>0$ and $a+b\in\ZZ$. In this case we are going firstly to reduce to the case where $a+b=0$.

    Denote $\rho'\coloneq \rho\nu_\rho^a$ and $d\coloneq d_{\rho',\mathrm{max}}(\pi)$. By the assumption on $a$ and $b$, $\rho'$ is not self dual. Applying \Cref{L:interop}(iii) with $\pi_1=\Z(\De),\, \pi_2=\rho'^{\times d}$ and $\tau=\mathcal{D}_{\rho'}^{\mathrm{max}}(\pi)$, we note that by \Cref{L:derivativesint}, that by the lower side of the commutative diagram 
    \[\im(I_{\pi_1,\pi_2,\tau})=\im(I_{\Z(\De)\times\rho'^{\times d},\mathcal{D}_{\rho,\max}(\pi)}).\] Thus the upper side also composes to $I_{\pi_1,\pi_2,\tau}$, and its image is the one of $I_{\Z(\De),\tau}$. 
    Next we apply \Cref{L:interop}(iii) with $\pi_1=\rho'^{r+1},\,\pi_2=\Z({}^-\De)$ and $\tau=\Dd(\pi)$. By the assumption on $\rho'$ and \Cref{L:interop}(i) the upper side of the commutative diagram is then non-zero and has by the induction hypothesis image equal to \[\soc(\rho'^{r+1}\rtimes \mathrm{Im}(I_{\Z({}^-\De), \mathcal{D}_{\rho'}^{\mathrm{max}}(\pi)})).\]
    By \Cref{L:derivativesint}, we have that 
    \[\cosoc(\soc(\rho'^{r+1}\rtimes \mathrm{Im}(I_{\Z({}^-\De), \mathcal{D}_{\rho'}^{\mathrm{max}}(\pi)})))=\soc(\rho'^{r+1}\rtimes \cosoc(\mathrm{Im}(I_{\Z({}^-\De), \mathcal{D}_{\rho'}^{\mathrm{max}}(\pi)}))),\] which by the induction hypothesis is equal to \[\soc(\rho'^{r+1}\rtimes \soc(\Z({}^-\De)\rtimes \mathcal{D}_{\rho'}^{\mathrm{max}}(\pi))).\]
    
    On the other hand, the lower side of the commutative diagram has image equal to \[\im(I_{\Z(\De)\times\rho'^{\times d},\mathcal{D}_{\rho'}^{\mathrm{max}}(\pi)})\] by \Cref{T:zel}.
    Finally, by \Cref{L:derreksp} \[\soc(\rho'^{r+1}\rtimes \soc(\Z({}^-\De)\rtimes \mathcal{D}_{\rho'}^{\mathrm{max}}(\pi)))=\soc(\Z(\De)\rtimes \pi).\]
    This finishes the claim in the case $\rho^\lor\nu_\rho^{-a}\ncong \rho\nu_\rho^b$.

Assume now that $\rho^\lor\nu_\rho^{-a}\cong \rho\nu_\rho^b$ and note that this is equivalent to $\Z(\De)^\lor\cong \Z(\De)$. By \Cref{L:2dimstronger} we have the following cases
\[ \dim_\C \ho_{\sp_{n+m}}(\Z(\De)\times\pi,\Z(\De)\rtimes\pi)=\]\[=\begin{cases}
    2&\text{ if }\Z(\De)\times\pi \text{ is semi-simple of length 2,}\\
        2&\text{ if }\Z(\De)\times\pi \text{ is socle- and cosocle-irreducible, but not irreducible}\\
    1&\text{ if }\Z(\De)\times\pi \text { is irreducible.}
\end{cases}\]
If $\Z(\De)\times\pi$ is semi-simple or irreducible, $I_{Z(\De),\pi}$ acts by a scalar on each summand and for at least one it has to be non-zero. But then \Cref{L:interop} implies that it is non-zero for each summand and hence the image of $I_{Z(\De),\pi}$ equals to all of $\Z(\De)\times \pi$.

Finally, in the second case we denote $\tau=\soc(\Z(\De)\times\pi)=\cosoc(\Z(\De)\times\pi)$. If the intertwining operator is surjective, the cosocle of the image is $\tau$. 
Otherwise, the intertwining operator has to be of the form 
\[\Z(\De)^\lor\times\pi\sra \tau \hra\Z(\De)\rtimes \pi,\]
in which the second case also follows.
\end{proof}
\begin{corollary}\label{C:secrep}
    Let $\pi,\pi'\in\irr(\sp_n)$ and $\De=[a,b]_\rho$ a segment with $\rho\in\cuu$. Then $\soc(\Z(\De)\rtimes\pi)$ is multiplicity-free and of length at most $2$, and if $\tau$ is an irreducible subrepresentation of $\Z(\De)\rtimes\pi$ and $\Z(\De)\rtimes\pi'$, then $\pi\cong \pi'$. 

    The socle of $\Z(\De)\rtimes\pi$ is irreducible if one of the following holds.
    \begin{enumerate}
        \item $a+b>0$,
        \item $b< 0$,
        \item $a+b\le 0$ and $a+b\notin\ZZ$.
    \end{enumerate}\end{corollary}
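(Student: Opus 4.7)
My plan is to organize the proof by which preceding lemma handles which case. First, the three conditions listed in (3) coincide exactly with the hypotheses of \Cref{L:casenotneeded}, and that lemma already supplies the irreducibility of $\soc(\Z(\De)\rtimes\pi)$ together with the uniqueness of $\pi$ recovered from $\tau$. In this regime parts (1), (2) and (3) are therefore immediate.

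What remains is the regime $b\ge 0$, $a+b\le 0$, $a+b\in\ZZ$. Since $\rho\in\cuu$, self-duality of $\De$ is equivalent to $a+b=0$. In the self-dual case \Cref{L:2dimstronger} gives that $\Z(\De)\rtimes\pi$ is either irreducible or semi-simple of length $2$ with two non-isomorphic irreducible summands, from which (1) is immediate, and (2) is precisely \Cref{L:deruniqueseg}.

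The genuine work is the case $a+b<0$, which I propose to settle by induction on the length $b-a+1$ of $\De$ (the length-$1$ case forces $a=b=0$ and is subsumed by the self-dual case). Set $\rho'\coloneq\rho\nu_\rho^a$; since $a<0$, $\rho'$ is not self-dual, so the derivative theory of \Cref{S:dersp} is available. Retracing the chain of intertwining identifications used inside the proof of \Cref{T:segint}---via \Cref{L:derreksp} combined with \Cref{L:derivativesint} and the commutative diagram of \Cref{L:interop}---yields
\[\soc(\Z(\De)\rtimes\pi)\;\cong\;\soc\bigl(\rho'^{\times(d+1)}\rtimes\soc(\Z({}^-\De)\rtimes \mathcal{D}_{\rho'}^{\max}(\pi))\bigr),\qquad d=d_{\rho',\max}(\pi).\]
The segment ${}^-\De$ is shorter and either satisfies the hypotheses of \Cref{L:casenotneeded} or lies again in the remaining regime with strictly smaller length, so the inductive hypothesis applies and the inner socle is multiplicity-free of length at most $2$. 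By injectivity of $\mathcal{D}_{\rho'}$ on isomorphism classes of irreducibles of $\sp$ (a consequence of the non-self-duality of $\rho'$, see \Cref{S:dersp}), two non-isomorphic summands of the inner socle cannot be identified after the outer induction $\rho'^{\times(d+1)}\rtimes(-)$; multiplicity-freeness together with the length bound therefore pass to the outer socle. Uniqueness (2) is transported the same way: from $\tau\hra\Z(\De)\rtimes\pi'$ one extracts $\mathcal{D}_{\rho'}^{\max}(\pi')\cong \mathcal{D}_{\rho'}^{\max}(\pi)$ and hence $\pi'\cong\pi$. The only real obstacle is this last transfer of (1) through the outer induction, and it is controlled entirely by the non-self-duality of $\rho'$.
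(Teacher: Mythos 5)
Your proposal is correct and follows essentially the same route as the paper: \Cref{L:casenotneeded} for the three irreducibility conditions, \Cref{L:2dimstronger} (with \Cref{L:deruniqueseg} for uniqueness) in the self-dual case $a+b=0$, and otherwise induction on the length of $\De$ via the reduction $\soc(\Z(\De)\rtimes\pi)=\soc(\rho'^{\times(d+1)}\rtimes\soc(\Z({}^-\De)\rtimes\mathcal{D}_{\rho'}^{\max}(\pi)))$ from the proof of \Cref{L:derreksp}, transferring the conclusions through the non-self-dual derivative theory. Your explicit appeal to \Cref{L:deruniqueseg} in the self-dual case and your remark on why $\rho'=\rho\nu_\rho^a$ is not self-dual are slightly more detailed than the paper's phrasing, but the argument is the same.
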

\begin{proof}
    We argue by induction on $l(\De)$. Note first that if $\De$ is self dual, the claim follows readily from \Cref{L:2dimstronger}. Otherwise, we saw in the proof of \Cref{{L:derreksp}} that 
    \[\soc(\Z(\De)\rtimes \pi)=\soc((\rho\nu_\rho^a)^{1+d_{\rho\nu_\rho^a,\mathrm{max}}(\pi)}\rtimes(\soc(\Z({}^-\De)\rtimes \mathcal{D}_{\rho\nu_\rho^a}^{\mathrm{max}}(\pi))).\] 
    The claim then follows the basic properties as laid out in \Cref{S:dergl} and \Cref{S:dersp} and the induction hypothesis.
    If one of the three conditions is satisfied, the claim follows from \Cref{L:casenotneeded}. 
\end{proof}
Finally let us make the following definitions.
Let $n\le m\in\NN$. For $\pi\in\irr_n$, let $\ain{r}{0}{n}$ be minimal such that there exists $\tau\in\irr_r$ with $\pi\hra\abs{n-r}{\frac{r}{2}}\times\tau$. Then \[ \mathfrak{T}_n^m(\pi)\coloneq\mathrm{soc}(\tau^\lor\nu^{\frac{m-n}{2}}\times \abs{m-r}{-\frac{r}{2}}).\] By \cite[Lemma 2.8]{MinLa18} this is well defined and an irreducible representation.

Let $\pi\in \irr(\sp_n)$ and let $\mu$ be a unitary character of $\gl_1$.
We let $\ain{r}{0}{n}$ be maximal such that there exists $\tau\in\irr(\sp_{n-r})$ with $\pi\hra\mu(\det_r)^{-1}\abs{r}{-\frac{m-n+r}{2}}\rtimes\tau$. Then \[ \mathfrak{T}_n^m(\pi,\mu)\coloneq\mathrm{soc}(\mu(\det_{m-n+r})^{-1}\abs{m-n+r}{-\frac{r}{2}}\rtimes\tau^\lor).\] By \Cref{C:secrep} this is well defined and is a multiplicity-free representation of length at most $2$.
\subsection{Intertwining operators for general linear groups}
In this subsection we will demonstrate how one is able to compute poles of certain intertwining operators for general linear groups explicitly. We start with the easiest case.
\begin{lemma}\label{L:simplepoles}
\[\Lambda(\rho,\rho')=\begin{cases}
    1&\text{if }\rho'\cong \rho,\\0&\text{otherwise}.
\end{cases}.\]
\end{lemma}
\begin{proof}
    For the computation of $\Lambda$ see for example \cite[Proposition 7.5]{Dat}.
\end{proof}
To continue, we prove next the following.
\begin{lemma}\label{L:polevanish}
    Let $\pi_1,\pi_2\in \Irr$ such that $\pi_1\otimes \pi_2$ appears in \[r_{\deg(\pi_1),\deg(\pi_2)}(\pi_2\times \pi_1)\] with multiplicity $1$, then $\Lambda(\pi_1,\pi_2)=0$.
\end{lemma}
\begin{proof}
    Note that by Frobenius reciprocity, the condition of the lemma implies that 
there exists a, up to a scalar unique, morphism $T\colon \pi_2\times\pi_1\ra \pi_1\times \pi_2$ with the defining property that for $f\in \pi_2\times \pi_1$ with support contained in $P_{\deg(\pi_2),\deg(\pi_1)}w_{\deg(\pi_2),\deg(\pi_1)}P_{\deg(\pi_1),\deg(\pi_2)}$,
\[T((f)(1)=\int_{U_{\deg(\pi_1),\deg(\pi_2)}}f(w_{\deg(\pi_2),\deg(\pi_1)}u)\,\mathrm{d}u.\]
By the definitions of \cite[§IV]{Wal03}, this implies that $\Lambda(\pi_1,\pi_2)=0$.
\end{proof}
\begin{lemma}\label{L:central}
Let $\De=[0,b]_\rho$ be a segment and $\pi_1,\pi_2\in \Irr$ such that for all $\ain{a}{0}{b}$ we have that $\pi_1\times \Z([a,b]_\rho)$ is irreducible and $\pi_1, \Dde(\pi_1)\in \irs$. Then \[\Lambda(\pi_1,\pi_2)=\Lambda(\Dde(\pi_1),\Dde(\pi_2))+\Lambda(\pi_1,\De(\pi_2)).\]
\end{lemma}
\begin{proof}
    By \Cref{L:interop}, there exists $\lambda\in \C$ such that the following diagram commutes
\[\begin{tikzcd}
    \De(\pi_2)\times \Dde(\pi_2)\times\pi_1\arrow[rr,"I_{\pi_1,\De(\pi_2)\times\Dde(\pi_2)}"]&&\pi_1\times \De(\pi_2)\times \Dde(\pi_2)\\
    \pi_2\times\pi_1\arrow[rr,"\lambda I_{\pi_1,\pi_2}"]\arrow[u,hookrightarrow]&&\pi_1\times \pi_2\arrow[u,hookrightarrow]
\end{tikzcd}\]
    and $\lambda\neq 0$ if and only if \[\Lambda(\pi_1,\pi_2)=\Lambda(\pi_1,\Dde(\pi_2))+\Lambda(\pi_1,\De(\pi_2)).\]
    By \Cref{L:prodsquare}, $\pi_1\times \De(\pi_2)\in \irs$, thus the socle $\sigma\coloneq\soc(\pi_1\times \De(\pi_2)\times \Dde(\pi_2))$ is irreducible. Hence $\sigma=\soc(\pi_1\times\pi_2)$ and if $\lambda=0$, it would appear as a subrepresentation with multiplicity $2$ in $\pi_1\times \De(\pi_2)\times \Dde(\pi_2)$, a contradiction to \Cref{T:si}.
    It remains to show that $\Lambda(\pi_1,\Dde(\pi_2))=\Lambda(\Dde(\pi_1),\Dde(\pi_2))$. To see this, we will use the commutative diagram
    \[\begin{tikzcd}
    \Dde(\pi_2)\times\De(\pi_1)\times\Dde(\pi_1)\arrow[rrr,"I_{\De(\pi_1)\times\Dde(\pi_1),\Dde(\pi_2)}"]&&&\De(\pi_1)\times\Dde(\pi_1)\times\Dde(\pi_2)\\
    \Dde(\pi_2)\times\pi_1\arrow[rrr,"\lambda' I_{\De(\pi_1,\Dde(\pi_2)}"]\arrow[u,hookrightarrow]&&&\pi_1\times \Dde(\pi_2)\arrow[u,hookrightarrow]
\end{tikzcd}\] for a suitable $\lambda'\in \C$. We start by showing that $\lambda'\neq 0$. For this, it is, as in the first step, enough to show that the socle $\soc(\De(\pi_1)\times\Dde(\pi_1)\times\pi_2)$ is irreducible, which by \Cref{T:si} can be shown by proving that there exists a, up to a scalar, unique morphism 
\[\Dde(\pi_2)\times\Dde(\pi_1)\times \De(\pi_1)\ra \De(\pi_1)\times \Dde(\pi_1)\times\Dde(\pi_2).\]
By Frobenius reciprocity, the Geometric Lemma, and the fact that both $\Dde(\pi_1)$ and $\Dde(\pi_2)$ are left-$\De$-reduced, we obtain that any such morphism induces a morphism
\[\Dde(\pi_2)\times\Dde(\pi_1)\ra \Dde(\pi_1)\times\Dde(\pi_2).\] The claim then follows from \Cref{T:si}. We therefore showed that 
 \[\Lambda(\pi_1,\Dde(\pi_2))=\Lambda(\Dde(\pi_1),\Dde(\pi_2))+\LL(\De(\pi_1),\Dde(\pi_2)).\]
 By \Cref{L:polevanish}, $\LL(\De(\pi_1),\Dde(\pi_2))$ vanishes and the claim follows.
\end{proof}
Applying \Cref{L:central} and \Cref{L:polevanish} to the case $\pi_1\cong \Z(\De_1)$ and the segment $\De=[0,0]_\rho$ yields the following.
\begin{corollary}\label{C:seg}
    Let $\pi_1=\Z(\De_1)$, $\De_1=[0,b]_\rho$, $\rho\in \cus$. Then for any $\pi\in \Irr$, we have
    \[\Lambda(\Z(\De_1),\pi)=\Lambda(\Z({}^-\De_1),\mathcal{D}_{\rho,l}^{\mathrm{max}}(\pi))\] if $l(\De)>1$ and
    \[\Lambda(\rho,\pi)=\dlm(\pi).\]
\end{corollary}
As a corollary, we obtain the following.
\begin{corollary}\label{C:onesegment}
    Let $\De=[0,b]_\rho$ be a segment, $k\in \NN$ and $\ain{a_i}{0}{b}$, $\ain{i}{1}{k}$. Then
    \[\Lambda(\Z(\De),\Z(\De_1)\times\ldots\times \Z(\De_k))=k.\]
\end{corollary}
From \Cref{L:central} we are thus able to deduce.
\begin{theorem}\label{T:centralint}
    Let $\De$ be a segment and $\pi\in \irr$. Then $\Lambda(\Z(\De),\pi)=l_k(\De)$.
\end{theorem}
\section{Spherical varieties}\label{S:spherical}
Let $\bG$ be a split-reductive group with Borel subgroup $\bB$, $\bT$ the maximal torus in $\bB$ and $\bU$ its unipotent part, and $\bX$ a $\bG$-spherical variety, both over $\Ff$, \emph{i.e.} $\bX$ is normal and admits an open (and dense) $\bB$-orbit, \emph{cf.} \cite{BriLunVus86}, \cite{Bri86}. We assume further that for any parabolic subgroup $\bB\subseteq \bP\subseteq \bG$, the $\bP$-orbits are defined over $\Ff$ and are orbits of $P$. We only assume this for the sake of simplicity and since all explicit cases we will encounter satisfy this property. For a more nuanced discussion of problems relating to the rationality of these orbits, we refer the reader for example to \cite[§3]{Sak08}.

If $\bX$ is homogeneous, \emph{i.e.} $\bX=\bH\backslash\bG$ for a subgroup $\bH\subseteq \bG$, $\bH$ is called a spherical subgroup of $\bG$ and any $\bG$ spherical variety $\bX'$ with $\bH\bs\bG\subseteq \bX'$ is called a spherical embedding of $\bH\bs\bG$. For example, if $\bN$ denotes the unipotent part of a parabolic subgroup of $\bG$, $\bN$ is a spherical subgroup by virtue of the Bruhat decomposition. Similarly, $\Delta \bG\subseteq \bG\times \bG$ is a spherical subgroup of $\bG\times\bG$, which is often referred to as the \emph{group case}.

We recall that in general $\bX$ admits only a finite number of $\bB$-, and hence $\bG$-, orbits and any $\bG$-stable closed subvariety of $\bX$ is again a spherical variety. We denote by $\oo(\bX)$ the set of $\bG$-orbits of $\bX$ and the closure-order on $\oo(\bX)$ by $\le$, \emph{i.e.} $\bY\le \bY'$ if and only if $\bY\subseteq \overline{\bY'}$ if and only if $\bY(\Ff)\subseteq \overline{\bY'(\Ff)}$.
One reason why spherical varieties are such a fruitful place to look for interesting phenomena is that they admit combinatorial description in terms of colored fans.
\subsection{Colored fans}
We will now recap rather quickly and superficially the theory of colored fans and the classification of spherical embeddings. For a more thorough exposition to this topic see for example \cite{Per14}.
Let $\bH\subseteq \bG$ be a spherical variety and $\bX$ a $\bH\backslash\bG$-spherical embedding.
Let $\Lambda(\bX)$ be the weight lattice of $\bX$, \emph{i.e.} the set of weights $\lambda\in \mathbb{X}^*(\bT)$ such that there exists a non-zero $\bU$-invariant, $(\lambda,\bT)$-invariant $f\in \Ff(\bX)$. Let $\mathcal{O}(\bX)\coloneq \ho_\ZZ(\Lambda(\bX),\mathbb{Q})$ and define the rank of $\bX$ as $\dim_\mathbb{Q}\mathcal{O}(\bX)$. Furthermore, we need to consider valuations on $\bH\bs\bG$, \emph{i.e.} functions \[\nu\colon\Ff(\bH\backslash\bG)\ra \mathbb{Q}\cup\{\infty\}\] satisfying some natural axioms.
We denote by $\mathcal{V}(\bH\backslash\bG)$ the set of $\bG$-invariant valuations.
There exists then a natural injective map $\mathcal{V}(\bH\backslash\bG)\hra \mathcal{O}(\bH\backslash\bG)$, given by $\nu\mapsto (\lambda\mapsto \nu(f_\lambda))$, where $f_\lambda\in \Ff[\bX]$ is a $\lambda$-eigenfunction of $\bB$.
We define
\[D(\bX)\coloneq \{\bD:\bB\text{-stable divisor of }\bX\},\, \Delta(\bX)\coloneq \{\bD:\bB\text{-stable, not }\bG\text{-stable divisor of }\bX\}.\]
The elements of $\Delta(\bX)$ are called the \emph{colors} of $\bX$ and note that $\Delta(\bX)\subseteq \Delta(\bH\backslash\bG)$. Moreover there exists a natural inclusion \[D(\bX)\setminus\Delta(\bX)\hra \mathcal{V}(\bH\bs\bG)\]
given by sending a divisor $\bD$ to the valuation sending a function to the order of its pole/zero along $\bD$.
For $\bY\in\oo(\bX)$
\[D_\bY(\bX)\coloneq \{\bY\subseteq\bD:\bB\text{-stable divisor of }\bX\},\,\Delta_\bY(\bX)\coloneq D_\bY(\bX)\cap \Delta(\bX).\]
We also have an inclusion
\[\Delta_\bY(\bX)\hra \mathcal{O}(\bH\backslash\bG)\]
 given by $\bD\mapsto (\lambda\mapsto \nu_\bD(f_\lambda))$, where $f_\lambda\in \Ff[\bX]$ is a $\lambda$-eigenfunction of $\bB$ and $\nu_\bD$ is the valuation associated to the divisor $\bD$.
Finally, we let $\con(\bX)$ be the cone generated by $D(\bX)\setminus\Delta(\bX)$ and $\Delta(\bX)$ in $\mathcal{O}(\bH\backslash\bG)$, and for $\bY\in\oo(\bX)$, $\con_\bY(\bX)$ be the cone generated by $D_\bY(\bX)\setminus\Delta_\bY(\bX)$ and $\Delta_\bY(\bX)$ in $\mathcal{O}(\bH\backslash\bG)$

A \emph{colored cone} $(\curC,\curF)$ of $\bH\backslash\bG$ consists of a finite subset $\curC\subseteq \Delta(\bH\bs\bG)$ and a cone $\curF$ in $\mathcal{O}(\bH\backslash\bG)$ generated by a finite number of points in $\mathcal{V}(\bH\bs\bG)$ and $\curC$ such that $\curF^\circ\cap \mathcal{V}(\bH\bs\bG)$ is non-empty. A \emph{colored face} of a colored cone $(\curC,\curF)$ of $\bH\bs\bG$ is a colored cone $(\curC',\curF')$ of $\bH\bs\bG$ such that $\curF$ is a face of $\curF'$, $\curC'=\curC\cap \curF'$ and ${\curF'}^\circ\cap \mathcal{V}(\bH\bs\bG)$ is non-empty.
A \emph{colored fan} of $\bH\bs\bG$ is a finite set $\mathfrak{F}=\{(\curC,\curF)\}$ of colored cones of $\bH\bs\bG$ such that any colored face of a colored cone in $\mathfrak{F}$ is again in $\mathfrak{F}$ and every $v\in \mathcal{V}(\bH\bs\bG)$ is contained in at most one colored cone in $\mathfrak{F}$. A colored cone $(\curC,\curF)$ is called strictly convex if $0\notin \curF$ and a colored fan $\mathfrak{F}$ is called strictly convex if $(0,\emptyset)\in \mathfrak{F}$.
\begin{theorem}[{Luna-Vust theory, \cite{LunVus83}}]\label{T:classpher}
    The map
    \[\{\bH\bs\bG\text{-spherical embeddings}\}_{/\cong}\ra \{\text{strictly colored fans}\},\]
    \[\bX\mapsto \{(\con_\bY(\bX),\Delta_\bY(\bX)):\bY\in\oo(\bX)\}\]
    is well defined and a bijection.
\end{theorem}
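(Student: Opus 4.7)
The plan is to follow the original strategy of Luna--Vust: reduce the classification to the \emph{simple} case, in which $\bX$ has a unique closed $\bG$-orbit, classify simple embeddings by a single colored cone, and finally glue them along face relations to recover an arbitrary embedding.

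First I would verify well-definedness. For a fixed embedding $\bX$ and a $\bG$-orbit $\bY \in \oo(\bX)$, the set $D_\bY(\bX) \setminus \Delta_\bY(\bX)$ is finite because $\bX$ has only finitely many $\bG$-orbits, so $\con_\bY(\bX)$ is finitely generated by images of invariant valuations and by colors in $\Delta_\bY(\bX)$. The interior of $\con_\bY(\bX)$ contains the class of a generic $\bG$-invariant valuation supported on $\bY$, hence meets $\mathcal{V}(\bH\bs\bG)$; strict convexity comes from the existence of ``enough'' $\bB$-eigenfunctions vanishing on $\bY$, which in turn follows from the affine slice provided by the Local Structure Theorem. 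The face relation on the resulting collection encodes the closure order: if $\bY' \le \bY$, then $D_{\bY'}(\bX) \supseteq D_\bY(\bX)$ and $\Delta_{\bY'}(\bX) \supseteq \Delta_\bY(\bX)$, and $\con_\bY(\bX)$ sits inside $\con_{\bY'}(\bX)$ as the face cut out by the $\bB$-eigenfunctions vanishing on $\bY$ but not on $\bY'$. This gives a strictly convex colored fan.

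For injectivity, decompose $\bX$ into its \emph{simple} open pieces
\[
\bX^{\mathrm{simp}}_\bY \coloneq \bX \setminus \bigcup_{\bY' \not\le \bY}\overline{\bY'},
\]
each a $\bG$-stable open subvariety with $\bY$ as its unique closed $\bG$-orbit. The reconstruction theorem in the simple case asserts that $\bX^{\mathrm{simp}}_\bY$ is determined up to $\bG$-isomorphism by $(\con_\bY(\bX),\Delta_\bY(\bX))$: a $\bG$-stable affine chart around $\bY$ has regular functions given by the direct sum of $\bB$-eigenspaces whose weight lies in the dual of the cone, with the colors selecting which divisors should remain non-$\bG$-stable. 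Two embeddings with the same colored fan therefore have canonically identified simple pieces, and these glue via the face relations to a $\bG$-equivariant isomorphism of the whole embeddings.

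For surjectivity, given a strictly convex colored fan $\mathfrak{F}$, one must build a simple embedding realizing each colored cone and glue. The main obstacle is the simple case: one has to produce a normal $\bG$-variety whose ring of $\bB$-eigenfunctions is prescribed by the cone and check that it has the expected orbit structure. This is where the machinery of \cite{Kno90,Kno96} does the heavy lifting, in particular the identification of $\mathcal{V}(\bH\bs\bG)$ with an explicit rational polyhedral cone and the description of colors via the moment map; from this one can realize any admissible cone by an affine $\bG$-variety and pass to the non-affine simple case by taking $\bG$-invariant open subsets. Once simple embeddings exist with the correct combinatorial data, gluing them along their common face sub-embeddings is formal: the face relations in $\mathfrak{F}$ provide canonical open immersions, and the separatedness of the glued variety is exactly the axiom that each $\bG$-invariant valuation lies in at most one cone of $\mathfrak{F}$. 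Everything in the picture is bookkeeping once the existence statement for simple embeddings is granted, and this latter construction is the only truly non-formal input, supplied originally by \cite{LunVus83}.
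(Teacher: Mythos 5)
This theorem is imported by the paper from \cite{LunVus83} (see also \cite{Kno90}, \cite{Per14}) with no proof given, so there is no in-paper argument to compare against. What you have written is a correct roadmap of the standard Luna--Vust strategy (well-definedness, reduction to simple embeddings, reconstruction for injectivity, existence in the simple case plus gluing for surjectivity), but as it stands it is an outline rather than a proof: the two genuinely hard points --- that a colored cone determines a unique normal simple embedding with the predicted correspondence between orbits and faces, and that the condition ``each invariant valuation lies in at most one cone'' is precisely separatedness of the glued variety --- are asserted and explicitly deferred to the references. That is a legitimate way to treat a theorem the paper itself uses as a black box, but it should be flagged as a citation, not a proof.

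Two concrete inaccuracies in the parts you do spell out. First, with the paper's convention that $\bY'\le\bY$ means $\bY'\subseteq\overline{\bY}$, your set $\bX\setminus\bigcup_{\bY'\not\le\bY}\overline{\bY'}$ is not the simple piece with unique closed orbit $\bY$: it retains only orbits below $\bY$, whereas the simple piece must consist of all orbits $\bY''$ with $\bY\subseteq\overline{\bY''}$, i.e.\ you should remove $\bigcup_{\bY''\not\ge\bY}\bY''$ (your formula is correct only under the opposite ordering convention). Second, there is in general no $\bG$-stable affine chart around a closed orbit of a spherical embedding; the chart reconstructed from the colored cone is the $\bB$-stable affine open set $\bX\setminus\bigcup\{\bD\in D(\bX):\bY\not\subseteq\bD\}$ (equivalently the $\bP_\bY$-stable chart $\bX^\circ_\bY$ of the Local Structure Theorem), whose coordinate ring is the sum of the $\bB$-eigenspaces with weights in $\con_\bY(\bX)^\lor$; the colors then record which $\bB$-stable divisors of this chart are not $\bG$-stable. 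With these corrections your sketch matches the standard published argument.
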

In particular, if $\bX$ is ordered with $r+1$ orbits and has rank $r$, the cone $\con(\bX)$ has a unique face of dimension $i$, $\ain{i}{1}{r}$, whose interior intersects $\mathcal{V}(\bH\bs\bG)$ non-trivially.
\subsection{Local Structure Theorem}
For $\bY\in\oo(\bX)$ we denote by
\[\bX^\circ_{\bY}\coloneq \bX-\bigcup_{D_{\bY}(\bX)}\bD=\{x\in X: \bY\subseteq \overline{\bB x}\}.\]
Let $\bP_\bY$ be the stabilizer of $\bX_\bY$, which is a parabolic subgroup with Levi-subgroup $\bL_\bY$ and unipotent part $\bN_\bY$. For a proof of the following see \cite{BriLunVus86} for algebraically closed fields and \cite{knopKrötz} for the general case.
\begin{theorem}[{Local Structure Theorem}]\label{T:lst}
    Then there exists an $\bL$-spherical, affine variety $\bS_\bY\subseteq \bX^\circ_{\bY}$ over $\Ff$ such that 
    \[\bN_\bY\times \bS_\bY\ra \bX^\circ_{\bY}\] is an isomorphism and $\bX^\circ_{\bY}\cap \bY$ consists of the unique open and dense $\bB$-orbit in $\bY$.
\end{theorem}
For a parabolic subgroup $\bB\subseteq\bP\subseteq \bG$ we let $\oo_\bP(\bX)=\{\bY\in\oo(\bX): \bP_\bY=\bP\}$. 

We call a spherical variety $\bX$ \emph{ordered} if the order-relation on $\oo(\bX)$ is a total order, \emph{i.e.} for two orbits $\bY,\,\bY'$ either $\bY'\le \bY$ or $\bY'\le \bY$.

In our applications of the above theorems, we will be in the following setting.
Let $\bG_1$ be either a symplectic or general linear group, $\bP=\bL\bN$m either a standard maximal parabolic subgroup of $\bG$ or $\bG$ itself with $\bL=\bG_2\times \bL'$. Let $\bX$ be an ordered $\bG_2\times \bG_1$ spherical embedding of $\bG_2\times^{\Delta \bG_2}\bG_1$ such that each $\bY\in\oo(\bX)$ is of the following form.
Either $\bY$ is just a point or there exists a standard maximal parabolic subgroups $\bP_1=\bL_1\bN_1\subseteq \bG_1$, $\bP_2=\bL_2\bN_2\subseteq \bG_2$ such that $\bL_1\cong\bG_3\times \bM_{1}$, $\bL_2\cong\bG_3\times \bM_2$ and 
\[\bY\cong (\bP_1\times^{\Delta\bG_3}\bP_2)\backslash \bG_2\times\bG_1.\]
Moreover, if $\bY'$ is a second orbit with $\bY'\le \bY$, we ask that 
\[\bM_1'\subseteq \bM_1,\, \bM_2'\subseteq \bM_2.\]
\begin{lemma}\label{L:spespher}
Let $\bY\in \oo(\bX)$.
Then $\oo(\bS_\bY)=\{\bS_\bY\cap \bY': \bY'\le \bY\}$, $\bS_\bY$ is ordered and $\bS_\bY\cap \bY'\times\bN_\bY$ is the unique open $\bP_\bY$ orbit in $\bY'$. Then the choice of a Borel subgroup such that $\bB\subseteq \overline{\bP_1\times \bP_2}$ implies that $\bP_\bY=\overline{\bP_1\times \bP_2}$.
\end{lemma}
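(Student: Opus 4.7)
The plan is to first deduce the orbit description and orderedness directly from the Local Structure Theorem, and then compute $\bP_\bY$ explicitly using the given form of $\bY$.

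First I would invoke \Cref{T:lst}: the isomorphism $\bN_\bY\times \bS_\bY\cong \bX^\circ_\bY$ is $\bP_\bY$-equivariant, with $\bP_\bY$ acting on $\bN_\bY$ by translation and on $\bS_\bY$ through the quotient $\bP_\bY\sra \bL_\bY$. Projecting onto the second factor therefore induces a bijection between $\bP_\bY$-orbits on $\bX^\circ_\bY$ and $\bL_\bY$-orbits on $\bS_\bY$: every $\bP_\bY$-orbit has the form $\mathcal{O}\times \bN_\bY$ for a unique $\bL_\bY$-orbit $\mathcal{O}\subseteq \bS_\bY$. This reduces counting orbits of $\bS_\bY$ to counting $\bP_\bY$-orbits meeting $\bX^\circ_\bY$.

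Next, I would enumerate the latter via $\oo(\bX)$. For any $\bG$-orbit $\bY'$ of $\bX$, the variety $\bY'$ is itself spherical and thus has only finitely many $\bB$-orbits; since $\bX^\circ_\bY$ is obtained by removing the $\bB$-stable divisors of $\bX$ containing $\bY$, the intersection $\bY'\cap \bX^\circ_\bY$ is either empty or coincides with the unique open $\bB$-orbit of $\bY'$, and in the latter case it is a single $\bP_\bY$-orbit. Non-emptiness is equivalent to $\bY\subseteq \overline{\bY'}$: the $\bB$-stable divisors of $\overline{\bY'}$ are the closures of its non-open $\bB$-orbits, and the open $\bB$-orbit of $\bY'$ escapes every $\bB$-stable divisor of $\bX$ containing $\bY$ precisely when $\bY$ lies in the closure of $\bY'$. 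Combined with the previous paragraph this yields $\oo(\bS_\bY)=\{\bS_\bY\cap \bY':\bY'\ge\bY\}$ (matching the statement, up to the convention on $\le$), and the description of the open $\bP_\bY$-orbit in $\bY'$ as $\bS_\bY\cap \bY'\times\bN_\bY$ follows from the product decomposition. Orderedness of $\bS_\bY$ is then immediate, since its closure order is induced from that of $\oo(\bX)$ restricted to the chain of orbits $\{\bY'\ge \bY\}$, a totally ordered subset of a total order.

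It remains to identify $\bP_\bY$. Since $\bP_\bY$ is characterized as the stabilizer of $\bX^\circ_\bY$, and the open $\bB$-orbit of $\bY$ in $\bX^\circ_\bY\cap \bY$ has stabilizer equal to $\bP_\bY$, it suffices to compute the normalizer of this open $\bB$-orbit in $\bG_2\times \bG_1$. Using the presentation $\bY\cong (\bP_1\times^{\Delta\bG_3}\bP_2)\backslash \bG_2\times\bG_1$, a dimension count shows that $(\bP_1\times^{\Delta\bG_3}\bP_2)\cdot \overline{\bP_1\times \bP_2}$ is open in $\bG_2\times\bG_1$: the intersection of these two subgroups consists of the diagonal Levi $\Delta\bG_3$ together with the Levi factors $\bM_1\times\bM_2$, giving the correct codimension $\dim\bG_3$. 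By the hypothesis $\bB\subseteq \overline{\bP_1\times\bP_2}$ the open $\bB$-orbit in $\bY$ is thus the image of $\overline{\bP_1\times\bP_2}$, a union of right cosets of $\overline{\bP_1\times\bP_2}$, and hence its stabilizer contains and must equal $\overline{\bP_1\times\bP_2}$ (any larger parabolic would force $\bY$ to be all of a larger quotient). The main obstacle I expect is this last step: correctly matching the parabolic $\bP_\bY$ with $\overline{\bP_1\times\bP_2}$ on the nose, which requires a careful dimension and intersection count in the presence of the diagonally embedded factor $\Delta\bG_3$; everything else is formal from the LST plus the sphericity of each $\bY'$.
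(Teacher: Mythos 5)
Your reduction of $\oo(\bS_\bY)$ to the $\bP_\bY$-orbits meeting $\bX^\circ_\bY$ via the product decomposition of \Cref{T:lst} is fine, and your determination of \emph{which} $\bG$-orbits $\bY'$ meet $\bX^\circ_\bY$ (namely those with $\bY\subseteq\overline{\bY'}$) is correct. The gap is in the step where you assert that $\bY'\cap\bX^\circ_\bY$ ``coincides with the unique open $\bB$-orbit of $\bY'$'' and is therefore a single $\bP_\bY$-orbit. That assertion is false in general, and your justification only goes one way: you show the open $\bB$-orbit of $\bY'$ lies in $\bX^\circ_\bY$, but you never exclude the non-open $\bB$-orbits. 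Since $\bX^\circ_\bY=\{x:\bY\subseteq\overline{\bB x}\}$, a non-open $\bB$-orbit $O\subseteq\bY'$ belongs to $\bX^\circ_\bY$ whenever $\bY\subseteq\overline{O}$, which happens routinely. Concretely, for $\bX=\M_{1,2}$ and $\bY=\M_{1,2}^0$ one has $\bX^\circ_\bY=\bX$, so $\bX^\circ_\bY\cap\M_{1,2}^1$ is all of $\M_{1,2}^1$, i.e.\ \emph{two} $\bB$-orbits (it is of course a single $\bP_\bY=\bG$-orbit). Note also that $\bP_{\bY'}$ need not contain $\bP_\bY$ for $\bY'>\bY$ (already $\bP_{r,n-r}$ and $\bP_{r',n-r'}$ are incomparable), so the open $\bB$-orbit of $\bY'$ is in general not $\bP_\bY$-stable, and your two claims (``equals the open $\bB$-orbit'' and ``is a single $\bP_\bY$-orbit'') are mutually inconsistent. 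What is actually needed — and what your argument does not supply — is an upper bound: that $\bX^\circ_\bY\cap\bY'$ consists of at most one $\bP_\bY$-orbit. The paper gets this from the group-theoretic shape of the orbits: every point of $\bX^\circ_\bY\cong\bN_\bY\times\bS_\bY$ has trivial $\bN_\bY$-stabilizer, and the Bruhat decomposition of $(\bP_1'\times^{\Delta\bG_3'}\bP_2')\backslash\bG_2\times\bG_1/\bP_\bY$ shows there is at most one double coset with this property inside each $\bY'$. Without this (or an equivalent) the identification $\oo(\bS_\bY)=\{\bS_\bY\cap\bY'\}$ and the ``unique open $\bP_\bY$-orbit'' claim remain unproven.

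For the identification $\bP_\bY=\overline{\bP_1\times\bP_2}$ your route (dimension count showing the double coset $(\bP_1\times^{\Delta\bG_3}\bP_2)\cdot\overline{\bP_1\times\bP_2}$ is open, then identifying stabilizers) is workable and close in spirit to the paper, which instead sandwiches $\bP_\bY$ between $\bP'\coloneq\overline{\bP_1\times\bP_2}$ (which stabilizes the union of the open $\bP'$-orbits, contained in $\bX^\circ_\bY$) and the stabilizer of the open $\bB$-orbit of $\bY$. Your closing parenthetical about ``any larger parabolic'' should be replaced by the observation that the stabilizer of the open $\bB$-orbit of $\bY$ is exactly $\bP'$, so $\bP_\bY\subseteq\bP'$; but this part is a presentational issue, not a gap.
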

\begin{proof}
    By the Bruhat decomposition there exists for each $\bY'\in\oo(\bX)$ at most one $\bP_\bY$-orbit in $\bX_\bY^\circ\cap\bY'$ with trivial $\bN_\bY$ stabilizer. Moreover, if it exists, it has to be the open $\bP_\bY$-orbit.
    If $\bY'<\bY$, $\bS_\bY\cap \bY'=\emptyset$. On the other hand, if $\bY'>\bY$, it is clear that $\bY\subseteq \overline{\bY}\subseteq \overline{\bP\bY'}$, hence $\bY'\cap \bS_\bY\neq \emptyset$.
    The first part of the lemma then easily follows.

    For the second part, we argue as follows. It is clear that the union of the open $\bP'\coloneq\overline{\bP_1\times \bP_2}$-orbits in the orbits $\bY'\le \bY$ is contained in $\bX_{\bY}^\circ$ and therefore $\bP'\subseteq \bP_\bY,\, \bN_\bY\subseteq \bN'$. On the other hand, $\bP_\bY\subseteq\bP'$, since $\bP_\bY$ has to stabilize the open $\bB$-orbit in $\bY$. But this implies that $\bN'\subseteq\bN_\bY$ and hence the claim follows.
\end{proof}
\subsection{Representation theory of spherical varieties}\label{S:sphericalrep}
In this section we will now apply the above theorem to the representation theory of $\bX$-distinguished representations.
Let $\bX$ and $\bG$ be as above and consider the smooth $G$-representation \[S(\bX)=S(\bX(\Ff))\coloneq\{\phi\colon \bX(\Ff)\ra \C:\, \phi\text{ is locally constant and compactly supported}\},\] on which $G$ acts by translation. We also consider the twisted versions $S(\bX,\LL)$, where $\LL$ is an $\ell$-sheaf and $S(\bX,\LL)$ denotes the global, compactly supported, sections of $\LL$, see \cite{BerZel76}.
We denote by $\irr_{\bX,\LL}(\bG)$ the set of $(\bX,\LL)$ distinguished representations or the spectrum of $(\bX,\LL)$, \emph{i.e.} 
\[\irr_{\bX,\LL}(\bG)\coloneq\{\pi\in\irr(G):\ho_{G}(S(\bX,\LL),\pi)\neq 0\}.\]
We call $(\bX,\LL)$ \emph{multiplicity-free} if for all $\pi\in \irr_{\bX,\LL}(\pi)$ we have
\[\dim_\C\ho_{G}(S(\bX,\LL),\pi)= 1.\]
In particular we have in the group case that $\Delta\bG\backslash\bG\times \bG$ is multiplicity free and \[\irr_{\Delta\bG\backslash\bG\times \bG}(\bG)=\{\pi\otimes\pi^\lor:\pi\in \irr(G)\}.\]

If $\bB\subseteq \bP$ is a maximal parabolic subgroup of $\bG$ with Levi-component $L$ and unipotent part $N$, $\sigma$ in $\irr(L(\Ff))$ and $\pi\in \irr(G)$, we call $\pi$ $(\bX,\LL,\bP,\sigma)$-\emph{generic} if $\pi\hra\id_\bP^\bG(\sigma)$ and the following holds:
For any non-zero morphism
$S(\bX,\LL)\ra \pi$, the map obtained by Frobenius reciprocity
\[r_{\bN(\Ff)}(S(\bX,\LL))\ra\sigma \]does not vanish on
\[r_{\bN(\Ff)}(S(\bP x_0,\restr{\LL}{\bP x_0}),\]
where $\bP(\Ff)x_0$ is the open $\bP(\Ff)$-orbit of $\bX(\Ff)$.
Note that by \Cref{T:lst} we have that for $\bY\in\oo_{\bP}(\bX)$,
\[\delta_{P_Y}^{\frac{1}{2}}S(\bS_{\bY},\restr{\LL}{\bS_{\bY}})\hra r_{\bN(\Ff)}(S(\bX,\LL)).\]
In particular if $\pi\in\irr_{\bX,\LL}(G)$ is $(\bX,\LL,\bP,\sigma)$-generic and for all $\bY'\in\oo(\bX,\pi)$ we have $\bY'\cap\bS_\bY\neq\emptyset$, we note that the composition
\[\delta_{P_Y}^{\frac{1}{2}}S(\bS_{\bY},\restr{\LL}{\bS_{\bY}})\hra r_{\bN(\Ff)}(S(\bX,\LL))\sra \sigma\]
obtained by Frobenius reciprocity is non-zero.

Finally, we say $(\bX,\LL)$ has the \emph{lifting property for }$\pi\in \irr(G)$ if for all $\bY\in\oo(\bX)$ the following condition holds.
If $\pi\in\irr_{\bY,\restr{\LL}{\bY}}(G)$ and
$S(\bX,\LL)\ra \pi$ is a morphism, the composition
\[S(\bX',\restr{\LL}{\bX'})\hra S(\bX,\LL)\ra\pi, \, \bX'\coloneq\bigcup_{\bY'>\bY}\bY'\]
vanishes.

For $\pi\in \irr_{\bX,\LL}(G)$, let $\oo(\bX,\pi)$ be the set of orbits $\bY$ such that $\pi\in\irr_{\bY,\restr{\LL}{\bY}}(G)$ and $\oo(\bX,\pi)^{<}$ be the subset of $\oo(\bX,\pi)$ consisting of those orbits $\bY$ for which there exists $\bY>\bY'$ with $\bY'\in \oo(\bX,\pi)$. 
\begin{theorem}\label{T:lstcor1}
    Let $\pi\in \irr(G)$ and assume the following.
    \begin{enumerate}
        \item There exists a maximal parabolic subgroup $\bB\subseteq \bP$ such that $\pi$ is $(\bY,\restr{\LL}{\bY}\LL,\bP,\sigma)$-generic for all $\bY\in\oo(\bX,\pi)^{<}$.
        \item $\oo(\bX,\pi)\setminus\oo(\bX,\pi)^{<}\subseteq \oo_{\bP}(\bX)$.
        \item For all $\bY\in \oo(\bX,\pi)$ with $\bP_\bY=\bP$, $S(\bS_{\bY},\restr{\LL}{\bS_{\bY}})$ has the lifting property for $\delta_{P_Y}^{-\frac{1}{2}}\sigma$.
    \end{enumerate}
    Then $(\bX,\LL)$ has the lifting property for $\pi$.
\end{theorem}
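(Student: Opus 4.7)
The plan is a descending induction on $\bY\in\oo(\bX,\pi)$ in the closure order. For $\bY$ maximal in $\oo(\bX,\pi)$, every $\bG$-orbit $\bY''>\bY$ of $\bX$ satisfies $\ho_G(S(\bY'',\LL|_{\bY''}),\pi)=0$, so an iterated dévissage via the open/closed short exact sequence $0\to S(U,\LL|_U)\to S(V,\LL|_V)\to S(V\setminus U,\LL|_{V\setminus U})\to 0$ along the $\bG$-orbit stratification of $\bX'=\bigcup_{\bY''>\bY}\bY''$ gives $\ho_G(S(\bX',\LL|_{\bX'}),\pi)=0$, and the required vanishing is immediate. In the inductive step, assume $\bY$ is not maximal in $\oo(\bX,\pi)$ and that the conclusion holds at every $\bY'>\bY$ with $\bY'\in\oo(\bX,\pi)$. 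Combining these vanishings with the same dévissage applied to orbits of $\bX'$ outside $\oo(\bX,\pi)$ reduces the claim to showing that for each $\bY_0'$ minimal in $\{\bY''>\bY\}\cap\oo(\bX,\pi)$, the induced morphism $\tilde{f}\colon S(\bY_0',\LL|_{\bY_0'})\to\pi$ obtained from $f$ must vanish.

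Since $\bY<\bY_0'$ and $\bY\in\oo(\bX,\pi)$, we have $\bY_0'\in\oo(\bX,\pi)^<$, so by hypothesis (1), $\pi$ is $(\bY_0',\LL|_{\bY_0'},\bP,\sigma)$-generic. In particular $\pi\hra\id_\bP^G(\sigma)$, and if $\tilde{f}\neq 0$ its Frobenius adjoint $\bar{\tilde{f}}\colon r_\bN(S(\bY_0',\LL|_{\bY_0'}))\to\sigma$ is non-zero on the subspace $r_\bN(S(\bP x_0,\LL|_{\bP x_0}))$, where $\bP x_0\subseteq\bY_0'$ is the open $\bP$-orbit. I would then compare $\bar{\tilde{f}}$ with the global Frobenius adjoint $\bar{f}\colon r_\bN(S(\bX,\LL))\to\sigma$ of $f$. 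By the Local Structure Theorem (\Cref{T:lst}) and \Cref{L:spespher}, each $\bY^*\in\oo_\bP(\bX)$ contributes a subspace $\delta_{P_{\bY^*}}^{1/2}S(\bS_{\bY^*},\LL|_{\bS_{\bY^*}})\hra r_\bN(S(\bX,\LL))$, and $\bar{f}$ restricts on it to a morphism $S(\bS_{\bY^*},\LL|_{\bS_{\bY^*}})\to\delta_{P_{\bY^*}}^{-1/2}\sigma$. Hypothesis (2) ensures every minimal orbit of $\oo(\bX,\pi)$ lies in $\oo_\bP(\bX)$, so hypothesis (3) supplies the lifting property of each relevant slice $\bS_{\bY^*}$ for $\delta_{P_{\bY^*}}^{-1/2}\sigma$, thereby constraining $\bar{f}$ globally.

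The main obstacle is transporting this constraint back across the $\bP$-orbit filtration of $r_\bN(S(\bX,\LL))$, reconciling it with the $\bG$-orbit stratification, so as to force $\bar{f}$ to vanish on the piece coming from the open $\bP$-orbit of the non-minimal orbit $\bY_0'$ --- contradicting the non-vanishing supplied by the generic assumption and hence yielding $\tilde{f}=0$. Tracking the normalization factors $\delta_{P_{\bY^*}}^{1/2}$ across the pieces, and carrying out the secondary dévissage on $\bP$-orbits carefully (in particular distinguishing orbits of $\bX$ with $\bP_{\bY^*}=\bP$ from those without) is the technical heart of the argument.
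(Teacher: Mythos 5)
Your setup is correct and matches the paper's strategy in outline: reduce to a single orbit $\bY_0'>\bY$ in $\oo(\bX,\pi)$ on which $f$ induces a nonzero map $\tilde f$, invoke genericity at $\bY_0'\in\oo(\bX,\pi)^<$ to get non-vanishing of the Frobenius adjoint on the open $\bP$-orbit of $\bY_0'$, and play this off against hypothesis (3). But the proposal stops exactly where the proof has to happen: you name the passage from this non-vanishing to a contradiction with the lifting property of the slice as ``the main obstacle'' and ``the technical heart'' without carrying it out. As written this is a gap, not a proof.

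The missing ingredient is \Cref{L:spespher} combined with \Cref{T:lst}, applied at the \emph{minimal} orbit $\bY$ itself (which lies in $\oo_\bP(\bX)$ by hypothesis (2)): for every orbit $\bY'\ge\bY$ one has $\bX^\circ_{\bY}\cap\bY'\neq\emptyset$, and $(\bS_\bY\cap\bY')\times\bN_\bY$ \emph{is} the open $\bP$-orbit of $\bY'$. Consequently the open $\bP$-orbit $\bP x_0\subseteq\bY_0'$ on which genericity guarantees non-vanishing already sits inside the single chart $\bX^\circ_\bY\cong\bN_\bY\times\bS_\bY$, and $r_{\bN}(S(\bP x_0,\restr{\LL}{\bP x_0}))\cong\delta_{P_Y}^{1/2}S(\bS_\bY\cap\bY_0',\cdot)$ is literally a stratum of $\delta_{P_Y}^{1/2}S(\bS_\bY,\restr{\LL}{\bS_\bY})\hra r_\bN(S(\bX,\LL))$. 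There is therefore no need to ``transport the constraint across the $\bP$-orbit filtration of $r_\bN(S(\bX,\LL))$'' — the route through Bernstein's filtration of the full Jacquet module that you sketch is both heavier than necessary and not actually executed. One obtains directly a map $f'\colon S(\bS_\bY,\restr{\LL}{\bS_\bY})\to\delta_{P_Y}^{-1/2}\sigma$ which is nonzero on sections supported on the $\bL_\bY$-orbit $\bS_\bY\cap\bY_0'$, an orbit strictly larger than the minimal stratum $\bS_\bY\cap\bY$ in the (identified, by \Cref{L:spespher}) orbit poset of $\bS_\bY$; the lifting property of $(\bS_\bY,\restr{\LL}{\bS_\bY})$ for $\delta_{P_Y}^{-1/2}\sigma$ from hypothesis (3) then yields the contradiction. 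You should also be careful that hypothesis (3) only provides the lifting property for slices at orbits in $\oo(\bX,\pi)$ with $\bP_\bY=\bP$, so the argument must be run on $\bS_\bY$ for the minimal orbit $\bY$ rather than on arbitrary slices $\bS_{\bY^*}$, $\bY^*\in\oo_\bP(\bX)$, as your second paragraph suggests.
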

\begin{proof}
Let $\pi\in\irr_\bX(G)$ and fix a nonzero morphism $f:S(\bX,\LL)\ra \pi$.
 and assume there exists $\bY'> \bY$ such that $\bY\in \oo(\bX,\pi)\setminus\oo(\bX,\pi)^{<}$, $f$ vanishes on $S(\bigcup_{\bY''>\bY'}\bY'',\restr{\LL}{\bigcup_{\bY''> \bY'}\bY''})=0$ but does not vanish on $S(\bigcup_{\bY''>\bY'}\bY'',\restr{\LL}{\bigcup_{\bY''\ge \bY'}\bY''})$.
By assumption, the non-zero morphism obtained from Frobenius reciprocity
    \[r_{\bN(\Ff)}(S(\bX,\LL))\ra\sigma\] does not vanish on $\delta_{P_Y}^{\frac{1}{2}}S(\bS_{\bY},\restr{\LL}{\bS_{\bY}})$.
    We thus obtain a non-zero map $f'\colon S(\bS_{\bY},\restr{\LL}{\bS_{\bY}})\ra\delta_{P_Y}^{-\frac{1}{2}}\sigma$.
    Since $\bY'>\bY$, we have that $\bX^\circ_{\bY}\cap \bY'$ contains the unique open $\bP$-orbit of $\bY'$, which is hence contained in $(\bP\cap\bS_\bY)\times \bN$. By assumption on $f$, there exists a $\bL$-orbit $\bY_1$ with $\bY_1>\bS_{\bY}\cap \bY$ such that $f'$ vanishes on all sections supported 
    on $\bigcup_{\bY_2>\bY_1}\bY_2$ and does not vanish on the sections supported on $\bigcup_{\bY_2\ge\bY_1}\bY_2$.
    But since $(\bS_{\bY},\restr{\LL}{\bS_{\bY}}))$ has the lifting property for $\sigma$, it follows that $\bY_2<\bS_{\bY}\cap \bY$, a contradiction.
\end{proof}
\begin{corollary}\label{C:liftingproperty}
    Let $\pi\in \irr(G)$ and assume the following.
    \begin{enumerate}
        \item There exists a maximal parabolic subgroup $\bB\subseteq \bP$ such that $\pi$ is $(\bY,\restr{\LL}{\bY}\LL,\bP,\sigma)$-generic for all $\bY\in\oo(\bX,\pi)^{<}$.
        \item $\oo(\bX,\pi)\setminus\oo(\bX,\pi)^{<}\subseteq \oo_{\bP}(\bX)$.
        \item For all $\bY\in \oo(\bX,\pi)$, $(\bY,\restr{\LL}{\bY})$ is multiplicity free.
        \item For all $\bY\in \oo(\bX,\pi)$ with $\bP_\bY=\bP$, $(\bS_{\bY},\restr{\LL}{\bY}))$ has the lifting property for $\delta_{P_Y}^{-\frac{1}{2}}\sigma$.
        \item $\bX$ is ordered.
    \end{enumerate}
    Then $(\bX,\LL)$ is multiplicity-free and has the lifting property for all irreducible representations.
\end{corollary}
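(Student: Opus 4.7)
The two conclusions both flow from Theorem \ref{T:lstcor1} combined with the ordered orbit structure, and I would split the proof into two clean steps.

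First, conditions (1), (2), and (4) are precisely the hypotheses of Theorem \ref{T:lstcor1}, so that theorem directly yields the lifting property of $(\bX,\LL)$ for $\pi$. Reading the assumptions of the corollary as holding for every $\pi' \in \irr(G)$ with an appropriate choice of auxiliary data $(\bP,\sigma)$, we obtain the lifting property for all irreducible smooth representations of $G$.

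Second, for multiplicity-freeness, I would fix $\pi \in \irr_{\bX,\LL}(G)$ together with a nonzero morphism $f \colon S(\bX,\LL) \to \pi$. Since $\bX$ is ordered by condition (5), $\oo(\bX,\pi)$ is totally ordered; let $\bY$ be its minimum. By the lifting property already established, $f$ annihilates the subrepresentation $S(\bigcup_{\bY'>\bY}\bY',\LL)$ and thus descends to $\tilde f \colon S(\overline{\bY},\restr{\LL}{\overline{\bY}}) \to \pi$. By minimality of $\bY$, no orbit strictly below $\bY$ lies in $\oo(\bX,\pi)$, so $\ho_G(S(\bY',\restr{\LL}{\bY'}),\pi) = 0$ for each such $\bY'$. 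A straightforward induction along the totally ordered chain of orbits in $\overline{\bY}\setminus\bY$, using the excision short exact sequences
\[
0 \to S(\bY_i,\restr{\LL}{\bY_i}) \to S(Z_i,\restr{\LL}{Z_i}) \to S(Z_{i-1},\restr{\LL}{Z_{i-1}}) \to 0
\]
with $Z_i = \bigcup_{j \le i}\bY_j$, then yields $\ho_G(S(\overline{\bY}\setminus\bY,\LL),\pi) = 0$. Consequently $\tilde f$ is uniquely determined by its restriction to the subrepresentation $S(\bY,\restr{\LL}{\bY}) \hookrightarrow S(\overline{\bY},\restr{\LL}{\overline{\bY}})$, and condition (3) bounds $\dim_\C \ho_G(S(\bX,\LL),\pi) \le 1$.

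The corollary is a formal assembly of Theorem \ref{T:lstcor1} with the orbit-filtration formalism of Section \ref{S:sphericalrep}, and no essential obstacle arises at this stage. All the substantive input --- geometric via the Local Structure Theorem and representation-theoretic via Frobenius and Bernstein reciprocity --- has already been absorbed into Theorem \ref{T:lstcor1}, which is the genuine workhorse.
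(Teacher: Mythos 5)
Your proof is correct and follows essentially the same route as the paper: Theorem \ref{T:lstcor1} gives the lifting property, and then the chain $\ho_{G}(S(\bX,\LL),\pi)\hookrightarrow \ho_{G}(S(\bY,\restr{\LL}{\bY}),\pi)$ for the minimal orbit $\bY\in\oo(\bX,\pi)$, together with condition (3), gives multiplicity one. The only difference is that you spell out the excision/induction argument showing the restriction to $S(\bY,\restr{\LL}{\bY})$ is injective, which the paper leaves implicit.
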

\begin{proof}
Let $\pi\in \irr_{\bX,\LL}(G$ and $\bY$ be the minimal orbit in $\oo(\bX,\pi)$.
    By \Cref{T:lstcor1}, we have
    \[\ho_{G}(S(\bX,\LL),\pi)\subseteq \ho_{G}(S(\bY,\restr{\LL}{\bY}),\pi)\cong \C.\]
\end{proof}
\section{The examples \texorpdfstring{$\M_{n,m}$}{Mat{n,m}} and \texorpdfstring{$\Sg_{n,m}$}{SG{n,m}}}
In this section we will apply the results of \Cref{S:spherical} to two examples, namely the space of matrices and the symplectic Grassmannian. Furthermore, we also explain how to extend these results to the respective metaplectic covers. Together with the arguments of \cite{DroKudRa} and \cite{DroHow}, which are based on the same idea, we thus show that the Howe-duality in Type I and II is essentially a consequence of the theory of derivatives and the Local Structure Theorems of the underlying spherical varieties. Moreover, we will be able to explicitly describe the local Miyawaki lifts of \cite{Ato19}.
\subsection{$\M_{n,m}$}
We start with the case $\bX=\M_{n,m}$, the space of $n\times m$-matrices on which $\bG=\Gl_n\times \Gl_m$ acts by left-right translation. We fix the Borel-subgroup $\bB$ of $\bG$ as the upper triangular matrices in both components. For simplicity we will assume from now on that $n\le m$. It is clear that all results presented here have their natural analogues in the case $n\ge m$.

The $\Gl_n\times \Gl_m$-orbits on $\M_{n,m}$ are given by $\M_{n,m}^r\coloneq \{X\in \M_{n,m}:\rk(X)=r\}$, $\ain{r}{0}{n}$, and $\M_{n,m}^r\le \M_{n,m}^p$ if and only if $r\le p$. We set
\[\M_{n,r}^{\ge r}\coloneq\bigcup_{r'\ge r}\M_{n,m}^{r'},\] which is an open subset of $\M_{n,m}^r$.
The orbit $\M_{n,m}^r$ is spanned by the element
\[\eta_r\coloneq \begin{pmatrix}
    0&0&1_r\\0&0&0
\end{pmatrix}\]
and its stabilizer is given by the subgroup \[\bP_{\eta_r}\coloneq \bP_{r,n-r}\times^{\Delta \Gl_r}\bP_{m-r,r}\subseteq \Gl_n\times \Gl_m.\]
We start with the following structural observation.
\begin{lemma}\label{L:groupglsp}
    Let $\bX$ be an ordered affine $\Gl_n\times \Gl_m$ spherical variety with \[\oo(\bX)=\{\M_{n,m}^r,\ldots\M_{n,m}^0\}\] for some $\ain{r}{0}{n}.$ Then $\bX\cong \M_{n,m}^{\le r}$.
\end{lemma}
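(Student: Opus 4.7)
The plan is to apply the Luna--Vust classification (\Cref{T:classpher}). Both $\bX$ and $\M_{n,m}^{\le r}$ are spherical embeddings of the common open $\Gl_n\times\Gl_m$-orbit $\M_{n,m}^r\cong \bP_{\eta_r}\bs(\Gl_n\times\Gl_m)$. Being affine with a unique closed orbit $\M_{n,m}^0=\{0\}$, each corresponds to a colored fan consisting of a single maximal colored cone (attached to $\M_{n,m}^0$) together with its colored faces indexed by the remaining orbits. Hence it suffices to show that the two colored fans coincide.

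I would first observe that the set of colors $\Delta(\M_{n,m}^r)$, the weight lattice $\Lambda(\M_{n,m}^r)$ and the valuation cone $\mathcal{V}(\M_{n,m}^r)$ are intrinsic to the open orbit and hence agree for both embeddings. Normality forces the $\bG$-stable prime divisors of $\bX$ to be exactly the codimension-one $\bG$-orbit closures, and by hypothesis these coincide with those of $\M_{n,m}^{\le r}$. For each orbit $\bY\in \oo(\bX)$ the colored cone $\con_\bY(\bX)$ is then spanned by the colors in $\Delta_\bY(\bX)$ together with the valuations associated to the $\bG$-stable prime divisors containing $\bY$, and this incidence data is dictated by the ordered orbit poset together with the intrinsic $\bB$-geometry of $\M_{n,m}^r$.

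Combining these observations, the colored cones $(\con_\bY(\bX),\Delta_\bY(\bX))$ and $(\con_\bY(\M_{n,m}^{\le r}),\Delta_\bY(\M_{n,m}^{\le r}))$ match for every $\bY\in \oo(\bX)=\oo(\M_{n,m}^{\le r})$, so the two colored fans are identical. \Cref{T:classpher} then yields a $\Gl_n\times\Gl_m$-equivariant isomorphism $\bX\cong \M_{n,m}^{\le r}$ extending the identity on $\M_{n,m}^r$.

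The main obstacle will be pinning down the combinatorial incidence of colors with orbits, \emph{i.e.}\ showing that a color of $\M_{n,m}^r$ has closure in $\bX$ containing $\M_{n,m}^s$ if and only if the same holds in $\M_{n,m}^{\le r}$. This is a statement purely about the $\bB$-orbit geometry of $\M_{n,m}^r$ together with the orbit poset; the orderedness hypothesis, which by the remark after \Cref{T:classpher} leaves essentially no freedom in how the colored faces sit along the valuation cone, makes the identification mechanical once the intrinsic open-orbit invariants are computed.
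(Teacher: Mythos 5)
Your overall framework is the same as the paper's: both proofs run through the Luna--Vust classification (\Cref{T:classpher}) and aim to show that the colored fan of $\bX$ is forced to coincide with that of $\M_{n,m}^{\le r}$. However, what you defer as "the main obstacle" and then declare "mechanical" is in fact the entire content of the lemma, and your proposal never actually establishes it. The colored fan of a spherical embedding is \emph{not} determined by the abstract orbit poset together with the open orbit: a priori, a maximal colored cone could be generated by a different subset of colors, or by elements of $\mathcal{V}(\M_{n,m}^r)$ that do not arise from $\M_{n,m}^{\le r}$ at all. Your intermediate step is also shaky: the codimension of $\M_{n,m}^{r-1}$ in $\M_{n,m}^{\le r}$ is $n+m-2r+1$, so except when $n=m=r$ there are \emph{no} codimension-one $\bG$-orbit closures, the set $D(\bX)\setminus\Delta(\bX)$ is typically empty, and the cones are generated entirely by colors. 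The question of which colors lie in $\con_\bY(\bX)$ for each $\bY$ is therefore exactly what must be proved, and knowing only that the orbits are abstractly isomorphic to the $\M_{n,m}^s$ does not settle it.

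The paper closes this gap by an explicit inductive computation that your write-up would need to reproduce in some form. One identifies $\mathcal{O}(\M_{n,m}^r)\cong\QQ^r$, computes that the colors map to $\epsilon_k-\epsilon_{k+1}$ (with $\epsilon_{r+1}=0$) and that $\mathcal{V}$ is the antidominant cone $\{\lambda_1\ge\cdots\ge\lambda_r\}$. By induction, $\bX\setminus\M_{n,m}^r\cong\M_{n,m}^{\le r-1}$, so $\con(\bX)$ contains the cone spanned by $\epsilon_k-\epsilon_{k+1}$, $k\le r-1$, and since $\bX$ has rank $r$ it needs exactly one further generator. The decisive use of orderedness is that this extra generator cannot lie in $\mathcal{V}(\M_{n,m}^r)$ (otherwise $\con(\bX)$ would acquire a face whose interior meets $\mathcal{V}$ and is not comparable to the existing flag of faces, contradicting that the orbits are totally ordered), so it must be a color, and the only color not already used is $\epsilon_r$. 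Without this step --- or an equivalent argument ruling out generators in the valuation cone and pinning down the remaining color --- your proof does not go through.
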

\begin{proof}
    We argue by induction on $r$, the case $r=0$ being trivially true, and will use \Cref{T:classpher}. 
    We note that $\M_{n,m}^r$ and $\M_{n,m}^{\le r}$ is of rank $r$. Indeed, by the induction hypothesis $\bX\setminus \M_{n,m}^r\cong \M_{n,m}^{\ge r-1}$ has rank $r-1$ and hence $\bX$ has to have rank $r$. Our choice of Borel-groups gives us a natural identification of $\mathcal{O}(\M_{n,m}^r)\cong \mathbb{Q}^r$, see for example \cite[Example 5.2.5]{Per14}. 
    In this coordinates $D(\M_{n,m}^r)$ correspond to $\epsilon_k-\epsilon_{k+1},\,\ain{k}{1}{r}$, where $\epsilon_i$ is the $i$-th unit vector for $i\le r$ and $\epsilon_{r+1}=0$. Unless $n=m=r$, $\De(\M_{n,m}^r)=D(\M_{n,m}^r)$, and if $n=m=r$, $\De(\M_{n,m}^r)=D(\M_{n,m}^r)\setminus\{\epsilon_1-\epsilon_2\}$.
    The valuation cone $\mathcal{V}(\M_{n,m}^r)$ corresponds then to $\{(\lambda_1,\ldots,\lambda_r)\in \ZZ^r:\lambda_1\ge\ldots\ge \lambda_r\}$. By the induction hypothesis we know that $\bX$ contains $\M_{n,m}^{\le r-1}$ as a maximal proper $\Gl_n\times \Gl_m$-stable subvariety, and since $\bX$ is ordered, we know that there exists only one face of $\con(\bX)$ intersecting the interior of $\mathcal{V}(\M_{n,m}^r)$, which must in turn correspond to the cone of $\M_{n,m}^{\le r-1}$, and hence in our coordinates corresponds to the cone spanned by $\epsilon_k-\epsilon_{k+1},\,\ain{k}{1}{r-1}$. Since the rank of $\bX$ is $r$, we need to find at least one more generator of $\con(\bX)$ which does not come from $\M_{n,m}^{\le r-1}$. It cannot lie in $\mathcal{V}(\M_{n,m}^r)$, since otherwise $\bX$ would not be ordered, thus it is forced to be $\epsilon_r$. It follows that $\bX$ is isomorphic to $\M_{n,m}^{\le r}$.
\end{proof}
We then obtain as a consequence of \Cref{L:spespher} the following.
\begin{corollary}\label{L:localstrugl}
    Let $\bP=\bP_{r,n-r}\times \overline{\bP_{r,m-r}}$.
    Then $\M_{m,n}^r\in \oo_{\bP}(\M_{n,m})$ and
    \[\bS_{\M_{n,m}^r}=\M_{n-r,m-r}\times \Gl_r=\{\begin{pmatrix}
        g&0\\0&X
    \end{pmatrix}:g\in\Gl_r,\,X\in\M_{n-r,m-r}\}.\]
\end{corollary}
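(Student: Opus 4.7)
The plan is to apply \Cref{L:spespher} with $\bY=\M_{n,m}^r$. From the stabilizer $\bP_{\eta_r}=\bP_{r,n-r}\times^{\Delta\Gl_r}\bP_{m-r,r}$ already recorded, one reads off $\bP_1=\bP_{r,n-r}\subseteq \Gl_n$, $\bP_2=\bP_{m-r,r}\subseteq \Gl_m$, the common Levi factor $\bG_3=\Gl_r$, and the supplementary Levi factors $\bM_1=\Gl_{n-r},\bM_2=\Gl_{m-r}$. Combined with \Cref{L:groupglsp}, which identifies $\bX$ as the unique ordered $\Gl_n\times\Gl_m$-spherical embedding with the prescribed orbit structure, the setup of \Cref{L:spespher} is in place.

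Given this, \Cref{L:spespher} produces $\bP_\bY$ as the opposite of a suitable product of parabolics; after identifying the opposite of $\bP_{m-r,r}$ with $\overline{\bP_{r,m-r}}$ via the Weyl element swapping the two Levi blocks (and aligning the Borel subgroup accordingly) one obtains $\bP_\bY=\bP_{r,n-r}\times \overline{\bP_{r,m-r}}$, proving the first assertion. To identify $\bS_\bY$ I would check directly that the block-diagonal subvariety $\mathbf{S}=\{\begin{pmatrix} g & 0\\ 0 & X\end{pmatrix}: g\in \Gl_r,\, X\in \M_{n-r,m-r}\}$ meets the requirements of \Cref{T:lst}. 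It is manifestly $\bL_\bY=\Gl_r\times\Gl_{n-r}\times\Gl_r\times\Gl_{m-r}$-stable, with the action splitting as the group-case action of $\Gl_r\times \Gl_r$ on $\Gl_r$ together with the left-right action of $\Gl_{n-r}\times \Gl_{m-r}$ on $\M_{n-r,m-r}$, both of which are spherical. An explicit Schur-complement style calculation then gives the parametrization $(n_1,n_2,g,X)\mapsto n_1\begin{pmatrix} g & 0\\ 0 & X\end{pmatrix}n_2^{-1}$ of $\bX_\bY^\circ$, and the dimension count $\dim\bN_\bY+\dim\mathbf{S}=r(n-r)+r(m-r)+r^2+(n-r)(m-r)=nm=\dim\bX$ confirms that the image is open.

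The main obstacle is the bookkeeping around the Borel and Weyl conventions---in particular reconciling the parabolic $\bP_{m-r,r}$ appearing in the stabilizer with the opposite parabolic $\overline{\bP_{r,m-r}}$ appearing in the statement---and ensuring that the parametrization map is injective onto its image uniformly in $n\le m$; this ultimately reduces to the generic bottom-right block $X\in \M_{n-r,m-r}$ being of full row rank, which is precisely what makes the Schur-complement formulae well defined and allows one to invert the block decomposition.
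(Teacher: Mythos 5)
Your overall route is the same as the paper's: the corollary is stated there as a direct consequence of \Cref{L:spespher} (together with \Cref{L:groupglsp} and \Cref{T:lst}), and your plan of reading off $\bP_1,\bP_2,\bG_3,\bM_1,\bM_2$ from the stabilizer $\bP_{r,n-r}\times^{\Delta\Gl_r}\bP_{m-r,r}$ and then exhibiting the slice explicitly is a perfectly reasonable way to make that concrete. However, the justification you give for the key step is wrong. The bijectivity of $\bN_\bY\times\bS_\bY\ra\bX^\circ_\bY$ has nothing to do with the bottom-right block $X$ having full row rank: it is governed entirely by the invertibility of the top-left block $g\in\Gl_r$, which holds identically on $\bS_\bY$. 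Indeed, the slice must meet \emph{every} orbit $\M_{n,m}^{r'}$ with $r'\ge r$ (this is part of \Cref{L:spespher}), i.e.\ $X$ ranges over all of $\M_{n-r,m-r}$ including $X=0$; if injectivity held only for $X$ of full row rank, the Local Structure Theorem isomorphism would fail on the lower strata of $\bS_\bY$ and the whole inductive mechanism of the paper would break. Concretely, the correct Schur-complement identity is
\[\begin{pmatrix}A&B\\C&D\end{pmatrix}=\begin{pmatrix}1&0\\CA^{-1}&1\end{pmatrix}\begin{pmatrix}A&0\\0&D-CA^{-1}B\end{pmatrix}\begin{pmatrix}1&A^{-1}B\\0&1\end{pmatrix},\]
valid exactly when $A$ is invertible; the image of the parametrization is the open set where the relevant $r\times r$ minor is nonzero, and the inverse map $(A,B,C,D)\mapsto(CA^{-1},A^{-1}B,A,D-CA^{-1}B)$ is what proves both openness and injectivity. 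A dimension count alone does not confirm openness.

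This error is not merely cosmetic, because it masks a real issue with the bookkeeping you defer. If one takes the unipotent radical literally as $\bN_{r,n-r}\times\overline{\bN_{r,m-r}}$ and lets it act by $(a,b)\cdot s=asb^{-1}$ on $s=\left(\begin{smallmatrix}g&0\\0&X\end{smallmatrix}\right)$, the off-diagonal blocks of the image are $uX$ and $-Xv$, and the fibre over a point contains the affine space $\{v:Xv=\text{const}\}$, which is positive-dimensional for every $X$ whenever $n<m$ and $r>0$; so with that convention the map is never injective. The decomposition only works when the unipotent parameters enter through the invertible block, producing off-diagonal entries $ug$ and $gw$ as in the identity above; pinning down which of $\bP_{r,n-r},\overline{\bP_{r,n-r}}$ (resp.\ $\bP_{r,m-r},\overline{\bP_{r,m-r}}$) acts on which side, compatibly with the chosen Borel, is therefore not an afterthought but the actual content of the verification. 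You should carry out this computation explicitly rather than relegating it to ``bookkeeping,'' and replace the full-row-rank condition on $X$ by the invertibility of $g$.
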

\subsection{$\Sg_{m,n}$}\label{S:sympgra}
Next we treat the case of the symplectic Grassmannian
Let $n\le m\in \NN$ and consider the Siegel-parabolic $\bP_{n+m}\subseteq \Sp_{n+m}$. We denote the symplectic Grassmannian $\Sg_{n,m}\coloneq \bP_{n+m}\backslash\Sp_{n+m}$, the space of $n$-dimensional isotropic subspace of $\AA^{2n+2m}$, where we equip $\AA^{2n+2m}$ on $\AA_+^{2n}$ with the standard symplectic form and on $\AA_-^{2m}$ with the negative standard symplectic form. 
This gives rise to an embedding
\[\Sp_n\times\Sp_m\hra \Sp_{n+m},\]
\[\left(\begin{pmatrix}
    A&B\\C&D
\end{pmatrix},\begin{pmatrix}
    A'&B'\\C'&D'
\end{pmatrix}\right)\mapsto \begin{pmatrix}
    A&0&B&0\\0&A'&0&-B'\\C&0&D&0\\0&-C'&0&D'
\end{pmatrix},\]
which in turn induces a right-action of $\Sp_n\times\Sp_m$ on $\Sg_{n,m}$.
The open and dense $\Sp_n\times \Sp_m$-orbit is given by those isotropic subspaces $U\in \Sg_{n,m}$ such that $U\cap \AA^{2n}$ is $0$-dimensional.
From now on we fix the choice of the Borel subgroup $\bB=\bP_{1,\ldots,1}\times\bP_{1,\ldots,1}$ of $\Sp_n\times \Sp_m$. The stabilizer of any point in the open $\Sp_n\times \Sp_m$-orbit is given by $ \Sp_n\times^{\Delta\Sp_n}\bP_{m}\subseteq \Sp_n\times \Sp_m $ , which is clearly a spherical subgroup, hence $\Sg_{n,m}$ is a spherical variety. 

The $\Sp_n\times \Sp_m$-orbits on $\Sg_{n,m}$ are given precisely by 
\[\Sg_{n,m}^r\coloneq \{U\in \Sg_{n,m}: \dim_\Ff(U\cap \AA_+^{2n})=r\},\,\ain{r}{0}{n}\]
and $\Sg_{n,m}^r\ge \Sg_{n,m}^{r'}$ if and only if $r\le r'$. We set \[\Sg_{n,m}^{\le r}\coloneq \bigcup_{r'\le r}\Sg_{n,m}^{r'},\] which are open subsets of $\Sg_{n,m}$. Following \cite{KudlaRallis}, we let $\delta_0$ be the element in $\Sg_{n,n}^0$ represented by \[\delta_0\coloneq \begin{pmatrix}
    0&0&1_n&0\\0&1_n&0&0\\-1_n&1_n&0&0\\0&0&1_n&1_n
\end{pmatrix}.\]
In general, we embed \[\AA^{4n-4r}=\AA^{2n-2r}_+\times \AA^{2n-2r}_+\subseteq \AA^{2n}\times \AA^{2m}=\AA^{2n+2m},\] and \[\AA^{2m-2n+4r}=\AA^{2r}_-\times \AA^{2m-2n+2r}_-\subseteq \AA^{2n}\times \AA^{2m}=\AA^{2n+2m},\] giving rise to an embedding
\[\sp_{2n-2r}\times \sp_{m-n+2r}\hra \sp_{m+n}.\]
This allows us to define $\delta_r\in \Sp_{n+m}$ as the image of $\delta_0\times 1_{2m-2n+4r}$.
The stabilizer of $\bP_{n+m}\delta_r$ is then isomorphic to \[\bP_{r}\times^{\Delta\Sp_{n-r}} \bP_{m-n+r}\subseteq \Sp_n\times \Sp_m.\]

As a corollary of \Cref{L:spespher} and \Cref{L:groupglsp} we obtain the following
\begin{lemma}\label{L:jacsp}
Let $\ain{r}{0}{n}$ and set $\bP=\bP_{n-r}\times \overline{\bP_{m-r}}\subseteq \Sp_n\times \Sp_m$. Then $\Sg_{n,m}^r\in\oo_\bP(\Sg_{n,m})$ and
    \[\bS_{\Sg_{n,m}^r}=\M_{n-r,m-r}\times \Sp_r.\] 
\end{lemma}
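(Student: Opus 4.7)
The proof should follow the pattern of \Cref{L:localstrugl}: it is essentially a direct application of \Cref{L:spespher} combined with \Cref{L:groupglsp} and the group-case structure. First, I would verify the hypotheses of \Cref{L:spespher} for $\bX = \Sg_{n,m}$: each orbit $\Sg_{n,m}^r$ is of the required form $(\bP_r \times^{\Delta \Sp_{n-r}} \bP_{m-n+r}) \bs (\Sp_n \times \Sp_m)$ (with $\bG_3 = \Sp_{n-r}$, $\bM_1 = \Gl_r$, $\bM_2 = \Gl_{m-n+r}$, as recorded in \Cref{S:sympgra}), and the nesting condition $\bM_1' \subseteq \bM_1$, $\bM_2'\subseteq \bM_2$ holds as $r'$ increases in passing to smaller orbits. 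After conjugating to a representative of the open $\bB$-orbit so that the Borel is contained in the appropriate opposite parabolic, the second part of \Cref{L:spespher} immediately yields $\bP_{\Sg_{n,m}^r} = \bP_{n-r} \times \overline{\bP_{m-r}}$, whose Levi is $(\Gl_{n-r} \times \Sp_r) \times (\Gl_{m-r} \times \Sp_r)$.

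To identify $\bS_{\Sg_{n,m}^r}$ explicitly, I would apply the first part of \Cref{L:spespher}: it tells us that $\bS_\bY$ is an ordered $\bL_\bY$-spherical affine variety whose $\bL_\bY$-orbits are exactly the intersections $\bS_\bY \cap \Sg_{n,m}^{r'}$ for $r' \ge r$, so there are $n - r + 1$ of them. To pin down $\bS_\bY$, I would construct an explicit section parameterized by $(X, g) \in \M_{n-r, m-r} \times \Sp_r$: for each such pair, assign an isotropic $(n+m)$-dimensional subspace $U(X, g) \subseteq \AA^{2n+2m}$, written in block form using the embedding $\Sp_n \times \Sp_m \hra \Sp_{n+m}$ and the representatives $\delta_{r'}$, in such a way that $\dim(U(X, g) \cap \AA_+^{2n})$ is controlled by $\rk X$ (so that the image lands in orbit $\Sg_{n,m}^{n-\rk X}$, ranging through all $r' \ge r$ as $\rk X$ varies from $n-r$ down to $0$). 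Under this parameterization the $\bL_\bY$-action should split as a product: $\Gl_{n-r} \times \Gl_{m-r}$ acts on $\M_{n-r, m-r}$ by left-right translation, while $\Sp_r \times \Sp_r$ acts on $\Sp_r$ as the group case $(g_1, g_2) \cdot g = g_1 g g_2^{-1}$.

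Once the product structure of the $\bL_\bY$-action on the section is established, the identification $\bS_{\Sg_{n,m}^r} \cong \M_{n-r, m-r} \times \Sp_r$ is immediate: the $\Gl$-factor is an ordered affine $\Gl_{n-r} \times \Gl_{m-r}$-spherical variety with $n - r + 1$ orbits indexed by rank, hence by \Cref{L:groupglsp} isomorphic to $\M_{n-r, m-r}$; the $\Sp_r$-factor is forced by the classification of the group case. The main obstacle is the explicit construction of the section $(X, g) \mapsto U(X, g)$ and verification that the $\bL_\bY$-action really decouples as claimed. This is a direct but somewhat tedious calculation with block matrices via the explicit embedding of $\Sp_n \times \Sp_m$ into $\Sp_{n+m}$ from \Cref{S:sympgra}, and amounts to tracking how the stabilizers $\bP_{r'} \times^{\Delta \Sp_{n-r'}} \bP_{m-n+r'}$ intersect the proposed Levi $(\Gl_{n-r} \times \Sp_r) \times (\Gl_{m-r} \times \Sp_r)$ for each $r' \ge r$.
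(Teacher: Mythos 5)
Your proposal matches the paper's approach: the lemma is stated there without proof, as an immediate corollary of \Cref{L:spespher} (which gives $\bP_{\bY}=\overline{\bP_{n-r}\times\bP_{m-r}}$-type identification and the orbit structure of $\bS_\bY$) together with \Cref{L:groupglsp} and the group case, exactly the two ingredients you identify. Your additional explicit section $(X,g)\mapsto U(X,g)$ is not needed once you invoke \Cref{L:groupglsp} — it suffices to compute the generic stabilizer on the open $\bL_\bY$-orbit from the representatives $\delta_{r'}$ and let the classification force the isomorphism type — but it is a correct and more self-contained way to finish.
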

\subsection{Representation theory}\label{S:repglsp}
We now study the implications for the representation theory of the respective spaces.
\subsubsection{$\M_{n,m}$}
By above observations we have that $S(\M_{n,m}^r)$ is as an $\gl_n\times \gl_m$-representation isomorphic to $\omega_r$,\[\omega_r\coloneq\#-\id_{\bP_{\eta_r}}^{\gl_n\times \gl_m}(\bon_{\bP_{\eta_r}})=\id_{P_{r,n-r}\times P_{m-r,r}}^{\gl_n\times \gl_m}(\abs{n-r}{\frac{r}{2}}\otimes \abs{m-r}{-\frac{r}{2}}\otimes (\bon_r\otimes\abs{r}{\frac{m-n}{2}})S(\Gl_r)),\] \emph{cf.} \cite[§2]{Minguez}.
\begin{lemma}\label{L:locmult}
    The space $\M_{n,m}^r$ is multiplicity-free.
    Moreover, \[\irr_{\M_{n,m}^r}(\gl_n\times \gl_m)=\{\soc(\abs{n-r}{\frac{r}{2}}\times \tau)\otimes \soc(\nu^{\frac{m-n}{2}}\tau^\lor \times\abs{m-r}{-\frac{r}{2}}): \tau\in \irr_r\}.\]
\end{lemma}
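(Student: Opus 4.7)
My plan is to compute $\ho_{\gl_n\times \gl_m}(\omega_r,\pi\otimes \pi')$ directly by applying Bernstein reciprocity to the realization of $\omega_r$ as a normalized parabolic induction recorded just above the lemma. Setting $P=\bP_{r,n-r}\times \bP_{m-r,r}$ with Levi $L\cong \Gl_r\times \Gl_{n-r}\times \Gl_{m-r}\times \Gl_r$, the second adjunction yields
\[\ho_{\gl_n\times \gl_m}(\omega_r,\pi\otimes\pi')\cong \ho_L\!\left(\abs{n-r}{\tfrac{r}{2}}\otimes \abs{m-r}{-\tfrac{r}{2}}\otimes (\bon_r\otimes\abs{r}{\tfrac{m-n}{2}})S(\Gl_r),\;r_{\overline{P_{r,n-r}}}(\pi)\otimes r_{\overline{P_{m-r,r}}}(\pi')\right),\]
and since $\overline{P}$ factors, the Jacquet module splits as a tensor product, allowing a separate analysis of the ``outer'' $\Gl_{n-r}\times\Gl_{m-r}$-factor and the ``inner'' $\Gl_r\times \Gl_r$-factor.

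On the outer factor, the inducing representation acts by the rigid characters $\abs{n-r}{r/2}\otimes \abs{m-r}{-r/2}$, so non-vanishing of the Hom forces these characters to appear as $\Gl_{n-r}$- resp.\ $\Gl_{m-r}$-quotients of the respective Jacquet modules and isolates $\Gl_r$-summands $\tau_1,\tau_2$. On the inner factor, using $S(\Gl_r)=\#-\id_{\Delta\Gl_r}^{\Gl_r\times\Gl_r}\bon$ together with Frobenius reciprocity for compactly induced representations, one computes
\[\ho_{\Gl_r\times\Gl_r}\!\left((\bon_r\otimes\abs{r}{\tfrac{m-n}{2}})S(\Gl_r),\tau_1\otimes\tau_2\right)\neq 0\iff \tau_2\cong \tau_1^\lor\nu^{(m-n)/2},\]
in which case it is one-dimensional. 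Combining these two observations and running first adjunction (Frobenius) backwards on each side, the non-vanishing criterion becomes the existence of $\tau\in\irr(\Gl_r)$ with
\[\pi\hra \abs{n-r}{r/2}\times \tau\qquad\text{and}\qquad \pi'\hra \nu^{(m-n)/2}\tau^\lor\times \abs{m-r}{-r/2}.\]

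To conclude, I invoke \cite[Lemma 2.8]{MinLa18}: both induced representations on the right have irreducible socles, these embeddings realize $\pi$ and $\pi'$ precisely as those socles with multiplicity one in the respective composition series, and $\tau$ is uniquely determined by $\pi$ (equivalently, by $\pi'$). Hence the total Hom space is at most one-dimensional, giving multiplicity-freeness, and the spectrum takes the asserted form. I expect the main obstacle to be the careful bookkeeping of modular-character normalizations: one must identify $\bP_{m-r,r}$ with the opposite of $\bP_{r,m-r}$ up to a Weyl conjugation, track the $\delta_P^{1/2}$-twists introduced by the normalized Jacquet and induction functors, and verify that the twist on the inner factor of the inducing representation produces precisely the coefficient $\nu^{(m-n)/2}$ on $\tau^\lor$ as in the statement. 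Beyond this bookkeeping, all the representation-theoretic content is supplied by Bernstein reciprocity together with the M\'inguez--Lapid socle-irreducibility.
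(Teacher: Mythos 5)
Your overall skeleton (Bernstein reciprocity, separation into an ``outer'' character condition and an ``inner'' $\Gl_r\times\Gl_r$ problem governed by $S(\Gl_r)$, and the final appeal to \cite[Lemma 2.8]{MinLa18}) is the right way to \emph{read off the shape of the spectrum}, and it does prove the lemma in the case $r=n$ — which is exactly the paper's base case. But for $r<n$ there is a genuine gap at the inner step. After reducing to the maximal $(\abs{n-r}{\frac{r}{2}},\Gl_{n-r})$- resp.\ $(\abs{m-r}{-\frac{r}{2}},\Gl_{m-r})$-equivariant subrepresentations of the Jacquet modules, the objects $\tau_1,\tau_2$ you feed into $\ho_{\Gl_r\times\Gl_r}(S(\Gl_r)\cdot\mathrm{tw},\tau_1\otimes\tau_2)$ are finite-length but in general \emph{reducible} $\Gl_r$-representations. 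The statement ``nonzero iff $\tau_2\cong\tau_1^\lor\nu^{(m-n)/2}$, and then one-dimensional'' is only valid for irreducible $\tau_1,\tau_2$: for reducible targets the dimension of $\ho_{G\times G}(S(G),\tau_1\otimes\tau_2)$ is controlled by Jordan--H\"older multiplicities and extension data, not by socles, and it can exceed $1$ (already for $G=\Gl_1$, take $\tau_1$ a nonsplit self-extension of a character $\chi$ and $\tau_2=\chi^{-1}$; the Hom space is two-dimensional). Note also that the adjunction you invoke, ``Frobenius reciprocity for compactly induced representations'' applied to $S(\Gl_r)=\#-\id_{\Delta\Gl_r}^{\Gl_r\times\Gl_r}\bon$, is not available: $\Delta\Gl_r$ is closed but not open, so compact induction from it is not left adjoint to restriction; what one actually has is the computation of the maximal $\sigma$-isotypic quotient of $S(\Gl_r)$ for \emph{irreducible} $\sigma$.

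Controlling exactly these multiplicities is the substantive content of the lemma (it is the heart of Howe duality in type II), and it is what the paper's proof supplies: an induction on $n+m$ in which one peels off the left-$\rho$-derivative of $\pi$ for $\rho=\nu^{\frac{2r-n+1}{2}}$ and the right-$\rho'$-derivative of $\pi'$, uses the Geometric Lemma to see that only one constituent of the Jacquet module of $\omega_r$ can contribute, and identifies the resulting induced representation with $\omega_{r-1}$ on smaller groups, concluding via \Cref{L:derrek} and the induction hypothesis. To repair your argument you would need an independent proof that the relevant $\tau_1\otimes\tau_2$ contributes at most a one-dimensional Hom space out of $S(\Gl_r)$ — which in practice amounts to redoing that derivative induction.
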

\begin{proof}
    We argue by induction on $n+m$ and consider $\pi\otimes\pi'\in \irr_n\times \irr_m$ such that there exists a non-zero morphism $\omega_r\ra\pi\otimes\pi'$.
    Assume first $r=n$. Then the claim follows from Frobenius reciprocity.
    If $r\le 1$, we obtain from \Cref{T:zel} that $\pi$ and $\pi'$ admit a $\rho\coloneq \nu^{\frac{2r-n+1}{2}}$ left- respectively $\rho'\coloneq \nu^{\frac{m-2r-1}{2}}$ right-derivative. We denote by $\tau\coloneq \Dl(\pi)$,  $\tau'\coloneq \mathcal{D}_{\rho',r}^{\mathrm{max}}(\pi')$, $d=\dlm(\pi)$ and $d'=d_{\rho',r,\mathrm{max}}(\pi')$.
    We assume without loss of generality that $d\ge d'$ and obtain by Frobenius reciprocity that 
    \[\ho_{\gl_n\times \gl_m}(\omega_r,\pi\otimes\pi')\subseteq\ho_{\gl_d\times \gl_{n-d}\times \gl_m}(r_{P_{d,n-d}\times \gl_m}(\omega_r), \rho^{\times d}\otimes \tau\otimes \pi').\]
    A quick analysis with the Geometric Lemma gives that the only constituent of $r_{P_{d,n-d}\times \gl_m}(\omega_r)$ admitting a $\gl_d\times \gl_{n-d}$-equivariant map to $\rho^{\times d}\otimes \tau$ is
    \[\id_{P_{1,d-1}\times P_{r-1,n-r}\times P_{m-r,d-1,r-d+1}}^{\gl_d\times \gl_{n-d}\times \gl_m}(\rho\otimes \abs{n-r-1}{\frac{r-1}{2}}\otimes \abs{m-r}{-\frac{r}{2}}\otimes (\bon_r\otimes\abs{r}{\frac{m-n}{2}})S(\Gl_{d-1})\otimes \]\[\otimes (\bon_r\otimes\abs{r}{\frac{m-n}{2}})S(\Gl_{r-d+1})).\]
    Applying Bernstein reciprocity a second time together with \Cref{L:ext} yields that the above Hom-space is embedded in
    \[\ho_{\gl_{n-d}\times \gl_m}(\id_{P_{1,d-1}\times P_{r-1,n-r}\times P_{m-r,d-1,r-d+1}}^{\gl_d\times \gl_{n-d}\times \gl_m}(((\rho')^{\times d-1})^\lor \otimes \abs{n-r-1}{\frac{r-1}{2}}\otimes \]\[\otimes \abs{m-r}{-\frac{r}{2}}\otimes  (\bon_r\otimes\abs{r}{\frac{m-n}{2}})S(\Gl_{r-d+1})), \tau\otimes \pi').\]
    Noting that $ \abs{m-r}{-\frac{r}{2}}\times ((\rho')^{\times d-1})^\lor$ is irreducible, we can use \Cref{L:derrek} to deduce that
    it is a quotient of
    \[(\rho')^{\times d}\times  \abs{m-r}{-\frac{r}{2}}.\]
    Since we assumed that $d'\le d$, we obtain that $d'=d$, and applying Bernstein reciprocity yields that the Hom-space can be embedded into
     \[\ho_{\gl_{n-d}\times \gl_{m-d}}(\id_{P_{1,d-1}\times P_{r-1,n-r}\times P_{m-r,d-1,r-d+1}}^{\gl_d\times \gl_{n-d}\times \gl_m}(\abs{n-r-1}{\frac{r-1}{2}}\otimes \abs{m-r-1}{-\frac{r-1}{2}}\otimes\]\[\otimes (\bon_r\otimes\abs{r}{\frac{m-n}{2}})S(\Gl_{r-d+1})), \tau\otimes \tau').\]
     But observe now that the induced representation is nothing but $\omega_{r-1}.$
     We thus obtain that the above space is bounded by $1$ by the induction hypothesis and hence we proved that it is multiplicity free. Moreover, it is clear that \[\{\soc(\abs{n-r}{\frac{r}{2}}\times \tau)\otimes \soc(\tau^\lor\nu^{\frac{m-n}{2}} \times\abs{m-r}{-\frac{r}{2}}): \tau\in \irr_r\}\subseteq \irr_{\M_{n,m}^r}(\gl_n\times \gl_m).\]
    By the induction hypothesis and the above argument, we know that every $\pi\otimes\pi'$ in the right hand side is of the form
\[\soc(\rho^{\times d}\times \soc(\abs{n-r-1}{\frac{r-1}{2}}\times \tau))\otimes \soc(\soc(\tau^\lor\nu^{\frac{m-n}{2}} \times\abs{m-r-1}{-\frac{r-1}{2}})\times (\rho')^d)\]
     for suitable irreducible and left-$\rho$-reduced $\tau$. The claim now follows from \Cref{L:derrek}.
\end{proof}
For an ordered partition $\alpha=(\alpha_1,\ldots,\alpha_k)$, we define \[\rho_{n,\alpha}\coloneq \abs{n-\alpha_1}{\frac{\alpha_1}{2}}\times\ldots\times \abs{n-\alpha_k}{\frac{\alpha_k}{2}},\]
\[\rho_{m,\alpha}^*\coloneq \abs{m-\alpha_1}{-\frac{\alpha_1}{2}}\times\abs{n-\alpha_2}{-\frac{-m+n+\alpha_2}{2}}\times\ldots\times \abs{n-\alpha_k}{-\frac{-m+n+\alpha_k}{2}}\]
The following is an easy consequence of the Geometric Lemma.
\begin{lemma}\label{L:locgen}
    Let $\pi\in \irr_n,\pi'\in\irr_m$ and chose $\alpha$ with respect to $\lvert\alpha\lvert$ maximal such that we can find $\tau\in\irr_{n-\lvert\alpha\lvert}$ with $\pi\hra \rho_{n,\alpha}\times \tau$.
    
    If $\sum_{i=1}^k n-\alpha_i<n$, set $\bP=\bP_{\lvert\alpha\lvert,n-\lvert\alpha\lvert}\times \overline{\bP_{\lvert\alpha\lvert,m-\lvert\alpha\lvert}}$. Then $\pi\otimes \pi'$ is \[(\M_{n,m}^r,\bP, \rho_{n,\alpha}\otimes \tau\otimes \rho_{m,\alpha}^*\otimes \nu^{\frac{m-n}{2}}\tau^\lor)-\]generic for all $\ain{r}{1}{n}$.
    
    If $\sum_{i=1}^k n-\alpha_i=n$ and $k>1$, set $\bP=\bP_{n-\alpha_1,\alpha_1}\times \overline{\bP_{m-\alpha_1,\alpha_1}}$. Then
    $\rho_{n,\alpha}\otimes \rho_{m,\alpha}^*$ is \[(\M_{n,m}^r,\bP, \abs{n-\alpha_1}{\frac{\alpha_1}{2}}\otimes \abs{n-\alpha_{2}}{\frac{\alpha_{2}}{2}}\times\ldots\times \abs{n-\alpha_k}{\frac{\alpha_k}{2}}\otimes \abs{m-\alpha_1}{-\frac{\alpha_1}{2}}\otimes  \abs{m-\alpha_{2}}{-\frac{\alpha_{2}}{2}}\times\ldots\times \abs{m-\alpha_k}{-\frac{\alpha_k}{2}})\text{-generic}\] for all $\ain{r}{n-\alpha_1+1}{n}$.
\end{lemma}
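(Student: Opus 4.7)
The plan is to verify the two conditions defining $(\M_{n,m}^r,\bP,\sigma)$-genericity for the pair $\pi\otimes\pi'$. For the first condition, $\pi\otimes\pi'\hookrightarrow \id_\bP^{\gl_n\times\gl_m}(\sigma)$, the embedding $\pi\hookrightarrow\rho_{n,\alpha}\times\tau$ is given by the choice of $\alpha,\tau$, while $\pi'\hookrightarrow \rho_{m,\alpha}^*\times\nu^{(m-n)/2}\tau^\lor$ is obtained by iterating the parametrization of $\pi'$ provided by \Cref{L:locmult} through the successive derivatives that produced $\alpha$. The second case of the statement, where $|\alpha|=n$ and $k>1$, corresponds to peeling off only the first block $\alpha_1$ and reduces to the same mechanism with the trailing factors absorbed.

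For the non-vanishing condition I would compute $r_\bN(\omega_r)$ via the Geometric Lemma, starting from the explicit parabolic induction description
\[\omega_r\cong \id_{P_{r,n-r}\times P_{m-r,r}}^{\gl_n\times\gl_m}\bigl(\abs{n-r}{\tfrac{r}{2}}\otimes\abs{m-r}{-\tfrac{r}{2}}\otimes(\bon_r\otimes\abs{r}{\tfrac{m-n}{2}})S(\Gl_r)\bigr).\]
Applying the Geometric Lemma on each factor yields a filtration on $r_\bN(\omega_r)$ whose subquotients correspond bijectively to $\bP$-double cosets in $(\Gl_n\times\Gl_m)/(P_{r,n-r}\times P_{m-r,r})$, i.e.\ to $\bP$-orbits on $\M_{n,m}^r$. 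Exactly one of these orbits is open, and by the formalism of \Cref{S:sphericalrep} the subquotient indexed by it is precisely the contribution on which non-vanishing must be checked.

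The key step is to show that every non-open subquotient admits no non-zero morphism to $\sigma$, so that a non-zero $\tilde f\colon r_\bN(\omega_r)\to\sigma$ obtained from Frobenius reciprocity applied to a non-zero $f\colon\omega_r\to\pi\otimes\pi'$ is automatically non-zero on the open-orbit piece. This vanishing is forced by the maximality of $|\alpha|$: by construction $\tau$ is left-reduced, and $\nu^{(m-n)/2}\tau^\lor$ right-reduced, with respect to each cuspidal representation appearing in $\rho_{n,\alpha}$, respectively $\rho_{m,\alpha}^*$. Combining \Cref{L:derrek} with the extremal-segment statement \Cref{L:ext} then rules out every non-open subquotient, in the same spirit as the derivative bookkeeping used in the proof of \Cref{L:locmult}. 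The surviving open-orbit subquotient maps surjectively onto $\sigma$ by direct inspection of the induction data.

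The main obstacle I anticipate is the combinatorial bookkeeping of the two-sided Geometric Lemma for $\gl_n\times\gl_m$: the filtration is indexed by a double family of double cosets, each giving rise to a non-trivial induced representation, and one must translate the maximality of $|\alpha|$ into the vanishing of each of the resulting Hom-spaces. The second case of the lemma then follows by the same argument applied to the smaller parabolic $\bP_{n-\alpha_1,\alpha_1}\times\overline{\bP_{m-\alpha_1,\alpha_1}}$.
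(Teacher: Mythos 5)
The paper gives no proof of this lemma at all --- it is introduced with the sentence ``The following is an easy consequence of the Geometric Lemma'' --- so your proposal is supplying the argument the author intends: compute $r_{\bN}(\omega_r)$ via the Geometric Lemma, observe that the open $\bP$-cell contributes a subrepresentation, and show that no other subquotient of the filtration maps to $\sigma$, exactly in the spirit of the proof of \Cref{L:locmult}. Your overall architecture is right (one small caveat: the filtration of $r_\bN(\omega_r)$ is indexed by $\bP\backslash(\Gl_n\times\Gl_m)/(\bP_{r,n-r}\times\bP_{m-r,r})$, which the $\bP$-orbits on $\M_{n,m}^r=\bP_{\eta_r}\backslash(\Gl_n\times\Gl_m)$ refine rather than match bijectively, since $\bP_{\eta_r}$ is the fibre product over $\Delta\Gl_r$; one must still track the Jacquet modules of the $S(\Gl_r)$-factor inside each cell).

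The one point that is genuinely wrong as stated is the justification for the vanishing on the non-open cells. Maximality of $\lvert\alpha\lvert$ does \emph{not} make $\tau$ left-reduced with respect to every cuspidal representation occurring in $\rho_{n,\alpha}$. Writing $\abs{n-\alpha_i}{\frac{\alpha_i}{2}}=\Z([\alpha_i-\tfrac{n-1}{2},\tfrac{n-1}{2}])$, all the segments in $\rho_{n,\alpha}$ share the common right endpoint $\tfrac{n-1}{2}$ (which is also why they are pairwise unlinked and $\rho_{n,\alpha}$ is irreducible, independently of the ordering of $\alpha$). What maximality rules out is an embedding $\tau\hra \Z([s,\tfrac{n-1}{2}])\times\tau''$ for \emph{some} segment with that fixed right endpoint --- in particular $\tau$ is left-$\nu^{\frac{n-1}{2}}$-reduced --- but $\tau$ may perfectly well fail to be left-$\nu^{j}$-reduced for $j<\tfrac{n-1}{2}$ in the cuspidal support of $\rho_{n,\alpha}$. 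The argument still closes, because by \Cref{T:propgl}(7) the pieces of $\abs{n-r}{\frac r2}$ that a non-open cell pushes into the $\gl_{n-\lvert\alpha\lvert}$-block are themselves segments ending at $\tfrac{n-1}{2}$, so the relevant Hom-spaces are killed by exactly the non-extractability statement above, packaged via \Cref{L:ext} and \Cref{L:derrek} as in the proof of \Cref{L:locmult}. You should replace the cuspidal-by-cuspidal reducedness claim by this common-right-endpoint argument; as written, that step does not follow from the maximality of $\lvert\alpha\lvert$.
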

\begin{lemma}\label{L:basecase}
    The space $\M_{n,m}$ has the lifting property for $\bon_n\otimes\bon_m$.
\end{lemma}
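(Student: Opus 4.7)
The plan is to locate the smallest orbit in whose spectrum $\bon_n\otimes\bon_m$ appears and to show that any equivariant morphism kills every section supported strictly above it. By \Cref{L:locmult}, choosing $\tau=\abs{r}{-(n-r)/2}\in\irr(\gl_r)$ puts $\bon_n\otimes\bon_m$ in $\irr_{\M_{n,m}^r}(\gl_n\times\gl_m)$ for every $r\in\{0,\ldots,n\}$. Since the minimal orbit in the closure order is $\M_{n,m}^0=\{0\}$, the lifting property reduces to showing that any non-zero equivariant morphism $f\colon S(\M_{n,m})\to\bon_n\otimes\bon_m$ vanishes on $S(\M_{n,m}\setminus\{0\})$. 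Filtering $S(\M_{n,m}\setminus\{0\})$ with successive quotients $\omega_r=S(\M_{n,m}^r)$ for $r\ge 1$ and chasing the long exact Hom sequences, the problem comes down to showing $\ho_{\gl_n\times\gl_m}(\omega_r,\bon_n\otimes\bon_m)=0$ for every $r\ge 1$.

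For a fixed $r\ge 1$, I would compute this Hom via Bernstein's second adjunction applied to the presentation $\omega_r=\id_{P_{r,n-r}\times P_{m-r,r}}^{\gl_n\times\gl_m}(\sigma_r)$. Reading off from the modular characters of the two parabolics, the Jacquet modules of the trivial representations are
\[
r_{\overline{P}_{r,n-r}}(\bon_n)=\abs{r}{(n-r)/2}\otimes\abs{n-r}{-r/2},\qquad r_{\overline{P}_{m-r,r}}(\bon_m)=\abs{m-r}{r/2}\otimes\abs{r}{-(m-r)/2}.
\]
Bernstein reciprocity then identifies the Hom with morphisms between two representations of the Levi $\gl_r\times\gl_{n-r}\times\gl_{m-r}\times\gl_r$. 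On the middle factors, $\sigma_r$ carries $\abs{n-r}{r/2}$ and $\abs{m-r}{-r/2}$, while the Jacquet target carries the opposite characters $\abs{n-r}{-r/2}$ and $\abs{m-r}{r/2}$. Whenever $r\ge 1$ and the corresponding Levi factor is positive-dimensional, these central characters disagree, forcing the Hom to vanish. Combined with the filtration argument, this handles every configuration except the square case $r=n=m$.

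In the remaining situation $r=n=m$, both middle factors collapse and Bernstein reciprocity leaves the one-dimensional space $\ho_{\gl_n\times\gl_n}(S(\Gl_n),\bon_n\otimes\bon_n)\cong\C$, generated by Haar integration on $\Gl_n$. To conclude, I would show that this functional does not lift to an equivariant morphism on $S(\M_{n,n})$: by its central character, any hypothetical lift must coincide on $S(\Gl_n)$ with the candidate $\phi\mapsto\int_{\Gl_n}\phi(g)\lvert\det g\rvert^{-n}\,dg_L$, whose integrand fails to be locally integrable along $\{\det=0\}$. Evaluating on a test function non-vanishing at $0$ produces a divergent integral, contradicting the finiteness of any such lift, so the restriction map $\ho(S(\M_{n,n}),\bon_n\otimes\bon_n)\to\ho(S(\Gl_n),\bon_n\otimes\bon_n)$ is the zero map. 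The main obstacle is precisely this non-extension step in the square case: the algebraic Jacquet comparison leaves a one-dimensional contribution that must be ruled out by an analytic argument on the boundary behaviour of the natural Haar density.
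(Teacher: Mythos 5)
Your overall strategy coincides with the paper's: reduce to showing that any equivariant morphism $S(\M_{n,m})\to\bon_n\otimes\bon_m$ kills $S(\M_{n,m}\setminus\{0\})$, dispose of the intermediate orbits by a Jacquet-module/central-character computation, and rule out the remaining Haar functional on $S(\Gl_n)$ in the square case by an argument at the boundary. Before addressing the gap, note that your opening claim that $\bon_n\otimes\bon_m\in\irr_{\M_{n,m}^r}(\gl_n\times\gl_m)$ for every $r$ (via $\tau=\abs{r}{-(n-r)/2}$) is false and in fact contradicts what you prove two sentences later: since $r_{n-r,r}(\bon_n)=\abs{n-r}{-r/2}\otimes\abs{r}{(n-r)/2}$, Frobenius reciprocity shows $\bon_n$ does not embed into $\abs{n-r}{r/2}\times\tau$ for any $\tau$ when $0<r<n$ (the socle of $\abs{n-r}{r/2}\times\abs{r}{-(n-r)/2}$ is the other constituent of that length-two product, not $\bon_n$). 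This error is harmless: the only distinguishing orbits are $\M_{n,m}^0$ and, when $n=m$, the open orbit, so the reduction you actually use is the right one, and your computation of the two opposite Jacquet modules and the resulting vanishing of $\ho(\omega_r,\bon_n\otimes\bon_m)$ for $1\le r\le n-1$ (and for $r=n<m$) is correct and matches the paper's terser appeal to Bernstein reciprocity.

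The genuine gap is in the final step. Knowing that any lift $T$ restricts on $S(\Gl_n)$ to a nonzero multiple of $\phi\mapsto\int\phi(g)\lvert\det g\rvert^{-n}\,dg$ and that this density is not locally integrable along $\{\det=0\}$ does not by itself contradict the existence of an abstract equivariant extension to $S(\M_{n,n})$: an extension need not be computed by the integral formula on test functions whose support meets the boundary. (Compare $n=1$: the density $\lvert x\rvert^{s-1}dx$ on $\Ff^\times$ fails to be locally integrable at $0$ for $\mathrm{Re}(s)\le 0$, yet extends to a distribution on $\Ff$ for every $s\neq 0$; what obstructs the extension at $s=0$ is not divergence alone but divergence combined with exact invariance.) To close the argument you must play the divergence off against the equivariance of $T$, which is exactly what the paper does: it produces characteristic functions $\chi_k$ of sets cut out by $\lvert\det\rvert\le q^{-k}$ that are permuted by the central scaling action, so that $T(\chi_k)=T(\chi_{k-1})$, whence $T(\chi_{k-1}-\chi_k)=0$; but $\chi_{k-1}-\chi_k$ is a nonzero, nonnegative element of $S(\Gl_n)$, on which $T$ is a nonzero multiple of the (positive) Haar integral --- a contradiction. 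Some such exhaustion-plus-invariance mechanism is needed; the bare assertion that ``a divergent integral contradicts the finiteness of the lift'' does not constitute a proof.
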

\begin{proof}
We first reduce to the case $n=m$, since for $n<m$, it is easy to see using Bernstein reciprocity for $\gl_m$ that there exists only one orbit in $\M_{n,m}$, namely $\M_{n,m}^0$, which distinguishes $\bon_n\otimes\bon_m$.

For $n=m$ it is again not hard to check with Bernstein reciprocity that the only orbits which are $\bon_n\otimes \bon_m$-distinguishing are $\M_{n,n}^n$ and $\M_{n,n}^0$.
Now the claim is equivalent to the fact that there exists no morphism $T:S(\M_{n,n})\sra \bon_n\otimes\bon_n$ such that the composition
    \[S(\Gl_n)\hra S(\M_{n,n})\sra \bon_n\otimes\bon_n\]
    is non-zero.
    It is clear that the composition is given by integrating compactly supported functions over $\gl_n$ with respect to a fixed Haar-measure. Let $X_k\coloneq \{m\in \M_{n,n}(\Ff): \lvert\det(m)\lvert\le q^{-k}\}$ and $\chi_k$ its characteristic function 
    Then $X_k=\varpi^{-1} X_{k-1}$ and hence
     \[T(\chi_{k})=T(\chi_{{k-1}})=T(\chi')+T(\chi_{k}), \]
     where $\chi'$ is the characteristic function of $\chi_{k-1}\setminus\chi_{k}$. This gives a contradiction, since $T(\chi')$ can by the above observation be computed as a non-vanishing integral.
\end{proof}
Equipped with these insights we are now able to state the following theorem, which in particular gives a new proof for Howe duality in type II.
\begin{theorem}\label{T:matrix}
    The space $\M_{n,m}$ is multiplicity-free, has the lifting property for all irreducible representations and \[\irr_{\M_{n,m}}(\gl_n\times \gl_m)=\{\pi\otimes \mathfrak{T}_n^m(\pi):\, \pi\in \irr_n\}.\]
\end{theorem}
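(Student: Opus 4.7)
The plan is to apply \Cref{C:liftingproperty} to $\bX=\M_{n,m}$ by induction on $n$. The case $n=0$ is vacuous, while the case $\pi=\bon_n$, $\pi'=\bon_m$ (which will turn out to correspond to $r_0=0$ in the main argument) is provided by \Cref{L:basecase}.

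For the inductive step, fix $\pi\otimes\pi'\in\irr_{\M_{n,m}}(\gl_n\times\gl_m)$, and let $r_0\in\{0,\ldots,n\}$ be the minimal integer for which $\pi\hookrightarrow\abs{n-r_0}{r_0/2}\times\tau$ holds for some $\tau\in\irr(\gl_{r_0})$; by \cite[Lemma~2.8]{MinLa18}, such $\tau$ is uniquely determined by $\pi$. By \Cref{L:locmult}, $\M^{r_0}_{n,m}$ is then the minimal orbit in the closure order $\M^0<\cdots<\M^n$ distinguishing $\pi\otimes\pi'$, and $\pi'$ is forced to embed into $\nu^{(m-n)/2}\tau^\lor\times\abs{m-r_0}{-r_0/2}$, whose socle is $\mathfrak{T}_n^m(\pi)$ by definition. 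Thus the explicit form of the spectrum follows as soon as multiplicity-freeness is established.

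I would then invoke \Cref{C:liftingproperty} with the parabolic $\bP=\bP_{r_0,n-r_0}\times\overline{\bP_{r_0,m-r_0}}$ furnished by \Cref{L:localstrugl}. Condition~(i) is \Cref{L:locgen} applied to the partition attached to $\pi$; condition~(ii) holds because the assignment $r\mapsto\bP_{\M^r}$ is injective, so $\oo_\bP(\M_{n,m})=\{\M^{r_0}\}$ contains the unique minimal distinguishing orbit; condition~(iii) is \Cref{L:locmult}; condition~(v) is immediate from the total order on the orbits. The heart of the argument is condition~(iv).

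For condition~(iv), one must show that the slice $\bS_{\M^{r_0}}=\M_{n-r_0,m-r_0}\times\Gl_{r_0}$ has the lifting property for $\delta_P^{-1/2}\sigma$, where $\sigma$ is read off from the explicit formula for $\omega_{r_0}$ recalled before \Cref{L:locmult}. This slice decomposes as a product of spherical varieties: the $\Gl_{r_0}$-factor is the group case $\Delta\Gl_{r_0}\bs(\Gl_{r_0}\times\Gl_{r_0})$, which is trivially multiplicity-free with the lifting property and whose spectrum consists of representations of the form $\rho\otimes\rho^\lor$; the $\M_{n-r_0,m-r_0}$-factor is covered by the inductive hypothesis (applicable whenever $r_0>0$, while $r_0=0$ is disposed of by \Cref{L:basecase}). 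The main obstacle I expect is the careful bookkeeping of the modular-character twists, namely verifying that $\delta_P^{-1/2}\sigma$ splits as a tensor product whose $\Gl_{r_0}\times\Gl_{r_0}$-part is of the form $\tau\otimes\tau^\lor$ (up to the innocuous character $\abs{r_0}{(m-n)/2}$ absorbed by the group case) and whose $\gl_{n-r_0}\times\gl_{m-r_0}$-part lies in the spectrum of $\M_{n-r_0,m-r_0}$. Once (iv) is secured, \Cref{C:liftingproperty} delivers both multiplicity-freeness and the lifting property for $\M_{n,m}$, completing the proof.
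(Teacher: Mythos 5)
Your proposal is correct and takes essentially the same route as the paper: the proof of \Cref{T:matrix} is precisely the assembly of \Cref{C:liftingproperty} with \Cref{L:locmult}, \Cref{L:locgen}, \Cref{L:basecase} and \Cref{L:localstrugl}, and your write-up (induction with the slice $\M_{n-r_0,m-r_0}\times\Gl_{r_0}$, the group case absorbing the $\Gl_{r_0}$-factor, and \Cref{L:basecase} disposing of $r_0=0$) is a faithful, in fact more detailed, expansion of that one-line argument.
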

\begin{proof}
This is just a consequence of \Cref{C:liftingproperty}, \Cref{L:locmult}, \Cref{L:locgen}, \Cref{L:basecase} and \Cref{L:localstrugl}. 
\end{proof}
\subsubsection{$\Sg_{n,m}$}
We fix a unitary character $\mu\colon\Ff^\times \ra\C$ and let $\LL_\mu$ be the 
sheaf of \[(P_{n+m},\delta_{P_{n+m}}^{\frac{1}{2}}\mu(\det_{m+n}))-\]equivariant, locally constant functions on $\sg_{n,m}$. We thus obtain that 
\[S(\Sg_{n,m}^r,\restr{\LL_\mu}{\Sg_{n,m}^r})\cong \sigma_r\coloneq \]
\[=\id_{P_r\times P_{m-n+r}}^{\sp_n\times \sp_m}(\mu(\det_{r})\abs{r}{\frac{r+m-n}{2}}\otimes \mu(\det_{m-n+r})\abs{m-n+r}{\frac{r}{2}}\otimes S(\Sp_{n-r})),\]
\emph{cf.} \cite[§1]{KudlaRallis}
\begin{lemma}\label{L:locmultsp}
    The tuple $(\Sg_{n,m}^r,\restr{\LL_\mu}{\Sg_{n,m}^r})$ is multiplicity free and
    \[\irr_{\Sg_{n,m}^r,\restr{\LL_\mu}{\Sg_{n,m}^r}}(\sp_n\times \sp_m)=\]\[=\{\pi\otimes\pi':\pi\otimes\pi'\subseteq \soc(\mu(\det_r)^{-1}\abs{r}{-\frac{r+m-n}{2}}\rtimes \tau)\otimes \soc(\mu(\det_{m-n+r})^{-1}\abs{m-n+r}{-\frac{r}{2}}\rtimes \tau^\lor),\,  \tau\in \irr(\sp_{n-r})\}.\]
\end{lemma}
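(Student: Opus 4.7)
My plan is to apply Bernstein reciprocity to the explicit induction formula for $\sigma_r$ and then exploit the group case $\Delta\Sp_{n-r}\bs\Sp_{n-r}\times\Sp_{n-r}$ for the $S(\Sp_{n-r})$-factor. Set $\chi_1\coloneq\mu(\det_r)\abs{r}{\frac{r+m-n}{2}}$ and $\chi_2\coloneq\mu(\det_{m-n+r})\abs{m-n+r}{\frac{r}{2}}$, so
\[\sigma_r=\id_{P_r\times P_{m-n+r}}^{\sp_n\times\sp_m}(\chi_1\otimes\chi_2\otimes S(\Sp_{n-r})),\]
with $\chi_i$ on the $\gl$-factors of the Levi and $S(\Sp_{n-r})$ on the pair of $\sp_{n-r}$-factors via left-right translation. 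Bernstein reciprocity then gives, for $\pi\otimes\pi'\in\irr(\sp_n\times\sp_m)$,
\[\ho_{\sp_n\times\sp_m}(\sigma_r,\pi\otimes\pi')=\ho_L(\chi_1\otimes\chi_2\otimes S(\Sp_{n-r}),\overline{r}_{P_r}(\pi)\otimes\overline{r}_{P_{m-n+r}}(\pi')).\]

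I would then extract the $\chi_i$-isotypic components of each Jacquet module on its $\gl$-factor. Since $\chi_1,\chi_2$ are one-dimensional, this reduces the computation to a Hom-space of the form
\[\ho_{\sp_{n-r}\times\sp_{n-r}}(S(\Sp_{n-r}),\mathcal{A}_\pi\otimes\mathcal{A}_{\pi'})\cong\ho_{\sp_{n-r}}(\mathcal{A}_\pi^\lor,\mathcal{A}_{\pi'}),\]
where the last identification is the group case for $\Delta\Sp_{n-r}\bs\Sp_{n-r}\times\Sp_{n-r}$, which is multiplicity-free with spectrum $\{\tau\otimes\tau^\lor:\tau\in\irr(\sp_{n-r})\}$, and $\mathcal{A}_\pi,\,\mathcal{A}_{\pi'}$ are the $\sp_{n-r}$-representations obtained from the $\chi_1$- respectively $\chi_2$-isotypic parts of $\overline{r}_{P_r}(\pi)$ and $\overline{r}_{P_{m-n+r}}(\pi')$. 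Non-vanishing of the Hom-space therefore forces the existence of an irreducible $\tau\in\irr(\sp_{n-r})$ with $\tau\hra\mathcal{A}_\pi$ and $\tau^\lor\hra\mathcal{A}_{\pi'}$, and in that case the Hom-space is one-dimensional.

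The final step translates the datum $\chi_1\otimes\tau\hra\overline{r}_{P_r}(\pi)$ into the embedding format appearing in the claim. By Bernstein reciprocity this is equivalent to a surjection $\chi_1\rtimes\tau\sra\pi$; applying the $\mathrm{MVW}$-involution (covariant and exact, with property~(3) of Section~\ref{S:symplecticgroup} giving $(\chi_1\rtimes\tau)^{\mathrm{MVW}}\cong\chi_1\rtimes\tau^\lor$) yields $\chi_1\rtimes\tau^\lor\sra\pi^\lor$, and taking the contragredient produces
\[\pi\hra(\chi_1\rtimes\tau^\lor)^\lor=\chi_1^{-1}\rtimes\tau=\mu(\det_r)^{-1}\abs{r}{-\frac{r+m-n}{2}}\rtimes\tau,\]
with the analogous argument giving $\pi'\hra\mu(\det_{m-n+r})^{-1}\abs{m-n+r}{-\frac{r}{2}}\rtimes\tau^\lor$. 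By \Cref{C:secrep}, the socle of each of these induced representations is multiplicity-free of length at most $2$, so $\tau$ is recovered uniquely from $\pi$, which yields both the spectrum description and the multiplicity-freeness of $(\Sg_{n,m}^r,\restr{\LL_\mu}{\Sg_{n,m}^r})$.

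The main obstacle is the careful bookkeeping of the $\mathrm{MVW}$-involution and the contragredient needed to pass from a subrepresentation of $\overline{r}_{P_r}(\pi)$ to an embedding $\pi\hra\chi_1^{-1}\rtimes\tau$; this is precisely where the inversion of $\mu$ and the negation of the exponents of $\lvert\cdot\rvert$ appear.
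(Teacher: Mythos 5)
Your overall skeleton --- Bernstein reciprocity applied to $\sigma_r$, reduction to the group case on the $\Sp_{n-r}$-factor, and the MVW/contragredient bookkeeping translating $\chi_1\otimes\tau\hra \overline{r}_{P_r}(\pi)$ into $\pi\hra\mu(\det_r)^{-1}\abs{r}{-\frac{r+m-n}{2}}\rtimes\tau$ --- is sound and correctly produces the \emph{shape} of the spectrum. The gap is in the sentence ``and in that case the Hom-space is one-dimensional.'' After your reduction you are left with $\ho_{\sp_{n-r}}(\mathcal{A}_\pi^\lor,\mathcal{A}_{\pi'})$, where $\mathcal{A}_\pi$ and $\mathcal{A}_{\pi'}$ are the $\chi_1$- respectively $\chi_2$-isotypic subrepresentations of the opposite Jacquet modules. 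These are of finite length but in general reducible and non-semisimple, so multiplicity-freeness of the group case $\Delta\Sp_{n-r}\bs\Sp_{n-r}\times\Sp_{n-r}$ buys you nothing here: a priori $\soc(\mathcal{A}_\pi)$ could contain $\tau$ with multiplicity $\ge 2$, or two non-isomorphic $\tau_1,\tau_2$ could occur simultaneously in $\soc(\mathcal{A}_\pi)$ and, dually, in $\soc(\mathcal{A}_{\pi'})$; either possibility would break both multiplicity one and the uniqueness of $\tau$. Note that $\chi_1$ is the segment representation $\Z([a,b]_\mu)$ of $\gl_r$, not a cuspidal representation, so the standard derivative statements do not apply to it directly; the whole of \Cref{S:Deder} (in particular \Cref{L:ext}, \Cref{L:2dim}, \Cref{L:deruniqueseg}, \Cref{C:secrep}) exists precisely to control such isotypic components, and even there the conclusions are only ``length at most $2$'' bounds obtained by delicate Hom-space counting. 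Your appeal to \Cref{C:secrep} at the end recovers uniqueness of $\tau$ from $\pi$, but it does not bound $\dim_\C\ho_{\sp_n\times\sp_m}(\sigma_r,\pi\otimes\pi')$.

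The paper instead argues by induction on $n+m$ exactly as in \Cref{L:locmult}: one peels off a single non-self-dual cuspidal $\rho=\mu^{-1}\lvert-\lvert^{-\frac{n-m+2r-1}{2}}$ at a time, uses the theory of $\rho$-derivatives (which \emph{does} guarantee that the relevant isotypic part of the Jacquet module is of the form $\rho^{\times d}\otimes\mathcal{D}_{\rho}^{\mathrm{max}}(\pi)$ with $\mathcal{D}_{\rho}^{\mathrm{max}}(\pi)$ irreducible), identifies via the Geometric Lemma the unique contributing constituent of the Jacquet module of $\sigma_r$, and embeds the Hom-space into the one for $\sigma_{r-1}$ on smaller groups; the base case $r=0$ is the group case. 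To salvage your one-step argument you would need to prove directly that $\mathcal{A}_\pi$ has irreducible socle-type behaviour strong enough to force $\dim_\C\ho(\mathcal{A}_\pi^\lor,\mathcal{A}_{\pi'})\le 1$, which is essentially equivalent to redoing that induction. (A secondary, fixable point: the identification $\ho_{\sp_{n-r}\times\sp_{n-r}}(S(\Sp_{n-r}),M_1\otimes M_2)\cong\ho_{\sp_{n-r}}(M_1^\lor,M_2)$ for finite-length reducible $M_1,M_2$ also needs justification, since $\Delta\Sp_{n-r}$ is closed but not open and compact induction is not left adjoint to restriction in this setting.)
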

\begin{proof}
    We proceed analogously to \Cref{L:locmult} and argue by induction on $n+m$. If $r=0$, the claim follows immediately. Otherwise, we proceed like in \Cref{L:locmult} and utilize the $\rho$-derivative of $\pi$ and $\pi'$, where $\rho=\mu^{-1}\lvert-\lvert^{-\frac{n-m+2r-1}{2}}$.
\end{proof}
 For $\alpha=(\alpha_1,\ldots,\alpha_k)$ an ordered partition, we let \[\tau_\alpha\coloneq \mu(\det_{\alpha_1})^{-1}\abs{\alpha_1}{-\frac{\alpha_1+n-m}{2}}\times\ldots\times\mu(\det_{\alpha_k})^{-1}\abs{\alpha_1}{-\frac{\alpha_k+n-m}{2}}\otimes \]\[\mu(\det_{m-n+\alpha_1})\abs{\alpha_1}{\frac{\alpha_1}{2}}\times\mu(\det_{\alpha_2})\abs{\alpha_2}{\frac{\alpha_2+n-m}{2}}\times \ldots\times\mu(\det_{\alpha_k})\abs{\alpha_k}{\frac{\alpha_k+n-m}{2}},\] which by \Cref{T:zel} is irreducible.
\begin{lemma}
   For $\pi\otimes\pi'\in \irr(\sp_n\times\sp_m)$, let $\alpha$ be maximal with respect to $\lvert\alpha\lvert$ such that there exists $\sigma\in \irr(\sp_{n-\lvert\alpha\lvert)}$ with $\pi\otimes\pi'\hra\id_{P_{\lvert\alpha\lvert}\times \overline{P_{\lvert\alpha\lvert}}}^{\sp_n\times\sp_m}(\tau_\alpha\otimes \sigma\otimes\sigma^\lor)$.
   Then $\pi\otimes \pi'\in \irr(\sp_n\times\sp_m)$ is
   \[(\Sg_{n,m}^r,\restr{\LL_\mu}{\Sg_{n,m}^r},\bP,\tau_\alpha\otimes\sigma\otimes\sigma^\lor)-\text{generic}\] for all $\ain{r}{0}{n}$.
\end{lemma}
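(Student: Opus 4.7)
The plan is to argue along the lines of the analogous statement \Cref{L:locgen}, using the Geometric Lemma together with a careful tracking of cuspidal supports. First I would unwind the definition of genericity via Frobenius reciprocity: a non-zero morphism $f\colon S(\Sg_{n,m}^r,\restr{\LL_\mu}{\Sg_{n,m}^r})\ra \pi\otimes\pi'$, composed with the assumed embedding $\pi\otimes\pi'\hra \id_\bP^{\sp_n\times\sp_m}(\tau_\alpha\otimes\sigma\otimes\sigma^\lor)$, corresponds under Frobenius reciprocity to a non-zero $L_\bP$-morphism $\phi\colon r_\bN(\sigma_r)\ra \delta_P^{-\frac{1}{2}}(\tau_\alpha\otimes\sigma\otimes\sigma^\lor)$. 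The genericity condition then amounts to showing that $\phi$ does not vanish on the image of $r_\bN(S(\bP x_0,\restr{\LL_\mu}{\bP x_0}))\hra r_\bN(\sigma_r)$, where $\bP x_0$ denotes the open $\bP$-orbit in $\Sg_{n,m}^r$.

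Next I would apply the Geometric Lemma (\Cref{L:GL} on each factor of $\sp_n\times\sp_m$) to compute $r_\bN\circ \id_{P_r\times \overline{P_{m-n+r}}}^{\sp_n\times\sp_m}$, yielding a filtration of $r_\bN(\sigma_r)$ whose subquotients are indexed by $\bP$-orbit data on $\Sg_{n,m}^r$. By the Local Structure Theorem combined with \Cref{L:jacsp}, the subquotient corresponding to the open $\bP$-orbit injects into $r_\bN(\sigma_r)$ through $\delta_P^{\frac{1}{2}}S(\bS_{\Sg_{n,m}^r},\restr{\LL_\mu}{\bS_{\Sg_{n,m}^r}})$, and it is precisely on this subquotient that one must check $\phi$ does not vanish.

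The heart of the argument is to verify that every non-open subquotient of the Geometric Lemma filtration admits no non-zero map to $\tau_\alpha\otimes\sigma\otimes\sigma^\lor$. This I would deduce from the maximality of $\lvert\alpha\lvert$: if some other subquotient did contribute, then Bernstein reciprocity applied to the combinatorial structure of the filtration would yield an embedding of $\pi$ (or equivalently of $\sigma$) into a parabolically induced representation whose $\gl$-factor has strictly larger degree than $\lvert\alpha\lvert$, contradicting the maximality in the hypothesis.

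The main obstacle will be the combinatorial bookkeeping of cuspidal supports. The Geometric Lemma produces a large number of double-coset contributions, and each has to be matched against the precise $\mu^{\pm 1}\nu_\bullet^\bullet$ characters appearing in $\tau_\alpha$. Once this matching is pinned down, the remaining steps are routine manipulations with Frobenius and Bernstein reciprocity, mirroring those already carried out in \Cref{L:locmult} and \Cref{L:locmultsp}.
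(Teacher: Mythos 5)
Your outline is the right one and coincides with what the paper intends: the paper states the $\gl$-analogue (\Cref{L:locgen}) as ``an easy consequence of the Geometric Lemma'' and gives no separate argument here, so the content is exactly the reduction you describe --- use Frobenius reciprocity, note that the open $\bP$-orbit contributes the bottom piece $F(\mathfrak{w}_1)$ of the Bernstein--Zelevinsky filtration of $r_{\bN}(\sigma_r)$, and show $\ho_{L_\bP}(F(\mathfrak{w}_i),\delta_{P}^{-1/2}(\tau_\alpha\otimes\sigma\otimes\sigma^\lor))=0$ for $i\ge 2$ so that any non-zero adjoint map must survive on the open stratum. The one caveat is that the step you defer as ``combinatorial bookkeeping'' is the entire mathematical content of the lemma: in the Geometric Lemma for $\sp_n$ (\Cref{L:GL}) the $k_3$-blocks are dualized, and these are precisely where the characters $\mu^{-1}\nu^{-\frac{\alpha_i+n-m}{2}}$ of $\tau_\alpha$ can be matched, so the contradiction with maximality of $\lvert\alpha\rvert$ only materializes after checking that any non-open stratum forces either a mismatched exponent or an embedding $\pi\otimes\pi'\hra\id(\tau_{\alpha'}\otimes\cdots)$ with $\lvert\alpha'\rvert>\lvert\alpha\rvert$ of the \emph{specific} form $\tau_{\alpha'}$; as written this is asserted rather than verified, but it is the same verification the paper itself leaves implicit.
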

Again, equipped with our knowledge on the boundary, we are now able to state the following theorem:
\begin{theorem}\label{T:symplectic}
     The space $(\Sg_{n,m},\LL_\mu)$ is multiplicity-free, has the lifting property for all irreducible representations  and \[\irr_{\Sg_{m,n},\LL_\mu}(\sp_n\times \sp_m)\subseteq \]\[\subseteq \{\pi\otimes\pi':\pi\otimes\pi'\subseteq \pi\otimes \mathfrak{T}_n^m(\pi,\mu),\,  \pi\in \irr(\sp_{n})\}.\]
\end{theorem}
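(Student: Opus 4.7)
The plan is to apply Corollary \ref{C:liftingproperty} to $\bX=\Sg_{n,m}$ equipped with the sheaf $\LL_\mu$, and to deduce both multiplicity-freeness and the lifting property for all irreducible representations in a single inductive argument on $n+m$. The spectrum description will then come out by tracing through the inductive step, essentially reading off the parameters $r$ and $\tau$ that appear in the local slice.

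First I would check the five hypotheses of \Cref{C:liftingproperty} one by one. Hypothesis (v), that $\Sg_{n,m}$ is ordered, is immediate from the description of the orbits $\Sg_{n,m}^r$ in \Cref{S:sympgra}, which are totally ordered by $r$. Hypothesis (iii), that each orbit $(\Sg_{n,m}^r,\restr{\LL_\mu}{\Sg_{n,m}^r})$ is multiplicity-free, is exactly \Cref{L:locmultsp}. Hypotheses (i) and (ii) are furnished by the lemma immediately preceding this theorem: for any $\pi\otimes\pi'$ in the spectrum of some orbit, the maximal partition $\alpha$ and the corresponding representation $\tau_\alpha\otimes\sigma\otimes\sigma^\lor$ provide a maximal parabolic $\bP$ witnessing $(\Sg_{n,m}^r,\restr{\LL_\mu}{\Sg_{n,m}^r},\bP,\tau_\alpha\otimes\sigma\otimes\sigma^\lor)$-genericity uniformly in $r$, and by maximality of $\alpha$ the minimal orbit in $\oo(\Sg_{n,m},\pi\otimes\pi')$ sits in $\oo_\bP(\Sg_{n,m})$.

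The heart of the argument is hypothesis (iv), the lifting property for the slice. By \Cref{L:jacsp} we have $\bS_{\Sg_{n,m}^r}=\M_{n-r,m-r}\times\Sp_r$, with the $\gl_{n-r}\times\gl_{m-r}\times\Sp_r\times\Sp_r$-action splitting as the matrix action on the first factor and the group case $\Delta\Sp_r\bs\Sp_r\times\Sp_r$ on the second. For the matrix factor I would invoke \Cref{T:matrix}, which already establishes the full lifting property for $\M_{n-r,m-r}$; for the group factor, multiplicity-one and the lifting property are classical (the group case is multiplicity-free with spectrum $\{\sigma\otimes\sigma^\lor\}$). Combining the two via the K\"unneth-type decomposition $S(\bS_{\Sg_{n,m}^r})\cong S(\M_{n-r,m-r})\otimes S(\Sp_r)$ gives (iv). This step is where we genuinely need the results on the matrix space, and explains why \Cref{T:matrix} is established first.

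The main obstacle I anticipate is the base case of the induction, which requires an analogue of \Cref{L:basecase}: one must rule out the existence of a surjection $S(\Sg_{n,m})\sra\bon_n\otimes\bon_m$ whose restriction to the open Siegel cell is non-zero. I would run the same exhaustion trick as in \Cref{L:basecase}, using the action of a uniformizer $\varpi$ to relate characteristic functions $\chi_k$ of the sets $\{U\in\Sg_{n,m}:\text{suitable valuation condition depending on }k\}$ and derive a contradiction from the fact that the putative map would have to satisfy $T(\chi_k)=T(\chi_{k-1})$ while also picking up a non-vanishing contribution on the difference. A second delicate point is the spectrum description: after \Cref{C:liftingproperty} gives multiplicity-freeness and lifting, one reads off $\irr_{\Sg_{n,m},\LL_\mu}$ by combining the local description of \Cref{L:locmultsp} with the maximality of $r$ in the definition of $\mathfrak{T}_n^m(\pi,\mu)$ — exactly the $\rho$-derivative recursion from the proof of \Cref{L:locmult}, now applied with $\rho=\mu^{-1}\abs{}{-\frac{m-n+2r-1}{2}}$ and \Cref{L:derreksp} in place of \Cref{L:derrek}.
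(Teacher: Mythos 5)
Your proposal is correct and follows essentially the same route as the paper: the proof there is exactly the combination of \Cref{C:liftingproperty}, \Cref{L:jacsp}, \Cref{L:locmultsp} and \Cref{T:matrix}, with the slice $\M_{n-r,m-r}\times\Sp_r$ handled by the matrix-space theorem together with the group case. The separate base-case analogue of \Cref{L:basecase} that you anticipate is not needed, since the reduction via the Local Structure Theorem always lands in a slice of the form $\M_{n-r,m-r}\times\Sp_r$, whose lifting property (including the delicate $\bon\otimes\bon$ situation) is already contained in \Cref{T:matrix}.
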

\begin{proof}
    By \Cref{C:liftingproperty}, \Cref{L:jacsp}, \Cref{L:locmultsp} and \Cref{T:matrix} we obtain the lifting property, the fact that $(\Sg_{n,m},\LL_\mu)$ is multiplicity free and the fact that for each $\pi\in \irr(\sp_{n})$, $\pi\otimes\pi'$ is distinguished only if $\pi'$ is a direct summand of $\mathfrak{T}_n^m(\pi,\mu)$. 
\end{proof}    
In fact we will prove the following.
    \begin{prop}\label{P:explicit}
        \[\irr_{\Sg_{m,n},\LL_\mu}(\sp_n\times \sp_m)= \{\pi\otimes\pi':\pi\otimes\pi'\subseteq \pi\otimes \mathfrak{T}_n^m(\pi,\mu),\,  \pi\in \irr(\sp_{n})\}.\]
    \end{prop}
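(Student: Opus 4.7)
The containment $\subseteq$ is exactly \Cref{T:symplectic}, so the task is to construct, for each $\pi \in \irr(\sp_n)$ and each irreducible summand $\pi' \subseteq \mathfrak{T}_n^m(\pi,\mu)$, a non-zero morphism $S(\Sg_{n,m},\LL_\mu) \to \pi \otimes \pi'$. Fix such a pair and let $r$ be the maximal integer with $\pi \hra \mu(\det_r)^{-1}\abs{r}{-\frac{m-n+r}{2}} \rtimes \tau$ for some $\tau \in \irr(\sp_{n-r})$. By \Cref{L:locmultsp}, the orbit $\Sg_{n,m}^r$ already distinguishes $\pi \otimes \pi'$, so that there is a non-zero morphism $\sigma_r \to \pi \otimes \pi'$; the task reduces to extending this orbit-level distinction to the full Schwartz space.

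I plan to argue by induction on $n$, the case $n=0$ being trivial. The orbit stratification furnishes the short exact sequences
\[0 \to S(\Sg_{n,m}^{<r},\LL_\mu) \to S(\Sg_{n,m},\LL_\mu) \to S(\Sg_{n,m}^{\ge r},\LL_\mu) \to 0,\]
\[0 \to \sigma_r \to S(\Sg_{n,m}^{\ge r},\LL_\mu) \to S(\Sg_{n,m}^{>r},\LL_\mu) \to 0.\]
Combining the maximality of $r$ with the lifting property from \Cref{T:symplectic} and the orbit spectra of \Cref{L:locmultsp}, every irreducible subquotient of $S(\Sg_{n,m}^{>r},\LL_\mu)$ that could distinguish $\pi \otimes \pi'$ would force an embedding of $\pi$ into $\mu(\det_{r'})^{-1}\abs{r'}{-\frac{m-n+r'}{2}} \rtimes \tau'$ for some $r'>r$, contradicting maximality of $r$. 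Hence $\ho_{\sp_n\times\sp_m}(S(\Sg_{n,m}^{>r},\LL_\mu), \pi \otimes \pi') = 0$, so it suffices to lift the given map $\sigma_r \to \pi \otimes \pi'$ to $S(\Sg_{n,m}^{\ge r},\LL_\mu)$ and then to precompose with the surjection $S(\Sg_{n,m},\LL_\mu) \sra S(\Sg_{n,m}^{\ge r},\LL_\mu)$.

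To produce the lift, I use \Cref{L:jacsp}: the slice $\bS_{\Sg_{n,m}^r} = \M_{n-r,m-r} \times \Sp_r$ is spherical for the Levi of $\bP_{n-r} \times \overline{\bP_{m-r}}$, and by \Cref{T:matrix} for the matrix factor together with the group case for $\Sp_r$, the pair $(\bS_{\Sg_{n,m}^r}, \restr{\LL_\mu}{\bS_{\Sg_{n,m}^r}})$ distinguishes every $\pi_0 \otimes \pi_0' \otimes \tau \otimes \tau^\lor$ with $\pi_0 \otimes \pi_0' \in \irr_{\M_{n-r,m-r}}(\gl_{n-r}\times\gl_{m-r})$ and $\tau \in \irr(\sp_r)$. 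The choice of $r$ and $\tau$ together with the definition of $\mathfrak{T}_n^m(\pi,\mu)$ ensures that $\pi \otimes \pi'$ embeds as a subrepresentation of the parabolic induction from $\bP_{n-r} \times \overline{\bP_{m-r}}$ of such a distinguished slice datum twisted by the appropriate modular character, so Bernstein reciprocity applied to the geometric-lemma filtration (\Cref{L:GL}) of $r_{\overline{N}_{n-r} \times N_{m-r}}(S(\Sg_{n,m},\LL_\mu))$, whose top subquotient is $\delta^{1/2}_{P_{n-r} \times \overline{P_{m-r}}} \cdot S(\bS_{\Sg_{n,m}^r}, \restr{\LL_\mu}{\bS_{\Sg_{n,m}^r}})$, yields the desired non-zero morphism. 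When $\mathfrak{T}_n^m(\pi,\mu)$ has length $2$, the two non-isomorphic summands from \Cref{L:2dimstronger} are distinct irreducible subrepresentations of $\mu(\det_{m-n+r})^{-1}\abs{m-n+r}{-\frac{r}{2}} \rtimes \tau^\lor$, and each is realized by a separate Bernstein-reciprocity map arising from the same group-case distinction of $\tau \otimes \tau^\lor$ on the $\Sp_r$-factor of the slice.

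The principal obstacle is verifying that the map produced by Bernstein reciprocity at the top stratum of the geometric-lemma filtration does not vanish when restricted to the image of $\sigma_r$ inside the Jacquet module: in other words, that the identification of the top subquotient with the Schwartz space of the slice is compatible with the orbit-level distinction constructed in \Cref{L:locmultsp}. This bookkeeping will require a careful comparison of the two presentations of $\sigma_r$ (as a compactly induced representation from the stabilizer and as the parabolic induction of a distinguished slice datum) together with the inductive use of \Cref{T:matrix}, and any contribution from lower strata must be controlled by the same maximality-of-$r$ argument that ruled out distinctions by $S(\Sg_{n,m}^{>r},\LL_\mu)$.
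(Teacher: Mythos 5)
Your reduction of the problem is correct: the containment $\subseteq$ is \Cref{T:symplectic}, the relevant orbit is the maximal $r$ with $\pi\hra\mu(\det_r)^{-1}\abs{r}{-\frac{m-n+r}{2}}\rtimes\tau$, and maximality of $r$ does rule out $\pi\otimes\pi'$ as a quotient of $S(\Sg_{n,m}^{>r},\LL_\mu)$. But the construction you propose has two genuine gaps. First, the lifting step: from the exact sequence $0\to\sigma_r\to S(\Sg_{n,m}^{\ge r},\LL_\mu)\to S(\Sg_{n,m}^{>r},\LL_\mu)\to 0$ one gets $0\to\ho(S(\Sg^{>r}),\pi\otimes\pi')\to\ho(S(\Sg^{\ge r}),\pi\otimes\pi')\to\ho(\sigma_r,\pi\otimes\pi')\to\mathrm{Ext}^1(S(\Sg^{>r}),\pi\otimes\pi')$, so vanishing of the Hom on the quotient only makes the restriction map injective; extending the given map $\sigma_r\to\pi\otimes\pi'$ to $S(\Sg^{\ge r})$ requires killing an $\mathrm{Ext}^1$ obstruction, which you do not address. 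Second, and more seriously, even if Frobenius/Bernstein reciprocity applied to the top stratum of the Jacquet module produces a non-zero map from $S(\Sg_{n,m},\LL_\mu)$ into the induced representation $\tau_0'$, this says nothing about which irreducible subrepresentations of $\tau_0'$ lie in the image. When $\mathfrak{T}_n^m(\pi,\mu)$ has length $2$ you must show that \emph{both} summands of $\soc(\tau_0')$ are quotients of the Schwartz space; your claim that each summand ``is realized by a separate Bernstein-reciprocity map'' is unsupported, since a priori the image could be a single irreducible subrepresentation.

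The paper's proof is built precisely around this second point and is analytic rather than algebraic: it deforms the sheaf to a family $\LL_{\mu,s}$, uses the doubling method to produce maps $f_s\colon S(\Sg_{n,m},\LL_{\mu,s})\to\tau_s'$ rational in $q^s$ (\Cref{L:generics}), normalizes by a power of $s$ to specialize at $s=0$, identifies $f_s$ on the orbit $\Sg_{n,m}^r$ with a composition of normalized intertwining operators, and matches the order of the pole of $f_s$ at $s=0$ with that of the intertwining operator. The conclusion $\cosoc(\im(f_0))=\soc(\tau_0')$ then follows from \Cref{T:segint}, which is exactly the statement that the cosocle of the image of the normalized intertwining operator is the full (possibly length-$2$) socle. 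None of this machinery appears in your argument, and without it the hard direction does not go through.
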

To do so, however we need the following slight generalization of \Cref{T:symplectic}.
For $s\in \C$, let
$\LL_{\mu,s}$ be the 
sheaf of $(P_{n+m},\delta_{P_{n+m}}^{\frac{1}{2}}\mu(\det_{m+n})\abs{m+n}{\frac{s}{2}})$-equivariant, locally constant functions on $\sg_{n,m}$.
Moreover, we let $\ain{r}{0}{n}$, fix $\tau\in \irr(\sp_{n-r})$ and define\[\tau_s'\coloneq \mu(\det_r)^{-1}\abs{r}{-\frac{m-n+r+s}{2}}\rtimes\tau\otimes  \mu(\det_{m-n+r})^{-1}\abs{m-n+r}{-\frac{r+s}{2}}\rtimes\tau^\lor,\] which for generic $s$ is irreducible;
We note that for $\ain{r}{0}{n}$ we have that
\[S(\Sg_{n,m}^r,\restr{\LL_{\mu,s}}{\Sg_{n,m}^t})\cong\]
\[\cong\id_{P_r\times P_{m-n+r}}^{\sp_n\times \sp_m}(\mu(\det_{r})\abs{r}{\frac{s+r+n-m}{2}}\otimes \mu(\det_{m-n+r})\abs{s+m-n+r}{\frac{r}{2}}\otimes S(\Sp_{n-r}).)\]
\begin{lemma}\label{L:generics}
    For generic $s\in\C$, $(\Sg_{n,m},\LL_{\mu,s})$ has the lifting property with respect to $\tau_s'$ and
    \[\dim_\C\ho_{\sp_n\times\sp_m}(S(\Sg_{n,m}^r,\restr{\LL_{\mu,s}}{\Sg_{n,m}^r}),\tau_s')=1.\]
    Moreover, if $\pi'\in\irr(\sp_m)$ such that 
    there exists a non-zero morphism
    \[S(\Sg_{n,m}^r,\restr{\LL_{\mu,s}}{\Sg_{n,m}^r})\ra \mu(\det_r)^{-1}\abs{r}{-\frac{m-n+r+s}{2}}\rtimes\tau\otimes \pi',\] then\[\pi'\cong \mu(\det_{m-n+r})^{-1}\abs{m-n+r}{-\frac{r+s}{2}}\rtimes\tau^\lor.\]
    Finally, there exists a, up to scalar, unique morphism
    \[S(\Sg_{n,m},\LL_{\mu,s})\ra\tau_s'\] which is a rational function in $q^s$ for flat sections.
\end{lemma}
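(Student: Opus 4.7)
The plan is to deduce \Cref{L:generics} from the arguments already used for \Cref{T:symplectic} together with Bernstein's meromorphic-continuation principle, restricting attention to a Zariski-open subset of $s\in\C$. First I would identify an explicit generic locus: for $s$ outside a discrete set, the segments appearing in both tensor factors of $\tau_s'$ fall into one of the three irreducibility cases listed in \Cref{C:secrep} (for example by making the real part of $s$ large enough that the condition $b<0$ is satisfied). On this locus both parabolic inductions
\[\mu(\det_r)^{-1}\abs{r}{-\frac{m-n+r+s}{2}}\rtimes\tau,\qquad \mu(\det_{m-n+r})^{-1}\abs{m-n+r}{-\frac{r+s}{2}}\rtimes\tau^\lor\]
are irreducible and I may write $\pi\otimes\pi'=\tau_s'$.

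Next I would verify the single-orbit statement
\[\dim_\C\ho_{\sp_n\times\sp_m}(S(\Sg_{n,m}^r,\restr{\LL_{\mu,s}}{\Sg_{n,m}^r}),\tau_s')=1\]
by replaying the inductive proof of \Cref{L:locmultsp} verbatim; the only change is that the cuspidal line governing the $\rho$-derivative step is replaced by its $s/2$-twist, which for generic $s$ is not self-dual, so \Cref{L:derreksp} continues to apply and the induction terminates at the trivially multiplicity-free rank-zero case. The same derivative analysis produces the rigidity clause: the cuspidal support of any candidate second factor $\pi'$ is forced by the $\rho$-derivative rigidity to agree with the prescribed parabolic induction.

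For the lifting property on $(\Sg_{n,m},\LL_{\mu,s})$ with respect to $\tau_s'$, I would invoke \Cref{C:liftingproperty} with $\bP=\bP_{n-r}\times\overline{\bP_{m-r}}$ exactly as in the proof of \Cref{T:symplectic}. Hypotheses (2)--(5) are provided by \Cref{L:jacsp}, the twisted version of \Cref{L:locmultsp}, \Cref{T:matrix} applied to the $\gl$-slice inside $\bS_{\Sg_{n,m}^r}=\M_{n-r,m-r}\times\Sp_r$, and the total order on $\oo(\Sg_{n,m})$; the genericity hypothesis (1) is the analogue of \Cref{L:locgen} in the symplectic setting, which for generic $s$ is controlled by the single leading-segment derivative and therefore holds automatically. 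Chaining the orbit-by-orbit bound then yields
\[\dim_\C\ho_{\sp_n\times\sp_m}(S(\Sg_{n,m},\LL_{\mu,s}),\tau_s')\le 1.\]

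Finally, for the rationality clause I would apply Bernstein's principle to the flat family $\{(\Sg_{n,m},\LL_{\mu,s})\}_{s\in\C}$: the $\tau_s'$-equivariant distributions assemble into a coherent sheaf over $\mathrm{Spec}\,\C[q^{\pm s}]$ which is generically of rank one by the above, hence locally free of rank one away from a finite set of poles. Non-vanishing of the pairing for generic $s$ is witnessed on the open orbit $\Sg_{n,m}^0$ by a standard intertwining-operator computation, so the sheaf is not identically zero, and evaluation on any flat section produces a rational function in $q^s$. The main obstacle, as usual, will be verifying that the genericity hypothesis of \Cref{C:liftingproperty}(1) persists after the twist $\abs{m+n}{s/2}$ — i.e.\ that the Jacquet-module calculations continue to factor through the correct $\rho$-derivative along the whole generic locus — and that the meromorphic extension of the distribution does not identically vanish there; both reduce, as sketched above, to the $\rho$-derivative calculus for a single well-chosen cuspidal line.
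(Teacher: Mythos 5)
Your treatment of the orbit-by-orbit multiplicity bound and of the lifting property follows the paper's route: one replays \Cref{L:locmultsp} with the cuspidal line twisted by $\nu^{s/2}$, which for generic $s$ is not self-dual, and then feeds the result into \Cref{C:liftingproperty} exactly as in \Cref{T:symplectic}. One point you leave implicit, and which the paper makes explicit, is why the orbits $\Sg_{n,m}^t$ with $t>r$ do not distinguish $\tau_s'$: a non-zero morphism there would force $\mu\lvert-\lvert^{\frac{s+m-n+1+2r}{2}}$ into the cuspidal support of $\tau$, which is impossible for generic $s$ since $\tau$ does not vary with $s$. That omission is harmless.

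The genuine gap is in the final clause. Your plan is Bernstein's continuation principle plus the claim that non-vanishing of the pairing for generic $s$ is ``witnessed on the open orbit $\Sg_{n,m}^0$ by a standard intertwining-operator computation.'' For $r\ge 1$ this witness cannot work: the minimal orbit in $\oo(\Sg_{n,m},\tau_s')$ is $\Sg_{n,m}^r$, so the lifting property you have just established forces every morphism $S(\Sg_{n,m},\LL_{\mu,s})\ra\tau_s'$ to vanish on the sections supported on $\bigcup_{t<r}\Sg_{n,m}^t\supseteq\Sg_{n,m}^0$. What the orbit computation actually gives is a non-zero morphism out of $S(\Sg_{n,m}^{\le r},\restr{\LL_{\mu,s}}{\Sg_{n,m}^{\le r}})$, and the missing step is to extend it across the closed complement $\bigcup_{t>r}\Sg_{n,m}^t$; Bernstein's principle does not supply this, since it only converts a generically non-zero solution space into a rational family, and at this stage you only have the upper bound $\le 1$ for the global Hom space, not its non-vanishing. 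The paper sidesteps the issue by importing an explicit global construction: the doubling-method integrals of Kudla--Rallis and Atobe produce a two-parameter family $f_{s,s'}\colon S(\Sg_{n,m},\LL_{\mu,s'})\ra\tau_s'$, rational in $q^s$ and $q^{s'}$, which is then normalized by a suitable power of $(s-s')$ and specialized at $s=s'$. To salvage your softer approach you would have to either prove the extension step directly (e.g.\ via disjointness of the Bernstein components of $S(\bigcup_{t>r}\Sg_{n,m}^t,\cdot)$ and $\tau_s'$ for generic $s$, which again rests on the cuspidal-support observation above) or substitute the doubling construction; as written, the existence of a non-zero rational family of global morphisms is not established.
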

\begin{proof}
    The strategy of \Cref{T:symplectic} works equally well here once we showed that     \[\dim_\C\ho_{\sp_n\times\sp_m}(S(\Sg_{n,m}^t,\restr{\LL_{\mu,s}}{\Sg_{n,m}^t}))\le 1\] with equality  only if $t\le r$. Indeed for $t>r$, the non-vanishing of the Hom-space would imply that $\mu\lvert-\lvert^{\frac{s+m-n+1+2r}{2}}$ appears in the cuspidal support of $\tau$, which contradicts the genericity of $s$.

    For $t\le r$, the claim follows by exactly the same argument as in \Cref{L:locmultsp}.

    Finally, by the doubling method, \emph{cf.} \cite{LapRal05}, \cite{GelPiaRal87}, there exists a non-zero map \[f_{s,s'}=S(\Sg_{n,m},\LL_{\mu,s'})\ra\tau_{s}'\] for $s,s'\in \C$ generic, which is a rational function in $q^{s'}$ and $q^s$. Indeed, the claim for $s'$ appears already in \cite[§1.5]{KudlaRallis}, see also \cite[§3.5]{Ato19}. The one for $s$ is an easy consequence of the construction in \cite[§3.5]{Ato19} as $s$ only enters the integral in this construction through the matrix coefficients and if $v_s^\lor$ is a matrix coefficient of $\mu(\det_r)\abs{r}{\frac{m-n+r+s}{2}}\rtimes\tau$ coming from flat sections, it is clear that $v_s^\lor(g)$ is rational in $q^s$ for fixed $g\in \sp_n$. By the Nullstellensatz we can now find a suitable power of $(s-s')$, such that the normalization of $f_{s,s'}$ by this power can be evaluated at $s=s'$, which yields the desired a non-zero map.
\end{proof}
\begin{proof}[Proof of \Cref{P:explicit}]
    We fix $\pi\in\irr(\sp_n)$ and let $\ain{r}{0}{n}$ be maximal such that there exists $\tau\in\irr(\sp_{n-r})$ with $\pi\hra\mu(\det_r)^{-1}\abs{r}{-\frac{m-n+r}{2}}\rtimes\tau$.
    To see that for all summands $\pi'$ of $\mathfrak{T}_n^m(\pi,\mu)$ the representation $\pi\otimes\pi'$ is distinguished, it suffices to construct a surjective map
    \[S(\Sg_{n,m},\LL_\mu)\ra \soc(\tau_0'),\] where
    for $s\in \C$ we set
\[\tau_s' \coloneq\mu(\det_r)^{-1}\abs{r}{-\frac{m-n+r+s}{2}}\rtimes\tau\otimes  \mu(\det_{m-n+r})^{-1}\abs{m-n+r}{-\frac{r+s}{2}}\rtimes\tau^\lor.\]
    One can construct a map \[f_0\colon S(\Sg_{n,m},\LL_\mu)\ra \tau_0'\] as follows. 
By \Cref{L:generics} there exists a non-zero map
\[f_s\colon S(\Sg_{n,m},\LL_{\mu,s})\ra \tau_s'.\]
Hence after renormalizing by a suitable power of $s$, one can extend it to $s=0$, where it gives the desired map $f_0$.
Note that for generic $s$, we saw in the proof of \Cref{L:generics} that $f_s$ restricts to a non-zero map \[S(\Sg_{n,m}^r,\restr{\LL_{\mu,s}}{\Sg_{n,m}^r})\ra\tau_s'.\] By \Cref{L:generics}, this map is up to a scalar unique and hence we can assume it is of the form \[(I_{{\mu(\det_r)\abs{r}{\frac{m-n+r+s}{2}}},\tau}\otimes I_{\mu(\det_{m-n+r})\abs{m-n+r}{\frac{r+s}{2}},\tau^\lor})\circ\]\[\circ\id_{P_r\times P_{m-n+r}}^{\sp_n\times\sp_r}(\bon_{\mu(\det_r)\abs{r}{\frac{m-n+r+s}{2}}}\otimes\bon_{\mu(\det_{m-n+r})\abs{m-n+r}{\frac{r+s}{2}}}\otimes (S(\Sp_{n-r})\ra\tau\otimes\tau^\lor)).\]
Now note that $f_0$ also induces a non-zero map on $S(\Sg_{n,m}^r,\restr{\LL_{\mu}}{\Sg_{n,m}^r})$ by the lifting property and hence the order of the pole of $f_s$ at $s=0$ has to equal the order of the intertwining operator $I_{\bon_{\mu(\det_r)\abs{r}{\frac{m-n+r+s}{2}}},\tau}\otimes I_{\mu(\det_{m-n+r})\abs{m-n+r}{\frac{r+s}{2}},\tau^\lor}$ at $s=0$. In particular, we have by \Cref{T:segint} that  
\[\cosoc(\im(f_0))=\soc(\Pi_0'),\] which is precisely what we wanted to show.
\end{proof}
\subsubsection{Metaplectic groups and other special cases}
Let $G$ be either $\gl_n\times\gl_m$, $\sp_n\times\sp_m$, or a parabolic subgroup of these two groups and let $\pi\colon H\ra G$ be the metaplectic $\mu_2\times \mu_2$ cover of $G$. If $X$ are the $\Ff$-points of a spherical variety of $G$, we can consider $X$ as an $H$-variety via the projection map $\pi$.
Since the results and definitions of \Cref{S:sphericalrep} only depend on the $\Ff$-points of our spherical variety, it is easy to generalize the respective notions to the metaplectic case.
It is now straightforward to generalize the arguments of the above section to obtain the following. Let $n\le m$ be natural numbers.
\begin{theorem}
    The space $\M_{n,m}$ is multiplicity-free, has the lifting property for all irreducible representations and \[\irr_{\M_{n,m}}(\Glt_n\times \Glt_m)=\{\pi\otimes \mathfrak{T}_n^m(\pi):\, \pi\in \irr(\Glt_n),\, \mu_2\text{ acts trivially on }\pi\}.\]
\end{theorem}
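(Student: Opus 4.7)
The plan is to reduce immediately to the non-metaplectic case, Theorem~\ref{T:matrix}, by exploiting the fact that $S(\M_{n,m})$ is inflated from $G \coloneq \gl_n \times \gl_m$ to $H \coloneq \Glt_n \times \Glt_m$ along the covering map $\pi\colon H \sra G$.

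First, by the setup at the start of this subsection, $H$ acts on $\M_{n,m}(\Ff)$ through $\pi$, so the kernel $\mu_2 \times \mu_2$ acts trivially on $S(\M_{n,m})$. Consequently, for any non-zero morphism $f\colon S(\M_{n,m}) \ra \sigma$ with $\sigma \in \irr(H)$, the subgroup $\mu_2\times\mu_2$ acts trivially on $\im(f)$, hence by irreducibility on all of $\sigma$. In particular $\sigma$ is non-genuine in both factors and is pulled back from some $\sigma_0 \in \irr(G)$. Conversely, for such a non-genuine $\sigma$ one has
\[
\ho_H(S(\M_{n,m}), \sigma) = \ho_G(S(\M_{n,m}), \sigma_0),
\]
so the $H$-spectrum of $\M_{n,m}$ is precisely the image of the $G$-spectrum under the pullback bijection between non-genuine irreducibles of $H$ and irreducibles of $G$.

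Next, the restriction in the theorem statement that $\mu_2$ act trivially on the first factor $\pi$ reflects exactly this non-genuineness; the analogous requirement on the second factor is automatic, since $\mathfrak{T}_n^m(\pi)$ is defined as a representation of $\gl_m$ and so is non-genuine once inflated to $\Glt_m$. The explicit description of $\irr_{\M_{n,m}}(\Glt_n \times \Glt_m)$ thus follows immediately from the description in Theorem~\ref{T:matrix} by applying the pullback bijection.

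Finally, multiplicity-freeness and the lifting property transfer from $G$ to $H$ for free, since the $H$- and $G$-orbit decompositions of $\M_{n,m}$ coincide (the $H$-action factors through $G$) and the relevant Hom-spaces coincide as above. I do not see any substantive obstacle here: the sole content is the triviality of the $\mu_2\times\mu_2$-action on $S(\M_{n,m})$, after which Theorem~\ref{T:matrix} supplies everything else, in line with the author's remark that the results of the previous subsection generalize "easily" to the metaplectic case.
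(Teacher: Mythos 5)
Your reduction is correct, and it is a genuinely different (and for this particular statement, more economical) route than the one the paper takes. The key observation you use --- that $S(\M_{n,m})$ carries no genuine twist, so the kernel $\mu_2\times\mu_2$ of $\Glt_n\times\Glt_m\sra\gl_n\times\gl_m$ acts trivially on it, forcing every irreducible quotient to be inflated from $\gl_n\times\gl_m$ and identifying all relevant Hom-spaces, orbit structures, and hence the spectrum, multiplicity bounds and the lifting property with those of \Cref{T:matrix} --- is sound. The paper instead signals that one should \emph{re-run} the whole machinery of the preceding subsection in the metaplectic setting: it has already set up derivatives, the Geometric Lemma, intertwining operators and the notions of \Cref{S:sphericalrep} for $\Glt_n$, and the intended proof is the verbatim analogue of \Cref{L:locmult}, \Cref{L:locgen}, \Cref{L:basecase} and \Cref{C:liftingproperty} for the cover. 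What your approach buys is brevity and the elimination of any need to re-verify the metaplectic representation theory; what the paper's uniform approach buys is that it also covers the genuinely twisted situations (such as $(\sgt_{n,m},\LL_\mu)$, where the equivariant sheaf is genuine and a formal pullback argument of your kind fails because genuine representations do occur in the spectrum), and it produces the metaplectic analogues of the intermediate orbit-wise statements, which are what the application to the theta correspondence actually consumes. One small point worth making explicit in your write-up: you should note that for non-genuine $\pi$ the metaplectic construction of $\mathfrak{T}_n^m(\pi)$ (socles of parabolic inductions along $\tp_\alpha$) commutes with inflation from $\gl$, which holds because $\widetilde{N}_\alpha=N_\alpha$ and induction of non-genuine representations factors through the linear quotient; this is what lets you match the right-hand sides of the two theorems rather than merely the left-hand sides.
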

Let $\mu$ be a unitary character of $\Gl_1$ and define \[\mu^{(n+m)}\colon\Glt_{n+m}\ra\C^\times,\, (g,\epsilon)\mapsto \epsilon^{n+m}\gamma_\Ff(\det(g,\psi)^{-n-m})\mu(\det(g)), \] see also \Cref{S:2gl}. Note that we can identify $\sg_{n,m}=\tp_{n+m}\bs\mp_{n+m}$, and we let $\LL_\mu$ be the sheaf of $(\mu^{(n+m)}\delta_{\overline{P}_{n+m}}^\frac{1}{2},\tp_{n+m})$-equivariant, locally constant functions on $\sgt_{n,m}$.
\begin{theorem}
     The space $(\sgt_{n,m}',\LL_\mu)$ is multiplicity-free, has the lifting property for all irreducible representations and \[\irr_{\sgt_{m,n},\LL_\mu}(\mp_n\times \mp_m)=\]\[=\{\pi\otimes\pi':\pi\otimes\pi'\subseteq \pi\otimes  \mathfrak{T}_n^m(\pi,\mu^{(n+m)}),\,  \pi\in \irr(\mp_{n}),\, (1,\epsilon)\text{ acts via }\epsilon^{n+m}\text{ on }\pi\}.\]
\end{theorem}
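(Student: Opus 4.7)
The plan is to follow the proofs of \Cref{T:symplectic} and \Cref{P:explicit} verbatim, replacing $\sp_k$ by $\mp_k$, $\gl_k$ by $\Glt_k$, and the character $\mu(\det_{k})$ by the genuine character $\mu^{(k)}$. As was observed in \Cref{S:dergl}, \Cref{S:dersp}, and at the end of each of the corresponding subsections, the $\rho$-derivative theory, \Cref{T:segint}, \Cref{C:secrep}, and the associated combinatorial properties all go through for $\Glt_k$ and $\mp_k$ once one uses the bijection $\pi\mapsto\pi^\psi$ to reduce genuine representations to ordinary ones. Similarly, the framework of \Cref{S:sphericalrep}, including \Cref{T:lstcor1} and \Cref{C:liftingproperty}, depends only on the $\Ff$-points of the underlying spherical variety and on the smooth representation theory of the acting group; since $\sgt_{n,m}$ and $\mp_n\times\mp_m$ have the same $\Ff$-points and orbit stratification as in \Cref{S:sympgra} (pulled back along the covering map), these results apply directly.

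First I would establish the metaplectic analogue of \Cref{L:jacsp}: for each $\ain{r}{0}{n}$, setting $\widetilde{\bP}=\tp_{n-r}\times\overline{\tp_{m-r}}\subseteq \mp_n\times\mp_m$, the orbit $\sgt_{n,m}^r$ lies in $\oo_{\widetilde{\bP}}$ with slice $\bS_{\sgt_{n,m}^r}=\M_{n-r,m-r}\times\mp_r$. This is immediate from \Cref{L:spespher}, since orbits and parabolic structure pull back from $\sg_{n,m}$. Next, I would verify the metaplectic analogue of \Cref{L:locmultsp}: each $(\sgt_{n,m}^r,\restr{\LL_\mu}{\sgt_{n,m}^r})$ is multiplicity-free with spectrum parametrized by $\tau\in\irr(\mp_{n-r})$ in the expected way. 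The induction on $n+m$ proceeds exactly as before, with the $\rho$-derivatives now indexed by non self-dual cuspidal representations of $\Glt_k$; the only bookkeeping needed is that every occurrence of $\mu(\det_k)$ becomes $\mu^{(k)}$, which is compatible with the normalization conventions of \Cref{S:2gl} because the Weil-index character contributes the same twist on both sides of the parabolic embedding.

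Third, the metaplectic version of \Cref{L:locgen} gives, for each $\pi\otimes\pi'\in\irr(\mp_n\times\mp_m)$, a parabolic subgroup $\widetilde{\bP}$ and a representation $\sigma$ of its Levi such that $\pi\otimes\pi'$ is $(\sgt_{n,m}^r,\restr{\LL_\mu}{\sgt_{n,m}^r},\widetilde{\bP},\sigma)$-generic for all $r$ in the required range; this follows from the Geometric Lemma exactly as in the linear case. Together with the metaplectic version of \Cref{T:matrix} (which serves as the base case when $r$ drops to $0$) and \Cref{C:liftingproperty}, this yields the multiplicity-freeness of $(\sgt_{n,m},\LL_\mu)$, the lifting property for all irreducibles, and the containment
\[\irr_{\sgt_{m,n},\LL_\mu}(\mp_n\times\mp_m)\subseteq \{\pi\otimes\pi':\pi\otimes\pi'\subseteq \pi\otimes\mathfrak{T}_n^m(\pi,\mu^{(n+m)})\}.\]
The constraint that $(1,\epsilon)$ acts via $\epsilon^{n+m}$ on $\pi$ comes out automatically, since $\LL_\mu$ transforms under $\mu^{(n+m)}$ which has this central character.

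Finally, for the reverse containment I would mimic the argument of \Cref{P:explicit}. The main technical ingredient is a meromorphic family $f_s\colon S(\sgt_{n,m},\LL_{\mu,s})\ra\tau_s'$ for generic $s$, constructed via the doubling method of \cite{GelPiaRal87}, \cite{LapRal05}; in the metaplectic Hilbert--Siegel setting this construction is explicitly carried out in \cite[§3.5]{Ato19}. One proves an analogue of \Cref{L:generics} for $\mp_n\times\mp_m$ by the same cuspidal-support genericity argument, then renormalizes $f_s$ by the appropriate power of $s$ to specialize at $s=0$, and uses \Cref{T:segint} (which holds for $\mp_n$) to identify the cosocle of the image with $\soc(\tau_0')$. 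The main obstacle is therefore purely notational: verifying that the doubling integral, its rationality in $q^s$, and its non-vanishing for generic $s$ all carry over to the metaplectic cover. This is essentially the content of the Hilbert--Siegel Miyawaki lift construction and is therefore available to us; the rest of the proof is a routine transcription.
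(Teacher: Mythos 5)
Your proposal is correct and matches the paper's intent exactly: the paper itself only asserts that the arguments of the split case carry over ("It is now straightforward to generalize the arguments of the above section..."), and your transcription — replacing $\sp_k$ by $\mp_k$, $\gl_k$ by $\Glt_k$, $\mu(\det_k)$ by $\mu^{(k)}$, and running \Cref{C:liftingproperty} for one containment and the doubling-method/\Cref{T:segint} argument of \Cref{P:explicit} for the other — is precisely what that generalization amounts to. No gaps.
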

Similarly, we can replace $\gl_n$ by $\gl_n'$ and $\M_{n,m}$ by $\M_{n,m}'$, the space of $n\times m$ matrices with entries in $\DD$. Then it is easy to formulate and to prove analogously to the case $\DD=\Ff$ the following theorem. 
\begin{theorem}
    The space $\mathrm{Mat}_{n,m}'$ is multiplicity-free, has the lifting property for all irreducible representations and \[\irr_{\mathbf{Mat}_{n,m}'}(\gl_n'\times \gl_m')=\{\pi\otimes\soc(\nu^{\frac{m-n}{2}}\pi^\lor\times\abs{n-m}{-\frac{n}{2}}):\, \pi\in \irr_n\}.\]
\end{theorem}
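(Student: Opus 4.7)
The proposed proof follows the same five-step pattern used to establish \Cref{T:matrix}, transcribed over the central division algebra $\DD$ in place of the ground field $\Ff$. The goal is to verify all hypotheses of \Cref{C:liftingproperty} for the spherical $\gl_n' \times \gl_m'$-variety $\M_{n,m}'$; given these, the theorem follows immediately.

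First, I would establish the orbit structure and the Local Structure Theorem. The $\Gl_n' \times \Gl_m'$-orbits on $\M_{n,m}'$ are again the rank strata $\M_{n,m}^{\prime r}$ for $0 \le r \le n$, ordered linearly by $r$. The colored-fan computation of \Cref{L:groupglsp} is insensitive to the replacement $\Ff \leftrightarrow \DD$ and extends verbatim, so $\M_{n,m}'$ is ordered. Applying \Cref{T:lst} as in \Cref{L:localstrugl} with the parabolic $\bP = \bP_{r,n-r}' \times \overline{\bP_{r,m-r}'}$ yields
\[ \bS_{\M_{n,m}^{\prime r}} = \M_{n-r,m-r}' \times \Gl_r', \]
and $S(\M_{n,m}^{\prime r})$ is realized as an induced representation $\omega_r'$ exactly analogous to the $\omega_r$ appearing in \Cref{S:repglsp}, with the shifts in the modular characters controlled by $\nu_{\bon_1'}$.

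Next, I would prove the orbit-wise multiplicity-freeness (analogue of \Cref{L:locmult}) and the genericity condition (analogue of \Cref{L:locgen}) by induction on $n+m$, using the $\rho$-derivative theory for $\gl_n'$. As remarked at the end of \Cref{S:dergl}, \Cref{L:derivativesintgl}, \Cref{L:derrek}, and \Cref{L:ext} all translate verbatim to $\Gl_n'$. Choosing $\rho,\rho'$ as suitable unitary twists of $\bon_1'$ and applying Bernstein reciprocity twice, combined with the Geometric Lemma for $\Gl_n'$ from \Cref{S:repgl}, isolates a single constituent of the relevant Jacquet module which, after normalization, is recognized as $\omega_{r-1}'$; the induction then closes precisely as in the proof of \Cref{L:locmult}. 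The base case is the lifting property for $\bon_n \otimes \bon_m$, the analogue of \Cref{L:basecase}: one first reduces to $n = m$ via Bernstein reciprocity, and then the same volume argument applies, setting $X_k = \{X \in \M_{n,n}'(\Ff) : |\det_n'(X)| \le q^{-k}\}$ and using $X_k = \varpi^{-1} X_{k-1}$ to force a contradiction against non-vanishing on $\Gl_n'$.

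With all five hypotheses of \Cref{C:liftingproperty} verified, one concludes that $\M_{n,m}'$ is multiplicity-free, has the lifting property for all irreducible representations, and that the spectrum is obtained by unwinding $\mathfrak{T}_n^m$ in the $\gl_n'$-setting; this yields the explicit description stated in the theorem. The main obstacle I anticipate is strictly bookkeeping: one must track carefully how the degree $d$ of $\DD$ enters the normalization $\nu_{\bon_1'}$ and hence the modular characters $\delta_{\bP_\alpha'}$ throughout. Once these normalizations are fixed, every step is a literal transcription of its counterpart in the proof of \Cref{T:matrix}, with no conceptually new input required.
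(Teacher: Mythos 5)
Your proposal matches the paper's treatment exactly: the paper proves this theorem by declaring that "all proofs proceed analogously to the split case," i.e.\ by running the same five inputs (\Cref{C:liftingproperty}, the analogues of \Cref{L:locmult}, \Cref{L:locgen}, \Cref{L:basecase}, and \Cref{L:localstrugl}) over $\DD$, which is precisely what you spell out. Your only caveat — tracking how the degree of $\DD$ enters $\nu_{\bon_1'}$ and the modular characters — is indeed the only genuine bookkeeping point, and the paper offers no additional detail beyond what you have written.
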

All proofs proceed analogously to the split case.
\subsection{Applications}\label{S:app}
In this section we give two applications of our results.
\subsubsection{Howe duality in type II}
We now quickly recap the well-known theory of the theta-correspondence in type II, \emph{cf.} \cite{Waldspurger}, \cite{MVW},\cite{Kudla1986}, \cite{Minguez}.

Define the Weil-representation of $\gl_n'\times \gl_m'$ as $\omega_{n,m}\coloneq (\abs{m}{\frac{m}{2}}\otimes \abs{n}{-\frac{n}{2}})S(\M_{n,m}')$. The big theta lift of $\pi\in\irr_n$ is defined as the maximal $\pi$-isotypical quotient of $\omega_{n,m}$ and denoted by $\Theta_{n,m}(\pi)$. Then $\Theta_{n,m}(\pi)$ is of finite length and non-zero if $n\ge m$. We denote $\theta_{n,m}(\pi)=\cosoc(\Theta_{n,m}(\pi))$. As a corollary of \Cref{T:matrix} we obtain a new proof of Howe duality in type II.
\begin{theorem}[{Howe duality in type II, \cite{Minguez}}]
    Let $\pi,\pi'\in \irr_n$. Then $\theta_{n,m}(\pi)$ is irreducible and if $\theta_{n,m}(\pi)\cong \theta_{n,m}(\pi')\neq 0$ then $\pi\cong \pi'$.
    If $n\le m$, we can describe $\theta_{n,m}(\pi)$ explicitly as follows.

    Let $\ain{r}{0}{n}$ be minimal such that there exists $\tau\in\irr_r$ with $\pi\hra\abs{n-r}{\frac{n+r}{2}}\times\tau$. Then
    \[\theta_{n,m}(\pi)\cong \soc(\tau^\lor\times \abs{m-r}{-\frac{m+r}{2}}).\]
\end{theorem}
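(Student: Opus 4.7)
The plan is to deduce the theorem directly from \Cref{T:matrix} applied to general linear groups over the division algebra $\DD$. The key point is that $\omega_{n,m}$ is a character twist of $S(\M_{n,m}')$ by a power of the reduced norm on each of the two factors. Consequently the spectrum of $\omega_{n,m}$ as a $\gl_n'\times\gl_m'$-representation is obtained from the spectrum of $S(\M_{n,m}')$ by the corresponding twist, and in particular $\omega_{n,m}$ is multiplicity-free.

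Multiplicity-freeness implies that for every $\pi\in\irr(\gl_n')$ there is at most one $\pi'\in\irr(\gl_m')$ with $\ho_{\gl_n'\times\gl_m'}(\omega_{n,m},\pi\otimes\pi')\neq 0$. Unwinding the definition of the big theta lift, this forces the cosocle $\theta_{n,m}(\pi)$ of $\Theta_{n,m}(\pi)$ to be irreducible whenever it is non-zero. For the injectivity of $\pi\mapsto\theta_{n,m}(\pi)$ on its locus of non-vanishing, I will combine the parametrisations of the spectrum from the two sides: \Cref{T:matrix} in the case $n\le m$ parametrises the spectrum by $\pi\in\irr(\gl_n')$, and its analogue in the case $m\le n$ (mentioned after \Cref{T:matrix}) gives a parametrisation by $\pi'\in\irr(\gl_m')$. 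Comparing the two parametrisations realises $\pi\leftrightarrow\theta_{n,m}(\pi)$ as mutually inverse bijections between the two sets of participating representations, so $\theta_{n,m}(\pi)\cong\theta_{n,m}(\pi')\neq 0$ forces $\pi\cong\pi'$.

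For the explicit description in the case $n\le m$, I will track the character twist defining $\omega_{n,m}$ through the construction of $\mathfrak{T}_n^m$. A minimal embedding $\pi\hra\abs{n-r}{\frac{n+r}{2}}\times\tau$ translates, after untwisting by the $\gl_n'$-component of the Weil twist, into a minimal embedding of the form $\abs{n-r}{\frac{r}{2}}\times\tau_0$ for a suitably character-shifted $\tau_0$; feeding this into the formula defining $\mathfrak{T}_n^m$ and then twisting back by the $\gl_m'$-component of the Weil twist, the shifts conspire to produce $\soc(\tau^\lor\times\abs{m-r}{-\frac{m+r}{2}})$, exactly as stated.

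The main obstacle I anticipate is purely the bookkeeping of the character twists defining $\omega_{n,m}$: ensuring that the exponents in the final socle precisely match those in the statement, and in particular that the $\nu^{\frac{m-n}{2}}$-factor appearing in $\mathfrak{T}_n^m$ is absorbed correctly by the two norm twists. The representation-theoretic substance is entirely contained in \Cref{T:matrix}, and once the normalisation is fixed the theorem assembles without further input.
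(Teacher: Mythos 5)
Your overall strategy is the same as the paper's: the theorem is stated there as a direct corollary of \Cref{T:matrix} (in its $\gl_n'$-version), with the Weil representation being a character twist of $S(\M_{n,m}')$, and the paper supplies no further argument. Your treatment of the twist bookkeeping and of the irreducibility of $\theta_{n,m}(\pi)$ is sound: multiplicity-freeness gives $\dim\ho_{\gl_m'}(\Theta_{n,m}(\pi),\pi')=\dim\ho(\omega_{n,m},\pi\otimes\pi')\le 1$ for every $\pi'$, and the graph description $\irr_{\M_{n,m}}=\{\pi\otimes\mathfrak{T}_n^m(\pi)\}$ shows (for $n\le m$) that exactly one $\pi'$ occurs, so the cosocle of $\Theta_{n,m}(\pi)$ is irreducible. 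The exponent shifts also work out as you say.

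There is, however, a genuine gap in your argument for the injectivity $\theta_{n,m}(\pi)\cong\theta_{n,m}(\pi'')\neq 0\Rightarrow\pi\cong\pi''$. You invoke "the analogue in the case $m\le n$" to get a second parametrisation of the spectrum by $\pi'\in\irr(\gl_m')$, but no such parametrisation exists when $n<m$: the spectrum of $\M_{n,m}$ is always indexed by representations of the \emph{smaller} group, and the set of second components is a proper subset of $\irr(\gl_m')$. What you actually need is the injectivity of the map $\pi\mapsto\mathfrak{T}_n^m(\pi)$ itself, which does not follow formally from \Cref{T:matrix}: a priori two distinct $\pi$, possibly supported on different minimal orbits $\M_{n,m}^{r_1}\neq\M_{n,m}^{r_2}$, could produce the same second component. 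To close this one must recover $r$ and then $\tau$ from $\sigma=\soc(\tau^\lor\nu^{\frac{m-n}{2}}\times\abs{m-r}{-\frac{r}{2}})$ alone — e.g.\ by showing that $m-r$ is the maximal length of a segment of the relevant form that can be split off $\sigma$ on the right (using that minimality of $r$ forces $\tau^\lor\nu^{\frac{m-n}{2}}$ to be suitably reduced), and then applying the uniqueness statement of \cite[Lemma 2.8]{MinLa18} to recover $\tau^\lor$, hence $\pi=\soc(\abs{n-r}{\frac{r}{2}}\times\tau)$. Note also that the irreducibility assertion in the range $n>m$ (which the theorem also covers) reduces to this same injectivity with the roles of the two groups exchanged, so the step cannot be avoided.
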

\subsubsection{Local Miyawaki liftings in the Hilbert Siegel case}
We recall the local Miyawaki liftings as introduced in \cite{Ato19}.

Let $\mu$ be a unitary character of $\gl_1$, $\pi\in\irr(\mp_n)$, $m\in \NN$ such that the $\mu_2$-part of $\irr(\mp_n)$ acts via $(\pm 1)^{n+m}$, \emph{i.e.} $\pi$ is genuine if $n+m$ is odd and factors through $\sp_n$ if $n+m$ is even.
We let the group $\mp_n\times\mp_m$ act on $\mp_{n+m}$ as in \Cref{S:sympgra}, however with the twist that on $\mp_m$ we twist the action by conjugation by \[\left(1_{2n},\begin{pmatrix}
    1_m&0\\0&-1_m
\end{pmatrix}\right).\]
We let  $\mu^{(n+m)}\rtimes \bon_1\in \Rep(\mp_{n+m})$, see \Cref{S:2gl}, and restrict this representation via the above action to $\mp_n\times\mp_m$.
Then the maximal $\pi$-isotypical quotient of $\mu^{(n+m)}\rtimes \bon_1\in \Rep(\mp_{n+m})$ is of the form
\[\pi\otimes\mathrm{M}_{\mu}^m(\pi).\]
We set\[\mathrm{Miy}_{\mu}^m(\pi)\coloneq \mathrm{cosoc}(\mathrm{M}_{\mu}^m(\pi))\]
The following is then an easy corollary from the results presented in \Cref{S:repglsp} and \Cref{C:secrep}.
\begin{theorem}
  Let $\pi,\pi'\in\irr(\mp_n)$, $m\in \NN$ such that the $\mu_2$-part of $\irr(\mp_n)$ acts via $(\pm 1)^{n+m}$. If $m\ge n$,   the representation $\mathrm{M}_{\mu}^m(\pi)$ is of finite length and non-zero. In this case, let $\ain{r}{0}{n}$ be maximal such that there exists $\tau\in\irr(\mp_{n-r})$ with $\pi\hra\mu_\psi(\widetilde{\det_r})^{-1}\abs{r}{-\frac{m-n+r}{2}}\rtimes\tau$. Then
  \[\mathrm{Miy}_{\mu}^m(\pi)=\soc(\mu_\psi(\widetilde{\det_{m-n+r}})\abs{m-n+r}{\frac{r}{2}}\rtimes\tau),\] which is of length at most $2$.
  If $n\ge m$ and $\mathrm{Miy}_{\mu}^m(\pi)\cong \mathrm{Miy}_{\mu}^m(\pi')\neq 0$, then $\pi\cong \pi'$.
\end{theorem}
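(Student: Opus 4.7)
The plan is to reduce the statement to the metaplectic spectrum result for $\sgt_{n,m}$ established in the previous subsection, via a careful comparison of the two embeddings of $\mp_n\times\mp_m$ into $\mp_{n+m}$. I first observe that the Miyawaki embedding differs from the symplectic-Grassmannian embedding of \Cref{S:sympgra} by conjugation with $M=\mathrm{diag}(1_m,-1_m)$ on the second factor. Since $M$ is anti-symplectic, this conjugation realizes the MVW-involution, so that as $\mp_n\times\mp_m$-representations $(\mu^{(n+m)}\rtimes\bon_1)|_{\mathrm{Miy}}$ is isomorphic to $S(\sgt_{n,m},\LL_\mu)$ with the $\mp_m$-action pulled back by MVW.

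Consequently, if $\pi\otimes T$ denotes the maximal $\pi$-isotypic quotient of $S(\sgt_{n,m},\LL_\mu)$ as an $\mp_n$-representation, then $\mathrm{M}_\mu^m(\pi)\cong T^{\mathrm{MVW}}$ as $\mp_m$-representations. The metaplectic analog of \Cref{P:explicit}, stated in the previous subsection, together with multiplicity-freeness, forces $\cosoc(T)\cong\mathfrak{T}_n^m(\pi,\mu^{(n+m)})$. Since MVW is covariant and exact it commutes with $\cosoc$, yielding
\[
    \mathrm{Miy}_\mu^m(\pi)\;\cong\;\mathfrak{T}_n^m(\pi,\mu^{(n+m)})^{\mathrm{MVW}}.
\]

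To match the explicit formula in the theorem, I apply property (iii) of the MVW involution to $\mathfrak{T}_n^m(\pi,\mu^{(n+m)})=\soc(\chi\rtimes\tau^\lor)$ for the appropriate character $\chi$ of $\Glt_{m-n+r}$, using $(\tau^\lor)^{\mathrm{MVW}}\cong\tau$, and reconcile the two characters $\mu^{(n+m)}$ and $\mu_\psi$ via their defining Weil-index relations in \Cref{S:2gl} together with the identity $\chi\circ\mathrm{Ad}(w)=\chi^{-1}$ for the Weyl element conjugating $\overline{\bP}$ to $\bP$. This reproduces $\soc(\mu_\psi(\widetilde{\det_{m-n+r}})\abs{m-n+r}{\frac{r}{2}}\rtimes\tau)$, and the length-at-most-$2$ claim then follows immediately from \Cref{C:secrep} applied to the segment determined by $\chi$ and $r$. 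Finiteness of $\mathrm{M}_\mu^m(\pi)$ is standard admissibility; non-vanishing for $m\ge n$ follows from the metaplectic analog of \Cref{L:generics}, which produces a nonzero morphism $S(\sgt_{n,m},\LL_\mu)\to\pi\otimes\mathfrak{T}_n^m(\pi,\mu^{(n+m)})$ via the doubling integral of \cite{Ato19}.

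Finally, for injectivity: given $\mathrm{Miy}_\mu^m(\pi)\cong\mathrm{Miy}_\mu^m(\pi')\neq 0$, the uniqueness part of \Cref{C:secrep} together with iterated $\rho$-derivatives recovers both $r$ and $\tau$ from $\mathrm{Miy}_\mu^m(\pi)$, after which $\pi$ is determined as the unique irreducible subrepresentation of $\mu_\psi(\widetilde{\det_r})^{-1}\abs{r}{-\frac{m-n+r}{2}}\rtimes\tau$ by maximality of $r$. The main obstacle will be the third step, namely the character bookkeeping relating $\mu^{(n+m)}$, $\mu_\psi$ and the MVW twist; the remaining representation-theoretic input is entirely in place from the preceding sections.
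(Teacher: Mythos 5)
Your proposal follows essentially the same route as the paper, which offers no detailed argument at all and simply declares the theorem an easy corollary of the metaplectic $\sgt_{n,m}$ spectrum results of \Cref{S:repglsp} together with \Cref{C:secrep}. Your reduction via the anti-symplectic conjugation realizing the MVW involution, followed by the identification of $\cosoc(\mathrm{M}_\mu^m(\pi))$ with the MVW-twist of $\mathfrak{T}_n^m(\pi,\mu^{(n+m)})$ and the length bound from \Cref{C:secrep}, is precisely the intended argument, fleshed out with more care than the paper itself provides.
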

\subsubsection{Computation of Standard $L$-factors}
In this subsection, we highlight how in the case of the space of matrices, the above computations allow one to compute the L-factors of Godement and Jacquet, \emph{cf} \cite{godement1972zeta}, \cite{JL2}.

We recall their definition. Let $\pi\in \Rep_n$ admitting a central character, $s\in \C$, $f$ a matrix coefficient of $\pi$ and $\phi\in S(\M_{n,n}')$. If $\mathrm{Re}(s)>>0$, the integral
    \[Z(s,f,\phi)\coloneq \int_{\gl_n'}f(g)\phi(g)\nu_n(g)\,\mathrm{d} g\]
    converges and defines an element of $\C(q^s)$.

    The ideal $I(\pi)$ generated by $Z(s+\frac{nd-1}{2},f,\phi)$, where $f$ and $\phi$ vary over all possible choices, is a fractional $\C[q^s,q^{-s}]$-ideal of $\C(q^s)$ containing $1$. The generator $L(\pi,s)\in \C[q^{-s}]$ of $I(\pi)$ normalized by the condition \[L(\pi,0)=1\] is called the local \emph{Godement-Jacquet L-factor} associated to the irreducible representation $\pi$. Similarly, one can construct the accompanying $\epsilon$- and $\gamma$-factors, for which we refer to the corresponding literature.

If $\pi$ is irreducible, the map
\[T_\pi\coloneq\restr{Z(s+\frac{nd-1}{2},-,-)L(\pi,s)}{s=-\frac{nd-1}{2}}\colon S(\M_{n,n}')\ra\pi\otimes\pi^\lor\] is by Howe duality the, up to a scalar, unique morphism $S(\M_{n,n}')\ra\pi\otimes\pi^\lor$. 
\begin{lemma}[{\cite[Proposition 2.3]{JL2}}]\label{L:indL}
   Let $\pi_1,\ldots,\pi_k\in \irr$. Then    \[L(\pi_1\times\ldots\times\pi_k,s)=\prod_{i=1}^kL(\pi_i,s).\]
\end{lemma}
The following is clear from the definitions and the fact that $S(\M_{n,n}')$ is multiplicity-free.
\begin{lemma}\label{L:nopole}
    Let $\pi\in \irr_n$. Then the unique non-zero morphism
    \[S(\M_{n,n}')\ra \pi\otimes\pi'\] does not vanish on $S(\Gl_n')$ if and only if $L(\pi,s)$ does not have a pole at $s=-\frac{nd-1}{2}$.
\end{lemma}
Next we prove the following.
\begin{lemma}\label{L:locgenL}
    We use the notation of \Cref{L:locgen} and assume $n=m$.
    Then the pole of $L(\pi,s)$ at $s=-\frac{nd-1}{2}$ equals the pole of $L(\rho_{n,\alpha},s)$ at $s=-\frac{nd-1}{2}$. 
\end{lemma}
\begin{proof}
 By \Cref{L:locgen} it follows that for $\phi\in S(\M_{n,n}')$ with support contained in the matrices 
 \[\left\{\begin{pmatrix}
     g&X\\Y&Z
 \end{pmatrix}:g\in \gl_{n-\lvert\alpha\lvert}',\,X\in \mathrm{Mat}_{n-\lvert\alpha\lvert,\lvert\alpha\lvert}',\, Y\in \mathrm{Mat}_{\lvert\alpha\lvert,n-\lvert\alpha\lvert}',\, Z\in \mathrm{Mat}_{\lvert\alpha\lvert,\lvert\alpha\lvert}'\right\}\]
and of the form
\[\phi\left(\begin{pmatrix}
     g&X\\Y&Z
 \end{pmatrix}\right)=\phi_{1,1}(g)\phi_{2,1}(X)\phi_{1,2}(Y)\phi_{2,2}(Z)\]
for Schwartz functions $\phi_{1,1},\ldots,\phi_{2,2}$ on the respective spaces, $f$ a matrix coefficient of $\pi$, which is induced from a matrix coefficient $f_1$ of $\rho_{n,\lvert\alpha\lvert}$ and $f_2$ a matrix coefficient of $\tau$,
\[T_\pi(f,\phi)=T_{\rho_{n,\lvert\alpha\lvert}}(f_1,\phi_{2,2})\int_{\mathrm{Mat}_{n-\lvert\alpha\lvert,\lvert\alpha\lvert}'\times \mathrm{Mat}_{\lvert\alpha\lvert,n-\lvert\alpha\lvert}'\times \gl_{\lvert\alpha\lvert}'}\phi_{1,1}(g)\phi_{2,1}(X)\phi_{1,2}(Y)\, \mathrm{d}g\,\mathrm{d} X\,\mathrm{d} Y\] for suitable Haar-measures.
From this, it follows easily that the pole of $L(\pi,s)$ at $s=-\frac{nd-1}{2}$ has to have at least the same order as the pole of $L(\rho_{n,\alpha},s)$ at $s=-\frac{nd-1}{2}$. But by \Cref{L:locgen} it also follows that one can choose the above data such that left-hand-side of the equation does not vanish, and thus it is also easy to see that the pole of $L(\pi,s)$ at $s=-\frac{nd-1}{2}$ has at most the same order as the pole of $L(\rho_{n,\alpha},s)$ at $s=-\frac{nd-1}{2}$.
\end{proof}
\begin{lemma}\label{L:trivL}
    The $L$-function of the trivial representation $\mathbf{1}_n$ of $\gl_n'$ has a simple pole at $s=-\frac{nd-1}{2}$.
\end{lemma}
\begin{proof}
    By \Cref{L:nopole} and the fact that in the filtration by ranks of $S(\M_{n,n}')$, $\mathbf{1}_n$ appears as a quotient imply that there is a pole at $s=-\frac{n-1}{2}$.

    Let $K$ be the compact subgroup of $S(\M_{n,n}')$
    consisting of matrices with entries in the ring of integers of $\DD$ and $\chi_K$ its characteristic function. On the one hand, we know that $T_{\mathbf{1}_n}(\chi_K)=\chi_K(0)=1$.
    On the other hand, we obtain the geometric series
    \[Z(s,\chi,1)=\frac{1}{1-q^{-s}}\mathrm{vol}(\{g\in K:\nu_n(g)=1\}),\] which has a simple pole at $s=0$. The claim thus follows.
\end{proof}
We are now able to recover the main (local) theorem of \cite{godement1972zeta}. Let $\varpi$ be a uniformizer of $\Ff$.
\begin{theorem}
Let $\pi\in \irr_n$. Then $L(\pi,s)$ has a pole at $s=-\frac{nd-1}{2}$ if and only if \[T_\pi\colon S(\M_{n,n}')\ra \pi\otimes\pi^\lor\] vanishes on $S(\Gl_n')$.

To be more precise,
let $r$ be maximal such that $T_\pi$ vanishes on $S({\M_{n,n}^{\ge r+1'}})$. Then \[\mathrm{ord}_{s=-\frac{nd-1}{2}}L(\pi,s)=\Lambda(\nu_{n-r}^{\frac{r}{2}},\tau)+1,\]
where $\tau$ is the unique irreducible representation such that $\pi\hra \nu_{n-r}^{\frac{r}{2}}\times\tau$.

Finally, let $\fm\in \Ms$ be the multisegment such that $\L(\fm)\cong \pi$. Then \[L(\pi,s)=\prod_{[a,b]_\rho\in \fm}L(\rho\nr^b,s),\]
where the product is over all segments in $\fm$ (with multiplicity)
and for $\rho$ a cuspidal representation
\[L(\rho,s)=\begin{cases}\frac{1}{1-\chi(\varpi)q^{\frac{1-d}{2}-s}}&\text{ if }\rho\cong \chi\text{ is an unramified character,}\\1&\text{ otherwise.}\end{cases}\]
\end{theorem}
\begin{proof}
    The claim regarding the description in terms of the multisegment follows easily from \Cref{L:trivL}, \Cref{L:indL}, \Cref{L:locgenL}, and \Cref{L:derL}.
    The claim regarding the intertwining operator is a simple consequence of \Cref{T:centralint}.
\end{proof}
\bibliographystyle{amsalpha}
\bibliography{References.bib}

\newcommand{\etalchar}[1]{$^{#1}$}
\providecommand{\bysame}{\leavevmode\hbox to3em{\hrulefill}\thinspace}
\providecommand{\MR}{\relax\ifhmode\unskip\space\fi MR }
\providecommand{\MRhref}[2]{%
  \href{http://www.ams.org/mathscinet-getitem?mr=#1}{#2}
}
\providecommand{\href}[2]{#2}
\begin{thebibliography}{GPSR87}

\bibitem[AGI{\etalchar{+}}24]{atobe2024localintertwiningrelationscotempered}
Hiraku Atobe, Wee~Teck Gan, Atsushi Ichino, Tasho Kaletha, Alberto Mínguez, and Sug~Woo Shin, \emph{Local intertwining relations and co-tempered $a$-packets of classical groups}, 2024, arXiv:2410.13504.

\bibitem[AM23]{AtoMin}
Hiraku Atobe and Alberto M\'inguez, \emph{The explicit {Z}elevinsky-{A}ubert duality}, Compos. Math. \textbf{159} (2023), no.~2, 380--418. \MR{4549708}

\bibitem[Ato19]{Ato19}
Hiraku Atobe, \emph{A theory of {M}iyawaki liftings: the {H}ilbert–{S}iegel case}, Mathematische Annalen \textbf{376} (2019), 1467 -- 1535.

\bibitem[Aub95]{Aub}
Anne-Marie Aubert, \emph{Dualit\'{e} dans le groupe de {G}rothendieck de la cat\'{e}gorie des repr\'{e}sentations lisses de longueur finie d'un groupe r\'{e}ductif {$p$}-adique}, Trans. Amer. Math. Soc. \textbf{347} (1995), no.~6, 2179--2189. \MR{1285969}

\bibitem[BLV86]{BriLunVus86}
M.~Brion, D.~Luna, and Th. Vust, \emph{Espaces homog\`enes sph\'eriques}, Invent. Math. \textbf{84} (1986), no.~3, 617--632. \MR{837530}

\bibitem[Bri86]{Bri86}
Michel Brion, \emph{Quelques propri\'et\'es des espaces homog\`enes sph\'eriques}, Manuscripta Math. \textbf{55} (1986), no.~2, 191--198. \MR{833243}

\bibitem[BZ76]{BerZel76}
I.~N. Bernstein and A.~V. Zelevinsky, \emph{Representations of the group {$GL(n,F),$} where {$F$} is a local non-{A}rchimedean field}, Uspehi Mat. Nauk \textbf{31} (1976), no.~3(189), 5--70. \MR{425030}

\bibitem[BZ77]{Ber}
\bysame, \emph{Induced representations of reductive {${\mathfrak{ p}}$}-adic groups. {I}}, Ann. Sci. \'{E}cole Norm. Sup. (4) \textbf{10} (1977), no.~4, 441--472. \MR{579172}

\bibitem[BZSV24]{BenSakVen}
David Ben-Zvi, Yiannis Sakellaridis, and Akshay Venkatesh, \emph{Relative langlands duality}, 2024, arXiv:2409.04677.

\bibitem[Dat05]{Dat}
J.-F. Dat, \emph{{$v$}-tempered representations of {$p$}-adic groups. {I}. {$l$}-adic case}, Duke Math. J. \textbf{126} (2005), no.~3, 397--469. \MR{2120114}

\bibitem[Dro25a]{DroHow}
Johannes Droschl, \emph{A note on the howe duality conjecture for symplectic-orthogonal and unitary pairs}, 2025, arXiv:2503.08475.

\bibitem[Dro25b]{DroKudRa}
\bysame, \emph{Proof of a conjecture of {K}udla and {R}allis on quotients of degenerate principal series}, Advances in Mathematics \textbf{464} (2025), 110145.

\bibitem[GJ72]{godement1972zeta}
R.~Godement and H.~Jacquet, \emph{Zeta functions of simple algebras.}, Springer-Verlag, Berlin-New York, 1972. \MR{342495}

\bibitem[GPSR87]{GelPiaRal87}
Stephen Gelbart, Ilya Piatetski-Shapiro, and Stephen Rallis, \emph{Explicit constructions of automorphic {$L$}-functions}, Lecture Notes in Mathematics, vol. 1254, Springer-Verlag, Berlin, 1987. \MR{892097}

\bibitem[Jac79]{JL2}
H.~Jacquet, \emph{Principal {$L$}-functions of the linear group.}, Automorphic forms, representations and {$L$}-functions ({P}roc. {S}ympos. {P}ure {M}ath., {O}regon {S}tate {U}niv., {C}orvallis, {O}re., 1977), {P}art 2, Proc. Sympos. Pure Math., XXXII, American Mathematical Society, Providence, 1979, pp.~63--86,. \MR{546609}

\bibitem[Jan00]{Jader}
Chris Jantzen, \emph{On square-integrable representations of classical {$p$}-adic groups. {II}}, Represent. Theory \textbf{4} (2000), 127--180. \MR{1789464}

\bibitem[KK24]{knopKrötz}
Friedrich Knop and Bernhard Krötz, \emph{Reductive group actions}, 2024, arXiv:1604.01005.

\bibitem[KLZ22]{KLZ}
E.~Kaplan, E.~Lapid, and J.~Zou, \emph{Classification of irreducible representations of metaplectic covers of the general linear group over a non-archimedean local field}, 2022, arXiv:2206.14731.

\bibitem[Kno90]{Kno90}
Friedrich Knop, \emph{Weylgruppe und {M}omentabbildung}, Invent. Math. \textbf{99} (1990), no.~1, 1--23. \MR{1029388}

\bibitem[Kno96]{Kno96}
\bysame, \emph{Automorphisms, root systems, and compactifications of homogeneous varieties}, J. Amer. Math. Soc. \textbf{9} (1996), no.~1, 153--174. \MR{1311823}

\bibitem[KR05]{KudlaRallis}
S.~Kudla and S.~Rallis, \emph{On first occurrence in the local theta correspondence}, Automorphic representations, {$L$}-functions and applications: progress and prospects, Ohio State Univ. Math. Res. Inst. Publ., vol.~11, de Gruyter, Berlin, 2005, pp.~273--308. \MR{2192827}

\bibitem[Kud86]{Kudla1986}
S.~Kudla, \emph{On the local theta-correspondence}, Invent. Math. \textbf{83} (1986), no.~2, 229--255. \MR{818351}

\bibitem[LM16]{LapMin16}
Erez Lapid and Alberto M\'inguez, \emph{On parabolic induction on inner forms of the general linear group over a non-archimedean local field}, Selecta Math. (N.S.) \textbf{22} (2016), no.~4, 2347--2400. \MR{3573961}

\bibitem[LM18]{MinLa18}
E.~Lapid and A.~M\'{\i}nguez, \emph{Geometric conditions for {$\square$}-irreducibility of certain representations of the general linear group over a non-archimedean local field}, Adv. Math. \textbf{339} (2018), 113--190. \MR{3866895}

\bibitem[LM25]{LapMin25}
Erez Lapid and Alberto M\'inguez, \emph{A binary operation on irreducible components of {L}usztig's nilpotent varieties {II}: applications and conjectures for representations of {${\rm GL}_n$} over a non-archimedean local field}, Pure Appl. Math. Q. \textbf{21} (2025), no.~2, 813--863. \MR{4847251}

\bibitem[LR05]{LapRal05}
Erez~M. Lapid and Stephen Rallis, \emph{On the local factors of representations of classical groups}, Automorphic representations, {$L$}-functions and applications: progress and prospects, Ohio State Univ. Math. Res. Inst. Publ., vol.~11, de Gruyter, Berlin, 2005, pp.~309--359. \MR{2192828}

\bibitem[LV83]{LunVus83}
D.~Luna and Th. Vust, \emph{Plongements d'espaces homog\`enes}, Comment. Math. Helv. \textbf{58} (1983), no.~2, 186--245. \MR{705534}

\bibitem[M\'08]{Minguez}
A.~M\'{\i}nguez, \emph{Correspondance de {H}owe explicite: paires duales de type {II}}, Ann. Sci. \'{E}c. Norm. Sup\'{e}r. (4) \textbf{41} (2008), no.~5, 717--741. \MR{2504432}

\bibitem[M\'09]{Mder}
Alberto M\'{\i}nguez, \emph{Sur l'irr\'{e}ductibilit\'{e} d'une induite parabolique}, J. Reine Angew. Math. \textbf{629} (2009), 107--131. \MR{2527415}

\bibitem[MgW86]{MoeglinetJ1986}
C.~M\oe~glin and J.-L. Waldspurger, \emph{Sur l'involution de {Z}elevinski}, J. Reine Angew. Math. \textbf{372} (1986), 136--177. \MR{863522}

\bibitem[MVW87]{MVW}
C.~M{\oe}glin, M.-F. Vign\'{e}ras, and J.-L. Waldspurger, \emph{Correspondances de {H}owe sur un corps {$p$}-adique}, Lecture Notes in Mathematics, vol. 1291, Springer-Verlag, Berlin, 1987. \MR{1041060}

\bibitem[Per14]{Per14}
Nicolas Perrin, \emph{On the geometry of spherical varieties}, Transform. Groups \textbf{19} (2014), no.~1, 171--223. \MR{3177371}

\bibitem[Sak08]{Sak08}
Yiannis Sakellaridis, \emph{On the unramified spectrum of spherical varieties over {$p$}-adic fields}, Compos. Math. \textbf{144} (2008), no.~4, 978--1016. \MR{2441254}

\bibitem[Sak23]{Sak23}
\bysame, \emph{Spherical varieties, functoriality, and quantization}, I{CM}---{I}nternational {C}ongress of {M}athematicians. {V}ol. 4. {S}ections 5--8, EMS Press, Berlin, [2023] \copyright 2023, pp.~2998--3037. \MR{4680350}

\bibitem[SV17]{SakVen}
Yiannis Sakellaridis and Akshay Venkatesh, \emph{Periods and harmonic analysis on spherical varieties}, Ast\'erisque (2017), no.~396, viii+360. \MR{3764130}

\bibitem[Tad95]{TADIC19951}
M.~Tadi\'{c}, \emph{Structure arising from induction and {J}acquet modules of representations of classical {$p$}-adic groups}, J. Algebra \textbf{177} (1995), no.~1, 1--33. \MR{1356358}

\bibitem[Wal90]{Waldspurger}
J.-L. Waldspurger, \emph{D\'{e}monstration d'une conjecture de dualit\'{e} de {H}owe dans le cas {$p$}-adique, {$p\neq 2$}}, Festschrift in honor of {I}. {I}. {P}iatetski-{S}hapiro on the occasion of his sixtieth birthday, {P}art {I} ({R}amat {A}viv, 1989), Israel Math. Conf. Proc., vol.~2, Weizmann, Jerusalem, 1990, pp.~267--324. \MR{1159105}

\bibitem[Wal03]{Wal03}
\bysame, \emph{La formule de {P}lancherel pour les groupes {$p$}-adiques (d'apr\`es {H}arish-{C}handra)}, J. Inst. Math. Jussieu \textbf{2} (2003), no.~2, 235--333. \MR{1989693}

\bibitem[Zel80]{Zel}
A.~V. Zelevinsky, \emph{Induced representations of reductive {${\mathfrak{ p}}$}-adic groups. {II}. {O}n irreducible representations of {${\rm GL}(n)$}}, Ann. Sci. \'{E}cole Norm. Sup. (4) \textbf{13} (1980), no.~2, 165--210. \MR{584084}

\end{thebibliography}
\end{document}